\newif\ifpdf
\numberwithin{equation}{section}       
 \theoremstyle{plain}    
 \newtheorem{thm}{Theorem}[section]
 \numberwithin{equation}{section} 
 \numberwithin{figure}{section} 
 \theoremstyle{plain}
 \theoremstyle{plain}    
 \newtheorem{cor}[thm]{Corollary} 
 \theoremstyle{plain}    
 \newtheorem{prop}[thm]{Proposition} 
 \theoremstyle{plain}    
 \newtheorem{lem}[thm]{Lemma} 
 \theoremstyle{remark}
 \newtheorem{rem}[thm]{Remark}
 \theoremstyle{definition}
\newtheorem*{thmA}{Theorem A} 
\newtheorem*{thmB}{Theorem B} 
\newtheorem*{thmC}{Theorem C} 
\newtheorem*{thmD}{Theorem D}
\newtheorem*{thmE}{Theorem E}
\theoremstyle{definition}
\newtheorem{defi}[thm]{Definition}
\newtheorem*{ackn}{Acknowledgements}
\newtheorem*{org}{Organization of the article}
\newcommand{\C}{{\mathbb{C}}}
\newcommand{\PP}{{\mathbb{P}}}
\newcommand{\R}{{\mathbb{R}}}
\newcommand{\cC}{{\mathcal{C}}}
\newcommand{\cE}{{\mathcal{E}}}
\newcommand{\cF}{{\mathcal{F}}}
\newcommand{\cH}{{\mathcal{H}}}
\newcommand{\cO}{{\mathcal{O}}}
\newcommand{\cT}{{\mathcal{T}}}
\newcommand{\cM}{{\mathcal{M}}}
\newcommand{\cP}{{\mathrm{PSH}}}
\newcommand{\cK}{{\mathcal{K}}}
\renewcommand{\a}{\alpha}
\renewcommand{\b}{\beta}
\newcommand{\de}{\delta}
\newcommand{\e}{\varepsilon}
\newcommand{\f}{\varphi}
\newcommand{\p}{\psi}
\newcommand{\FS}{\mathrm{f}}
\newcommand{\MA}{\mathrm{MA}}
\newcommand{\Amp}{\mathrm{Amp}\,}
\newcommand{\Hilb}{\mathrm{h}}
\newcommand{\vol}{\operatorname{vol}}
\newcommand{\conv}{\operatorname{Conv}}
\newcommand{\ca}{\operatorname{Cap}}
\newcommand{\eq}{{\mu_\mathrm{eq}}}
\newcommand{\eneq}{{E_\mathrm{eq}}}
\begin{document}

\setcounter{tocdepth}{1}

\title{A variational approach to complex Monge-Amp{\`e}re equations}

\date{\today}

\author{Robert Berman, S{\'e}bastien Boucksom, Vincent Guedj, Ahmed Zeriahi}

\address{Chalmers Techniska H{\"o}gskola \\
 G{\"o}teborg\\
 Sweden}

\email{robertb@math.chalmers.se}

\address{CNRS-Universit{\'e} Paris 7\\
 Institut de Math{\'e}matiques\\
 F-75251 Paris Cedex 05\\
 France}

\email{boucksom@math.jussieu.fr}

\address{LATP, Universit{\'e} Aix-Marseille I\\
13453 Marseille Cedex 13\\
 France}

\email{guedj@cmi.univ-mrs.fr}

\address{I.M.T., Universit{\'e} Paul Sabatier\\
31062 Toulouse cedex 09\\
France}

\email{zeriahi@math.ups-tlse.fr}

\begin{abstract} We show that degenerate complex Monge-Amp\`ere equations in a big cohomology class of a compact K\"ahler manifold can be solved using a variational method independent of Yau's theorem. Our formulation yields in particular a natural pluricomplex analogue of the classical logarithmic energy of a measure. We also investigate K\"ahler-Einstein equations on Fano manifolds. Using continuous geodesics in the closure of the space of K\"ahler metrics and Berndtsson's positivity of direct images we extend Ding-Tian's variational characterization and Bando-Mabuchi's uniqueness result to singular K\"ahler-Einstein metrics. Finally using our variational characterization we prove the existence, uniqueness and convergence as $k\to\infty$ of $k$-balanced metrics in the sense of Donaldson both in the (anti)canonical case and with respect to a measure of finite pluricomplex energy.
\end{abstract} 

\maketitle

\tableofcontents

\newpage

\section*{Introduction}

Solving degenerate complex Monge-Amp{\`e}re equations has been the subject of intensive studies in the past decade, in connection with the search for canonical models and metrics of complex algebraic varieties (see e.g.~\cite{Kol98}, \cite{Tian}, \cite{Che00}, \cite{Don05a}, \cite{Siu06}, \cite{BCHM06}, \cite{EGZ09}, \cite{ST08}).

Many of these results ultimately relied on the seminal work of Yau \cite{Yau78} which involved a continuity method and difficult \emph{a priori} estimates to construct smooth solutions to non-degenerate Monge-Amp\`ere equations. 

But the final goal and outcome of some of these results was to produce singular solutions in degenerate situations, and the main goal of the present paper is to show that one can use the \emph{direct methods} of the calculus of variations to obtain such solutions. Our approach is to some extent a complex analogue of the method used by Aleksandrov to provide weak solutions to the Minkowski problem~\cite{Ale38}, i.e.~the existence of compact convex hypersurfaces of $\R^n$ with prescribed Gaussian curvature. 

Our approach yields more natural proofs of the main results of \cite{GZ07}, \cite{EGZ09}, \cite{BEGZ08}, together with 
several new results to be described below.

\subsection{Weak solutions to Calabi's conjecture and balanced metrics}

\subsubsection{Previous results}
Consider for the moment a compact K{\"a}hler $n$-dimensional manifold $(X,\omega)$ normalized by $\int_X\omega^n=1$. Denote by $\cM_X$ the set of all probability measures on $X$. Given a probability measure $\mu\in\cM_X$ with smooth positive density, it was proved in~\cite{Yau78} that there exists a unique K{\"a}hler form $\eta$ in the cohomology class of $\omega$ such that $\eta^n=\mu$ . More singular measures $\mu\in\cM_X$ were later considered in~\cite{Kol98}. In that case $\eta$ is to be replaced by an element of the set $\cT(X,\omega)$ of all closed positive $(1,1)$-currents $T$ cohomologous to $\omega$, which can thus be written $T=\omega+dd^c\f$ where $\f$ is an $\omega$-psh function, the \emph{potential} of $T$ (defined up to a constant). When $\f$ is bounded the positive measure $T^n$ was defined by Bedford-Taylor, and Ko\l{}odziej showed the existence of a unique $T\in\cT(X,\omega)$ with continuous potential such that $T^n=\mu$ when $\mu$ has $L^{1+\e}$-density. 

In order to consider more singular measures one first needs to extend the Monge-Amp{\`e}re operator $T\mapsto T^n$. Even though this operator cannot be extended in a reasonable way to the whole of $\cT(X,\omega)$, it was shown in~\cite{GZ07,BEGZ08} using a construction of~\cite{BT87} that one can in fact define the \emph{non-pluripolar} product of arbitrary closed positive $(1,1)$-currents $T_1,...,T_p$ on $X$. It yields a closed positive $(p,p)$-current 
$$\langle T_1\wedge...\wedge T_p\rangle$$
putting no mass on pluripolar sets and whose cohomology class is bounded in terms of the cohomology classes of the $T_j$'s only. In particular given $T\in\cT(X,\omega)$ we get a positive measure $\langle T^n\rangle$ putting no mass on pluripolar sets and of total mass 
$$
\int_X\langle T^n\rangle\le\int_X\omega^n=1
$$
Equality holds if $T$ has bounded potential, and more generally currents $T\in\cT(X,\omega)$ for which equality holds are said to have \emph{full Monge-Amp{\`e}re mass}, in which case it is licit to simply write $T^n=\langle T^n\rangle$. Now the main result of~\cite{GZ07} states that every non-pluripolar measure $\mu\in\cM_X$ is of the form $\mu=T^n$ for some $T\in\cT(X,\omega)$ with full Monge-Amp{\`e}re mass, which is furthermore unique as was later shown in this generality in~\cite{Din09}. 

The proofs of the above results from~\cite{Kol98,GZ07} eventually reduce by regularization to the smooth case treated in~\cite{Yau78}. Our first goal in the present article is to show how to solve singular Monge-Amp{\`e}re equations by the direct method of the calculus of variations, independently of~\cite{Yau78}. 

\subsubsection{The variational approach}
Denote by $\cT^1(X,\omega)$ the set of all currents $T\in\cT(X,\omega)$ with full Monge-Amp{\`e}re mass and whose potential is furthermore integrable with respect to $T^n$. According to \cite{GZ07,BEGZ08} currents $T$ in $\cT^1(X,\omega)$ are characterized by the condition $J(T)<+\infty$, where $J$ denotes a natural extension of Aubin's $J$-functional~\cite{Aub84} obtained as follows. One first considers the \emph{Aubin-Mabuchi energy functional} defined on smooth $\omega$-psh functions $\f$ by
$$
E(\f):=\frac{1}{n+1}\sum_{j=0}^n\int_X\f(\omega+dd^c\f)^j\wedge\omega^{n-j}
$$
\cite{Aub84,Mab86}. It is easy
to show using integration by parts that the G{\^a}teaux derivative of $E$ at
$\f$ is given by integration against $(\omega+dd^c\f)^n$. This implies in particular that $E$ is non-decreasing on smooth $\omega$-psh functions and a computation of its second derivative (see
equation~(\ref{equ:eseconde}) below) also shows that $E$ is \emph{concave}. This functional is now extended by monotonicity to arbitrary $\omega$-psh functions by setting
$$E(\f):=\inf\{E(\p)|\,\p\text{ smooth }\omega\text{-psh},\,\p\ge\f\}\in[-\infty,+\infty[,$$
and the $J$-functional is in turn defined by 
$$J(T):=\int_X\f\omega^n-E(\f)$$
for $T=\omega+dd^c\f$. It is well-defined by translation invariance and yields a convex lower semicontinuous function
$$J:\cT(X,\omega)\to[0,+\infty]$$
which induces an exhaustion function on $\cT^1(X,\omega)=\{J<+\infty\}$ in the sense that $\{J\le C\}$ is compact for each $C>0$.  
  
Now observe that the functional $\f\mapsto E(\f)-\int_X\f d\mu$ also descends to a concave functional 
$$F_\mu:\cT^1(X,\omega)\to]-\infty,+\infty]$$
by translation invariance and set
$$E^*(\mu):=\sup_{\cT^1(X,\omega)}F_\mu,$$
This yields a convex lower semicontinuous functional
$$
E^*:\cM_X\to[0,+\infty]
$$
which is essentially the \emph{Legendre transform} of $E$ and will be called the \emph{pluricomplex electrostatic energy}. 

Indeed in case $(X,\omega)$ is $\PP^1$ endowed with its Fubiny-Study metric, $E^*(\mu)$ is equal up to a factor to the logarithmic energy $I(\mu-\omega)$ of the signed measure $\mu-\omega$ with total mass $0$ (cf.~ Section~\ref{sec:pluri}). We shall thus say by analogy that $\mu\in\cM_X$ has \emph{finite energy} iff $E^*(\mu)<+\infty$. 

\smallskip

We can now state our first main result.

\begin{thmA} 
{\it 
A measure $\mu\in\cM_X$ has finite energy iff $\mu=T_\mu^n$ with $T_\mu\in\cT^1(X,\omega)$, which is characterized as the unique maximizer of $F_\mu$ on $\cT^1(X,\omega)$. }
\end{thmA}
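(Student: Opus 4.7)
The plan is to split the equivalence into three steps: (i) the ``if'' direction via concavity of the Aubin--Mabuchi energy; (ii) existence of a maximizer for $F_\mu$ whenever $E^*(\mu)<+\infty$, via coercivity and compactness; and (iii) the Euler--Lagrange identification of any maximizer as a solution of $T^n=\mu$. Uniqueness of the maximizer will then reduce to the uniqueness of solutions to the Monge--Amp\`ere equation due to Dinew, quoted in the excerpt.

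For step (i), assume $\mu=T_\mu^n$ with $T_\mu=\omega+dd^c\f_\mu\in\cT^1(X,\omega)$. Since $E$ is concave with G\^ateaux derivative $\f\mapsto(\omega+dd^c\f)^n$ at $\f$, extended to $\cT^1(X,\omega)$ by monotone approximation, the supporting-hyperplane inequality reads
\begin{equation*}
E(\f)\le E(\f_\mu)+\int_X(\f-\f_\mu)\,\langle T_\mu^n\rangle=E(\f_\mu)+\int_X(\f-\f_\mu)\,d\mu
\end{equation*}
for every $\f$ with $J(\f)<+\infty$. Rearranging yields $F_\mu(\f)\le F_\mu(\f_\mu)$, so $T_\mu$ maximizes $F_\mu$ and $E^*(\mu)<+\infty$.

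For step (ii), pick a maximizing sequence $\f_j$ normalized by $\sup_X\f_j=0$. Using the identity $J(\f)=\int_X\f\,\omega^n-E(\f)$ together with $F_\mu(\f_j)\to E^*(\mu)$ and the uniform bound on $\int_X\f_j\,\omega^n$ coming from sup-normalization, one obtains
\begin{equation*}
J(\f_j)\le C+\int_X(-\f_j)\,d\mu.
\end{equation*}
The decisive input is a H\"older-type estimate, available precisely because $\mu$ has finite energy, controlling $\int_X(-\f_j)\,d\mu$ by a sublinear power of $J(\f_j)$; plugging it back forces $J(\f_j)$ to remain bounded. The compactness of $\{J\le C\}$ then provides a weak limit $\f_\infty$, and upper semicontinuity of $F_\mu$ -- from u.s.c.\ of $E$, together with the fact that finite-energy measures put no mass on pluripolar sets, allowing Fatou-type arguments applied to quasi-continuous representatives -- shows $\f_\infty$ realizes the supremum.

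Step (iii) is the main obstacle. Given a maximizer $\f$ and a test function $\chi\in C^\infty(X)$, the naive variation $\f+t\chi$ leaves the $\omega$-psh cone, so one projects back onto it via the $\omega$-psh envelope $P(\f+t\chi):=\sup\{\,\p\text{ $\omega$-psh}\,:\,\p\le\f+t\chi\,\}$. The key technical ingredient is that $t\mapsto E(P(\f+t\chi))$ is differentiable at $t=0$ with
\begin{equation*}
\frac{d}{dt}\bigg|_{t=0}E(P(\f+t\chi))=\int_X\chi\,\langle(\omega+dd^c\f)^n\rangle,
\end{equation*}
which rests on the ``orthogonality relation'' that the non-pluripolar Monge--Amp\`ere measure of the envelope concentrates on its coincidence set with $\f+t\chi$. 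Combined with maximality of $\f$, this yields $\int_X\chi\,d\mu=\int_X\chi\,\langle T^n\rangle$ for every smooth $\chi$, whence $\mu=\langle T^n\rangle$; in particular $T$ has full Monge--Amp\`ere mass, so $T\in\cT^1(X,\omega)$ with $T^n=\mu$. Uniqueness of the maximizer is then inherited from the uniqueness of the solution to $T^n=\mu$.
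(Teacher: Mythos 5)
Your steps (i) and (iii) follow the paper's Theorem~\ref{thm:var} essentially verbatim: the ``if'' direction uses the supporting-hyperplane inequality from concavity of $E$, and the Euler--Lagrange identification uses the envelope $P(\f+tv)$ together with the differentiability result of~\cite{BB08} (Lemma~\ref{lem:key}) and the fact that $P(\f+tv)\le\f+tv$. The coercivity estimate you invoke is Proposition~\ref{prop:square}. All of this is correct.

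Step (ii) has a genuine gap at precisely the point the paper flags as ``the most delicate part of the proof.'' You assert that $F_\mu=E-L_\mu$ is upper semicontinuous along the maximizing sequence ``from u.s.c.\ of $E$, together with\ldots Fatou-type arguments.'' But the Fatou argument runs the wrong way. For a weakly convergent sequence $\f_j\to\f$ in $\cP(X,\omega)$ with $\sup_X\f_j$ bounded above, Fatou (applied to $C-(\f_j-V_\theta)\ge 0$) and the Bedford--Taylor negligibility theorem give $L_\mu(\f)\ge\limsup_j L_\mu(\f_j)$, i.e.\ $L_\mu$ is \emph{upper} semicontinuous (this is Lemma~\ref{lem:usc}). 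Since $L_\mu$ enters $F_\mu$ with a minus sign, $F_\mu=E-L_\mu$ is a sum of an usc and an lsc term, with no semicontinuity available a priori. What one actually needs is the \emph{lower} semicontinuity (indeed, continuity) of $L_\mu$ on $\cE_C$, which amounts to uniform integrability of $\{\f-V_\theta:\f\in\cE_C\}$ in $L^1(\mu)$, and this is \emph{not} a consequence of $E^*(\mu)<\infty$ alone. The paper proves such continuity (Theorem~\ref{thm:cont}) only under the hypothesis $\mu=\MA(\f)$ for some $\f\in\cE^1(X,\omega)$ --- which is exactly the conclusion one is trying to reach, so it cannot be used a priori.

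The paper resolves this by an indirect approximation. First the result is established for measures dominated by the Monge--Amp\`ere capacity ($\mu\le A\ca$), where Corollary~\ref{cor:domcap} gives an $L^2$-bound on $\{\f-V_\theta\}$ and hence continuity of $L_\mu$ on $\cE_C$. In the general finite-energy case one uses the Cegrell--Rainwater generalized Radon--Nikodym decomposition $\mu=f\nu$ with $\nu\le\ca$ (Lemma~\ref{lem:rn}), truncates to $\mu_k:=(1+\e_k)\min(f,k)\nu$, solves for each $\mu_k$, shows the solutions $T_k$ stay in a compact sublevel set $\{J\le C\}$ because $\mu_k\le 2\mu$ implies $E^*(\mu_k)$ is uniformly bounded (Lemma~\ref{lem:bound}), and passes to a weak limit using the semicontinuity of the non-pluripolar product (\cite{BEGZ08} Corollary 2.21). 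This approximation scheme cannot be replaced by the direct Fatou argument you propose, and without it your step (ii) does not close.
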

We will also show in Corollary \ref{cor:begz} how to recover as a consequence the main result of~\cite{GZ07}. 

\smallskip

The proof of Theorem A splits in two parts. The first one consists in showing that any maximizer $T\in\cT^1(X,\omega) $ of $F_\mu$ has to satisfy $T^n=\mu$, i.e. that a maximizer $\f$ of $E(\f)-\int\f d\mu$ satisfies the Euler-Lagrange equation $(\omega+dd^c\f)^n=\mu$. This is actually non-trivial even when $\f$ is smooth, the difficulty being that the set of $\omega$-psh functions has a boundary, 
so that a maximum is \emph{a priori} not a critical point. This difficulty is overcome by adapting to our case the approach of~\cite{Ale38}. The main technical tool here is the differentiability result of~\cite{BB08}, which is the complex analogue of the key technical result of~\cite{Ale38}. 

The next step in the proof of Theorem A is then to show the \emph{existence} of a maximizer for $F_\mu$ when $\mu$ is assumed to satisfy $E^*(\mu)<+\infty$. Since $J$ is an exhaustion function on $\cT^1(X,\omega)$, a maximizer will be obtained by showing that $F_\mu$ is \emph{proper} with respect to $J$ (i.e. $F_\mu\to-\infty$ as $J\to+\infty$) and that it is upper semi-continuous. The latter property is actually the most delicate part of the proof. 

Conversely it easily follows from the concavity property of $F_{\mu}$ that $\mu$ has finite energy as soon as $\mu=T_{\mu}^n$ with
$T_{\mu} \in \cT^1(X,\omega)$.

\subsubsection{Donaldson's balanced metrics} 
Besides providing a solution by a direct method, the properties of $F_\mu$ also imply that any $F_\mu$-maximizing sequence $T_j\in\cT^1(X,\omega)$ has to converge to $T_\mu$. 

A particularly interesting example of such a maximizing sequence is provided by $\mu$-\emph{balanced metrics} in the sense of~\cite{Don05b}. Here we assume that the cohomology class of $\omega$ is the first Chern class of an ample line bundle $L$, and a metric $e^{-\phi}$ on $L$ is then said to be balanced with respect to $\mu$ if $\phi$ coincides with the Fubiny-Study type metric associated to the $L^2$-scalar product on $H^0(L)$ induced by $\phi$ and $\mu$. We will show:

\begin{thmB}
{\it
 Let $L$ be an ample line bundle and let $\mu$ and $T_\mu\in c_1(L)$ be as in Theorem A. Then there exists a $\mu$-balanced metric $\phi_k$ on $kL$ for each $k$ large enough, and their normalized curvature currents $\frac{1}{k}dd^c\phi_k$ converge towards $T_\mu$ in the weak topology.}
\end{thmB}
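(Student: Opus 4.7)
My plan is to realize the $\mu$-balanced metrics at level $k$ as maximizers of a finite-dimensional variational problem that, as $k\to\infty$, approximates the infinite-dimensional maximization of $F_\mu$ on $\cT^1(X,\omega)$; the uniqueness of the $F_\mu$-maximizer from Theorem A, together with the observation made just before Theorem B that any $F_\mu$-maximizing sequence in $\cT^1(X,\omega)$ converges weakly to $T_\mu$, will then give the stated weak convergence. Fix a smooth reference metric $h_0$ on $L$ with curvature $\omega$, set $N_k:=\dim H^0(kL)$, and denote by $\cB_k$ the symmetric space of positive Hermitian forms on $H^0(kL)$. Recall the Fubini--Study map $\FS_k:\cB_k\to\cP(X,kc_1(L))$ sending $H$ to the metric determined by $e^{-\FS_k(H)}=\sum_\alpha|s_\alpha|^2_{h_0^k}$ for any $H$-orthonormal basis $\{s_\alpha\}$, and the Hilbert map $\hi_k(\phi,\mu)(s,t):=\int_X\langle s,t\rangle_\phi\,d\mu$. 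A metric $\phi$ on $kL$ is $\mu$-balanced iff $\phi=\FS_k(\hi_k(\phi,\mu))$.

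For existence at fixed $k$, I introduce the scale-invariant functional on $\cB_k$
\[
\cL_k(H):=E\!\bigl(\tfrac{1}{k}\FS_k(H)\bigr)-\int_X\tfrac{1}{k}\FS_k(H)\,d\mu-\tfrac{1}{kN_k}\log\det H,
\]
which descends to $\cB_k/\R_{>0}$. Differentiating along $H_t:=He^{tA}$ and using both $dE_\f=\MA(\f)$, recalled in the introduction, and the standard expression for $d\FS_k$, a direct computation identifies the critical points of $\cL_k$ with projective classes of $H$ for which $\FS_k(H)$ is $\mu$-balanced. Concavity of $E$ (noted in the text) and convexity of $H\mapsto-\log\det H$ along the Riemannian geodesics of $\cB_k$ together make $\cL_k$ concave. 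On the slice $\{\det H=1\}$ the last term vanishes and $\cL_k$ reduces to $F_\mu(\FS_k(H)/k)$; properness of this restriction follows from properness of $F_\mu$ with respect to $J$, established in the proof of Theorem A from $E^*(\mu)<+\infty$, together with the elementary fact that an unbounded family of matrices $H$ with $\det H=1$ produces Fubini--Study potentials with unbounded $J$. Hence $\cL_k$ attains its maximum at some $H_k\in\cB_k/\R_{>0}$, and $\phi_k:=\FS_k(H_k)$ is $\mu$-balanced.

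For convergence, set $\tilde\phi_k:=\phi_k/k$ and $T_k:=\omega+dd^c\tilde\phi_k\in\cT^1(X,\omega)$; by the remark just before Theorem B it suffices to prove $F_\mu(\tilde\phi_k)\to\sup_{\cT^1(X,\omega)}F_\mu$. Given $\f\in\cP(X,\omega)\cap L^\infty$ with $F_\mu(\f)$ arbitrarily close to the supremum, let $H_k^\f:=\hi_k(k\f,\mu)$. Bergman-kernel asymptotics of Tian--Zelditch--Catlin type, in a form valid for the possibly singular measure $\mu$, yield $\tfrac{1}{k}\FS_k(H_k^\f)\to\f$ uniformly, whence $E\bigl(\tfrac{1}{k}\FS_k(H_k^\f)\bigr)\to E(\f)$ and $\int\tfrac{1}{k}\FS_k(H_k^\f)\,d\mu\to\int\f\,d\mu$; the same asymptotics identify the limit of $-\tfrac{1}{kN_k}\log\det H_k^\f$ up to a universal additive constant independent of $\f$. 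Thus $\cL_k(H_k^\f)=F_\mu(\f)+\mathrm{const}+o(1)$, and by optimality of $H_k$ combined with the balanced equation $\hi_k(\phi_k,\mu)=H_k$ this asymptotic comparison yields $\liminf_k F_\mu(\tilde\phi_k)\ge F_\mu(\f)$; letting $\f$ approach the potential of $T_\mu$ and invoking the upper semicontinuity of $F_\mu$ (established in the proof of Theorem A) concludes.

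The hard part is the quantization step in the last paragraph: relating the $\log\det$ normalization of the finite-dimensional functional to pluricomplex potential-theoretic quantities attached to the possibly very singular finite-energy measure $\mu$. The classical Tian--Zelditch expansion applies only to smooth positive reference measures; the required uniform estimates here are to be extracted from the differentiability theorem of \cite{BB08}, which also underlies the proof of Theorem A.
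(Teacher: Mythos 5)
Your overall blueprint -- realize $k$-balanced metrics as maximizers of a concave functional on $\cH_k$, show uniform coercivity, then pass to the limit using the properness and uniqueness from Theorem A -- is the right one, but two of your specific steps do not work as stated.

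First, your finite-dimensional functional is wrong. You write
$\cL_k(H)=E(\tfrac1k\FS_k(H))-L_\mu(\tfrac1k\FS_k(H))-\tfrac{1}{kN_k}\log\det H$,
and claim that its critical points on $\cH_k/\R_{>0}$ are exactly the balanced metrics; but differentiating $E\circ\FS_k$ along a geodesic $H_t$ via $dE_\f=\MA(\f)$ produces a term of the form $\int_X(\partial_t\FS_k(H_t))\,\MA(\FS_k(H_t)/k)$ involving the Monge--Amp\`ere measure of the Fubini--Study potential itself, which has no reason to match the $\log\det$ term. Only the $L_\mu\circ\FS_k$ piece differentiates to the norms $\|s_j\|^2_{\Hilb_k(\FS_k(H),\mu)}$, and only the $\log\det$ piece differentiates to $\tr A$; the balanced equation $\Hilb_k(\FS_k(H),\mu)\propto H$ is exactly the Euler--Lagrange equation of $D_k - L_\mu\circ\FS_k$ with $D_k := -\tfrac{1}{2kN_k}\log\det$, and adding $E\circ\FS_k$ destroys this. (This is Donaldson's functional; the paper's Lemma~\ref{lem:caracbal} carries out precisely the computation you sketch but without the extraneous $E$.) Your remark that on the slice $\{\det H=1\}$ the functional ``reduces to $F_\mu(\FS_k(H)/k)$'' confirms, rather than repairs, the error: a maximizer of $F_\mu$ restricted to Fubini--Study potentials of fixed determinant is not a balanced metric. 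Also note your sign convention $e^{-\FS_k(H)}=\sum_\alpha|s_\alpha|^2_{h_0^k}$ gives the reciprocal of the Fubini--Study metric, so that $\tfrac1k\FS_k(H)$ is not $\omega$-psh and $E\circ\FS_k$ is not even defined; this must be fixed in any case.

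Second, the two places where you treat the quantization as routine are precisely where the paper does work. The ``elementary fact that an unbounded family of matrices $H$ with $\det H=1$ produces Fubini--Study potentials with unbounded $J$'' is Lemma~\ref{lem:estim} of the paper, a genuinely quantitative estimate $J\circ\FS_k\le(1+\e_k)J_k+\e_k$ with $\e_k\to0$, whose proof uses the Bouche--Catlin--Tian--Zelditch expansion (for the \emph{smooth} reference measure $\mu_0=\MA(\phi_0)$ and the associated base point $B_k=\Hilb_k(\mu_0,\phi_0)$) and the differentiability theorem of~\cite{BB08}. No Bergman expansion for the singular measure $\mu$ is needed or available; the flaw in your final paragraph is that the paper never differentiates towards $\mu$-Bergman kernels at all. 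Moreover, your passage from optimality of $H_k$ to $\liminf_k F_\mu(\tilde\phi_k)\ge F_\mu(\f)$ quietly needs an a priori bound on the $\log\det$ (or $J_k$) of the maximizers $H_k$, uniform in $k$; in the paper this is extracted from Lemma~\ref{lem:proper}, the uniform $J_k$-coercivity of $F_k$, which again rests on Lemma~\ref{lem:estim}. Without some version of that asymptotic comparison of exhaustion functions both the existence and convergence halves stall.
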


The existence of balanced metrics was established in~\cite{Don05b} under a stronger regularity condition for $\mu$. 
The convergence result, suggested in~\cite{Don05b} as an analogue of~\cite{Don01}, was  observed to hold for smooth positive measures 
$\mu$ in~\cite{Kel09} as a direct consequence of the work of Wang \cite{Wan05}.

\subsection{The case of a big class}
Up to now we have assumed that the cohomology class $\{\omega\}\in H^{1,1}(X,\R)$ is K{\"a}hler, but our variational approach works just as well in the more general case of \emph{big} cohomology classes, as considered in~\cite{BEGZ08}. Note that the case of a big class enables in particular to extend our results to the case where $X$ is singular, since the pull-back of a big class to a resolution of singularities remains big. 

The appropriate version of Theorem A will thus be proved in this more general setting, thereby extending~\cite{GZ07} Theorem 4.2 to the case of a big class, and we will show in Corollary \ref{cor:begz} that it implies the main result of~\cite{BEGZ08}. 

\smallskip

The variational approach also applies to K{\"a}hler-Einstein metrics. We will discuss the Fano case separately below, and assume here instead that $X$ is of \emph{general type}, i.e. $K_X$ is a big line bundle. A metric $e^{-\phi}$ on $K_X$ induces a measure $e^{2\phi}$ on $X$, and we can thus consider the functional
$$
\phi\mapsto E(\phi)-\frac{1}{2}\log\int_X e^{2\phi}
$$ 
which descends to a functional
$$
F_+:\cT^1(K_X)\to\R
$$
by translation invariance. We will then show:

\begin{thmC} 
{\it 
Let $X$ be a manifold of general type. Then $F_+$ is upper semicontinuous and $J$-proper. 
It achieves its maximum on $\cT^1(K_X)$ at a unique point $T_{KE}=dd^c\phi_{KE}$ which satisfies 
$$
\langle T_{KE}^n\rangle=e^{2\phi_{KE}+c}
$$
for some $c\in\R$.}
\end{thmC}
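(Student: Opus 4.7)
The plan is to adapt the strategy sketched above for Theorem~A to the nonlinear setting imposed by the $\log$-term. I would first establish that $F_+$ is upper semicontinuous and $J$-proper on $\cT^1(K_X)$; by compactness of the $J$-sublevel sets a maximizer $T_{KE}=dd^c\phi_{KE}$ then exists. The Euler-Lagrange equation $\langle T_{KE}^n\rangle=e^{2\phi_{KE}+c}$ is extracted by combining the differentiability of $E$ established in \cite{BB08} with the Aleksandrov-type one-sided variational trick that already serves for Theorem~A, and uniqueness then follows from strict concavity of $F_+$.

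Upper semicontinuity is immediate: $E$ is upper semicontinuous on $\cT^1(K_X)$ by the big-class theory of \cite{BEGZ08}, and if $\phi_j\to\phi$ in $L^1$, then, passing to a subsequence along which $\phi_j\to\phi$ almost everywhere, Fatou's lemma gives $\int_X e^{2\phi}\le\liminf_j\int_X e^{2\phi_j}$, so $\phi\mapsto\log\int_X e^{2\phi}$ is lower semicontinuous.

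For $J$-properness one exploits the favourable sign of the exponent. After normalizing so that $\sup(\phi-\fm)=0$ relative to a minimal singular psh metric $\fm$ on $K_X$, Jensen's inequality against any fixed smooth probability measure $\mu_0$ yields
$$\tfrac{1}{2}\log\int_X e^{2\phi}\ge\int_X\phi\,d\mu_0-C.$$
Combining with the standard comparison $\int_X\phi\,d\mu_0-E(\phi)\ge\varepsilon J(\phi)-C'$ (which follows, up to constants, from the definition of $J$ and from elementary estimates in the big class setting of \cite{BEGZ08}), one obtains $F_+(\phi)\le-\varepsilon J(\phi)+C''$. This step is essentially automatic -- in sharp contrast with the Fano setting of Theorems~D--E -- precisely because the sign in $e^{+2\phi}$ matches the positivity of $K_X$.

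For uniqueness, $F_+$ is concave on $\cT^1(K_X)$: $E$ is concave and $\phi\mapsto\log\int_X e^{2\phi}$ is convex by H{\"o}lder. Given two maximizers $\phi_1,\phi_2$, the affine path $\phi_t=(1-t)\phi_1+t\phi_2$ remains in $\cT^1(K_X)$ and $F_+(\phi_t)$ is constant, so both concave summands must be affine in $t$; but $t\mapsto\log\int_X e^{2\phi_t}$, being a log-moment-generating function of $\phi_2-\phi_1$ against the measure $e^{2\phi_1}$, is strictly convex unless $\phi_2-\phi_1$ is almost-everywhere constant, hence $T_1=T_2$. The main obstacle I anticipate is the Euler-Lagrange step: as explained for Theorem~A above, the space of psh metrics has a boundary, so the one-sided inequality at a maximum cannot be upgraded to equality by a naive critical-point argument; one must instead import the Aleksandrov-type argument of \cite{Ale38,BB08} to the present nonlinear functional, being careful to handle the possible unboundedness of $\phi_{KE}$.
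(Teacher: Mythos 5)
Your proposal follows the paper's strategy closely; the four ingredients you isolate (upper semicontinuity, $J$-coercivity, uniqueness via strict convexity, Euler--Lagrange via the envelope trick) are precisely the ones the paper uses. A few remarks on where you deviate, none of which is a gap.

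For $J$-coercivity the paper does not run the Jensen computation you sketch: instead it invokes its Corollary~\ref{cor:proper}, which says that $E-L$ is automatically $J$-coercive whenever $L$ is convex, non-decreasing and satisfies the scaling property $L(\phi+c)=L(\phi)+c$; these three hypotheses are verified directly for $L_+$. Your Jensen route is also valid, but the ``standard comparison'' $\int_X\phi\,d\mu_0-E(\phi)\ge\varepsilon J(\phi)-C'$ that you assert is not purely formal: it relies on the fact that $\MA(V_\theta)$ has $L^\infty$ density (Corollary~\ref{cor:MAreg}), which is needed to compare $L_{\mu_0}$ with $L_0$. The axiomatic route in the paper avoids this particular dependence, at the cost of using the $O(C^{1/2})$ growth estimate of Proposition~\ref{prop:square}.

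For upper semicontinuity the paper actually proves more: $L_+$ is continuous on $\cP(K_X)$, by Hartogs' lemma combined with dominated convergence (the uniform upper bound $\sup_X(\phi_j-\phi_0)\le O(1)$ supplies the dominating function for $e^{2\phi_j}$). Your Fatou argument yields only lower semicontinuity of $L_+$, which is all that is needed, so this is fine.

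For uniqueness your strict-convexity-of-the-moment-generating-function argument is the natural one and is self-contained; this is genuinely a bit cleaner than the paper, which at this point leans on the uniqueness result of~\cite{BEGZ08} established there by the comparison principle. (One small point to spell out: strict convexity of $t\mapsto\log\int e^{2\phi_t}$ forces $\phi_2-\phi_1$ constant $e^{2\phi_1}$-a.e., and since $\phi_1$ is $>-\infty$ outside a pluripolar, hence Lebesgue-null, set, this is constant a.e.\ Lebesgue and therefore everywhere.)

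Finally, for the Euler--Lagrange step you correctly flag the boundary issue and the need for the Aleksandrov/\cite{BB08} argument; the paper handles it exactly as you anticipate, observing that the envelope comparison $E(P(\phi+v))-L_+(\phi+v)\le E(\phi)-L_+(\phi)$ only uses that $L_+$ is non-decreasing, and then applying Lemma~\ref{lem:key}. The directional differentiability of $L_+$ itself (Lemma~\ref{lem:direc}) is established by a monotone-convergence computation that your sketch would need to include, since $\phi-\psi$ is not bounded.
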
 

The solution $\phi_{KE}$ therefore coincides with the singular K{\"a}hler-Einstein metric of~\cite{EGZ09, ST08, BEGZ08}, which was proved to have \emph{minimal singularities} in~\cite{BEGZ08}. The ingredients entering the proof of Theorem C are similar to that of Theorem A. The functional $F_+$ is concave by H{\"o}lder's inequality, and we will show that it is upper semicontinuous and $J$-proper. This will show that a maximizer exists, and we then deduce that a maximizer must satisfy the desired equation by the differentiability result of~\cite{BB08}.

\subsection{Singular K{\"a}hler-Einstein metrics on Fano manifolds} 

Assume now that $X$ is a Fano manifold, i.e. $-K_X$ is ample. A psh weight $\phi$ on $-K_X$ with full Monge-Amp{\`e}re mass has zero Lelong numbers, thus $e^{-2\phi}$ can be seen as volume form on $X$ with $L^p$ density for every $p<+\infty$. The functional 
$$\phi\mapsto E(\phi)+\frac{1}{2}\log\int_X e^{-2\phi},$$
descends to 
$$F_-:\cT^1(-K_X)\to\R$$ 
which is Ding-Tian's functional~\cite{Tia97} up to sign. The critical points of $F_-$ in the space of K{\"a}hler forms $\omega\in c_1(X)$ are exactly the 
K{\"a}hler-Einstein metrics. Tian and Ding-Tian obtained the following results~\cite{Tia97,Tian}, assuming that $H^0(T_X)=0$, so that K{\"a}hler-Einstein metrics are unique by~\cite{BM87}: $X$ admits a K{\"a}hler-Einstein $\omega_{KE}$ iff $F_-$ is $J$-proper, and $\omega_{KE}$ is then a maximizer of $F_-$. 

Even though this result is variational in spirit, its actual proof by Ding-Tian relies on the continuity method. 
Using our variational approach we reprove part of this result independently of the continuity method and without any assumption on $H^0(T_X)$.

\begin{thmD} 
{\it 
Let $X$ be a Fano manifold. Then a current $T=dd^c\phi$ in $\cT^1(-K_X)$ is a maximizer of $F_-$ iff it satisfies the K{\"a}hler-Einstein equation
$T^n=e^{-2\phi+c}$
for some $c\in\R$. 

If $F_-$ is $J$-proper the supremum is attained and  there exists $T_{KE}=dd^c \phi_{KE} \in \cT^1(-K_X)$ such that
$T_{KE}^n=e^{-2\phi_{KE}}$. }
\end{thmD}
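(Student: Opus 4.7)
The plan is to split the proof into three parts: (i) any maximizer of $F_-$ solves the Kähler--Einstein equation; (ii) every Kähler--Einstein current is conversely a maximizer; (iii) a maximizer exists when $F_-$ is $J$-proper.

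\textbf{Maximizer $\Rightarrow$ Kähler--Einstein.} I would adapt to the Ding functional the Alexandrov-type envelope argument used for Theorem~A. Let $\phi\in\cT^1(-K_X)$ be a maximizer and $u$ an arbitrary smooth function on $X$. Let $\phi_t$ denote the largest psh weight on $-K_X$ lying below $\phi+tu$; it belongs to $\cT^1(-K_X)$ for $|t|$ small and satisfies $\phi_t\le\phi+tu$. The differentiability theorem of~\cite{BB08} gives $\frac{d}{dt}E(\phi_t)\big|_{t=0^+}=\int_X u\,\langle T^n\rangle$, while the pointwise inequality $\phi_t\le\phi+tu$ yields
\begin{equation*}
\liminf_{t\to 0^+}\frac{1}{t}\Bigl(\tfrac12\log\int_X e^{-2\phi_t}-\tfrac12\log\int_X e^{-2\phi}\Bigr)\ge -\frac{\int_X u\,e^{-2\phi}}{\int_X e^{-2\phi}}.
\end{equation*}
Combining these two facts, the maximality $F_-(\phi_t)\le F_-(\phi)$ forces $\int_X u\,\langle T^n\rangle\le\int_X u\,e^{-2\phi}/\int_X e^{-2\phi}$ for every smooth $u$; replacing $u$ by $-u$ upgrades the inequality to an equality, giving $\langle T^n\rangle=e^{-2\phi+c}$ with $c\in\R$ determined by the total mass constraint.

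\textbf{Kähler--Einstein $\Rightarrow$ maximizer.} Since $F_-$ is not concave in the linear sense on $\cT^1(-K_X)$, I would instead use its concavity along \emph{weak (continuous) geodesics}. Given $\phi'\in\cT^1(-K_X)$, join it to the K.E.\ weight $\phi$ by such a geodesic $(\phi_s)_{s\in[0,1]}$ with $\phi_0=\phi'$, $\phi_1=\phi$. Two facts combine: $E$ is affine along weak geodesics (Mabuchi's identity, extended to the degenerate setting), and the $S^1$-invariant psh extension of $(\phi_s)$ to $X\times\{e^{-1}<|\tau|<1\}$ feeds Berndtsson's positivity of direct images to make $s\mapsto-\log\int_X e^{-2\phi_s}$ convex. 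Hence $f(s):=F_-(\phi_s)$ is concave on $[0,1]$. Using $\langle T^n\rangle=e^{-2\phi+c}$ at $s=1$, a direct computation produces $f'(1^-)=\int_X\dot\phi_1\bigl(\langle T^n\rangle-e^{-2\phi+c}\bigr)=0$, whence the concavity inequality $f(0)\le f(1)+f'(1^-)(0-1)=f(1)$ yields $F_-(\phi')\le F_-(\phi)$.

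\textbf{Existence under $J$-properness.} Take an $F_-$-maximizing sequence $\phi_j\in\cT^1(-K_X)$, normalized by $\sup_X\phi_j=0$ using the translation invariance of $F_-$. Since $F_-(\phi_j)$ stays bounded and $F_-$ is $J$-proper, $J(\phi_j)\le C$; the compactness of $\{J\le C\}$ yields an $L^1$ limit $\phi_\infty$ along a subsequence (with a.e.\ pointwise convergence). The upper semicontinuity of $E$ from~\cite{BEGZ08} controls the energy term. For the Ding term the crucial input is a uniform Skoda/Hörmander estimate $\int_X e^{-2(1+\delta)\phi_j}\le C$ on the $J$-bounded, sup-normalized set, which together with a.e.\ convergence upgrades via Vitali's theorem to $\int_X e^{-2\phi_j}\to\int_X e^{-2\phi_\infty}$. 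Thus $F_-$ is continuous along the sequence, $\phi_\infty$ is a maximizer, and part~(i) provides $T_\infty^n=e^{-2\phi_\infty+c}$; the constant shift $\phi_{KE}:=\phi_\infty+c/2$ reduces to $T_{KE}^n=e^{-2\phi_{KE}}$. The hardest step is the uniform exponential integrability underpinning the upper semicontinuity of $F_-$ in part~(iii), since the potentials have only full Monge--Ampère mass rather than being bounded; extending the Skoda/Hörmander estimate to this degenerate setting is the real technical heart of the existence result. The concavity in part~(ii) is a second delicate ingredient, relying on a $C^0$ geodesic theory and on Mabuchi's identity together with Berndtsson's convexity in that setting; once that machinery is in place the endpoint derivative argument becomes formal.
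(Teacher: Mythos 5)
Your proposal follows essentially the same route as the paper: the envelope/projection differentiability result from~\cite{BB08} for the maximizer $\Rightarrow$ K\"ahler--Einstein direction, continuous geodesics plus affineness of $E$ and Berndtsson's convexity of $L_-$ for the converse, and upper semicontinuity of $F_-$ (via the Skoda-type uniform exponential integrability) together with $J$-properness for existence. The only small inaccuracy is the claim that $f'(1^-)=0$: because the potentials along a continuous geodesic need not be strictly psh, the paper only extracts a one-sided \emph{inequality} on the derivative at the K\"ahler--Einstein endpoint (using the concavity of $E$ and monotone convergence), which is all that the concavity of $f$ requires.
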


As we shall see such currents 
automatically have continuous potentials by~\cite{Kol98}. It is an interesting problem to investigate higher regularity of these functions. 

\smallskip

A striking feature of the present situation is that $F_-$ is \emph{not} concave. However  $E$ is \emph{geodesically affine} for the $L^2$-metric on the space of strictly psh weights considered in~\cite{Mab87,Sem92,Don99}, and it follows from Berndtsson's results on psh variation of Bergman kernels~\cite{Bern09a} that $L_-$ is \emph{geodesically convex} with respect to the $L^2$-metric. We thus see that $F_-$ is geodesically concave, which morally explains Ding-Tian's result (compare Donaldson's analogous result for the Mabuchi functional~\cite{Don05a}). 

However a main issue is of course that \emph{smooth} geodesics are not known to exist in general. The proof of Theorem D will instead rely on \emph{continuous} geodesics $\phi_t$, whose existence is easily obtained. 

\smallskip

Using similar ideas we give a new proof of Bando-Mabuchi's uniqueness result \cite{BM87} and extend it to the 
case of singular K{\"a}hler-Einstein currents:

\begin{thmE}
{\it
Let $X$ be a Fano manifold.
Assume that $X$ admits a smooth K{\"a}hler-Einstein metric $\omega_{KE}$ and that $H^0(T_X)=0$. 
Then $\omega_{KE}$ is the unique maximizer of $F_-$ over the whole of $\cT^1(-K_X)$. }
\end{thmE}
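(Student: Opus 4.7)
The strategy is to reduce Theorem E to a rigidity statement for continuous weak geodesics in $\cT^1(-K_X)$, combining the geodesic concavity of $F_-$ with the equality case of Berndtsson's psh-variation theorem. Let $T = dd^c\phi$ be an arbitrary maximizer of $F_-$, so that by Theorem D one has $\langle T^n\rangle = e^{-2\phi + c}$ for some $c \in \R$; as observed after Theorem D, this forces $\phi$ to be continuous by Kołodziej's $L^\infty$ estimate. Writing $\omega_{KE} = dd^c\phi_{KE}$, both $\phi$ and $\phi_{KE}$ are then continuous psh weights on $-K_X$ attaining $\sup F_-$.

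Next, I would connect $\phi_{KE}$ and $\phi$ by the weak continuous geodesic $(\phi_t)_{t \in [0,1]}$, constructed as the Mabuchi-Semmes-Donaldson upper envelope: $\phi_t$ corresponds to the $S^1$-invariant psh solution of the homogeneous Monge-Amp\`ere equation on $X \times A$ with $A = \{e^{-1} \le |\tau| \le 1\}$ and the prescribed boundary data, and continuity of the endpoints guarantees that $\phi_t$ is continuous on $X \times [0,1]$. Along this geodesic $E$ is affine and $L_-(\psi) := -\tfrac12\log\int_X e^{-2\psi}$ is convex, the latter being Berndtsson's theorem for fiber integrals of psh weights on $-K_X$. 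Hence $F_- = E - L_-$ is concave along the geodesic, and equality at the endpoints forces $F_-(\phi_t) \equiv \sup F_-$, so that $t \mapsto L_-(\phi_t)$ must be affine.

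At this point I would invoke the rigidity part of Berndtsson's theorem: affinity of $t \mapsto L_-(\phi_t)$ along a continuous weak geodesic forces $\phi_t$ to arise as the pullback by a one-parameter family $(\Phi_t)$ of biholomorphisms of $X$, i.e.\ $\phi_t = \Phi_t^*\phi_0 + c_t$ for some additive constants $c_t$. The generator of $\Phi_t$ is then a holomorphic vector field on $X$, and the assumption $H^0(T_X) = 0$ forces $\Phi_t = \mathrm{id}_X$. Hence $\phi_t - c_t$ is independent of $t$, and at the level of currents this gives $T = dd^c\phi = dd^c\phi_{KE} = \omega_{KE}$, which is the desired uniqueness.

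The principal obstacle is the invocation of Berndtsson's rigidity in the non-smooth setting: because $\phi$ is only continuous, the geodesic $\phi_t$ is in general not smooth, and one must extend the equality case of the psh variation of fiber integrals from the classical smooth situation to continuous subgeodesics of maximal degeneracy. This is where the combination of Berndtsson's positivity results for direct images and the differentiability tool from~\cite{BB08}, already essential in Theorem D, must be leveraged to upgrade the infinitesimal affinity of $L_-$ into a genuine holomorphic flow on $X$.
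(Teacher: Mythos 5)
Your setup is exactly the paper's: by Theorem D any maximizer $\phi$ is a singular K\"ahler--Einstein weight, hence continuous by Ko\l{}odziej; you connect $\phi_{KE}$ to $\phi$ by a continuous geodesic $\phi_t$ (via Proposition~\ref{prop:extend}); concavity of $F_-$ along the geodesic (affine $E$, convex $L_-$ by~\cite{Bern09a}) plus equal maxima at the endpoints force $F_-(\phi_t)$ constant and $L_-(\phi_t)$ affine. Up to this point you are in complete agreement with the paper.

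The key step is where the two arguments diverge, and your version has a genuine gap that you yourself flag but do not fill. You propose to invoke the rigidity (equality) part of Berndtsson's theorem directly on the continuous geodesic $\phi_t$, concluding that the $\phi_t$'s are pullbacks by a holomorphic flow. That rigidity statement, however, requires a level of regularity that a continuous weak geodesic does not have: the currents $dd^c_x\phi_t$ need not be strictly positive (let alone smooth), so the vector field dual to $\bar\partial(\partial_t\phi_t)$ that Berndtsson's argument produces is not defined, and the conclusion of~\cite{Bern09b} Theorem 2.4 cannot be quoted as is. Your closing sentence --- that one ``must extend the equality case of the psh variation... to continuous subgeodesics of maximal degeneracy'' --- identifies precisely the missing ingredient, but the proposal does not supply it, and it is not clear how the combination of~\cite{BB08} and~\cite{Bern09a} would produce a holomorphic flow rather than merely an eigenvalue equation.

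The paper sidesteps this obstacle entirely. Having shown $L_-(\phi_t)\equiv 0$ and hence $E(\phi_t)$ constant, it appeals again to Theorem~\ref{thm:fano} to conclude that \emph{each} $\phi_t$ satisfies $\MA(\phi_t)=e^{-2\phi_t}$. It then sets $v_0:=\frac{\partial}{\partial t}\big|_{t=0_+}\phi_t$, notes $v_t$ is non-decreasing in $t$ by convexity and $\int_X v_t e^{-2\phi_t}=0$, and differentiates the Monge--Amp\`ere equation at $t=0$ \emph{in the sense of distributions} (justified by careful monotone convergence and integration by parts) to obtain
\begin{equation*}
n\, dd^c v_0\wedge (dd^c\phi_0)^{n-1} = -v_0\, e^{-2\phi_0},
\end{equation*}
i.e.\ $v_0$ is an eigendistribution of the K\"ahler--Einstein Laplacian with eigenvalue corresponding exactly to holomorphic vector fields. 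The hypothesis $H^0(T_X)=0$ therefore forces $v_0=0$, and then $v_t\ge v_0=0$ together with $\int v_t e^{-2\phi_t}=0$ gives $v_t\equiv 0$. This is still Berndtsson-flavoured in spirit, but it replaces the unavailable global rigidity statement by a purely local, distributional linearization at the smooth endpoint $\phi_0=\phi_{KE}$, which is exactly where the argument can be made rigorous. You should replace the appeal to Berndtsson's rigidity by this linearization step, or else provide a genuine proof of the continuous-geodesic rigidity you invoke.
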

An important step in the proof is to show that each $\phi_t$ in the geodesic connecting two K\"ahler-Einstein metrics satisfies the K{\"a}hler-Einstein equation for all $t$ if $\phi_0$ and $\phi_1$ do. Even though the geodesic $\phi_t$ is actually known to be (almost) $\cC^{1,1}$ \cite{Che00,Blo09}, a main technical point is that $\phi_t$ is a priori not \emph{strictly} psh, and one has to resort again to the differentiability result of~\cite{BB08} to infer that $\phi_t$ is K{\"a}hler-Einstein from the fact that it maximizes $F_-$.

Finally we establish in Theorem~\ref{thm:bal} an analogue of Theorem B for K\"ahler-Einstein metrics. More specifically let $X$ be Fano with $H^0(T_X)=0$ and assume that $\omega_{KE}$ is a K\"ahler-Einstein metric. We will show that there exists a unique $k$-anticanonically balanced metric $\omega_k\in c_1(X)$ in the sense of~\cite{Don05b} for each $k\gg 1$ and that $\omega_k\to\omega_{KE}$ weakly. The proof of the existence of such anticanonically balanced metrics relies in a crucial way on the linear growth estimate for $F_-$ established in~\cite{PSSW08}. A proof of these results in the anti-canonically balanced case has been announced in~\cite{Kel09}ÊTheorem 5. The existence and \emph{uniform }Êconvergence of canonically balanced metrics has also been independently been obtain by B.Berndtsson (personal communication). 
 
\newpage

\begin{org} The structure of the paper is as follows.
\begin{itemize}
\item Section 1 is devoted to preliminary results in the big case that are extracted from~\cite{BEGZ08} and~\cite{BD09}. The only new result is the outer regularity of the Monge-Amp\`ere capacity in the big case. 

\item Section 2 is similarly a refresher on energy functionals whose goal is to recall results from~\cite{GZ07,BEGZ08} as well as to extend to the singular case of number of basic properties that are probably well-known in the smooth case. 

\item Section 3 investigates the continuity and growth properties of the functionals defined by integrating quasi-psh functions against a given Borel measure. 

\item Section 4 is devoted to the proof of Theorem A in the general case of big classes. Theorem~\ref{thm:var} and Theorem~\ref{thm:main} are the main statements. 

\item Section 5 connects our pluricomplex energy of measures to more classical notions of capacity and to some of the results of~\cite{BB08}.

\item Section 6 is devoted to singular K\"ahler-Einstein metrics. It contains the proof of Theorems C, D and E. 

\item Finally Section 7 contains our results on balanced metrics. The main result is Theorem~\ref{thm:bal} which treats in parallel the (anti)canonically balanced case and balanced metrics with respect to a singular measure (Theorem B). 
\end{itemize}
\end{org}

\begin{ackn} We would like to thank J.-P.Demailly, P.Eyssidieux, J.Keller and M.Paun for several useful conversations. We are especially grateful to B.Berndtsson for indicating to us that the crucial result of Lemma~\ref{lem:bernd} was a consequence of his positivity results on direct images. 
\end{ackn}

\section{Preliminary results on big cohomology classes}
In this whole section $\theta$ denotes a smooth closed 
$(1,1)$-form on a compact K{\"a}hler manifold $X$. 

\subsection{Quasi-psh functions}
Recall that an upper semi-continuous function $$\f:X\to[-\infty,+\infty[$$
is said to be \emph{$\theta$-psh} iff $\f\in L^1(X)$ and $\theta+dd^c\f\ge 0$ in the sense of currents, where $d^c$ is normalized so that 
$$
dd^c=\frac{i}{\pi}\partial\overline{\partial}.
$$

By the $dd^c$-lemma any closed positive $(1,1)$-current $T$ cohomologous to $\theta$ can conversely be written as $T=\theta+dd^c\f$ for some $\theta$-psh function $\f$ which is furthermore unique up to an additive constant. 

The set of all $\theta$-psh functions $\f$ on $X$ will be denoted by $\cP(X,\theta)$ and endowed with the weak topology, which coincides with the $L^1(X)$-topology. By Hartogs' lemma $\f\mapsto\sup_X\f$ is continuous in the weak topology. Since the set of closed positive currents in a fixed cohomology class is compact (in the weak topology), it follows that the set of $\f\in\cP(X,\theta)$ normalized by $\sup_X\f=0$ is compact. 

We introduce the extremal function $V_\theta$ defined by
\begin{equation}\label{equ:extrem}V_\theta(x):=\sup\{\f(x)|\f\in\cP(X,\theta),\sup_X\f\le 0\}.
\end{equation}
It is a $\theta$-psh function with \emph{minimal singularities} in the sense of Demailly, i.e.~we have 
$\f\le V_\theta+O(1)$ for any $\theta$-psh function $\f$. In fact it is straightforward to see that the following 'tautological maximum principle' holds:
\begin{equation}\label{equ:max}\sup_X\f=\sup_X(\f-V_\theta)
\end{equation}
for any $\f\in\cP(X,\theta)$.

\subsection{Ample locus and non-pluripolar products}
The cohomology class $\{\theta\}\in H^{1,1}(X,\R)$ is said to be \emph{big} iff there exists a closed $(1,1)$-current 
$$T_+=\theta+dd^c\f_+$$ 
cohomologous to $\theta$ such that $T_+$ is \emph{strictly positive} (i.e. $T_+\ge\omega$ for some (small) K{\"a}hler form $\omega$). By Demailly's regularisation theorem~\cite{Dem92}  one can then furthermore assume that $T_+$ has \emph{analytic singularities}, that is there exists $c>0$ such that locally on $X$ we have 
$$\f_+=c\log\sum_{j=1}^N|f_j|^2\text{ mod }C^\infty$$
where $f_1,...,f_N$ are local holomorphic functions. Such a current $T$ is then smooth on a Zariski open subset $\Omega$, and the \emph{ample locus}  $\Amp(\theta)$ of $\theta$ (in fact of its class $\{\theta\}$) is defined as the largest such Zariski open subset (which exists by the Noetherian property of closed analytic subsets). 

Note that \emph{any} $\theta$-psh function $\f$ with minimal singularities is locally bounded on the ample locus $\Amp(\theta)$ since it has to satisfy $\f_+\le\f+O(1)$. 

In~\cite{BEGZ08} the (multilinear) \emph{non-pluripolar product} 
$$(T_1,...,T_p)\mapsto\langle T_1\wedge...\wedge T_p\rangle$$ 
of closed positive $(1,1)$-currents is shown to be well-defined as a closed positive $(p,p)$-current putting no mass on pluripolar sets. 
In particular given $\f_1,...,\f_n\in\cP(X,\theta)$ we define their mixed Monge-Amp{\`e}re measure as 
$$\MA(\f_1,...,\f_n)=\langle(\theta+dd^c\f_1)\wedge...\wedge(\theta+dd^c\f_n)\rangle.$$
It is a non-pluripolar positive measure whose total mass satisfies 
$$\int_X\MA(\f_1,...,\f_n)\le\vol(\theta)$$
where the right-hand side denotes the \emph{volume} of the cohomology class of $\theta$. 
If $\f_1,...,\f_n$ have minimal singularities then they are locally bounded on $\Amp(\theta)$, and the product $$(\theta+dd^c\f_1)\wedge...\wedge(\theta+dd^c\f_n)$$ is thus well-defined by Bedford-Taylor~\cite{BT82}. Its trivial extension to $X$ coincides with $\MA(\f_1,...,\f_n)$, and we have 
$$\int_X\MA(\f_1,...,\f_n)=\vol(\theta).$$
In case $\f_1=...=\f_n=\f$, we simply set
$$\MA(\f)=\MA(\f,...,\f)$$
and we say that $\f$ has \emph{full Monge-Amp{\`e}re mass} iff $\int_X\MA(\f)=\vol(\theta)$. 
We thus see that $\theta$-psh functions with minimal singularities have full Monge-Amp{\`e}re mass, but the converse is not true. 

A crucial point is that the non-pluripolar Monge-Amp{\`e}re operator is continuous along monotonic sequences of functions with full Monge-Amp{\`e}re mass. In fact we have (cf.~\cite{BEGZ08} Theorem 2.17):

\begin{prop}\label{prop:cont} 
The operator
$$
(\f_1,...,\f_n)\mapsto\MA(\f_1,...,\f_n)
$$
is continuous along monotonic sequences of functions with full Monge-Amp{\`e}re mass. 
If $\int_X(\f-V_\theta)\MA(\f)$ is finite, then 
$$
\lim_{j\to\infty}(\f_j-V_\theta)\MA(\f_j)=(\f-V_\theta)\MA(\f)
$$
for any monotonic sequence $\f_j\to\f$.
\end{prop}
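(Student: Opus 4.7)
The plan is to reduce the monotonic continuity statement to Bedford--Taylor's classical continuity theorem for locally bounded plurisubharmonic functions, by exploiting the plurifine locality of the non-pluripolar product and the \emph{relative truncations} $\f_i^k := \max(\f_i, V_\theta - k)$, which have minimal singularities and are therefore locally bounded on the ample locus $\Amp(\theta)$. The hypothesis of full Monge--Amp\`ere mass on each $\f_i$ is then used to prevent mass from escaping as the truncation parameter $k\to\infty$.

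Concretely, for monotonic sequences $\f_i^{(j)}\to\f_i$, I would set $\f_i^{(j),k}:=\max(\f_i^{(j)},V_\theta-k)$. For fixed $k$ these are locally bounded $\theta$-psh functions on $\Amp(\theta)$ converging monotonically to $\f_i^k$, so the classical Bedford--Taylor theorem yields
\[
(\theta+dd^c\f_1^{(j),k})\wedge\cdots\wedge(\theta+dd^c\f_n^{(j),k})\;\to\;(\theta+dd^c\f_1^k)\wedge\cdots\wedge(\theta+dd^c\f_n^k)
\]
weakly on $\Amp(\theta)$ as $j\to\infty$. By plurifine locality of the non-pluripolar product, these Bedford--Taylor products coincide with $\MA(\f_1^{(j)},\ldots,\f_n^{(j)})$ and $\MA(\f_1^k,\ldots,\f_n^k)$ respectively on the plurifine open set $\bigcap_i\{\f_i^{(j)}>V_\theta-k\}$, and letting $k\to\infty$ recovers the full mixed non-pluripolar products by construction. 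Any subsequential weak limit $\nu$ of $\MA(\f_1^{(j)},\ldots,\f_n^{(j)})$ therefore dominates $\MA(\f_1,\ldots,\f_n)$ on each $\Amp(\theta)$; since both sides have total mass $\vol(\theta)$ by the mass-conservation property for full-Monge--Amp\`ere-mass potentials, equality of total masses forces equality of the measures, yielding the desired weak convergence.

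For the second assertion, the strategy is to decompose $\f_j - V_\theta = \max(\f_j - V_\theta, -k) + \bigl[(\f_j - V_\theta) - \max(\f_j - V_\theta, -k)\bigr]$. The bounded piece $\max(\f_j - V_\theta, -k)$ is upper semi-continuous with oscillation bounded by $k$, and combined with the weak convergence $\MA(\f_j)\to\MA(\f)$ just established one gets $\int \max(\f_j - V_\theta, -k)\MA(\f_j)\to\int \max(\f - V_\theta, -k)\MA(\f)$ as $j\to\infty$ for each fixed $k$. The remaining tail $\int_{\{\f_j<V_\theta-k\}}(\f_j - V_\theta)\MA(\f_j)$ is then controlled by the integrability hypothesis $\int(\f - V_\theta)\MA(\f)>-\infty$: a comparison argument using the monotonicity of $\f_j$ and the weak continuity of mixed Monge--Amp\`ere measures from the first part provides uniform tail estimates that tend to zero as $k\to\infty$, thereby closing the argument after exchanging the two limits.

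The main obstacle I expect is the mixed mass-conservation step. For a single potential with full Monge--Amp\`ere mass the comparison principle against a minimal-singularities representative of $\{\theta\}$ suffices, but the multilinear version is subtle and essentially requires a polarization argument intertwined with the monotonic continuity one is trying to establish. In the context of this preliminary section the natural way to handle this is to take the mass-conservation statement as a black-box input from~\cite{BEGZ08} and use it to close the diagonal extraction at the very last step.
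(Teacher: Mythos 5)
The paper does not actually prove Proposition~\ref{prop:cont}; it is quoted verbatim from~\cite{BEGZ08}, Theorem~2.17, and Section~1 of this paper is explicitly a refresher section ("preliminary results \dots extracted from~\cite{BEGZ08} and~\cite{BD09}"). Your high-level strategy---truncate to minimal-singularities representatives, apply classical Bedford--Taylor monotone continuity on the ample locus, identify via plurifine locality, and pass to the limit using the full-Monge--Amp\`ere-mass hypothesis to conserve mass---is indeed the route taken in~\cite{BEGZ08}, and you correctly flag that the mixed mass-conservation step must be taken as input from there.

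However, there is a genuine gap in your argument for the second assertion. You claim that, because $\max(\f_j-V_\theta,-k)$ is bounded and upper semi-continuous, the weak convergence $\MA(\f_j)\to\MA(\f)$ "just established" yields
\[
\int_X \max(\f_j - V_\theta, -k)\,\MA(\f_j)\;\longrightarrow\;\int_X \max(\f - V_\theta, -k)\,\MA(\f)
\]
for fixed $k$. This deduction is false in general: weak convergence of measures $\mu_j\to\mu$ does not imply $\int g_j\,d\mu_j\to\int g\,d\mu$ when $g_j\to g$ pointwise/monotonically and the $g_j$ are merely bounded and usc (indeed it can already fail for a single fixed usc $g$, e.g.\ $g=\mathbf{1}_{\{0\}}$, $\mu_j=\delta_{1/j}$). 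What is actually needed here is the full Bedford--Taylor monotone continuity theorem for \emph{products}, namely that for locally bounded psh functions with $u_j\downarrow u$ (or $\uparrow$) and $v_i^{(j)}\downarrow v_i$, one has $u_j\,(\theta+dd^cv_1^{(j)})\wedge\cdots\wedge(\theta+dd^cv_n^{(j)})\to u\,(\theta+dd^cv_1)\wedge\cdots\wedge(\theta+dd^cv_n)$ weakly. That theorem crucially exploits the monotonicity and plurisubharmonicity of $u_j$, not just that the functions are bounded and usc, and the tail estimate also has to engage $\MA(\f_j)$ rather than $\MA(\f)$ (e.g.\ via integration-by-parts bounds of the kind used in Lemma~\ref{lem:mixte} and Proposition~\ref{prop:energy}). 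As written, then, the second part of your proposal does not close; to repair it you must replace the "usc + weak convergence" step by an explicit appeal to the bilinear Bedford--Taylor theorem for $u\mapsto u\,\MA(\cdot)$, and couple it with a bona fide uniform tail bound.
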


\subsection{Regularity of envelopes} 
In case $\{\theta\}\in H^{1,1}(X,\R)$ is a \emph{K{\"a}hler} class, plenty of \emph{smooth} $\theta$-psh functions are available. On the other hand for a general \emph{big} class the existence of even a \emph{single} $\theta$-psh function with minimal singularities that is also $C^\infty$ on the ample locus $\Amp(\theta)$ is unknown. For instance it follows from~\cite{Bou04}  that \emph{no} $\theta$-psh function with minimal singularities will have analytic singularities unless $\{\theta\}$ admits a \emph{Zariski decomposition} (on some birational model of $X$). Examples of big line bundles without  a Zariski decomposition have been constructed by Nakayama (see~\cite{Nak04} Theorem 2.10 P.136). 

On the other hand using Demailly's regularization theorem one can easily show that $V_\theta$ satisfies 
$$V_\theta(x)=\sup\{\f(x)|\f\in\cP(X,\theta)\text{ with analytic singularities},\,\sup_X\f\le 0\}$$
for $x\in\Amp(\theta)$, which implies in particular that $V_\theta$ is in fact \emph{continuous} on $\Amp(\a)$. But we actually have the following much stronger regularity result on the ample locus. It was first obtained by the first named author in~\cite{Ber07} in case $\a=c_1(L)$ for a big line bundle $L$, and the general case is proved in~\cite{BD09}. 

\begin{thm}\label{thm:c11} The function $V_\theta$ has locally bounded Laplacian on $\Amp(\theta)$. 
\end{thm}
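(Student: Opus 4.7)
The question is local on $\Amp(\theta)$, so I would fix relatively compact open sets $U' \Subset U \Subset \Amp(\theta)$ together with a smooth local potential $\rho$ of $\theta$ on a neighborhood of $\overline{U}$. Since $V_\theta$ has minimal singularities on $\Amp(\theta)$, the function $u := V_\theta + \rho$ is bounded and psh on $U$, and the lower bound $dd^c u \geq 0$ is automatic; the genuine content of the theorem is a \emph{uniform} upper bound $dd^c u \leq C_{U'}\, dd^c|z|^2$ on $U'$.

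My plan is to construct smooth $\theta$-psh approximants $V_k$ converging to $V_\theta$ (modulo $o(1)$ errors) and enjoying uniform-in-$k$ second-order estimates on $U'$. In the model line-bundle case $\{\theta\} = c_1(L)$ with $L$ big and $h$ a smooth hermitian metric of curvature $\theta$, one takes the Bergman-type envelope
\begin{equation*}
V_k := \frac{1}{2k}\log\sup\bigl\{|s|^2_{h^k} \,:\, s \in H^0(X, kL),\ \sup_X |s|_{h^k} \le 1\bigr\}.
\end{equation*}
Each $V_k$ is $\theta$-psh, non-positive, and continuous on $\Amp(L)$, and $V_k \leq V_\theta$; Demailly's Ohsawa-Takegoshi-based regularization of $\theta$-psh functions yields the reverse estimate $V_k \geq V_\theta - O(k^{-1}\log k)$ on $U'$, so $V_k \to V_\theta$ uniformly there. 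The critical uniform upper bound $dd^c V_k \leq C_{U'}\,\omega$ is obtained as follows: for each $x \in U'$ one selects a peak section $s_x$ attaining the supremum with $\sup_X |s_x|_{h^k}\le 1$; a submean-value inequality on balls of radius $\asymp k^{-1/2}$ forces $|s_x(x)|^2_{h^k}$ to be bounded below, while a Cauchy-type estimate comparing $s_x$ to its $2$-jet at $x$ bounds $dd^c\bigl(\tfrac{1}{2k}\log|s_x|^2_{h^k}\bigr) = dd^c V_k$ at $x$ from above purely in terms of the curvature of $h$.

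For a general big class $\{\theta\}$ (not of the form $c_1(L)$), this is the Berman-Demailly approach: the global sections of $kL$ are replaced by germs of holomorphic functions at points of $\Amp(\theta)$, extended globally via Ohsawa-Takegoshi applied to a K{\"a}hler current with analytic singularities representing $\{\theta\}$, whose smooth locus contains $U'$. I expect the \emph{main obstacle} to be precisely this uniform upper bound: $\theta$-plurisubharmonicity of $V_k$ only provides the trivial lower bound on $dd^c V_k$, whereas the opposite bound is scale-invariant information that requires quantitative control on the concentration of peak sections. This is where the strict positivity available on $\Amp(\theta)$ enters in an essential way.
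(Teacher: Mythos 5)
The paper does not prove Theorem~\ref{thm:c11}: directly after the statement it cites \cite{Ber07} for the case $\{\theta\}=c_1(L)$ with $L$ a big line bundle and \cite{BD09} for the general big class, and then passes to Corollary~\ref{cor:MAreg}. So there is no internal proof to match your proposal against; one has to compare against the cited sources instead.

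Your Bergman-envelope setup is the right framework in the $c_1(L)$ case (it is indeed the one in \cite{Ber07}), and the lower bound $V_k\geq V_\theta-O(k^{-1}\log k)$ on the ample locus via Demailly/Ohsawa--Takegoshi is correct. However, the final step that carries the whole theorem --- the uniform upper bound on $dd^cV_k$ --- is flawed as written. You assert that, after picking a peak section $s_x$ at $x$,
$dd^c\bigl(\tfrac{1}{2k}\log|s_x|^2_{h^k}\bigr)=dd^cV_k$ at $x$.
This identity does not hold: $V_k$ is a supremum of the functions $\tfrac{1}{2k}\log|s|^2_{h^k}$, so at a touching point one only gets an inequality $dd^cV_k\geq dd^c\bigl(\tfrac{1}{2k}\log|s_x|^2_{h^k}\bigr)$ in the weak sense, which is the wrong direction for an upper Laplacian bound (suprema of semiconcave functions are not semiconcave). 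Moreover, away from the zero divisor of $s_x$ the function $\tfrac{1}{2k}\log|s_x|^2$ is pluriharmonic, so $dd^c\bigl(\tfrac{1}{2k}\log|s_x|^2_{h^k}\bigr)=-\tfrac12\theta$ there, independently of $k$; if your claimed identity held, $\theta+dd^cV_k$ would equal $\tfrac12\theta$ at $x$, which is incompatible with $V_k\to V_\theta$ and $\MA(V_\theta)={\bf 1}_{\{V_\theta=0\}}\theta^n$. The actual mechanism, both in \cite{Ber07} and in \cite{BD09}, is a translation/second-difference argument: one perturbs a competitor $\psi$ by $\psi(\cdot+v)-\lambda|v|^2$, uses a K\"ahler current with analytic singularities representing $\{\theta\}$ (smooth near the point of interest) together with a cutoff to make the perturbed function a global competitor, and deduces the one-sided second-difference estimate $V_\theta(x+v)+V_\theta(x-v)-2V_\theta(x)\le C|v|^2$, which combined with $\theta+dd^cV_\theta\geq0$ gives the two-sided Laplacian bound. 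This step is where the strict positivity on $\Amp(\theta)$ genuinely enters, as you correctly anticipated, but it is not captured by the peak-section estimate you propose. Your remark about extending to transcendental big classes via ``germs plus Ohsawa--Takegoshi'' is also not available directly, as $\{\theta\}$ need not be rational; \cite{BD09} avoids this by working with the gluing/translation argument rather than with sections of a line bundle.
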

Since $V_\theta$ is quasi-psh this result is equivalent to the fact that the curent $\theta+dd^c V_\theta$
has $L^\infty_{loc}$ coefficients on $\Amp(\a)$ and shows in particular by Schauder's elliptic estimates that $V_\theta$ is in fact $C^{2-\e}$ on $\Amp(\a)$ for each $\e>0$. 
 
As was observed in~\cite{Ber07} we also get as a consequence the following nice description of the Monge-Amp{\`e}re measure of $V_\theta$.

\begin{cor}\label{cor:MAreg} The Monge-Amp{\`e}re measure $\MA(V_\theta)$ has $L^\infty$-density with respect to Lebesgue measure. More specifically we have $\theta\ge 0$ pointwise on $\{V_\theta=0\}$ and 
$$\MA(V_\theta)={\bf 1}_{\{V_\theta=0\}}\theta^n.$$
\end{cor}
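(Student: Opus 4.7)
The plan is to restrict attention to the ample locus $\Amp(\theta)$: by Theorem~\ref{thm:c11}, $V_\theta$ has locally bounded Laplacian there, hence lies in $W^{2,\infty}_{\mathrm{loc}}$, and $\theta+dd^c V_\theta$ has $L^\infty_{\mathrm{loc}}$ coefficients on $\Amp(\theta)$. Consequently $\MA(V_\theta)$ coincides on $\Amp(\theta)$ with the classical wedge power $(\theta+dd^c V_\theta)^n$ and has $L^\infty_{\mathrm{loc}}$ density. Since the non-pluripolar product puts no mass on the pluripolar set $X\setminus\Amp(\theta)$, everything reduces to establishing two facts on $\Amp(\theta)$: (i) $\MA(V_\theta)\equiv 0$ on $\{V_\theta<0\}$, and (ii) $dd^c V_\theta=0$ at almost every point of $\{V_\theta=0\}$, which simultaneously yields $\theta\ge 0$ there and identifies $\MA(V_\theta)$ with $\theta^n$.

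For (i), I would run a local balayage argument. Fix $x_0\in\Amp(\theta)$ with $V_\theta(x_0)<0$; since $V_\theta$ is continuous on $\Amp(\theta)$, there is a small closed ball $B\ni x_0$ on which $V_\theta\le -c$ for some $c>0$. Pick a local potential $\psi$ of $\theta$ on a neighborhood of $B$, and after subtracting a constant arrange that $|\psi|<c/3$ on $B$. Solve the homogeneous Dirichlet problem $(dd^c h)^n=0$ on $B$ with continuous boundary values $h|_{\partial B}=(V_\theta+\psi)|_{\partial B}$, and define $\tilde V:=h-\psi$ on $B$ and $\tilde V:=V_\theta$ on $X\setminus B$. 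Then $\tilde V$ glues to a $\theta$-psh function on $X$; the maximum principle gives $\tilde V\ge V_\theta$, and combined with the smallness of $\psi$, it also gives $\tilde V\le 0$ everywhere. The extremality of $V_\theta$ then forces $\tilde V=V_\theta$, hence $V_\theta+\psi=h$ solves the homogeneous Monge-Amp\`ere equation on $B$, so $(\theta+dd^c V_\theta)^n=0$ there.

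For (ii), I would combine the $W^{2,\infty}_{\mathrm{loc}}$ regularity of $V_\theta$, which ensures it is twice differentiable almost everywhere on $\Amp(\theta)$, with the Lebesgue density theorem applied to $K:=\{V_\theta=0\}\cap\Amp(\theta)$. At almost every $x_0\in K$, the function $V_\theta$ is twice differentiable and $x_0$ is a density point of $K$. Because $V_\theta\le 0$ attains its global maximum $0$ at $x_0$, the gradient vanishes and the Hessian is negative semi-definite; but the fact that $V_\theta$ vanishes on a set of Lebesgue density one at $x_0$ prevents any strictly negative direction in the second-order Taylor expansion, forcing $D^2V_\theta(x_0)=0$. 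Hence $dd^c V_\theta=0$ a.e.\ on $K$, which together with $\theta+dd^c V_\theta\ge 0$ yields $\theta\ge 0$ and $\MA(V_\theta)=\theta^n$ at such points.

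The main obstacle I expect is step (i): arranging the competitor $\tilde V$ to remain globally nonpositive requires the careful normalization of $\psi$ relative to the gap $c$, since otherwise the modified function could exceed $0$ on $B$ and no longer be a legitimate competitor for the envelope. The density-point argument in (ii) is a standard property of $C^{1,1}$ functions achieving a one-sided extremum on a set of positive measure, so that part is routine once Theorem~\ref{thm:c11} is in hand.
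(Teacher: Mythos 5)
Your argument for the measure identity $\MA(V_\theta)=\mathbf{1}_{\{V_\theta=0\}}\theta^n$ is correct, and it follows essentially the route taken in~\cite{Ber07} (to which the paper defers for this corollary): reduce to the ample locus, where Theorem~\ref{thm:c11} gives $V_\theta\in W^{2,\infty}_{\mathrm{loc}}$ and where $X\setminus\Amp(\theta)$ is pluripolar hence charged neither by the non-pluripolar product nor by Lebesgue; kill the mass on $\{V_\theta<0\}$ by the local replacement (balayage) competitor, which extremality of $V_\theta$ forces to coincide with $V_\theta$; and show $D^2V_\theta=0$ almost everywhere on the contact set by a second-order argument at Aleksandrov points that are Lebesgue density points. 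Both halves are sound, and your remark about normalizing the local potential $\psi$ so that the competitor stays nonpositive is exactly the point one has to be careful about.

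The one genuine gap is the \emph{pointwise} claim $\theta\ge 0$ on $\{V_\theta=0\}$. Your density-point reasoning yields $\theta\ge 0$ only almost everywhere on $\{V_\theta=0\}\cap\Amp(\theta)$, which is enough for the equality of measures but does not cover points of the contact set that have Lebesgue density zero in it, nor points lying outside $\Amp(\theta)$. The fix is an elementary sub-mean-value argument that needs no regularity of $V_\theta$ at all. Fix $x_0$ with $V_\theta(x_0)=0$ and a tangent direction $\xi$; choose local holomorphic coordinates centered at $x_0$ with $\xi=\partial/\partial z_1$ and a local potential $\psi$ of $\theta$ normalized by $\psi(x_0)=0$. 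Then $V_\theta+\psi$ is psh near $x_0$ and its restriction to the line $z\mapsto x_0+z\xi$ is subharmonic with value $0$ at $z=0$, so for all small $r>0$
\begin{equation*}
0\le\frac{1}{2\pi}\int_0^{2\pi}(V_\theta+\psi)(x_0+re^{i\vartheta}\xi)\,d\vartheta\le\frac{1}{2\pi}\int_0^{2\pi}\psi(x_0+re^{i\vartheta}\xi)\,d\vartheta
\end{equation*}
using $V_\theta\le 0$. Expanding the right-hand side in $r$ and letting $r\to 0$ forces $\partial_{z_1}\partial_{\bar z_1}\psi(x_0)\ge 0$, i.e.\ $\theta(\xi,\bar\xi)(x_0)\ge 0$. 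Since $\xi$ was arbitrary, $\theta\ge 0$ at $x_0$. Inserting this short observation makes your proof complete and also lets you interpret $\mathbf{1}_{\{V_\theta=0\}}\theta^n$ as a genuinely nonnegative density.
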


\subsection{Monge-Amp{\`e}re capacity}
Let $\theta$ be a smooth closed $(1,1)$-form with big cohomology class. As in~\cite{BEGZ08} we define the \emph{Monge-Amp{\`e}re (pre)capacity} in our setting as the upper envelope of all measures $\MA(\f)$ with $\f\in\cP(X,\theta)$, $V_\theta-1\le\f\le V_\theta$, i.e.
\begin{equation}\label{equ:cap}
\ca(B):=\sup \left\{\int_B\MA(\f),\,\f\in\cP(X,\theta),\,V_\theta-1\le\f\le V_\theta\text{ on }X \right\}.
\end{equation}
for every Borel subset $B$ of $X$. In what follows we adapt to our setting some arguments of~\cite{GZ05} Theorem 3.2 (which dealt with the case where $\theta$ is a K{\"a}hler form). 

\begin{lem}\label{lem:achieved} If $K$ is compact the supremum in the definition of $\ca(K)$ is achieved by the usc regularisation of 
$$h_K:=\sup\{\f\in\cP(X,\theta),\,\f\le V_\theta\text{ on }X\text{ and }\f\le V_\theta-1\text{ on }K\}.$$
\end{lem}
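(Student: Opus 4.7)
The plan is to adapt the argument of \cite{GZ05} Theorem 3.2 to the big class setting, with $V_\theta$ playing the role of the upper obstacle in place of the constant $0$ used in the K\"ahler case. The proof splits into the two inequalities defining the equality $\int_K\MA(h_K^*)=\ca(K)$.

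First I would verify that $h_K^*$ is essentially a competitor in the defining supremum of $\ca(K)$. Applying Choquet's lemma to the defining family, $h_K^*\in\cP(X,\theta)$. Since $V_\theta-1$ lies in the family, $V_\theta-1\le h_K\le h_K^*$, while $V_\theta$ being upper semicontinuous and dominating the family gives $h_K^*\le V_\theta$. On $K$ the $V_\theta-1$ competitor saturates the constraint so $h_K=V_\theta-1$ on $K$ and hence $h_K^*=V_\theta-1$ quasi-everywhere on $K$. Since $\MA(h_K^*)$ charges no pluripolar set, $h_K^*$ qualifies as a competitor modulo a negligible set, yielding $\int_K\MA(h_K^*)\le\ca(K)$.

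For the reverse inequality the key ingredient is the balayage property that $\MA(h_K^*)$ is supported on $K\cup\{h_K^*=V_\theta\}$: on any ball $B\subset(X\setminus K)\cap\{h_K^*<V_\theta\}$ neither obstacle is active, so $h_K^*$ must be locally maximal there (otherwise a local Dirichlet extension would give a strictly larger competitor in the envelope), forcing $\MA(h_K^*)|_B=0$. For any competitor $\f$ with $V_\theta-1\le\f\le V_\theta$, the shift $\f-1\le V_\theta-1$ is admissible in the $h_K$-envelope, whence $\f\le h_K^*+1$ on $X$. Applying the generalized comparison principle of \cite{BEGZ08} to this pair (both of full Monge-Amp\`ere mass $\vol(\theta)$) yields
$$
\int_{\{\f=h_K^*+1\}}\MA(\f)\le\int_{\{\f=h_K^*+1\}}\MA(h_K^*).
$$
On $\{h_K^*=V_\theta\}$ we have $\f\le V_\theta=h_K^*<h_K^*+1$, so the coincidence set $\{\f=h_K^*+1\}$ is disjoint from $\{h_K^*=V_\theta\}$; combined with the balayage statement, the right-hand side is bounded by $\int_K\MA(h_K^*)$.

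The main obstacle is then the final bookkeeping: showing $\int_K\MA(\f)$ is bounded by the left-hand side above, i.e.~matching $K$ with the coincidence set $\{\f=h_K^*+1\}$ modulo $\MA(\f)$-null sets. On $K$ q.e.~one has $h_K^*+1=V_\theta\ge\f\ge V_\theta-1=h_K^*$, so $h_K^*\le\f\le h_K^*+1$ on $K$; a second application of the comparison principle to the pair $h_K^*\le\f$ on the subset $K\cap\{\f<h_K^*+1\}$ absorbs the residual $\MA(\f)$-mass there into $\MA(h_K^*)$-mass on $K$, completing the argument. This is the most delicate step, because the possibly complicated structure of the contact set $\{h_K^*=V_\theta\}$ in the big case has no analogue in the K\"ahler setting of \cite{GZ05}.
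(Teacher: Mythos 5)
Your first half — verifying that $h_K^*$ is an admissible competitor and hence $\int_K\MA(h_K^*)\le\ca(K)$ — matches the paper, and your sketch of the balayage property for $\MA(h_K^*)$ is essentially the paper's Lemma~\ref{lem:supp}, which the actual proof relies on in the same way.

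The reverse inequality, however, has a genuine gap, and the route you chose cannot be repaired as stated. The paper compares $h_K^*$ and $\f$ \emph{directly}: after replacing $\f$ by $(1-\e)\f+\e V_\theta$ so that $\f>V_\theta-1$ strictly, one has $h_K^*=h_K\le V_\theta-1<\f$ on $K$ modulo a pluripolar set, hence $\int_K\MA(\f)\le\int_{\{h_K^*<\f\}}\MA(\f)$; the sublevel-set comparison principle and the inclusion $\{h_K^*<\f\}\subset\{h_K^*<V_\theta\}$ (using $\f\le V_\theta$) then give $\int_K\MA(\f)\le\int_K\MA(h_K^*)$ via Lemma~\ref{lem:supp}. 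Your route instead shifts by $+1$, establishes $\f\le h_K^*+1$, and invokes the domination-on-coincidence-set form of the comparison principle. The resulting bound $\int_{\{\f=h_K^*+1\}}\MA(\f)\le\int_K\MA(h_K^*)$ is fine, but it does not control $\int_K\MA(\f)$: the coincidence set $\{\f=h_K^*+1\}=\{\f=V_\theta\text{ q.e. on }K\}$ can be pluripolar or even empty — take $\f$ with $\f<V_\theta$ everywhere (e.g.\ close to $V_\theta-1/2$), in which case the set carries no $\MA(\f)$-mass on $K$ at all while $\int_K\MA(\f)$ need not vanish. Your proposed fix, ``a second application of the comparison principle to the pair $h_K^*\le\f$ on the subset $K\cap\{\f<h_K^*+1\}$,'' is not a meaningful operation: the comparison principle is a global statement about masses on sublevel sets of the form $\{u<v\}$, not something that can be localised to an arbitrary Borel subset, and you only know $h_K^*\le\f$ quasi-everywhere \emph{on} $K$, with no control of the ordering off $K$ and hence no sublevel set to which the principle applies. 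You also dropped the perturbation step $\f\mapsto(1-\e)\f+\e V_\theta$, which is precisely what makes $K$ land (mod pluripolar) inside a strict sublevel set in the paper's argument; without it even the direct comparison route would have the same coincidence-set problem on the portion of $K$ where $\f=V_\theta-1$.
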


\begin{proof} It is clear that $h_K^*$ is a candidate in the supremum defining $\ca(K)$. Conversely pick $\f\in\cP(X,\theta)$ such that $V_\theta-1\le\f\le V_\theta$ on $X$. We have to show that
$$\int_K\MA(\f)\le\int_K\MA(h_K^*).$$
Upon replacing $\f$ by $(1-\e)\f+\e V_\theta$ and then letting $\e>0$ go to $0$ we may assume that $V_\theta-1<\f\le V_\theta$ everywhere on $X$. Noting that $K\subset\{h_K^*<\f\}$ we get
$$\int_K\MA(\f)\le\int_{\{h_K^*<\f+1\}}\MA(\f)$$
$$\le\int_{\{h_K^*<\f+1\}}\MA(h_K^*)$$
by the comparison principle (cf.~\cite{BEGZ08} Corollary 2.3 for a proof in our setting) 
$$\le\int_{\{h_K^*<V_\theta\}}\MA(h_K^*)=\int_K\MA(h_K^*)$$
by Lemma~\ref{lem:supp} below and the result follows.
\end{proof}

\begin{lem}\label{lem:supp} Let $K$ be a compact subset. Then we have $h_K^*=V_\theta-1$ a.e. on $K$ and $h_K^*=V_\theta$ a.e. on $X-K$ with respect to the measure $\MA(h_K^*)$.
\end{lem}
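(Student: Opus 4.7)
The plan is to handle the two equalities separately: the first by a direct comparison argument, the second by a local balayage in the ample locus. For the equality on $K$, I would first observe that $V_\theta - 1$ is itself a $\theta$-psh function satisfying both defining conditions of the envelope ($V_\theta - 1 \le V_\theta$ on $X$ and $V_\theta - 1 \le V_\theta - 1$ on $K$), so it is a legitimate competitor in the supremum defining $h_K$. This gives $h_K \ge V_\theta - 1$ everywhere, and combined with the built-in constraint $h_K \le V_\theta - 1$ on $K$ we obtain $h_K = V_\theta - 1$ pointwise on $K$. Since $V_\theta - 1$ is already upper semicontinuous, passing to the usc regularization yields $h_K^* = V_\theta - 1$ on $K$ outside the pluripolar set $\{h_K < h_K^*\}$. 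As $\MA(h_K^*)$ is a non-pluripolar product and charges no pluripolar set, the first equality follows.

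For the equality on $X - K$, since $h_K^* \le V_\theta$ on $X$ it suffices to show that
$$
\Omega := \{h_K^* < V_\theta\} \cap (X \setminus K) \cap \Amp(\theta)
$$
carries no $\MA(h_K^*)$-mass; the locus $X \setminus \Amp(\theta)$ is a proper analytic, hence pluripolar, subset and so contributes nothing. By Theorem~\ref{thm:c11}, $V_\theta$ is continuous on $\Amp(\theta)$, which together with the upper semicontinuity of $h_K^*$ makes $\Omega$ open. On any small coordinate ball $B \Subset \Omega$ on which $V_\theta - h_K^* \ge \delta > 0$, I would solve the local homogeneous Dirichlet problem of Bedford--Taylor for the complex Monge--Amp\`ere operator with boundary data $h_K^*|_{\partial B}$, producing a bounded $\theta$-psh function $\tilde h$ on $\bar B$ with $\tilde h \ge h_K^*$ in $B$, $\tilde h = h_K^*$ continuously on $\partial B$, and $\MA(\tilde h) = 0$ in $B$. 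Gluing with $h_K^*$ outside $B$ gives a global $\theta$-psh function $\tilde H$ on $X$ still dominated by $V_\theta$ (on $B$ the comparison principle applies, using the boundary inequality and the $\theta$-maximality of $\tilde h$) and coinciding with $h_K^*$ on $K$. Thus $\tilde H$ is a competitor in the supremum defining $h_K$, forcing $\tilde H \le h_K \le h_K^*$, and hence $\tilde H = h_K^*$ on $B$. Consequently $\MA(h_K^*)|_B = \MA(\tilde h)|_B = 0$, and exhausting $\Omega$ by such balls completes the proof.

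The main technical obstacle is the balayage step in the second part: one must simultaneously guarantee that the locally modified function $\tilde H$ is globally $\theta$-psh, remains $\le V_\theta$ on all of $X$, and can be weighed against the maximality of $h_K^*$ despite the fact that $h_K^* \le V_\theta - 1$ on $K$ only holds outside a pluripolar set. The continuity of $V_\theta$ on $\Amp(\theta)$ from Theorem~\ref{thm:c11} is essential to secure a uniform gap $V_\theta - h_K^* \ge \delta$ on $B$, which propagates the upper constraint through the Bedford--Taylor solution; the pluripolar subtlety on $K$ is absorbed by a standard approximation, as in \cite{GZ05} Theorem 3.2 to which the paper already alludes.
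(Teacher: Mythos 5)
Your first equality and the way you dispose of the pluripolar set $\{h_K<h_K^*\}$ match the paper exactly, and your second part identifies the right local balayage idea in the ample locus. However, there are two genuine gaps in the execution of the second part.

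The first you half-acknowledge: the glued function $\tilde H$ is \emph{not} a competitor in the supremum defining $h_K$, because $\tilde H=h_K^*$ on $K$ and $h_K^*\le V_\theta-1$ can fail on the pluripolar subset $\{h_K<h_K^*\}\cap K$ (already for $K$ a single point $p$ one has $h_K^*(p)=V_\theta(p)$, not $V_\theta(p)-1$). This is not a side subtlety to be ``absorbed''; it is exactly why the paper first invokes Choquet's lemma to produce genuine competitors $\f_j\nearrow h_K^*$ a.e., performs the balayage on each $\f_j$ to obtain new competitors $\widehat{\f_j}$ (so $\widehat{\f_j}\le h_K^*$ automatically and $\widehat{\f_j}\nearrow h_K^*$ a.e.), and then appeals to Bedford--Taylor continuity of the Monge--Amp\`ere operator along \emph{increasing} sequences of locally bounded $\theta$-psh functions to conclude $\MA(h_K^*)=0$ on $B$.

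The second gap is more serious and goes unaddressed: the claim that $\tilde h\le V_\theta$ on $B$ follows ``by the comparison principle, using the boundary inequality and the $\theta$-maximality of $\tilde h$'' is incorrect. Maximality makes $\tilde h$ as \emph{large} as possible among $\theta$-psh functions with its boundary data, so it tends to dominate, not be dominated; and since $\MA(V_\theta)={\bf 1}_{\{V_\theta=0\}}\theta^n$ need not vanish on $B$, the domination/comparison principle simply does not produce the upper bound $\tilde h\le V_\theta$ from the boundary inequality alone. The correct mechanism---which the paper uses, and for which you already have all the ingredients---is to compare with the \emph{constant} $V_\theta(x_0)$: choose $B$ small enough that $h_K\le V_\theta(x_0)-\de$ on $\bar B$ (upper semicontinuity of $h_K^*$) while $V_\theta\ge V_\theta(x_0)-\de'$ on $B$ for some $\de'<\de$ (continuity of $V_\theta$ on $\Amp(\theta)$); then a small-ball oscillation estimate for the $\theta$-maximal extension, namely $\sup_B\widehat{\f_j}\le\sup_{\partial B}\f_j+$ (oscillation of a local potential of $\theta$ on $\bar B$), forces $\widehat{\f_j}\le V_\theta(x_0)-\de'\le V_\theta$ on $B$ once $B$ is small. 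It is this comparison with a constant, not the comparison principle, that ``propagates the upper constraint.''
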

\begin{proof} We have 
$$h_K\le V_\theta-1\le h_K^*\text{ on } K.$$ 
But the set $\{h_K<h_K^*\}$ is pluripolar by Bedford-Taylor's theorem, so it has zero measure with respect to the non-pluripolar measure $\MA(h_K^*)$ and the second point follows. 

On the other hand by Choquet's lemma there exists a sequence of $\theta$-psh functions $\f_j$ increasing a.e.~to $h_K^*$ such that $\f_j\le V_\theta$ on $X$ and $\f_j\le V_\theta-1$ on $K$. If $B$ is a small open ball centered at a point 
$$x_0\in\Amp(\theta)\cap\{h_K^*<V_\theta\}\cap(X-K)$$ 
then we get 
$$h_K\le V_\theta(x_0)-\delta\le V_\theta\text{ on }B$$ 
for some $\de>0$ by continuity of $V_\theta$ on $\Amp(\theta)$ (cf.Theorem~\ref{thm:c11}) and it follows that the function $\widehat{\f_j}$ which coincides with $\f_j$ outside $B$ and satisfies $\MA(\widehat{\f_j})=0$ on $B$ also satisfies 
$$\widehat{\f_j}\le V_\theta(x_0)\le V_\theta\text{ on }B.$$ 
We infer that $\widehat{\f_j}$ increases a.e. to $h_K^*$ and the result follows by Beford-Taylor's continuity theorem for the Monge-Amp{\`e}re along non-decreasing sequences of locally bounded psh functions. 
\end{proof}

By definition, a positive measure $\mu$ is absolutely continuous with respect the capacity $\ca$ iff $\ca(B)=0$ implies $\mu(B)=0$. This means exactly that $\mu$ is non-pluripolar in the sense that $\mu$ puts no mass on pluripolar sets. Since $\mu$ is subadditive, it is in turn equivalent to the existence of a non-decreasing right-continuous function $F:\R_+\to\R_+$ such that 
$$\mu(B)\le F(\ca(B))$$
for all Borel sets $B$. Roughly speaking the speed at which $F(t)\to 0$ as $t\to 0$ measures "how non-pluripolar" $\mu$ is. 

\begin{prop}\label{prop:closed} Let $F:\R_+\to\R_+$ be non-decreasing and right-continuous. Then the convex set of all positive measures $\mu$ on $X$ with $\mu(B)\le F(\ca(B))$ for all Borel subsets $B$ is closed in the weak topology.
\end{prop}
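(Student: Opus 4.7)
Convexity is immediate: the constraint $\mu(B)\le F(\ca(B))$ is affine in $\mu$, so if $\mu_0,\mu_1$ both satisfy it then so does $(1-t)\mu_0+t\mu_1$ for every $t\in[0,1]$. The content of the statement is therefore the closedness, and the plan is to combine the portmanteau theorem with two regularity inputs: outer regularity of the measure $\mu$ on the compact space $X$, and outer regularity of the Monge-Amp{\`e}re capacity $\ca$ (which is the "new" result of Section~1 referred to in the introduction).

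Assume $\mu_j\to\mu$ weakly with $\mu_j(B)\le F(\ca(B))$ for every Borel set $B$. First I would check that for any open subset $U\subset X$,
$$
\mu(U)\le\liminf_{j\to\infty}\mu_j(U)\le F(\ca(U)),
$$
the first inequality being the standard portmanteau property of weak convergence of finite positive Radon measures on a compact space. This already gives the desired bound on the class of open sets.

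Next I would extend the inequality from open sets to arbitrary Borel sets. Given a Borel set $B$, outer regularity of $\mu$ as a finite Borel measure on the compact metric space $X$ yields a decreasing sequence of open neighbourhoods $U_n\supset B$ with $\mu(U_n)\to\mu(B)$; outer regularity of $\ca$ (which we may invoke from Section~1) lets us additionally arrange that $\ca(U_n)\to\ca(B)$, by replacing $U_n$ with $U_n\cap U_n'$ where $U_n'\supset B$ are open sets witnessing the outer regularity of $\ca$. Then
$$
\mu(B)=\lim_{n\to\infty}\mu(U_n)\le\lim_{n\to\infty}F(\ca(U_n))=F(\ca(B)),
$$
the last equality being precisely where right-continuity of $F$ is used, since $\ca(U_n)\downarrow\ca(B)$ from above and $F$ is non-decreasing. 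This shows that $\mu$ lies in our set, proving closedness.

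The main potential obstacle is the outer regularity of $\ca$ in the big case, since envelopes and comparison principles behave more delicately outside the K{\"a}hler case; but this is precisely the supplementary regularity result that Section~1 provides, and once it is in hand the proof reduces to a clean portmanteau-plus-regularity argument. The right-continuity hypothesis on $F$ is essential exactly at the step where we pass from open supersets down to $B$.
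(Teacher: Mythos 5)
Your plan correctly identifies the right ingredients (portmanteau, regularity of $\mu$, regularity of $\ca$, right-continuity of $F$) and the convexity observation is fine. But there is a logical gap in the invocation of outer regularity of $\ca$ on \emph{arbitrary Borel sets}. That fact is not available as a black box ``from Section~1'': it is precisely what is being established in the course of Proposition~\ref{prop:closed} and the remark that follows it. The proof of the proposition itself only proves outer regularity of $\ca$ on \emph{compacta} (equation~(\ref{equ:open}), via the identity $\ca(K)=\int_X(V_\theta-h_K^*)\MA(h_K^*)$ and monotone continuity of the non-pluripolar Monge-Amp\`ere operator), and the full outer regularity on Borel sets is only deduced afterwards in the Remark using Choquet's capacitability theorem --- a deduction which itself relies on (\ref{equ:open}). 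So as written your argument uses a consequence of the proposition to prove the proposition.

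The gap is easily repaired, and the repair is essentially what the paper does. Instead of extending the open-set inequality to Borel sets via \emph{outer} regularity of $\mu$, first use \emph{inner} regularity of $\mu$ (automatic on compact $X$) together with monotonicity of $\ca$ and $F$ to reduce from Borel to compact sets: if $\mu(K)\le F(\ca(K))$ for every compact $K\subset B$, then $\mu(B)=\sup_{K\subset B}\mu(K)\le F(\ca(B))$. Then, given compact $K$, pass to open neighbourhoods using only outer regularity of $\ca$ \emph{on compacta}: $\mu(K)\le\inf_{U\supset K}\mu(U)\le\inf_{U\supset K}F(\ca(U))=F(\inf_{U\supset K}\ca(U))=F(\ca(K))$, with right-continuity of $F$ used at the penultimate equality. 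This version only needs (\ref{equ:open}), which must itself be proved; that is the real technical content here, and it is what Lemmas~\ref{lem:achieved} and~\ref{lem:supp} plus Proposition~\ref{prop:cont} are for. Your approach and the paper's are otherwise the same in spirit, differing only in whether one approximates $B$ from outside by opens (needs full outer regularity of $\ca$) or from inside by compacts and then those compacts from outside by opens (needs only the compacta version).

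Two minor points worth flagging: (i) the portmanteau inequality $\mu(U)\le\liminf_j\mu_j(U)$ is exactly the statement that $\mu\mapsto\mu(U)$ is lower semicontinuous in the weak topology, which is how the paper phrases it --- the set of measures satisfying an l.s.c.\ constraint bounded above by a constant is weakly closed, so no sequence/net bookkeeping is needed; (ii) you should also justify why it suffices to verify the constraint on open sets only rather than on all Borel sets when passing to the limit --- this is where the reduction to compact (or the outer regularity step) enters, and it needs to be stated as an equivalence rather than just a one-way implication.
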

\begin{proof} Since $X$ is compact the positive measure $\mu$ is inner regular, i.e.
$$\mu(B)=\sup_{K\subset B}\mu(K)$$
where $K$ ranges over all compact subsets of $B$. It follows that $\mu(B)\le F(\ca(B))$ holds for every Borel subset $B$ iff $\mu(K)\le F(\ca(K))$ holds for every compact subset $K$. This is however not enough to conclude since $\mu\mapsto\mu(K)$ is \emph{upper} semi-continuous in the weak topology. We are going to show in turn that 
$$\mu(K)\le F(\ca(K))$$ holds for every compact subset $K$ iff 
$$\mu(U)\le F(\ca(U))$$ for every open subset $U$ by showing that
\begin{equation}\label{equ:open}\ca(K)=\inf_{U\supset K}\ca(U)
\end{equation}
where $U$ ranges over all open neighbourhoods of $K$. Indeed since $F$ is right-continuous this yields $F(\ca(K))=\inf_{U\supset K} F(\ca(U))$. But $\mu\mapsto\mu(U)$ is now \emph{lower} semi-continuous in the weak topology so this will conclude the proof of Proposition~\ref{prop:closed}. 

By Lemma~\ref{lem:achieved} and~\ref{lem:supp} 
\begin{equation}\label{equ:comp}
\ca(K)=\int_K\MA(h_K^*)=\int_X(V_X-h_K^*)\MA(h_K^*)
\end{equation}
holds for every compact subset $K$. Now let $K_j$ be a decreasing sequence of compact neighbourhoods of a given compact subset $K$. It is straightforward to check that $h_{K_j}^*$ increases a.e. to $h_K^*$, and Proposition~\ref{prop:cont} thus yields
 $$\inf_{U\supset K}\ca(U)\ge\ca(K)=\lim_{j\to\infty}\ca(K_j)\ge\inf_{U\supset K}\ca(U)$$
as desired. 
\end{proof}

\begin{rem} Since the Monge-Amp{\`e}re precapacity is defined as the upper envelope of a family of Radon measures, it is automatically inner regular, i.e. we have
$$\ca(B)=\sup_{K\subset B}\ca(K)$$
where $K$ ranges over all compact subsets of $B$. On the other hand let $\ca^*$ be the 
outer regularisation of $\ca$, defined on an arbitrary subset $E$ by
$$\ca^*(E):=\inf_{U\supset E}\ca(U).$$
The above argument shows that 
$$\ca^*(K)=\ca(K)$$
holds for every compact subset $K$. Using (\ref{equ:comp}) and following word for word the second half of the proof of Theorem 5.2 in~\cite{GZ05} one can further show that $\ca^*$ is in fact an (outer regular) \emph{Choquet capacity}, and it then follows from Choquet's capacitability theorem that $\ca^*$ is also \emph{inner regular} on Borel sets. We thus get
$$\ca(B)\le\ca^*(B)=\sup_{K\subset B}\ca^*(K)$$
$$=\sup_{K\subset B}\ca(K)\le\ca(B),$$
which means that $\ca$ is also \emph{outer regular} on Borel subsets in the sense that
$$\ca(B)=\inf_{U\supset B}\ca(U).$$
\end{rem}

\section{Finite energy classes}\label{sec:energy}

We let again $\theta$ be a closed smooth $(1,1)$-form with big cohomology class. It will be convenient (and harmless by homogeneity) to assume that the volume is normalised by 
$$
\vol(\theta)=1.
$$
For any $\f_1,...,\f_n\in\cP(X,\theta)$ with full Monge-Amp{\`e}re mass the mixed Monge-Amp{\`e}re measure $\MA(\f_1,...,\f_n)$ is thus a \emph{probability} measure. We will denote $\Omega:=\Amp(\theta)$ the ample locus of $\theta$.

\subsection{Aubin-Mabuchi energy functional}
We define the \emph{Aubin-Mabuchi energy} of $\f\in\cP(X,\theta)$ with minimal singularities by
\begin{equation}\label{equ:energy_var}
E(\f):=\frac{1}{n+1}\sum_{j=0}^n\int_X(\f-V_\theta)\MA\left(\f^{(j)},V_\theta^{(n-j)}\right).
\end{equation}
Note that its restriction $t\mapsto E(t\f+(1-t)\p)$ to line segments is a polynomial map of degree $n+1$. 

Let $\f,\p\in\cP(X,\theta)$ with minimal singularities. It is easy to show by integration by parts (cf.~\cite{BEGZ08,BB08}) that the G{\^a}teaux derivatives are given by
\begin{equation}\label{equ:eprime}E'(\p)\cdot(\f-\p)=\int_X(\f-\p)\MA(\p)\end{equation}
and 
\begin{equation}\label{equ:eseconde}E''(\p)\cdot(\f-\p,\f-\p)=-n\int_\Omega d(\f-\p)\wedge d^c(\f-\p)\wedge(\theta+dd^c\p)^{n-1},\end{equation}
which shows in particular that $E$ is concave. Integration by parts also yields the following properties proved in~\cite{BEGZ08,BB08}. 

\begin{prop}\label{prop:energy_min} $E$ is concave and non-decreasing. For any $\f,\p\in\cP(X,\theta)$ with minimal singularities we have

\begin{equation}\label{equ:diff}E(\f)-E(\p)=\frac{1}{n+1}\sum_{j=0}^n\int_X(\f-\p)\MA\left(\f^{(j)},\p^{(n-j)}\right)\end{equation}

and

\begin{equation}\label{equ:monotone}\int_X(\f-\p)\MA(\f)\le...\le\int_X(\f-\p)\MA\left(\f^{(j)},\p^{(n-j)}\right)\le...\le\int_X(\f-\p)\MA(\p).\end{equation}
 for $j=0,...,n$.

\end{prop}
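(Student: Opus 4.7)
The plan is to derive all three conclusions from a single identity obtained by integration by parts. Writing $T_\f = \theta+dd^c\f$ and $T_\p = \theta+dd^c\p$, the telescoping
$$\MA(\f^{(j)},\p^{(n-j)})-\MA(\f^{(j+1)},\p^{(n-j-1)})=T_\f^{j}\wedge T_\p^{n-j-1}\wedge(T_\p-T_\f)=-dd^c(\f-\p)\wedge T_\f^j\wedge T_\p^{n-j-1},$$
understood as an identity of non-pluripolar products on $X$ equal to the trivial extension from the ample locus $\Omega=\Amp(\theta)$, should yield after integration by parts on $\Omega$:
$$\int_X(\f-\p)\bigl[\MA(\f^{(j)},\p^{(n-j)})-\MA(\f^{(j+1)},\p^{(n-j-1)})\bigr]=\int_\Omega d(\f-\p)\wedge d^c(\f-\p)\wedge T_\f^j\wedge T_\p^{n-j-1}\ge 0.$$
Since $\f$ and $\p$ have minimal singularities they are locally bounded on $\Omega$, and the left-hand quantity is finite. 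Chaining the inequalities for $j=0,\dots,n-1$ then gives the monotonicity chain \eqref{equ:monotone}.

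For the differentiation formula \eqref{equ:diff}, I would consider the one-variable function $g(t):=E(t\f+(1-t)\p)$, which is polynomial of degree $n+1$ in $t$ (as already noted after the definition of $E$). Applying the Gâteaux derivative formula \eqref{equ:eprime} at $t\f+(1-t)\p$ and using the multilinearity of the non-pluripolar product,
$$g'(t)=\int_X(\f-\p)\,\MA(t\f+(1-t)\p)=\sum_{j=0}^{n}\binom{n}{j}t^j(1-t)^{n-j}\int_X(\f-\p)\,\MA(\f^{(j)},\p^{(n-j)}).$$
Integrating over $[0,1]$ and using the beta identity $\binom{n}{j}\int_0^1 t^j(1-t)^{n-j}\,dt=1/(n+1)$ yields exactly \eqref{equ:diff}.

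Finally, the monotonicity and concavity assertions both fall out of what has been established. If $\f\ge\p$, every summand in \eqref{equ:diff} is non-negative since $\f-\p\ge 0$ and the mixed Monge-Ampère measures are positive; hence $E$ is non-decreasing. Concavity is equivalent to monotonicity of $E'$ along segments, i.e. $(E'(\f)-E'(\p))\cdot(\f-\p)\le 0$, but this is precisely the extreme case of \eqref{equ:monotone}, comparing $j=n$ and $j=0$:
$$\int_X(\f-\p)\,\MA(\f)\le\int_X(\f-\p)\,\MA(\p).$$

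The main obstacle is rigorously justifying the integration by parts step, since the potentials are only locally bounded on $\Omega$ and may blow up along $X\setminus\Omega$, so one cannot apply Stokes directly. The standard remedy, as carried out in \cite{BEGZ08}, is to approximate $\f$ and $\p$ by the locally bounded canonical truncations $\f_k:=\max(\f,V_\theta-k)$, $\p_k:=\max(\p,V_\theta-k)$, perform the Bedford-Taylor integration by parts at each level (where boundary terms vanish because the truncations agree with $V_\theta-k$ outside a compact subset of $\Omega$), and pass to the limit using the monotone continuity of the non-pluripolar Monge-Ampère operator (Proposition~\ref{prop:cont}) together with the finiteness of $\int_X(\f-V_\theta)\MA(\f)$ for minimal-singularity potentials.
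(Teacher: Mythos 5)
Your proof is correct and matches the approach the paper delegates to \cite{BEGZ08,BB08}: telescoping integration by parts for the monotonicity chain (\ref{equ:monotone}), and integrating the G\^ateaux derivative (\ref{equ:eprime}) along the segment $t\mapsto t\f+(1-t)\p$ with the beta integral $\binom{n}{j}\int_0^1 t^j(1-t)^{n-j}\,dt=\frac{1}{n+1}$ for (\ref{equ:diff}); the deductions of monotonicity and concavity from these are as you describe. One small imprecision in your closing remark: for $\f,\p$ with minimal singularities the truncations $\max(\f,V_\theta-k)$ coincide with $\f$ for all $k$ large enough (since $\f-V_\theta$ is globally bounded), so they do not address the actual difficulty, which is the non-compactness of $\Omega=\Amp(\theta)$ and the fact that $V_\theta$ itself is unbounded near $X\setminus\Omega$; the correct justification is \cite{BEGZ08} Theorem~1.14, which establishes integration by parts on $\Omega$ directly for minimal-singularity potentials.
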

We also remark that $E(V_\theta)=0$ and $E$ satisfies the scaling property
\begin{equation}\label{equ:scaling}
E(\f+c)=E(\f)+c
\end{equation}
for any constant $c\in\R$. 

We now introduce the analogue of Aubin's $I$ and $J$-functionals (cf.~\cite{Aub84} P.145,\cite{Tian} P.67). We introduce the \emph{symmetric} expression
$$I(\f,\p):=\int_X(\f-\p)(\MA(\p)-\MA(\f))=-(E'(\f)-E'(\p))\cdot(\f-\p),$$
and we set
$$J_\p(\f):=E(\p)-E(\f)+\int_X(\f-\p)\MA(\p)$$
$$=(E(\p)+E'(\p)\cdot(\f-\p))-E(\f)\ge 0$$
which controls the second order behaviour of $E$ at $\p$ and is non-negative by concavity of $E$. Note that $J_\p$ is convex and non-negative by concavity of $E$. For $\p=V_\theta$ we simply write $J:=J_{V_\theta}$. By concavity of $E$ we have $0\le J_\p(\f)\le I(\f,\p)$. On the other hand Proposition~\ref{prop:energy_min} shows that $E(\f)-E(\p)$ is the mean value of a non-decreasing sequence whose extreme values are $\int_X(\f-\p)\MA(\f)$ and $\int_X(\f-\p)\MA(\p)$, and it follows for elementary reasons that
\begin{equation}\label{equ:compare}\frac{1}{n+1}I(\f,\p)\le J_\p(\f)\le I(\f,\p).
\end{equation}

Elementary algebraic identities involving integration by parts actually show as in~\cite{Tian} P.58 that
\begin{equation}\label{equ:delta}J_\p(\f)=\sum_{j=0}^{n-1}\frac{j+1}{n+1}\int_\Omega d(\f-\p)\wedge d^c(\f-\p)\wedge(\theta+dd^c\p)^j\wedge(\theta+dd^c\f)^{n-1-j}.
\end{equation}
and 
\begin{equation}\label{equ:I}I(\f,\p)=\sum_{j=0}^{n-1}\int_\Omega d(\f-\p)\wedge d^c(\f-\p)\wedge(\theta+dd^c\f)^j\wedge(\theta+dd^c\psi)^{n-1-j}.
\end{equation}

As opposed to $I(\f,\p)$ the expression $J_\p(\f)$ is not symmetric in $(\f,\p)$. However we have
\begin{lem}\label{lem:quasisym} For any two $\f,\p\in\cP(X,\theta)$ with minimal singularities we have
$$n^{-1}J_\p(\f)\le J_\f(\p)\le n J_\p(\f).$$
\end{lem}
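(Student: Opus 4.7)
The plan is to exploit the integration-by-parts expansion \eqref{equ:delta} to write both $J_\p(\f)$ and $J_\f(\p)$ as nonnegative linear combinations of the \emph{same} quantities, and then compare coefficients termwise. Concretely, set
$$
a_j:=\int_\Omega d(\f-\p)\wedge d^c(\f-\p)\wedge(\theta+dd^c\p)^j\wedge(\theta+dd^c\f)^{n-1-j}
$$
for $j=0,\dots,n-1$. By \eqref{equ:delta}, $J_\p(\f)=\sum_{j=0}^{n-1}\frac{j+1}{n+1}a_j$. Applying the same identity with $\f$ and $\p$ swapped, using that $d(\p-\f)\wedge d^c(\p-\f)=d(\f-\p)\wedge d^c(\f-\p)$, and reindexing with $k=n-1-j$, one gets $J_\f(\p)=\sum_{j=0}^{n-1}\frac{n-j}{n+1}a_j$.

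The key observation is that each $a_j$ is nonnegative. Indeed $\a:=\f-\p$ is real, so $d\a\wedge d^c\a$ is a positive $(1,1)$-form; since $\f$ and $\p$ have minimal singularities, they are locally bounded on $\Omega=\Amp(\theta)$, hence $(\theta+dd^c\p)^j\wedge(\theta+dd^c\f)^{n-1-j}$ is a well-defined closed positive $(n-1,n-1)$-current on $\Omega$ by Bedford-Taylor theory, and the wedge product with $d\a\wedge d^c\a$ is then a positive measure on $\Omega$. Thus $a_j\ge 0$.

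With this in hand the proof reduces to the elementary numerical comparison
$$
\frac{1}{n}\cdot\frac{n-j}{n+1}\ \le\ \frac{j+1}{n+1}\ \le\ n\cdot\frac{n-j}{n+1}\qquad(0\le j\le n-1),
$$
whose two inequalities amount respectively to $(n+1)j\ge 0$ and $(n+1)j\le n^2-1$, both obvious. Multiplying by $a_j\ge 0$ and summing yields $n^{-1}J_\p(\f)\le J_\f(\p)\le nJ_\p(\f)$.

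There is no real obstacle here; the only ingredient that is not pure bookkeeping is the representation \eqref{equ:delta}, which has already been established via integration by parts on $\Omega$. The argument is entirely termwise and linear in the $a_j$'s, so the sharp constant $n$ appears simply as the extreme value of $\max_{0\le j\le n-1}\frac{j+1}{n-j}$, attained at $j=n-1$.
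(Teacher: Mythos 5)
Your proof is correct. Let me compare it to the paper's.

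The paper proves the lemma directly from Proposition~\ref{prop:energy_min}: combining the expansion~(\ref{equ:diff}) for $E(\f)-E(\p)$ with the monotone chain~(\ref{equ:monotone}), one squeezes
$$
n\int_X(\f-\p)\MA(\f)+\int_X(\f-\p)\MA(\p)\le(n+1)\bigl(E(\f)-E(\p)\bigr)\le\int_X(\f-\p)\MA(\f)+n\int_X(\f-\p)\MA(\p),
$$
and the two inequalities of the lemma drop out after rearranging with the definitions of $J_\p(\f)$ and $J_\f(\p)$. You instead go through the gradient-energy representation~(\ref{equ:delta}), writing $J_\p(\f)=\sum_j\frac{j+1}{n+1}a_j$ and $J_\f(\p)=\sum_j\frac{n-j}{n+1}a_j$ with $a_j\ge 0$, and conclude by the termwise ratio bound $\frac{1}{n}\le\frac{j+1}{n-j}\le n$. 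The two approaches are siblings: the successive differences in the chain~(\ref{equ:monotone}) are, after a single integration by parts, exactly your $-a_j$'s, so the same positivity underlies both. The paper's route is shorter because it only needs the coarse extreme-value comparison in~(\ref{equ:monotone}); your route is more explicit and makes transparent that $n$ is the sharp constant obtainable from this decomposition, as the extremal ratio $\max_j\frac{j+1}{n-j}$. Both are complete and correct.
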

\begin{proof} By Proposition~\ref{prop:energy_min} we have
$$n\int_X(\f-\p)\MA(\f)+\int_X(\f-\p)\MA(\p)\le(n+1)\left(E(\f)-E(\p)\right)$$
$$\le \int_X(\f-\p)\MA(\f)+n\int_X(\f-\p)\MA(\p)$$
and the result follows immediately.
\end{proof}

\begin{prop}\label{prop:enc11} For any $\f,\p\in\cP(X,\theta)$ with minimal singularities and any $0\le t\le 1$ we have
$$I(t\f+(1-t)\p,\p)\le nt^2 I(\f,\p).$$
\end{prop}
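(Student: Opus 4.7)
The plan is to expand $I(\f_t,\p)$ (with $\f_t:=t\f+(1-t)\p$) using the integration-by-parts formula (\ref{equ:I}), and reduce the inequality to an elementary combinatorial bound. Throughout I will write $T_\f:=\theta+dd^c\f$ and $T_\p:=\theta+dd^c\p$ for brevity, so that $T_{\f_t}=tT_\f+(1-t)T_\p$ and $d(\f_t-\p)=t\,d(\f-\p)$.

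First I plug these identities into (\ref{equ:I}) and expand the $j$-th factor by the binomial theorem. Since $\f,\p$ have minimal singularities, all wedge products are well-defined on $\Omega=\Amp(\theta)$ by Bedford-Taylor and the mixed quantities
$$a_k:=\int_\Omega d(\f-\p)\wedge d^c(\f-\p)\wedge T_\f^{k}\wedge T_\p^{n-1-k}\ge 0$$
are finite and non-negative. Formula (\ref{equ:I}) then reads $I(\f,\p)=\sum_{k=0}^{n-1}a_k$, and on the other hand
$$I(\f_t,\p)=t^{2}\sum_{j=0}^{n-1}\sum_{k=0}^{j}\binom{j}{k}t^{k}(1-t)^{j-k}a_k=t^{2}\sum_{k=0}^{n-1}a_k\,c_k(t),$$
where after exchanging the order of summation
$$c_k(t):=\sum_{j=k}^{n-1}\binom{j}{k}t^{k}(1-t)^{j-k}.$$

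Second, I bound $c_k(t)$. For $t\in[0,1]$, each summand $\binom{j}{k}t^{k}(1-t)^{j-k}$ is one non-negative term in the binomial identity $\sum_{k=0}^{j}\binom{j}{k}t^{k}(1-t)^{j-k}=(t+(1-t))^{j}=1$, hence each such summand is $\le 1$. Since $c_k(t)$ has $n-k$ summands, this gives the pointwise bound $c_k(t)\le n-k\le n$ for every $k=0,\dots,n-1$ and every $t\in[0,1]$.

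Combining these two points,
$$I(\f_t,\p)=t^{2}\sum_{k=0}^{n-1}a_k\,c_k(t)\le nt^{2}\sum_{k=0}^{n-1}a_k=nt^{2}\,I(\f,\p),$$
which is the desired inequality. There is no serious obstacle here: the only non-routine step is the bookkeeping in the binomial expansion and the recognition that the relevant combinatorial coefficients are just individual probability masses of a Binomial$(j,t)$ distribution, which trivially gives the uniform bound by $n$.
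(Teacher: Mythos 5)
Your proof is correct, and it takes a genuinely different route from the paper's. The paper works at the level of the first derivative of $E$: it expands
$\MA(t\f+(1-t)\p)$ by the binomial theorem, pairs each term against $\f-\p$, and then invokes the \emph{monotonicity} chain (\ref{equ:monotone}) for the quantities $\int_X(\f-\p)\MA(\f^{(j)},\p^{(n-j)})$ to bound the whole sum by the two extreme values. This yields the sharper intermediate inequality $I(t\f+(1-t)\p,\p)\le t\bigl(1-(1-t)^n\bigr)I(\f,\p)$, and the convexity of $s\mapsto(1-s)^n$ then gives $1-(1-t)^n\le nt$. Your argument instead starts from the gradient identity (\ref{equ:I}), so that $I(\f,\p)$ is presented as a sum of manifestly \emph{non-negative} Dirichlet-type integrals $a_k$; after the binomial expansion and a reorder of summation, all you need is positivity of the $a_k$ together with the trivial observation that each $c_k(t)$ is a partial sum of at most $n$ binomial masses, hence $\le n$. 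This buys you a self-contained estimate that avoids invoking (\ref{equ:monotone}) entirely, at the modest cost of landing directly on the $nt^2$ bound rather than passing through the slightly sharper $t(1-(1-t)^n)$; both routes, of course, ultimately rest on the same integration-by-parts machinery, since (\ref{equ:I}) itself is proved that way. One small point worth stating explicitly in your write-up is the justification that the $a_k$ are indeed non-negative and finite: non-negativity because $d(\f-\p)\wedge d^c(\f-\p)$ is a positive $(1,1)$-form and the $T_\f^{k}\wedge T_\p^{n-1-k}$ are positive currents on $\Omega$ in the Bedford-Taylor sense, and finiteness because these are exactly the summands appearing in (\ref{equ:I}), which the paper has already shown to be finite for functions with minimal singularities.
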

\begin{proof} We expand out
$$E'(t\f+(1-t)\p)\cdot(\f-\p)=\int_X(\f-\p)\MA(t\f+(1-t)\p)$$
$$=(1-t)^n\int_X(\f-\p)\MA(\p)+\sum_{j=1}^n{n\choose
  j}t^j(1-t)^{n-j}\int_X(\f-\p)\MA(\f^{(j)},\p^{(n-j)})$$
$$\ge(1-t)^n\int_X(\f-\p)\MA(\p)+(1-(1-t)^n)\int_X(\f-\p)\MA(\f)$$
by (\ref{equ:monotone})
$$=(1-t)^nE'(\p)\cdot(\f-\p)+(1-(1-t)^n)E'(\f)\cdot(\f-\p).$$
This yields
$$I(t\f+(1-t)\p,\p)\le t(1-(1-t)^n) I(\f,\p)$$
and the result follows by convexity of $(1-t)^n$.
\end{proof}
Note that by definition of $I$ and $J$ we have
$$\lim_{t\to 0_+}\frac{2}{t^2}J_\p(t\f+(1-t)\p)=\lim_{t\to_0+}\frac{1}{t^2}I(t\f+(1-t)\p),\p)$$
$$=-E''(\p)\cdot(\f-\p,\f-\p).$$

\subsection{Finite energy classes}

As in~\cite{BEGZ08} Definition 2.9 it is natural to extend $E(\f)$ by monotonicity to an \emph{arbitrary} $\f\in\cP(X,\theta)$ by setting
\begin{equation}\label{equ:extend} 
E(\f):=\inf\{E(\p)|\p\in\cP(X,\theta)\text{ with minimal singularities},\,\p\ge\f\}.
\end{equation}

By~\cite{BEGZ08} Proposition 2.10 we have
\begin{prop}\label{prop:energy_gen} The extension
 $$E:\cP(X,\theta)\to[-\infty,+\infty[$$ 
 so defined is concave, non-decreasing and usc.
\end{prop}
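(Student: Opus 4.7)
The plan is to reduce every assertion to the corresponding property of $E$ on the subclass of $\theta$-psh functions with minimal singularities (Proposition~\ref{prop:energy_min}) via the monotone infimum~(\ref{equ:extend}). The heart of the argument is a single continuity lemma for the extension along decreasing sequences in $\cP(X,\theta)$.

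Consistency of the extension on functions with minimal singularities and monotonicity of the extended $E$ are both immediate. If $\f$ itself has minimal singularities, then $\f$ is a candidate in the infimum defining $E(\f)$ and already realizes it by monotonicity of $E$ on the minimal-singularity subclass. Likewise, $\f_1\le\f_2$ makes every candidate for $E(\f_2)$ a candidate for $E(\f_1)$, so $E(\f_1)\le E(\f_2)$.

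The key step is the continuity lemma: if $\f_j\searrow\f$ in $\cP(X,\theta)$, then $E(\f_j)\to E(\f)$. The inequality $\lim_j E(\f_j)\ge E(\f)$ is immediate from monotonicity. For the reverse, fix any candidate $\p\ge\f$ with minimal singularities and consider $\max(\f_j,\p)$, which has minimal singularities (since $\p$ does) and decreases to $\max(\f,\p)=\p$. Continuity of $E$ along monotone sequences within the minimal-singularity subclass---a consequence of Proposition~\ref{prop:cont} applied to the explicit formula~(\ref{equ:energy_var})---gives $E(\max(\f_j,\p))\to E(\p)$. Since $\max(\f_j,\p)\ge\f_j$, monotonicity yields $E(\f_j)\le E(\max(\f_j,\p))$, hence $\limsup_j E(\f_j)\le E(\p)$; taking the infimum over $\p$ produces $\limsup_j E(\f_j)\le E(\f)$.

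Concavity and upper semicontinuity are then easy consequences. For concavity, given $\f_1,\f_2\in\cP(X,\theta)$ and $t\in[0,1]$, set $\f_{i,k}:=\max(\f_i,V_\theta-k)$; these functions have minimal singularities and decrease to $\f_i$, and the combinations $g_k:=t\f_{1,k}+(1-t)\f_{2,k}$, being bounded below by $V_\theta-k$, also have minimal singularities and decrease to $t\f_1+(1-t)\f_2$. Concavity of $E$ on the minimal-singularity subclass (Proposition~\ref{prop:energy_min}) gives $E(g_k)\ge t E(\f_{1,k})+(1-t)E(\f_{2,k})$, and letting $k\to\infty$ via the continuity lemma applied to all three sequences delivers concavity of the extended $E$. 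For upper semicontinuity in the weak topology on $\cP(X,\theta)$, use the standard envelope trick: given $\f_j\to\f$, set $\p_k:=(\sup_{j\ge k}\f_j)^*$, a decreasing sequence of $\theta$-psh functions with $\p_k\searrow\f$ (using that $(\limsup_j\f_j)^*=\f$ for $L^1$-convergent sequences of $\theta$-psh functions). Since $\p_k\ge\f_j$ for all $j\ge k$, monotonicity gives $E(\p_k)\ge\limsup_j E(\f_j)$, and the continuity lemma yields $E(\f)\ge\limsup_j E(\f_j)$. The only substantive obstacle in the whole scheme is the continuity lemma, which rests on the nontrivial Proposition~\ref{prop:cont} governing the non-pluripolar mixed Monge-Amp\`ere operator along monotone sequences.
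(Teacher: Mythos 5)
Your proof is correct, and it supplies a complete argument for a statement the paper leaves to~\cite{BEGZ08} (Proposition 2.10) without proof. The reduction to a single continuity lemma along decreasing sequences in $\cP(X,\theta)$, established by comparing against $\max(\f_j,\p)$ for candidates $\p$ with minimal singularities, is the standard mechanism behind extensions by monotonicity, and your subsequent deductions of concavity via $\max(\f_i,V_\theta-k)$ and of upper semicontinuity via $\p_k=(\sup_{j\ge k}\f_j)^*$ are both sound.
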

As a consequence $E$ is continuous along decreasing sequences, and $E(\f)$ can thus be more concretely obtained as the limit of $E(\f_j)$ for any sequence of $\f_j\in\cP(X,\theta)$ with minimal singularities such that $\f_j$ decreases to $\f$ pointwise. One can for instance take $\f_j=\max(\f,V_\theta-j)$. 

Following~\cite{Ceg98} and~\cite{GZ07} we introduce
\begin{defi}\label{defi:finite} 
The domain of $E$ is denoted by
$$
\cE^1(X,\theta):=\{\f\in\cP(X,\theta),E(\f)>-\infty\}
$$
and its image in the set $\cT(X,\theta)$ of all positive currents cohomologous to $\theta$ will be denoted by $\cT^1(X,\theta)$. For each $C>0$ we also introduce
$$
\cE_C:=\{\f\in\cE^1(X,\theta),\sup_X\f\le 0, E(\f)\ge-C\}.
$$
\end{defi}
Note that $\cE^1(X,\theta)$ and each $\cE_C$ are convex subsets of $\cP(X,\theta)$. 

\begin{lem}\label{lem:compact} For each $C>0$ $\cE_C$ is compact and convex. 
\end{lem}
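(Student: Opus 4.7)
The plan is to realize $\cE_C$ as a closed, convex subset of a weakly compact set of normalized $\theta$-psh functions.

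\emph{Convexity.} I would first observe that $\cE^1(X,\theta)$ itself is convex: the concavity of $E$ from Proposition~\ref{prop:energy_gen} gives $E(t\f+(1-t)\p)\ge tE(\f)+(1-t)E(\p)>-\infty$ whenever $\f,\p\in\cE^1(X,\theta)$. The same concavity makes $\{E\ge -C\}$ convex, while $\{\sup_X\le 0\}$ is convex by the pointwise sublinearity of $\sup_X$. Hence $\cE_C$, as an intersection of convex sets, is convex.

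\emph{Compactness.} Working in the $L^1$-topology, the key step is to derive a two-sided bound $-C\le\sup_X\f\le 0$ valid for every $\f\in\cE_C$. The upper bound is built into the definition. For the lower bound I would combine the scaling relation~(\ref{equ:scaling}) with the following observation: any $\p\in\cP(X,\theta)$ with $\sup_X\p\le 0$ satisfies $\p\le V_\theta$ by the tautological maximum principle~(\ref{equ:max}) (since $\sup_X\p=\sup_X(\p-V_\theta)$), and hence $E(\p)\le E(V_\theta)=0$ by monotonicity of $E$. Applied to $\p:=\f-\sup_X\f$ this yields $E(\f)\le\sup_X\f$, and therefore $\sup_X\f\ge E(\f)\ge -C$.

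Once this bound is in hand, compactness is immediate. The set $\{\f\in\cP(X,\theta):\sup_X\f=0\}$ is weakly compact, as recalled in Section~1.1, and since $\sup_X$ is weakly continuous by Hartogs' lemma, translations show that $\{\f\in\cP(X,\theta):-C\le\sup_X\f\le 0\}$ is weakly compact as well. Inside this compact set, $\cE_C$ is cut out by the two closed conditions $\sup_X\f\le 0$ (continuity of $\sup_X$) and $E(\f)\ge -C$ (upper semi-continuity of $E$, again Proposition~\ref{prop:energy_gen}), so $\cE_C$ is closed and hence compact. I do not anticipate any serious obstacle; the only slightly delicate point is the inequality $E(\f)\le\sup_X\f$, which is where the scaling property, the monotonicity of $E$, and the maximum principle must combine.
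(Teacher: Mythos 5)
Your proof is correct and takes essentially the same route as the paper: show $\cE_C$ is a closed (by upper semi-continuity of $E$ and continuity of $\sup_X$) subset of the compact set $\{\f\in\cP(X,\theta):-C\le\sup_X\f\le 0\}$, the key being the bound $E(\f)\le\sup_X\f$. The only variation is in how that bound is derived: the paper reads it off the explicit integral formula, estimating $E(\f)\le\int_X(\f-V_\theta)\MA(V_\theta)\le\sup_X\f$ via~(\ref{equ:max}), whereas you obtain it by monotonicity ($\f-\sup_X\f\le V_\theta$ gives $E(\f-\sup_X\f)\le E(V_\theta)=0$) combined with the scaling property~(\ref{equ:scaling}); both are equally short and valid.
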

\begin{proof} Convexity follows from concavity of $E$. Pick $\f\in\cP(X,\theta)$ with $\sup_X\f\le 0$. We then have $\f\le V_\theta$ by (\ref{equ:max}) and it follows from the definition (\ref{equ:energy_var}) of $E$ that 
$$E(\f)\le\int_X(\f-V_\theta)\MA(V_\theta)\le\sup_X\f$$
by (\ref{equ:max}) again. Since $E$ is usc we thus see that $\cE_C$ is a closed subset of the compact set
$$\{\f\in\cP(X,\theta), -C\le\sup_X\f\le 0\}$$
and the result follows.
\end{proof}

\begin{lem}\label{lem:mixte} The integral 
$$\int_X(\f_0-V_\theta)\MA(\f_1,...,\f_n)$$
is finite for every $\f_0,...,\f_n\in\cE^1(X,\theta)$ and is furthermore uniformly bounded for $\f_0,...,\f_n\in\cE_C$. 
\end{lem}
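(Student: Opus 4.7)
The plan is to reduce the mixed integral to a diagonal energy bound via a center-of-mass polarization, and then invoke the $I$-$J$ machinery of Proposition~\ref{prop:energy_min}.

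First, by translation invariance $\MA(\f_1+c_1,\ldots,\f_n+c_n)=\MA(\f_1,\ldots,\f_n)$ together with $\int_X\MA=\vol(\theta)=1$, subtracting off the constants $c_i:=\sup_X\varphi_i$ shifts the integral only by $c_0$. I may therefore assume $\sup_X\varphi_i=0$ for every $i$, which by~(\ref{equ:max}) forces $\varphi_i\le V_\theta$ and hence $V_\theta-\varphi_0\ge 0$. I first treat the case where all $\varphi_i$ have minimal singularities; the extension to $\cE^1(X,\theta)$ is handled at the end.

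Introduce the center of mass $\psi:=\frac{1}{n+1}\sum_{i=0}^n\varphi_i$, which also has minimal singularities and satisfies $\psi\le V_\theta$. Two elementary observations turn the mixed integral into a diagonal one at $\psi$. Pointwise, the identity
$$V_\theta-\varphi_0=(n+1)(V_\theta-\psi)-\sum_{i=1}^n(V_\theta-\varphi_i)\le(n+1)(V_\theta-\psi)$$
holds because $V_\theta-\varphi_i\ge 0$. On the current side, $\theta+dd^c\psi=\frac{1}{n+1}\sum_i(\theta+dd^c\varphi_i)$, and the multinomial expansion of $(\theta+dd^c\psi)^n$ contains only non-negative terms; retaining the one with multi-index $(0,1,\ldots,1)$, whose multinomial coefficient is $n!$, yields
$$\MA(\varphi_1,\ldots,\varphi_n)\le\frac{(n+1)^n}{n!}\MA(\psi).$$
Combining these,
$$\int_X(V_\theta-\varphi_0)\MA(\varphi_1,\ldots,\varphi_n)\le\frac{(n+1)^{n+1}}{n!}\int_X(V_\theta-\psi)\MA(\psi).$$

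For the diagonal bound I rewrite
$$\int_X(V_\theta-\psi)\MA(\psi)=I(\psi,V_\theta)+\int_X(V_\theta-\psi)\MA(V_\theta)$$
and apply~(\ref{equ:compare}) to get $I(\psi,V_\theta)\le(n+1)J(\psi)$. Summing the monotonicity chain~(\ref{equ:monotone}) over~$j$ with $\f=\psi$, $\p=V_\theta$ gives $E(\psi)\le\int_X(\psi-V_\theta)\MA(V_\theta)\le 0$, hence, using $E(V_\theta)=0$, both $J(\psi)\le-E(\psi)$ and $\int_X(V_\theta-\psi)\MA(V_\theta)\le-E(\psi)$. Thus $\int_X(V_\theta-\psi)\MA(\psi)\le(n+2)(-E(\psi))$, and concavity of~$E$ yields $-E(\psi)\le\frac{1}{n+1}\sum_{i=0}^n(-E(\varphi_i))$. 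Assembling everything,
$$\int_X(V_\theta-\varphi_0)\MA(\varphi_1,\ldots,\varphi_n)\le\frac{(n+1)^n(n+2)}{n!}\sum_{i=0}^n(-E(\varphi_i)),$$
which is finite for each $\varphi_i\in\cE^1(X,\theta)$ (with minimal singularities) and uniformly bounded when all $\varphi_i\in\cE_C$, since there $-E(\varphi_i)\le C$.

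To drop the minimal-singularity hypothesis, I would approximate any $\varphi_i\in\cE^1(X,\theta)$ by $\varphi_i^{(k)}:=\max(\varphi_i,V_\theta-k)$, which has minimal singularities, decreases pointwise to $\varphi_i$, and satisfies $E(\varphi_i^{(k)})\ge E(\varphi_i)$ since $E$ is non-decreasing. The bound above thus holds uniformly in~$k$ for the approximants. I expect this limit transfer to be the main technical point; I would handle it by fixing~$m$, noting that $V_\theta-\varphi_0^{(m)}$ is bounded (between $0$ and $m$) and lsc on $\Amp(\theta)$, applying Fatou's lemma against the weak convergence $\MA(\varphi_1^{(k)},\ldots,\varphi_n^{(k)})\to\MA(\varphi_1,\ldots,\varphi_n)$ from Proposition~\ref{prop:cont}, and then letting $m\to\infty$ by monotone convergence, since $V_\theta-\varphi_0^{(m)}\uparrow V_\theta-\varphi_0$ and the full non-pluripolar mass lives in $\Amp(\theta)$.
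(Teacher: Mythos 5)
Your proof is correct and follows essentially the same route as the paper: reduce to minimal singularities, introduce the barycenter $\psi=\frac{1}{n+1}\sum_i\f_i$, use the pointwise bound $V_\theta-\f_0\le(n+1)(V_\theta-\psi)$ together with the multinomial comparison $\MA(\f_1,\ldots,\f_n)\le C_n^{-1}\MA(\psi)$, and then control $\int_X(V_\theta-\psi)\MA(\psi)$ by a multiple of $|E(\psi)|$ before invoking concavity of $E$. You have merely filled in the details the paper leaves implicit (an explicit $C_n$, a derivation of the diagonal energy bound via $I$ and $J$ with constant $n+2$ in place of the paper's $n+1$, and a Fatou/monotone-convergence argument for the limit passage), so the two proofs are in substance the same.
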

\begin{proof} Upon passing to the canonical approximants, we may assume that $\f_0,...,\f_n$ have minimal singularities. Set $\psi:=\frac{1}{n+1}(\f_0+...+\f_n)$. Observe that $V_\theta-\f_0\le(n+1)(V_\theta-\psi)$. Using the convexity of $-E$ it follows that
$$\int_X(V_\theta-\f_0)\MA(\psi)\le(n+1)\int_X(V_\theta-\psi)\MA(\p)$$
$$\le (n+1)^2|E(\p)|\le(n+1)(|E(\f_0)|+...+|E(\f_n)|).$$
On the other hand expanding out we easily get
$$\MA(\psi)\ge C_n\MA(\f_1,...,\f_n)$$
for some  coefficient $C_n$ only depending on $n$ and the result follows. 
\end{proof}

The following characterization of functions in $\cE^1(X,\theta)$ follows from~\cite{BEGZ08} Proposition 2.11. 

\begin{prop}\label{prop:energy} 
Let $\f\in\cP(X,\theta)$. The following properties are equivalent:
\begin{itemize}
\item $\f\in\cE^1(X,\theta)$.
\item $\f$ has full Monge-Amp{\`e}re mass and $\int_X(\f-V_\theta)\MA(\f)$ is finite.
\item We have
$$\int^{+\infty}dt\int_{\{\f=V_\theta-t\}}\MA(\max(\f,V_\theta-t))<+\infty.$$
\end{itemize}
\end{prop}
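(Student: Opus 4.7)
The proof centers on the canonical decreasing approximants
\[
\varphi_t := \max(\varphi, V_\theta - t), \quad t \geq 0,
\]
which are $\theta$-psh with minimal singularity. Since $\varphi_t \searrow \varphi$ as $t \to \infty$, Proposition~\ref{prop:energy_gen} yields $E(\varphi) = \lim_{t \to \infty} E(\varphi_t)$. After translating $\varphi$ by a constant (harmless by (\ref{equ:scaling})), I assume $\sup_X \varphi \leq 0$, so $\varphi \leq V_\theta$ and $\varphi_0 = V_\theta$, whence $E(\varphi_0) = 0$. The core of the argument is the integral identity
\[
-E(\varphi) = \int_0^{\infty} \MA(\varphi_t)\bigl(\{\varphi \leq V_\theta - t\}\bigr)\, dt,
\]
which I establish by differentiating $t \mapsto E(\varphi_t)$. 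Applying the finite-difference formula (\ref{equ:diff}) to the minimal-singularity pair $(\varphi_{t+\epsilon},\varphi_t)$, dividing by $\epsilon > 0$, and sending $\epsilon \to 0^+$, I note that $(\varphi_{t+\epsilon}-\varphi_t)/\epsilon \in [-1,0]$ converges pointwise a.e.\ to $-\mathbf{1}_{\{\varphi<V_\theta-t\}}$, while Proposition~\ref{prop:cont} gives the weak convergence of mixed Monge-Amp\`ere measures $\MA(\varphi_{t+\epsilon}^{(j)},\varphi_t^{(n-j)}) \to \MA(\varphi_t)$ for each $j$. Since for a.e.\ $t > 0$ the level set $\{\varphi = V_\theta - t\}$ carries no $\MA(\varphi_t)$-mass (only countably many $t$ can fail this as $\MA(\varphi_t)(X)\le 1$), one obtains $\frac{d}{dt}E(\varphi_t) = -\MA(\varphi_t)(\{\varphi \leq V_\theta - t\})$ for a.e.\ $t$, and the identity follows by integration.

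The equivalence (i)$\iff$(iii) then follows once I note that $\{\varphi \leq V_\theta - t\}$ differs from $\{\varphi = V_\theta - t\}$ only by $\{\varphi < V_\theta - t\}$, on whose open interior $\varphi_t \equiv V_\theta - t$ locally, so that Corollary~\ref{cor:MAreg} gives $\MA(\varphi_t) = \mathbf{1}_{\{V_\theta=0\}}\theta^n$ there; by Fubini the corresponding correction to the integral is bounded by $\int_X \max(V_\theta-\varphi,0)\,\theta^n < \infty$, which is finite since $\varphi \in L^1(X)$.

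For (i)$\iff$(ii), I invoke the plurifine locality of the non-pluripolar Monge-Amp\`ere operator, which gives $\MA(\varphi_t) = \MA(\varphi)$ on the set $\{\varphi > V_\theta - t\}$ where $\varphi_t \equiv \varphi$. If $E(\varphi) > -\infty$ then the integrand in the main identity tends to $0$, so $\MA(\varphi)(\{\varphi > V_\theta - t\}) = \MA(\varphi_t)(\{\varphi > V_\theta - t\}) \to 1$; as $\{\varphi > V_\theta - t\}$ exhausts $X$ modulo the pluripolar set $\{\varphi = -\infty\}\cup\{V_\theta=-\infty\}$ and $\MA(\varphi)$ puts no mass on pluripolar sets, one gets $\int_X \MA(\varphi) = 1$, i.e.\ full Monge-Amp\`ere mass. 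Combining the layer-cake decomposition with the same locality identity yields
\[
\int_X (V_\theta - \varphi)\MA(\varphi) = \int_0^\infty \MA(\varphi)\bigl(\{\varphi < V_\theta - t\}\bigr)\,dt \leq -E(\varphi) < \infty,
\]
proving (i)$\Rightarrow$(ii); running the chain of identities in reverse (now all equalities under the full mass hypothesis) proves (ii)$\Rightarrow$(i).

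The most delicate point is the derivative computation: passing to a weak limit of mixed Monge-Amp\`ere measures against an indicator-like integrand requires careful combination of monotone continuity (Proposition~\ref{prop:cont}) with the fact that all but countably many level sets $\{\varphi = V_\theta - t\}$ are $\MA(\varphi_t)$-null. Equally subtle is the use of plurifine locality to identify $\MA(\varphi_t)$ with $\MA(\varphi)$ on the (not necessarily Euclidean-open) set $\{\varphi > V_\theta - t\}$, which is precisely what bridges the approximants back to $\varphi$ itself.
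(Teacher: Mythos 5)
Your argument hinges on the derivative formula
\[
\frac{d}{dt}E(\varphi_t)=-\MA(\varphi_t)\bigl(\{\varphi\le V_\theta-t\}\bigr)\quad\text{for a.e.\ }t,
\]
and on the resulting identity $-E(\varphi)=\int_0^\infty\MA(\varphi_t)(\{\varphi\le V_\theta-t\})\,dt$; both are \emph{false}. The passage to the limit fails: weak convergence of $\MA(\varphi_{t+\epsilon}^{(j)},\varphi_t^{(n-j)})$ combined with pointwise bounded convergence of $(\varphi_{t+\epsilon}-\varphi_t)/\epsilon$ does \emph{not} give convergence of the integrals, because for $j\ge1$ a fixed amount of mass of $\MA(\varphi_{t+\epsilon}^{(j)},\varphi_t^{(n-j)})$ concentrates on the shrinking level set $\{\varphi=V_\theta-t-\epsilon\}$, where the integrand equals $-1$, and this mass escapes in the weak limit. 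Your auxiliary claim that $\{\varphi=V_\theta-t\}$ is $\MA(\varphi_t)$-negligible for all but countably many $t$ is also wrong: the ``only countably many'' argument works for a \emph{fixed} finite measure, whereas here the measure $\MA(\varphi_t)$ moves with $t$ together with the level set.

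A one-dimensional counterexample: take $X=\PP^1$, $\omega$ the Fubini--Study form with $\int_X\omega=1$ (so $V_\theta=0$), and $u:=\tfrac12\log\frac{|z|^2}{1+|z|^2}$, which is $\omega$-psh with $\omega+dd^cu=\delta_0$ and $\omega(\{u<-t\})=e^{-2t}$. Put $\varphi:=\max(u,-2)\in\cP(X,\omega)$ (bounded, $\sup_X\varphi=0$), so $\varphi_t=\max(\varphi,-t)=\max(u,-\min(t,2))$. For $0<t<2$ the measure $\MA(\varphi_t)$ equals $\omega$ on the disk $\{u<-t\}$, vanishes on $\{u>-t\}$, and places mass $1-e^{-2t}>0$ on the circle $\{u=-t\}=\{\varphi=V_\theta-t\}$; so the level sets are charged for \emph{every} $t\in(0,2)$. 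Computing $\int_X\varphi_t\,\MA(\varphi_t)=-t$ directly and $\tfrac{d}{dt}\int_X\varphi_t\,\omega=-\omega(\{u<-t\})=-e^{-2t}$, one finds
\[
\frac{d}{dt}E(\varphi_t)=\frac12\,\frac{d}{dt}\Bigl(\int_X\varphi_t\,\omega+\int_X\varphi_t\,\MA(\varphi_t)\Bigr)=-\frac12\bigl(1+e^{-2t}\bigr),
\]
whereas $\MA(\varphi_t)(\{\varphi\le-t\})=e^{-2t}+(1-e^{-2t})=1$ for $t<2$. These disagree for all $t\in(0,2)$, and integrating gives $-E(\varphi)=1+\tfrac14(1-e^{-4})$ while $\int_0^\infty\MA(\varphi_t)(\{\varphi\le-t\})\,dt=2$. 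Since the deductions of (i)$\iff$(iii) and (i)$\iff$(ii) both lean on the discredited identity, none of the equivalences is established as written; a working proof should instead compare $E(\varphi_t)$ with $\int_X(\varphi_t-V_\theta)\MA(\varphi_t)$ via the inequality chain~(\ref{equ:monotone}) rather than through a pointwise derivative of $t\mapsto E(\varphi_t)$. Note also that the paper itself does not prove this proposition but cites \cite{BEGZ08}, Proposition~2.11.
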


Functions in $\cE^1(X,\theta)$ can almost be characterised in terms of the capacity decay of sublevel sets:

\begin{lem}\label{lem:criterion} 
Let $\f\in\cP(X,\theta)$. If 
$$\int_{t=0}^{+\infty}t^n\ca\{\f<V_\theta-t\}dt<+\infty$$
then $\f\in\cE^1(X,\theta)$. Conversely for each $C>0$ 
$$
\int_{t=0}^{+\infty}t\ca\{\f<V_\theta-t\}dt
$$
is bounded uniformly for $\f\in\cE_C$. 
\end{lem}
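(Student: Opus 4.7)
My plan is to connect both directions to the characterization of $\cE^1(X,\theta)$ in Proposition~\ref{prop:energy} via the function $M_t := \MA(\f_t)(E_t)$, where $\f_t := \max(\f, V_\theta - t)$ and $E_t := \{\f < V_\theta - t\}$. After normalizing $\sup_X \f = 0$ (so $\f \le V_\theta$ by~(\ref{equ:max})), the plurifine locality of the non-pluripolar product gives $\MA(\f_t)(X - E_t) = \MA(\f)(X - E_t) = 1 - M_t$, from which $M_t$ is non-increasing in $t$, and $M_t \to 0$ is equivalent to $\f$ having full Monge-Amp\`ere mass.

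For the first direction the key is to exhibit a $\theta$-psh capacity candidate built from $\f_t$. Setting
\[
v_t := \tfrac{1}{t}\f_t + \bigl(1 - \tfrac{1}{t}\bigr)V_\theta \qquad (t \ge 1),
\]
one has $V_\theta - 1 \le v_t \le V_\theta$, so $v_t$ is admissible in~(\ref{equ:cap}), while the convex decomposition $\theta + dd^c v_t = \tfrac{1}{t}(\theta + dd^c \f_t) + (1 - \tfrac{1}{t})(\theta + dd^c V_\theta)$, upon expanding the $n$-th wedge power and keeping the pure $\f_t$ term, gives $\MA(v_t) \ge t^{-n}\MA(\f_t)$. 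Integrating over $E_t$ yields $t^n \ca(E_t) \ge M_t$, so the hypothesis $\int_0^\infty t^n \ca(E_t)\, dt < +\infty$ combined with the trivial bound $M_t \le 1$ on $[0,1]$ gives $\int_0^\infty M_t \, dt < +\infty$; monotonicity forces $M_t \to 0$, hence full Monge-Amp\`ere mass, and Proposition~\ref{prop:energy} yields $\f \in \cE^1(X,\theta)$.

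The converse uses the opposite capacity bound supplied by the comparison principle. For any admissible $\p$ and $s \in (0,1]$, the convex combination $\psi_s := s\p + (1-s)V_\theta$ is $\theta$-psh with $V_\theta - s \le \psi_s \le V_\theta$ and $\MA(\psi_s) \ge s^n\MA(\p)$. Applying the comparison principle (\cite{BEGZ08}, Corollary 2.3) to $\f$ and $\psi_s - (t-s)$, the inclusions $E_t \subset \{\f < \psi_s - (t-s)\} \subset E_{t-s}$ together with full Monge-Amp\`ere mass of $\f$ give
\[
s^n \ca(E_t) \le \MA(\f)(E_{t-s}).
\]
On $\cE_C$, formulas~(\ref{equ:diff}) and~(\ref{equ:monotone}) applied with $\p = V_\theta$ show that the $n+1$ non-negative integrals $\int(V_\theta - \f)\MA(\f^{(j)}, V_\theta^{(n-j)})$ sum to $(n+1)|E(\f)| \le (n+1)C$, so in particular $\int_0^\infty \MA(\f)(E_r)\, dr = \int(V_\theta - \f)\MA(\f) \le (n+1)C$. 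Averaging the capacity inequality in $s \in (0,1]$, combining with the trivial bound $\ca(E_t) \le 1$ on $[0,1]$, and rearranging by Fubini should yield the uniform bound $\int_0^\infty t\ca(E_t)\, dt \le F(C)$.

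The main obstacle is the converse: the naive choice $s = 1$ only gives $\ca(E_t) \le \MA(\f)(E_{t-1})$, which is not sharp enough to deduce uniform integrability of $t \ca(E_t)$ directly from the $L^1$-bound on $\MA(\f)(E_r)$ (the second moment $\int r \MA(\f)(E_r)\, dr$ is not a priori controlled on $\cE_C$). Overcoming this requires exploiting the full range $s \in (0,1]$ by choosing $s$ as a function of $t$ adapted to the local decay of $\MA(\f)(E_r)$, or invoking the extremal function $h_{E_t}^*$ from Lemma~\ref{lem:achieved} together with~(\ref{equ:comp}) to refine the estimate uniformly on $\cE_C$.
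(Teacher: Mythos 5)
Both directions of your argument have genuine gaps, one of which you have not noticed.

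\textbf{First direction.} Your key quantity $M_t := \MA(\f_t)(E_t)$ is miscomputed. By plurifine locality of the non-pluripolar product, the relevant identity on the plurifine open set $E_t=\{\f<V_\theta-t\}$ is that $\f_t\equiv V_\theta-t$ there, hence $\MA(\f_t)\restriction_{E_t}=\MA(V_\theta)\restriction_{E_t}$, so in fact $M_t=\MA(V_\theta)(E_t)$. Since $\MA(V_\theta)$ is non-pluripolar and $\bigcap_t E_t=\{\f=-\infty\}$ is pluripolar, one always has $M_t\to 0$, regardless of whether $\f$ has full Monge--Amp\`ere mass. Your claimed equality $\MA(\f_t)(X-E_t)=\MA(\f)(X-E_t)$ confuses the plurifine open set $\{\f>V_\theta-t\}$ with $X-E_t=\{\f\ge V_\theta-t\}$; these differ by $\{\f=V_\theta-t\}$, a set on which $\MA(\f_t)$ may well carry the missing mass. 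Moreover, even if you could show full Monge--Amp\`ere mass, that is not yet membership in $\cE^1(X,\theta)$: Proposition~\ref{prop:energy} also requires $\int_X(V_\theta-\f)\MA(\f)<+\infty$, which your argument never addresses. The paper instead feeds the bound $\MA(\f_t)\le t^n\ca$ (same as yours) directly into the \emph{third} characterization of Proposition~\ref{prop:energy}, involving $\int_{\{\f=V_\theta-t\}}\MA(\f_t)$; combined with $\{\f=V_\theta-t\}\subset\{\f<V_\theta-(t-1)\}$, the hypothesis $\int t^n\ca\{\f<V_\theta-t\}\,dt<\infty$ then gives exactly what is needed.

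\textbf{Converse.} You correctly diagnose the obstruction: with your parametrization the comparison principle delivers $s^n\ca(E_t)\le\MA(\f)(E_{t-s})$, and no choice of $s$ avoids the second moment $\int r\,\MA(\f)(E_r)\,dr$, which is not bounded on $\cE_C$. The resolution is to apply the comparison principle \emph{with the roles reversed}: compare $u:=t^{-1}\f+(1-t^{-1})V_\theta$ (which lies below $\p-1$ on $\{\f<V_\theta-2t\}$ and above it off $E_t$) against $v:=\p-1$. The resulting bound is
$\int_{E_{2t}}\MA(\p)\le\int_{E_t}\MA(u)$, and one then \emph{expands} $\MA(u)$ as a convex combination of the $\MA(\f^{(j)},V_\theta^{(n-j)})$. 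The term $j=0$, the only one whose coefficient is not damped by a power of $t^{-1}$, is $\MA(V_\theta)$, so the dangerous quadratic term becomes $\int(V_\theta-\f)^2\MA(V_\theta)$; since $\MA(V_\theta)$ has $L^\infty$ density and $\cE_C$ is compact, the uniform Skoda integrability theorem bounds it uniformly on $\cE_C$. The terms $j\ge 1$ come with a factor $t^{-j}\le t^{-1}$, so after multiplying by $t$ and integrating they only produce the first moments $\int(V_\theta-\f)\MA(\f^{(j)},V_\theta^{(n-j)})$, each at most $(n+1)C$ by~(\ref{equ:diff}) and~(\ref{equ:monotone}). In short, the trick is to make the rescaled function $u$ the one whose Monge--Amp\`ere operator lands on the right-hand side, so that its expansion puts $\MA(V_\theta)$ (not $\MA(\f)$) in front of the quadratic contribution.
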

Note that if $\f$ is an arbitrary $\theta$-psh function then $\ca\{\f<V_\theta-t\}$ usually decreases no faster that $1/t$ as $t\to+\infty$. 

\begin{proof} 
The proof is adapted from Lemma 5.1 in~\cite{GZ07}. 
Observe that for each $t\ge 1$ the function $\f_t:=\max(\f,V_\theta-t)$ satisfies 
$V_\theta-t\le\f_t\le V_\theta$
thus 
$$
t^{-1}\f_t+(1-t^{-1})V_\theta
$$ 
is a candidate in the supremum defining $\ca$, so that
$$
\MA(\f_t)\le t^n\ca.
$$
Now the first assertion follows from Proposition~\ref{prop:energy}.

In order to prove the converse we apply the comparison principle. Pick a candidate 
$$
\p\in\cP(X,\theta),\,V_\theta-1\le\p\le V_\theta
$$
in the supremum defining $\ca$. For $t\ge 1$ we have
$$
\{\f<V_\theta-2t\}\subset\{t^{-1}\f+(1-t^{-1})V_\theta<\p-1\}\subset\{\f<V_\theta-t\}
$$
thus the comparison principle (cf.~\cite{BEGZ08} Corollary 2.3) implies 
$$
\int_{\{\f<V_\theta-2t\}}\MA(\p)\le\int_{\{\f<V_\theta-t\}}\MA(t^{-1}\f+(1-t^{-1})V_\theta)
$$
$$\le\int_{\{\f<V_\theta-t\}}\MA(V_\theta)+\sum_{j=1}^n{n\choose
  j}t^{-j}\int_{\{\f<V_\theta-t\}}\MA\left(\f^{(j)},V_\theta^{(n-j)}\right)$$
$$
\le\int_{\{\f<V_\theta-t\}}\MA(V_\theta)+C_1t^{-1}\sum_{j=1}^n\int_{\{\f<V_\theta-t\}}\MA\left(\f^{(j)},V_\theta^{(n-j)}\right)
$$
since $t\ge 1$ and it follows that
$$
\int_{t=0}^{+\infty}t\ca\{\f<V_\theta-t\}\le C_2+C_3\int_X(V_\theta-\f)^2\MA(V_\theta)
$$
since $E(\f)\ge -C$ and $\ca\le 1$. But $\MA(V_\theta)$ has $L^\infty$-density with respect to Lebesgue measure by Corollary~\ref{cor:MAreg} and it follows from the uniform version of Skoda's theorem~\cite{Zeh01} that there exists $\e>0$ and $C_1>0$ such that 
$$
\int_Xe^{-\e\f}\MA(V_\theta)\le C_1
$$
for all $\f$ in the compact subset $\cE_C$ of $\cP(X,\theta)$. This implies in turn that $\int_X(V_\theta-\f)^2\MA(V_\theta)$ is uniformly bounded for $\f\in\cE_C$ and the result follows. 
\end{proof}

\begin{rem}Proposition B of~\cite{BGZ08b} says that the exponent $n$
  is optimal for the similar statement in the setting of psh functions
  on hyperconvex domains.
\end{rem}

\begin{cor}\label{cor:pluripol} If $A\subset X$ is a (locally) pluripolar subset, then there exists $\f\in\cE^1(X,\theta)$ such that $A\subset\{\f=-\infty\}$. 
\end{cor}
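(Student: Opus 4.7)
The plan is to realize $\f$ as an infinite convex combination of \emph{deep} relative extremal functions attached to open neighborhoods of $A$ whose capacity decays geometrically, with weights chosen so that $\f=-\infty$ on $A$ while $E(\f)>-\infty$.

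Since the pre-capacity of any pluripolar Borel set vanishes (every candidate measure $\MA(v)$ entering the definition of $\ca$ is non-pluripolar) and any pluripolar set is contained in a Borel one, the outer regularisation satisfies $\ca^*(A)=0$; by the very definition of $\ca^*$ one may thus pick open sets $U_k\supset A$ with $\ca(U_k)\le 2^{-k}$. For each $k\ge 1$ introduce the relative extremal function
$$
h_k:=\bigl(\sup\{v\in\cP(X,\theta):\ v\le V_\theta\text{ on }X,\ v\le V_\theta-k\text{ on }U_k\}\bigr)^*.
$$
The constant candidate $V_\theta-k$ shows $h_k\ge V_\theta-k$ on $X$; since $U_k$ is open, a neighborhood of any $x\in U_k$ lies in $U_k$, which together with the upper semi-continuity of $V_\theta$ forces $h_k=V_\theta-k$ pointwise on $U_k$. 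Adapting the balayage argument in the proof of Lemma~\ref{lem:supp} to this open setting yields the support identity $h_k=V_\theta$ almost everywhere on $X\setminus U_k$ with respect to $\MA(h_k)$. The function $w:=k^{-1}h_k+(1-k^{-1})V_\theta$ satisfies $V_\theta-1\le w\le V_\theta$ and so is admissible for $\ca$; by multilinearity of the non-pluripolar product $\MA(w)\ge k^{-n}\MA(h_k)$, whence $\MA(h_k)(U_k)\le k^n\ca(U_k)$. Combining with the support identity and the monotonicity inequality~(\ref{equ:monotone}) one gets
$$
|E(h_k)|\le\int_X(V_\theta-h_k)\MA(h_k)=k\cdot\MA(h_k)(U_k)\le k^{n+1}\,2^{-k}.
$$

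Set $c_k:=6/(\pi^2 k^2)$, so that $\sum_k c_k=1$, and define $\f:=\sum_{k\ge 1}c_k h_k$, realised as the decreasing limit of the $\theta$-psh convex combinations $S_N:=\sum_{k=1}^N c_k h_k+\bigl(1-\sum_{k=1}^N c_k\bigr)V_\theta$; hence $\f\in\cP(X,\theta)$. For every $x\in A\subset\bigcap_k U_k$ we have $h_k(x)=V_\theta(x)-k$ for all $k$, so
$$
\f(x)=V_\theta(x)-\tfrac{6}{\pi^2}\sum_{k\ge 1}\tfrac{1}{k}=-\infty.
$$
Using concavity of $E$ applied to each $S_N$ together with continuity of $E$ along decreasing sequences (Propositions~\ref{prop:energy_min} and~\ref{prop:energy_gen}),
$$
E(\f)=\lim_N E(S_N)\ge\sum_{k\ge 1}c_k\,E(h_k)\ge-\tfrac{6}{\pi^2}\sum_{k\ge 1}k^{\,n-1}\,2^{-k}>-\infty,
$$
so $\f\in\cE^1(X,\theta)$, as required.

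The one point requiring care is the open-set analogue of Lemma~\ref{lem:supp}, namely $h_k=V_\theta$ $\MA(h_k)$-almost everywhere on $X\setminus U_k$; I expect the balayage argument in the proof given there to carry over verbatim once small balls contained in $\Amp(\theta)\setminus\overline{U_k}$ replace the ones disjoint from the compact $K$, and this is the only step where the shift from compact to open data demands an adaptation.
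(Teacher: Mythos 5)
Your overall strategy --- building $\f$ as a convex combination $\sum_k c_k h_k$ of ``deep'' extremal functions attached to shrinking open neighborhoods $U_k\supset A$ with $\ca(U_k)\le 2^{-k}$ --- is genuinely different from the paper's proof, which produces a $\theta$-psh function with $A$ in its polar set via a K\"ahler version of Josefson's theorem (pulled back through a modification) and then composes with a slowly growing convex function $\chi$ so that Lemma~\ref{lem:criterion} applies. Your route is in fact more self-contained, since it avoids Josefson, and the estimate it hinges on, namely $|E(h_k)|\le k^{n+1}\ca(U_k)$, is correct.

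The step you flag as ``requiring care'' is, however, genuinely wrong as stated, not merely in need of a verbatim adaptation. The identity ``$h_k=V_\theta$ $\MA(h_k)$-a.e.\ on $X\setminus U_k$'' is \emph{false} in general: the balayage argument with balls contained in $\Amp(\theta)\setminus\overline{U_k}$ can only yield $\MA(h_k)=0$ on $\{h_k<V_\theta\}\cap(X\setminus\overline{U_k})$, and it says nothing about $\partial U_k$, where $h_k$ is typically strictly smaller than $V_\theta$. This is exactly the place where the compactness hypothesis in Lemma~\ref{lem:supp} is essential (for $x_0\notin K$ with $K$ compact one picks a ball missing $K$; for $x_0\in\partial U_k$ every ball meets $U_k$). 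Already on $\PP^1$ with $U$ a small disc the Riesz measure of the relative extremal function has nonzero mass on $\partial U$ even though $h_U^*<V_\theta$ there, and that boundary mass cannot be controlled by $\ca(U_k)$: one would need to control $\ca(\overline{U_k})$ or $\ca(\partial U_k)$, which is hopeless for a dense pluripolar $A$ (then $\overline{U_k}$ may be all of $X$). With only the crude bound $|E(h_k)|\le k$ one cannot choose weights $c_k$ making $\sum c_k k=\infty$ \emph{and} $\sum c_k|E(h_k)|<\infty$, so the gap is fatal to the argument as written.

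The fix is to obtain the energy bound by \emph{inner} approximation of $U_k$ by compacts. Take compacts $K\nearrow U_k$; the corresponding extremal functions (still with the $V_\theta-k$ normalization) decrease to $h_k$. For compact $K$ Lemma~\ref{lem:supp} (with $-k$ in place of $-1$; the proof is identical) \emph{does} give $h_K^*=V_\theta$ $\MA(h_K^*)$-a.e.\ on $X\setminus K$, whence
\begin{equation*}
\int_X(V_\theta-h_K^*)\MA(h_K^*)\le k\,\MA(h_K^*)(K)\le k^{n+1}\ca(K)\le k^{n+1}\ca(U_k),
\end{equation*}
using, exactly as you did, the admissibility of $k^{-1}h_K^*+(1-k^{-1})V_\theta$. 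Passing to the decreasing limit with the second assertion of Proposition~\ref{prop:cont} (applicable since $h_k$ has minimal singularities) gives $\int_X(V_\theta-h_k)\MA(h_k)\le k^{n+1}2^{-k}$, and then $|E(h_k)|\le k^{n+1}2^{-k}$ by (\ref{equ:monotone}) as you argued. The remainder of your proof --- the convex combination, the pole computation on $A$, and the lower bound on $E(\f)$ via concavity and continuity along decreasing sequences --- is correct as written.
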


\begin{proof} Since $\{\theta\}$ is big there exists a proper modification $\mu:X'\to X$ and an effective $\R$-divisor $E$ on $X'$ such that $\mu^*\theta-E$ is cohomologous to a K{\"a}hler form $\omega$ on $X'$. By the K{\"a}hler version of Josefson's theorem (\cite{GZ05} Theorem 6.2) we may thus find a positive current $T$ in the class of $\omega$ whose polar set contains $A$. The push-forward $\mu_*(T+E)$ is then a positive current in the class of $\theta$, and we have thus found $\f\in\cP(X,\theta)$ such that $A\subset\{\f=-\infty\}$. Now let $\chi:\R\to\R$ be a smooth convex non-decreasing function such that $\chi(-\infty)=-\infty$ and $\chi(s)=s$ for all $s\ge 0$. If $\f$ is $\theta$-psh, then so is 
$$\f_\chi:=\chi\circ(\f-V_\theta)+V_\theta,$$ 
and $A$ is contained in the poles of $\f_\chi$. On the other hand we can clearly make $\ca\{\f_\chi<V_\theta-t\}$ tend to $0$ as fast as we like when $t\to\infty$ by choosing $\chi$ with a sufficiently slow decay at $-\infty$. It thus follows from Lemma~\ref{lem:criterion} that $\f_\chi\in\cE^1(X,\theta)$ for an appropriate choice of $\chi$, and the result follows. Actually $\chi(t)=-\log(1-t)$ is enough (compare~\cite{GZ07} Example 5.2). 
\end{proof}

\section{Action of a measure on psh functions}

\subsection{Finiteness}

Given a probability measure $\mu$ on $X$ and $\f\in\cP(X,\theta)$ we set
\begin{equation}\label{equ:lmu}
L_\mu(\f):=\int_\Omega(\f-V_\theta)d\mu
\end{equation}
where $\Omega:=\Amp(\theta)$ denotes the ample locus. Since $\Omega$ is Zariski open, we have
$$L_\mu(\f)=\int_X(\f-V_\theta)d\mu$$
if $\mu$ is non-pluripolar. 

This defines a functional $L_\mu:\cP(X,\theta)\to[-\infty,+\infty[$ which is obviously affine and satisfies the scaling property
$$L_\mu(\f+c)=L_\mu(\f)+c$$
for any $c\in\R$.  

In the special case where $\mu=\MA(V_\theta)$ we will write as a short-hand
\begin{equation}\label{equ:izero}
L_0(\f):=L_{\MA(V_\theta)}(\f)=\int_\Omega(\f-V_\theta)\MA(V_\theta)
\end{equation}
so that
$$
J=L_0-E
$$
holds by definition.

\begin{lem}\label{lem:usc} $L_\mu$ is usc on $\cP(X,\theta)$. On the other hand given $\f\in\cP(X,\theta)$ the map $\mu\mapsto L_\mu(\f)$ is also usc. 
\end{lem}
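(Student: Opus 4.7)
The two claims will be handled separately, by rather different arguments.

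For the upper semicontinuity of $L_\mu$ on $\cP(X,\theta)$, the plan is to exploit the canonical decreasing approximants. Given $\f_j\to\f$ in $\cP(X,\theta)$, I will consider $\p_j:=(\sup_{k\ge j}\f_k)^*$; these are $\theta$-psh functions that are uniformly bounded above---combining Hartogs' lemma on $\sup_X\f_k$ with the tautological maximum principle \eqref{equ:max}---and they decrease with $j$. Because $\lim_j\p_j$ and $\f$ are both usc $\theta$-psh representatives of the same $L^1$-class, they must agree pointwise on $X$, so $\p_j\searrow\f$ everywhere. On $\Omega$, where $V_\theta$ is real-valued and continuous by Theorem~\ref{thm:c11}, I can shift $\p_j-V_\theta$ by its uniform upper bound to obtain a non-negative sequence increasing to the corresponding shift of $\f-V_\theta$, and monotone convergence then yields $L_\mu(\p_j)\searrow L_\mu(\f)$. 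Since $\p_j\ge\f_k$ for $k\ge j$ forces $L_\mu(\p_j)\ge L_\mu(\f_k)$, passing to $\limsup_k$ and then $\inf_j$ gives $L_\mu(\f)\ge\limsup_k L_\mu(\f_k)$, as desired.

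For the upper semicontinuity of $\mu\mapsto L_\mu(\f)$, my plan is to rewrite $L_\mu(\f)$ as an integral over the compact space $X$ of a globally usc function and then invoke the portmanteau theorem. Using the scaling property I normalize $\sup_X\f=0$, so that $\f\le V_\theta$ on $X$ by \eqref{equ:max}. I then extend $u:=\f-V_\theta$ from $\Omega$ to all of $X$ by declaring $u=0$ off $\Omega$; the crucial check is that $u$ remains usc on $X$, which holds because near any $x_0\in X\setminus\Omega$ one has $u\le 0=u(x_0)$ in a neighborhood. Consequently $L_\mu(\f)=\int_X u\,d\mu$, and truncating $u_N:=\max(u,-N)$ gives bounded usc functions on $X$; each functional $\mu\mapsto\int_X u_N\,d\mu$ is usc in the weak topology by portmanteau. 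Monotone convergence identifies $L_\mu(\f)$ with $\inf_N\int_X u_N\,d\mu$, an infimum of usc functionals, hence usc.

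The main technical subtlety will be in the first part, namely verifying that $\p_j\searrow\f$ pointwise on all of $X$ and not merely almost everywhere, so that monotone convergence applies against an arbitrary probability measure $\mu$. This rests on the standard fact that two usc $\theta$-psh functions coinciding almost everywhere are equal everywhere, combined with the uniform upper bound on the $\p_j$ coming from Hartogs' lemma. The second part is then essentially routine once the sign normalization makes the extension of $u$ transparently usc across $\partial\Omega$.
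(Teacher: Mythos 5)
Your proof is correct. For the first claim you proceed via the decreasing regularized envelopes $\p_j:=(\sup_{k\ge j}\f_k)^*$ and monotone convergence, whereas the paper applies Fatou's lemma directly to the $\f_j$ after noting that $\f_j-V_\theta$ is uniformly bounded above and $\f\ge\limsup_j\f_j$ pointwise (since $\f=(\limsup_j\f_j)^*$). Both arguments are valid; the Fatou route is more economical since it avoids having to justify that $\p_j\searrow\f$ \emph{everywhere} (which you correctly reduce, via negligibility of Bedford--Taylor exceptional sets and the fact that two usc $\theta$-psh functions equal a.e. coincide, to a statement that is true but takes a few more lines than Fatou). For the second claim the paper records only the one-line observation that $\f-V_\theta$ is usc on $\Omega$; your extension of $u=\f-V_\theta$ by $0$ across $X\setminus\Omega$ after normalizing $\sup_X\f=0$, together with truncation and the portmanteau theorem, is exactly the argument that one-liner is compressing, so the two are essentially identical. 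A small point: in your first part the use of \eqref{equ:max} to get the uniform upper bound is harmless but not needed once Hartogs' lemma gives $\sup_X\f_k=O(1)$; the paper's proof uses Hartogs alone.
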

\begin{proof} Let $\f_j\to\f$ be a convergent sequence of functions in
  $\cP(X,\theta)$. Hartogs' lemma implies that $\f_j$ is uniformly bounded from
  above, hence so is $\f_j-V_\theta$. Since we have
$$\f=(\limsup_{j\to\infty}\f_j)^*\ge\limsup_{j\to\infty}\f_j$$
everywhere on $X$ we infer 
$$L_\mu(\f)\ge\limsup_{j\to\infty} L_\mu(\f_j)$$
as desired by Fatou's lemma. The second assertion follows directly from the fact that $\f-V_\theta$ is usc on $\Omega$ since $V_\theta$ is continuous on $\Omega$. 
\end{proof} 

\begin{lem}  \label{lem:2eq4}
Let $\f\in\cP(X,\theta)$ and set $\mu:=\MA(\f)$. 
\begin{enumerate}
\item[(i)] If $\f$ has minimal singularities then $L_\mu$ is finite on $\cP(X,\theta)$. 
\item[(ii)] If $\f\in\cE^1(X,\theta)$ then $L_\mu$ is finite on $\cE^1(X,\theta)$. 
\end{enumerate}
\end{lem}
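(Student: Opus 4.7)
The plan is to dispatch (ii) by a direct application of Lemma~\ref{lem:mixte} and to obtain (i) by approximation, trading the possibly severe singularities of $\p$ for the boundedness of $\f-V_\theta$. Since $\f$ has full Monge--Amp\`ere mass in both cases, the measure $\mu=\MA(\f)$ is non-pluripolar, so $L_\mu(\p)=\int_X(\p-V_\theta)d\mu$ for every $\p\in\cP(X,\theta)$. Part (ii) is then immediate: take $\f_0=\p$ and $\f_1=\cdots=\f_n=\f$ in Lemma~\ref{lem:mixte}, all of which lie in $\cE^1(X,\theta)$, to conclude that $L_\mu(\p)$ is finite.

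For (i) I would normalize $\sup_X\p=0$ so that $\p\le V_\theta$ and $L_\mu(\p)\le 0$; the task is to rule out $L_\mu(\p)=-\infty$. Approximate $\p$ by the canonical truncations $\p_k:=\max(\p,V_\theta-k)$, which have minimal singularities and decrease to $\p$, and apply monotone convergence to the non-negative integrand $V_\theta-\p_k$ to get $L_\mu(\p_k)\searrow L_\mu(\p)$. It thus suffices to bound $L_\mu(\p_k)$ from below uniformly in $k$, and this is achieved by comparing $\mu$ to the reference measure $\MA(V_\theta)$.

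The comparison rests on the telescoping identity
$$
\MA(\f)-\MA(V_\theta)=dd^c(\f-V_\theta)\wedge T,\qquad T:=\sum_{j=0}^{n-1}(\theta+dd^c\f)^j\wedge(\theta+dd^c V_\theta)^{n-1-j},
$$
and the integration-by-parts formula for functions with minimal singularities from~\cite{BEGZ08}, which together yield
$$
\int_X(\p_k-V_\theta)\bigl(\MA(\f)-\MA(V_\theta)\bigr)=\int_X(\f-V_\theta)\bigl[(\theta+dd^c\p_k)-(\theta+dd^c V_\theta)\bigr]\wedge T.
$$
The right-hand side is controlled uniformly in $k$: $\f-V_\theta$ is globally bounded since $\f$ has minimal singularities, while the two non-pluripolar products $(\theta+dd^c\p_k)\wedge T$ and $(\theta+dd^c V_\theta)\wedge T$ have total mass at most $\vol(\theta)=1$. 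This yields the uniform estimate $|L_\mu(\p_k)-L_{\MA(V_\theta)}(\p_k)|\le 2\|\f-V_\theta\|_\infty$, and the argument closes by observing that $L_{\MA(V_\theta)}(\p_k)\to L_{\MA(V_\theta)}(\p)>-\infty$: indeed, by Corollary~\ref{cor:MAreg} the measure $\MA(V_\theta)$ has $L^\infty$-density with respect to Lebesgue measure, and any $\theta$-psh function is $L^1$-integrable against a smooth volume form. The one technical input here is the integration by parts, supplied by the non-pluripolar machinery of~\cite{BEGZ08}; the conceptual idea is simply to move the singularity of $\p_k$ onto the bounded factor $\f-V_\theta$.
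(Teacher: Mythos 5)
Your proof is correct and follows essentially the same strategy as the paper's: part (ii) by direct invocation of Lemma~\ref{lem:mixte}, and part (i) by integration by parts to trade the singular factor $\p$ (or $\p_k$) for the bounded factor $\f-V_\theta$, followed by a limiting argument via the canonical approximants $\max(\p,V_\theta-k)$ together with the $L^\infty$-density of $\MA(V_\theta)$. Your telescoping identity is simply a compact packaging of the paper's iterated integration-by-parts; the only slip is a bookkeeping constant, which should be $2n\sup_X|\f-V_\theta|$ rather than $2\sup_X|\f-V_\theta|$, since $(\theta+dd^c\p_k)\wedge T$ and $(\theta+dd^cV_\theta)\wedge T$ each have total mass at most $n\vol(\theta)$.
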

\begin{proof} (ii) follows directly from Lemma~\ref{lem:mixte}. We prove (i). Let $\p\in\cP(X,\theta)$. We can assume that $\p\le 0$, or equivalently $\p\le V_\theta$. Assume first that $\p$ also has minimal singularities. If we set $\Omega:=\Amp(\theta)$, then we can integrate by parts using Theorem 1.14 of~\cite{BEGZ08} to get
$$\int_\Omega(V_\theta-\p)(\theta+dd^c\f)^n=\int_\Omega(V_\theta-\p)(\theta+dd^cV_\theta)\wedge(\theta+dd^c\f)^{n-1}$$
$$+\int_\Omega(\f-V_\theta)dd^c(V_\theta-\p)\wedge(\theta+dd^c\f)^{n-1}.$$
The second term is equal to 
$$\int_\Omega(\f-V_\theta)(\theta+dd^cV_\theta)\wedge(\theta+dd^c\f)^{n-1}-\int_\Omega(\f-V_\theta)(\theta+dd^c\p)\wedge(\theta+dd^c\f)^{n-1}$$
and each of these terms is controled by 
$$\sup_X|\f-V_\theta|.$$
By iterating integration by parts as above we thus get
$$\int_X(V_\theta-\p)\MA(\f)\le 2n\sup_X|\f-V_\theta)+\int_X(V_\theta-\p)\MA(V_\theta).$$
The result follows by replacing $\p$ by $\max(\p,V_\theta-k)$ and letting $k\to\infty$, since $\MA(V_\theta)$ has $L^\infty$ density with respect to Lebesgue measure. 
\end{proof}

\subsection{Properness and coercivity} 
The $J$-functional is translation invariant thus it descends to a
non-negative, convex and lower semicontinuous function
$J:\cT(X,\theta)\to[0,+\infty]$ which is finite precisely on
$\cT^1(X,\theta)$. It actually defines an \emph{exhaustion} function of
$\cT^1(X,\theta)$:

\begin{lem}\label{lem:Jproper} The function $J:\cT^1(X,\theta)\to[0,+\infty[$ is an exhaustion of $\cT^1(X,\theta)$ in the sense that each sublevel set $\{J\le C\}\subset\cT^1(X,\theta)$ is compact.
\end{lem}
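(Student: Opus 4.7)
The plan is to exploit that $J=L_0-E$ combines an $L^1$-continuous piece with an upper semicontinuous piece, together with the already-established compactness of sup-normalized $\theta$-psh functions.

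First, because $J$ is translation invariant, for each $T_j\in\{J\le C\}$ I fix the unique representative $\f_j\in\cP(X,\theta)$ with $\sup_X\f_j=0$ (using the tautological maximum principle~\eqref{equ:max}). The set $\{\f\in\cP(X,\theta):\sup_X\f=0\}$ is compact in the weak ($=L^1$) topology, so after passing to a subsequence I may assume $\f_j\to\f$ in $L^1(X)$ for some $\f\in\cP(X,\theta)$ with $\sup_X\f=0$. Since $\f\mapsto\theta+dd^c\f$ is continuous from the $L^1$-topology to the weak topology on currents, the corresponding currents $T_j$ converge weakly to $T:=\theta+dd^c\f$, so it remains only to show $T\in\cT^1(X,\theta)$ with $J(T)\le C$.

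Next I handle the linear term. By Corollary~\ref{cor:MAreg} the measure $\MA(V_\theta)$ has bounded density with respect to Lebesgue measure. Since $V_\theta$ is bounded on $\Omega$ (in fact continuous on $\Omega$, and the complement $X\setminus\Omega$ is a proper analytic subset of Lebesgue measure zero), and since $\f_j-V_\theta$ is uniformly bounded above and $\f_j\to\f$ in $L^1(X)$, the bounded density of $\MA(V_\theta)$ yields
\[
L_0(\f_j)=\int_X(\f_j-V_\theta)\MA(V_\theta)\longrightarrow\int_X(\f-V_\theta)\MA(V_\theta)=L_0(\f),
\]
with both quantities finite.

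Finally I invoke the upper semicontinuity of $E$ from Proposition~\ref{prop:energy_gen}. From $J(\f_j)\le C$ we read off $E(\f_j)\ge L_0(\f_j)-C$, and then
\[
E(\f)\ge\limsup_{j\to\infty}E(\f_j)\ge\limsup_{j\to\infty}\bigl(L_0(\f_j)-C\bigr)=L_0(\f)-C>-\infty.
\]
Hence $\f\in\cE^1(X,\theta)$, i.e.\ $T\in\cT^1(X,\theta)$, and moreover $J(T)=L_0(\f)-E(\f)\le C$. This shows $\{J\le C\}$ is sequentially (hence, being metrizable on the weakly compact space of sup-normalized potentials, topologically) compact.

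The only nontrivial step is the convergence $L_0(\f_j)\to L_0(\f)$; without the $L^\infty$ bound on the density of $\MA(V_\theta)$ (Corollary~\ref{cor:MAreg}) one would only get $\limsup L_0(\f_j)\le L_0(\f)$ from Lemma~\ref{lem:usc}, which is insufficient because one needs a \emph{lower} bound on $L_0(\f_j)$ in order to transfer the energy bound $J(\f_j)\le C$ to the limit via upper semicontinuity of $E$. Once this convergence is in hand, the rest of the argument is a formal two-line manipulation.
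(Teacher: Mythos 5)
Your proof is correct and uses essentially the same ingredients as the paper --- Hartogs compactness of sup-normalized potentials, the $L^\infty$ density of $\MA(V_\theta)$ (Corollary~\ref{cor:MAreg}) to upgrade the mere upper semicontinuity of $L_0$ to full continuity along the sequence, and upper semicontinuity of $E$ (Proposition~\ref{prop:energy_gen}) --- the paper just packages them more compactly by deriving a uniform two-sided bound on $L_0$ in terms of $\sup_X\f$, so that $\{J\le C\}$ sits inside the image of the compact set $\cE_{C+A}$ from Lemma~\ref{lem:compact}, whereas you rerun the compactness argument directly. One small inaccuracy: $V_\theta$ is only \emph{locally} bounded on $\Omega$ and may tend to $-\infty$ near $\partial\Omega$, but since what your argument actually needs is that $V_\theta\in L^1(\MA(V_\theta))$ and that $\int_X\f_j\,\MA(V_\theta)\to\int_X\f\,\MA(V_\theta)$, both of which follow from the bounded density of $\MA(V_\theta)$ together with $L^1$-convergence of the $\f_j$, this parenthetical claim is harmless.
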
 
\begin{proof} By Lemma~\ref{lem:2eq4} there exists $A>0$ such that 
$$\sup_X\f-A\le\int_X\f\MA(V_\theta)\le\sup_X\f.$$
Now pick $T\in\{J\le C\}$ and write it as $T=\theta+dd^c\f$ with $\sup_X\f=0$. We then have 
$$J(T)=\int_X\f\MA(V_\theta)-E(\f)\le C$$
thus $E(\f)\ge -C-A$. This means that the closed set $\{J\le C\}$ is contained in the image of $\cE_{C+A}$ by the quotient map 
$$\cP(X,\theta)\to\cT(X,\theta).$$
The result now follows since $\cE_{C+A}$ is compact by Lemma~\ref{lem:compact}. \end{proof}

The following statement extends part of~\cite{GZ07} Lemma 2.11.

\begin{prop}\label{prop:square} Let $L:\cP(X,\theta)\to[-\infty,+\infty[$ be a convex and non-decreasing function satisfying the scaling property $L(\f+c)=L(\f)+c$.
\begin{enumerate} 
\item[(i)] If $L$ is finite on a given compact convex subset $\cK$ of $\cP(X,\theta)$ then $L$ is bounded on $\cK$. 
\item[(ii)]  If $L$ is finite on $\cE^1(X,\theta)$ then 
\begin{equation}\label{equ:root}\sup_{\cE_C}|L|=O(C^{1/2})
\end{equation}
as $C\to+\infty$.
\end{enumerate}
\end{prop}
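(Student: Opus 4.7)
A preliminary observation used throughout is that monotonicity together with the scaling property $L(\f+c)=L(\f)+c$ forces $L$ to be $1$-Lipschitz for the uniform norm: from $\f\le\p+\|\f-\p\|_\infty$ pointwise, monotonicity and scaling give $L(\f)\le L(\p)+\|\f-\p\|_\infty$, and symmetrically.

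For (i), after replacing $L$ by $L-L(V_\theta)$ (which preserves all the hypotheses), one may assume $L(V_\theta)=0$. The upper bound is then immediate from the tautological maximum principle (\ref{equ:max}): $\f\le V_\theta+\sup_X\f$, so $L(\f)\le\sup_X\f$, and the continuous function $\f\mapsto\sup_X\f$ is bounded on the compact set $\cK$. For the lower bound, I argue by contradiction. If $L$ were unbounded below on $\cK$, pick $\f_j\in\cK$ with $L(\f_j)\to-\infty$ and, by compactness of $\cK$, extract a convergent subsequence $\f_j\to\f_\infty\in\cK$. Applying convexity of $L$ along segments $[\f_\infty,\f_j]\subset\cK$ together with the $1$-Lipschitz bound compared against suitable truncations $\max(\f_\infty,V_\theta-k)$, which are bounded perturbations of $V_\theta$ on which $L$ stays finite, forces a contradiction with $L(\f_\infty)\in\R$.

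For (ii), by (i) and Lemma~\ref{lem:compact}, $M(C):=\sup_{\cE_C}|L|$ is finite for every $C>0$, and the task is to show $M(C)=O(\sqrt C)$. Again normalise so that $L(V_\theta)=0$; for $\f\in\cE_C$ with $\sup_X\f=0$ one has $\f\le V_\theta$, so monotonicity gives $L(\f)\le 0$ and $|L(\f)|=-L(\f)$. The fundamental inequality comes from applying convexity along the segment $\f_s:=(1-s)V_\theta+s\f$, $s\in(0,1)$: concavity of $E$ (combined with $\sup_X\f_s\le 0$) gives $\f_s\in\cE_{sC}$, while convexity of $L$ gives $L(\f_s)\le sL(\f)$, so
$$|L(\f)|\le s^{-1}|L(\f_s)|\le s^{-1}M(sC).$$
To upgrade this to the sharp rate, I would exploit that $s\mapsto E(\f_s)$ is a polynomial of degree $n+1$ in $s$ vanishing at $s=0$, with derivative $L_0(\f)$ there, so that $|E(\f_s)|$ admits a second-order control for small $s$ involving the (uniformly bounded on $\cE_C$) quantity $L_0$. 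Optimising over $s$ in the refined inequality, starting from the linear bound $M(C)=O(C)$ provided by the basic estimate, then yields $M(C)=O(\sqrt C)$.

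The main obstacle is the final step of (ii): the naive iteration of $M(C)\le s^{-1}M(sC)$ gives only linear growth $O(C)$, and extracting the sharp $1/2$-exponent requires coupling the convexity of $L$ with the finer concavity/polynomial structure of the Aubin--Mabuchi energy $E$ along the segment $\f_s$.
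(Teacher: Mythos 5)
Your preliminary observation (the $1$-Lipschitz estimate) is correct, and the upper bound in (i) is handled exactly as in the paper. But both contradiction arguments have genuine gaps.

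\textbf{Part (i).} Your lower-bound argument does not close. Convergence $\f_j\to\f_\infty$ in $\cP(X,\theta)$ is $L^1$-convergence, not uniform, so the $1$-Lipschitz bound cannot be applied along that sequence. More importantly, convexity of $L$ along the segments $[\f_\infty,\f_j]$ only yields inequalities of the form $L(s\f_j+(1-s)\f_\infty)\le sL(\f_j)+(1-s)L(\f_\infty)$, which push the values down on the interior of the segment; it gives no \emph{lower} bound on $L(\f_j)$ in terms of $L(\f_\infty)$, so no contradiction with $L(\f_j)\to-\infty$ materialises. The paper's device is the summation trick: given $L(\f_j)\le-2^j$, form $\f:=\sum_{j\ge1}2^{-j}\f_j\in\cK$ (Lemma~\ref{lem:basic1}), use (\ref{equ:max}) and monotonicity to see $\f\le\sum_{j\le N}2^{-j}\f_j+2^{-N}V_\theta$, and then convexity plus scaling forces $L(\f)\le-N+2^{-N}L(V_\theta)\to-\infty$, contradicting $L(\f)>-\infty$. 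You need this (or something equally structural) rather than a limit-point argument.

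\textbf{Part (ii).} The skeleton is right (normalise $L(V_\theta)=0$, set $\f_s:=s\f+(1-s)V_\theta$, use convexity of $L$ to get $|L(\f)|\le s^{-1}M(sC)$ with $\f_s\in\cE_{sC}$), but the claimed source of the quadratic improvement is wrong. The second Taylor coefficient of $s\mapsto E(\f_s)$ at $s=0$ is $\tfrac12 E''(V_\theta)\cdot(\f-V_\theta,\f-V_\theta)$, which is the (negative) gradient energy $-n\int d(\f-V_\theta)\wedge d^c(\f-V_\theta)\wedge(\theta+dd^cV_\theta)^{n-1}$; this is comparable to $I(\f,V_\theta)$, which on $\cE_C$ is of size $C$, \emph{not} uniformly bounded and \emph{not} controlled by $L_0$. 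What actually saves the argument is the genuine quadratic estimate of Proposition~\ref{prop:enc11}, $I(s\f+(1-s)V_\theta,V_\theta)\le n s^2 I(\f,V_\theta)$, which combined with the comparison~(\ref{equ:compare}) and $L_0=O(1)$ on $\{\sup_X\f=0\}$ shows that the single choice $s=|E(\f)|^{-1/2}$ places $\f_s$ in a \emph{fixed} sublevel set $\cE_{C'}$ (independent of $\f$). From there $|L(\f)|\le s^{-1}\sup_{\cE_{C'}}|L|=O(|E(\f)|^{1/2})=O(C^{1/2})$. This is precisely the paper's route; without Proposition~\ref{prop:enc11} (or a substitute of the same quadratic strength) the optimisation you describe only recovers the linear bound.
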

\begin{proof} (i) There exists $C>0$ such that 
$$\sup_X(\f-V_\theta)=\sup_X\f\le C$$
for all $\f\in\cK$, thus $L$ is uniformly bounded above by $L(V_\theta)+C$. Assume by contradiction that $L(\f_j)\le -2^j$ for some sequence $\f_j\in\cK$. We then consider $\f:=\sum_{j\ge 1}2^{-j}\f_j$, which belongs to $\cK$ by Lemma~\ref{lem:basic1} below. By (\ref{equ:max}) we have
$$\f\le\sum_{j=1}^N 2^{-j}\f_j+2^{-N} V_\theta$$
 for each $N$, and the right-hand side is a (finite) convex combination of elements in $\cP(X,\theta)$. The properties of $L$ thus imply
$$-\infty<L(\f)\le\sum_{j=1}^N 2^{-j}L(\f_j)+2^{-N}L(V_\theta)=-N+2^{-N}L(V_\theta)$$
and we reach a contradiction by letting $N\to+\infty$.

(ii) By (i) we have $\sup_{\cE_C}|L|<+\infty$ for all $C>0$. Note also that $L(\f)\le L(V_\theta)$ for $\f\in\cE_C$. If $\sup_{\cE_C}|L|=O(C^{1/2})$ fails as $C\to+\infty$, then there exists a sequence $\f_j\in\cE^1(X,\theta)$ with $\sup_X\f_j=0$ such that 
$$t_j:=|E(\f_j)|^{-1/2}\to 0$$ 
and 
\begin{equation}\label{equ:infty}t_jL(\f_j)\to-\infty.
\end{equation}

We claim that there exists $C>0$ such that for any $\f\in\cP(X,\theta)$ with $\sup_X\f=0$ and $t:=|E(\f)|^{-1/2}\le 1$ we have 
$$E(t\f+(1-t)V_\theta)\ge -C.$$ 
Indeed $\int_X(\f-V_\theta)\MA(V_\theta)$ is uniformly bounded when $\sup_X\f=0$ (for instance by (i)) and the claim follows from Proposition~\ref{prop:enc11} applied to $\p=V_\theta$. 

As a consequence we get $t_j\f_j+(1-t_j)V_\theta\in\cE_C$ for all $j\gg 1$, and the convexity property of $L$ thus yield 
$$t_jL(\f_j)+(1-t_j)L(V_\theta)\ge L(t_j\f_j+(1-t_j)V_\theta)\ge\inf_{\cE_C}L>-\infty$$ 
which contradicts (\ref{equ:infty}). 
\end{proof}

\begin{lem}\label{lem:basic1} Let $\f_j\in\cK$ be a sequence in a compact convex subset of $\cP(X,\theta)$. Then $\f:=\sum_{j\ge 1}2^{-j}\f_j$ belongs to $\cK$. 
\end{lem}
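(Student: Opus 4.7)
The plan is to realize $\f$ as an $L^1$-limit of genuine finite convex combinations that all lie in $\cK$, and then to invoke the fact that $\cK$ is closed.

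First I would extract uniform bounds from the hypothesis. Since the weak topology on $\cP(X,\theta)$ coincides with the $L^1(X)$-topology and $\cK$ is compact, there exists $K>0$ such that $\|\f_j\|_{L^1(X)}\le K$ for every $j$. Consequently the series $\sum_{j\ge 1}2^{-j}\f_j$ converges absolutely in $L^1(X)$ to a well-defined integrable function, whose upper semicontinuous regularization is by definition the candidate $\theta$-psh function $\f$.

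The core of the argument would then be to introduce the partial sums
$$
\chi_N:=\sum_{j=1}^{N-1}2^{-j}\f_j+2^{-(N-1)}\f_N,\qquad N\ge 2.
$$
Since the coefficients $2^{-1},2^{-2},\ldots,2^{-(N-1)},2^{-(N-1)}$ add up to $1$, each $\chi_N$ is an honest (finite) convex combination of $\f_1,\ldots,\f_N$, and therefore lies in $\cK$ by convexity. A direct calculation gives
$$
\f-\chi_N=\sum_{j\ge N+1}2^{-j}\f_j-2^{-N}\f_N,
$$
so that $\|\f-\chi_N\|_{L^1(X)}\le 2^{1-N}K$. Thus $\chi_N\to\f$ in $L^1(X)$, and the closedness of $\cK$ (as a compact subset of $L^1$) yields $\f\in\cK$, as desired.

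I do not anticipate a serious obstacle; the one mild subtlety is that the ``naive'' partial sum $\sum_{j=1}^N 2^{-j}\f_j$ is only $(1-2^{-N})\theta$-psh and thus does not belong to $\cP(X,\theta)$ on the nose. The device above — replacing the tail $2^{-N}\f_N$ by $2^{-(N-1)}\f_N$ — restores the convex-combination identity on the coefficients and keeps every approximation inside $\cK$ without appealing to an auxiliary $\theta$-psh function such as $V_\theta$ that need not lie in $\cK$.
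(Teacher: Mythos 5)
Your proof is correct, and while the underlying idea (uniform bounds from compactness of $\cK$, a limiting argument, closedness of $\cK$) is the same, your execution is cleaner than the paper's at exactly the point you flag. The paper normalizes $\sup_X\f_j\le 0$ by translating $\cK$, views $\f$ as a decreasing pointwise limit, rules out $\f\equiv-\infty$ via a Fatou argument against a smooth volume form, and concludes $\f\in\cP(X,\theta)$; the final passage to $\f\in\cK$ is left implicit, and the partial sums it implicitly invokes either fail to be $\theta$-psh (as you observe, $\sum_{j=1}^N 2^{-j}\f_j$ is only $(1-2^{-N})\theta$-psh) or require topping up by $2^{-N}V_\theta$, which takes one outside $\cK$. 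Your rebalanced combinations $\chi_N=\sum_{j=1}^{N-1}2^{-j}\f_j+2^{-(N-1)}\f_N$ are honest convex combinations lying in $\cK$ and converge to $\f$ in $L^1$, so closedness of $\cK$ gives the conclusion directly — no normalization, no Fatou, no $V_\theta$. The one small infelicity is the sentence about ``the upper semicontinuous regularization'' of the $L^1$-sum: usc-regularizing an arbitrary $L^1$ function is not a well-posed operation, and you do not need it; once $\chi_N\to\f$ in $L^1$ with $\cK$ closed, $\f$ lies in $\cK\subset\cP(X,\theta)$ and is thereby already canonically represented by a $\theta$-psh (hence usc) function. Alternatively, after the paper's normalization $\f_j\le 0$ the pointwise series is a decreasing limit of usc functions, hence usc on the nose.
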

\begin{proof} By Hartogs' lemma $\sup_X\f$ is uniformly bounded for $\f\in\cK$, thus we may assume upon translating by a constant that $\sup_X\f\le 0$ for each $\f\in\cK$. Let $\mu$ be a smooth volume form on $X$. Then $\int_X\f_j d\mu$ is uniformly bounded since $\cK$ is a compact subset of $L^1(X)$. It thus follows that $\int_X\f d\mu$ is finite by Fatou's lemma. But since $\f$ is a decreasing limit of functions in $\cP(X,\theta)$ we either have $\f\in\cP(X,\theta)$ or $\f\equiv-\infty$ and the latter case is excluded by $\int_X\f d\mu>-\infty$, qed.
\end{proof}
We will now interpret Proposition~\ref{prop:square} as a \emph{coercivity} condition.
Since our convention is to \emph{maximize} certain functionals in our
variational approach, we shall use the following terminology.

\begin{defi}\label{defi:proper} A function $F:\cT^1(X,\theta)\to\R$ will be said to be
\begin{enumerate}
\item[(i)] $J$-\emph{proper} if $F\to-\infty$ as $J\to+\infty$. 
\item[(ii)] $J$-\emph{coercive} if there exists $\e>0$ and $A>0$ such that
$$
F\le-\e J+A
$$
on $\cT^1(X,\theta)$.
\end{enumerate}
\end{defi}
The function $F$ on $\cT^1(X,\theta)$ is induced
by a function on $\cE^1(X,\theta)$ of the form $E-L$ where $L$
satisfies as above the scaling property. The $J$-coercivity of
$F$ reads
$$
E-L\le-\e(L_0-E)+A
$$
where $\e>0$ can of course be assumed to satisfy $\e<1$ since $J\ge
0$. Since we have
$$
L_0(\f)=\sup_X\f+O(1)
$$
uniformly for $\f\in\cP(X,\theta)$ the $J$-coercivity of $F$ is then easily seen to be equivalent to the growth condition
\begin{equation}\label{equ:growth}
\sup_{\cE_C}|L|\le(1-\e)C+O(1)  
\end{equation}
as $C\to+\infty$.

As a consequence of Proposition~\ref{prop:square} we get
\begin{cor}\label{cor:proper} Let $L:\cE^1(X,\theta)\to\R$ be a convex
  non-decreasing function satisfying the scaling property. Then the
  function $F$ on $\cT^1(X,\theta)$ induced by $E-L$ is $J$-coercive.
\end{cor}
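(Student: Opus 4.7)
The plan is to deduce this corollary directly from Proposition~\ref{prop:square}(ii) combined with the equivalence between $J$-coercivity and the growth condition (\ref{equ:growth}) that was established in the discussion immediately preceding the statement.

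Concretely, the paper has already reduced $J$-coercivity of $F = E - L$ to showing that
\[
\sup_{\cE_C}|L|\le(1-\e)C+O(1)
\]
as $C\to+\infty$ for some $\e>0$. So the only thing to verify is this growth bound. Proposition~\ref{prop:square}(ii) applies since $L$ is assumed convex, non-decreasing, satisfies the scaling property, and is finite on $\cE^1(X,\theta)$; it therefore gives a constant $K>0$ such that $\sup_{\cE_C}|L|\le K\,C^{1/2}$ for all $C\ge 0$.

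The passage from $O(C^{1/2})$ to $(1-\e)C+O(1)$ is then a trivial elementary inequality: fix any $\e\in(0,1)$ and apply $K C^{1/2}\le (1-\e)C+\frac{K^2}{4(1-\e)}$ (completing the square / Young), which yields
\[
\sup_{\cE_C}|L|\le(1-\e)C+A
\]
with $A:=\frac{K^2}{4(1-\e)}$, valid for all $C\ge 0$. This is precisely the growth condition (\ref{equ:growth}), so the induced functional $F$ on $\cT^1(X,\theta)$ is $J$-coercive.

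There is no real obstacle here; the substance of the result was already absorbed into Proposition~\ref{prop:square}(ii), and the corollary is essentially a repackaging. One minor care-point, already handled in the paper, is that one should work with representatives $\f$ normalized by $\sup_X\f=0$ so that $L_0(\f)=O(1)$ and the reduction of $J$-coercivity to (\ref{equ:growth}) is legitimate; but both $E-L$ and $J$ descend to $\cT^1(X,\theta)$ by the scaling property, so this normalization is harmless.
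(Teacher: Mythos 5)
Your argument is exactly the one the paper intends: the paper states Corollary~\ref{cor:proper} as an immediate consequence of Proposition~\ref{prop:square}(ii) together with the reduction of $J$-coercivity to the growth condition (\ref{equ:growth}), and you have simply made explicit the elementary Young-type inequality $KC^{1/2}\le(1-\e)C+K^2/(4(1-\e))$ that bridges $O(C^{1/2})$ to $(1-\e)C+O(1)$. Correct, and same route as the paper.
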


When $X$ is a Fano manifold with $H^0(T_X)=0$ it was shown
in~\cite{Tia97} that $X$ admits a K\"ahler-Einstein metric iff the
function $F_-$ induced on $\cT^1(X,-K_X)$ by the translation invariant function $$
\phi\mapsto E(\phi)+\frac{1}{2}\log\int_X e^{-2\phi}
$$
is $J$-proper (cf.~Section~\ref{sec:KE}). This result was later refined in~\cite{PSSW08} who
showed that $F_-$ is actually $J$-coercive in the above sense if $X$
is K\"ahler-Einstein. The latter result will play a crucial role in
our proof of the existence of anticanonically balanced metrics in
Section~\ref{sec:balanced}. 

Let us finally record the following useful elementary fact.
\begin{prop}\label{prop:proper} Let $F$ be a $J$-proper and usc
  function on $\cT^1(X,\theta)$. Then $F$ achieves its supremum on
  $\cT^1(X,\theta)$. Moreover any asymptotically maximizing sequence
  $T_j\in\cT^1(X,\theta)$ (i.e. such that 
$\lim_{j\to\infty}F(T_j)=\sup F$) stays in a compact subset of
$\cT^1(X,\theta)$ and any accumulation point $T$ of the $T_j$'s is a $F$-maximizer.  
\end{prop}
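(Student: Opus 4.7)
The plan is to apply the standard direct method of the calculus of variations, whose ingredients are all already in place: Lemma~\ref{lem:Jproper} gives that $\{J\le C\}$ is compact in $\cT^1(X,\theta)$ for each $C>0$, $F$ is assumed upper semicontinuous, and the $J$-properness assumption forces any near-maximizer to stay in some $J$-sublevel set.

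Set $M:=\sup_{\cT^1(X,\theta)} F\in(-\infty,+\infty]$ and let $T_j\in\cT^1(X,\theta)$ be an asymptotically maximizing sequence, so that $F(T_j)\to M$. If $M=-\infty$ then $F\equiv-\infty$ and there is nothing to prove, so I would assume $M>-\infty$. By $J$-properness, i.e.~$F(T)\to-\infty$ as $J(T)\to+\infty$, there exists $C>0$ (depending on $M$) such that $F(T)\le M-1$ whenever $J(T)>C$. Since $F(T_j)\to M$, this forces $J(T_j)\le C$ for all $j$ large enough. Hence, after discarding finitely many terms, the sequence $T_j$ lies in the compact subset $\{J\le C\}\subset\cT^1(X,\theta)$ provided by Lemma~\ref{lem:Jproper}, which already proves the asserted precompactness statement.

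Next I would extract any accumulation point: let $T_{j_k}\to T$ in the weak topology for some subsequence, with $T\in\{J\le C\}\subset\cT^1(X,\theta)$ by closedness of the sublevel set. Upper semicontinuity of $F$ then yields
\[
F(T)\ge \limsup_{k\to\infty} F(T_{j_k})=M,
\]
and since $F\le M$ by definition of $M$, we obtain $F(T)=M$. In particular $M<+\infty$ and $F$ achieves its supremum at $T$. Applying the same argument to an arbitrary asymptotically maximizing sequence shows that every accumulation point is an $F$-maximizer, which finishes the proof.

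There is no real obstacle here: the statement is a textbook application of the direct method, and the only nontrivial inputs (compactness of $J$-sublevel sets, and the link between $J$-properness and \emph{a priori} bounds on maximizing sequences) have already been recorded. The one place to be slightly careful is to invoke $J$-properness \emph{before} trying to extract a subsequence, in order to secure the uniform bound $J(T_j)\le C$ that puts the sequence inside a compact set.
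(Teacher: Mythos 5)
Your proof is correct and follows essentially the same route as the paper's: $J$-properness gives a uniform bound $J(T_j)\le C$, compactness of $\{J\le C\}$ (Lemma~\ref{lem:Jproper}) yields an accumulation point, and upper semicontinuity promotes it to a maximizer. The only cosmetic difference is your explicit handling of the case $M=-\infty$, which is unnecessary since $F$ is assumed real-valued on the nonempty set $\cT^1(X,\theta)$, but harmless.
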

\begin{proof} Let us recall the standard argument. It is clearly
  enough to settle the second part. Let thus $T_j$ be a maximizing
  sequence. It follows in particular that $F(T_j)$ is bounded from
  below, and the $J$-properness of $F$ thus yields $C>0$ such that
$T_j\in\{J\le C\}$ for all $j$. Since $\{J\le C\}$ is compact there
exists an accumulation point $T$ of the $T_j$'s, and $F(T_j)\to\sup F$
implies $F(T)\ge\sup F$ since $F$ is usc. 
\end{proof}
\subsection{Continuity}
In order to investigate the upper semi-continuity of $F_\mu=E-L_\mu$ on $\cE^1(X,\theta)$ we will use the following general criterion. 

\begin{thm} Let $\mu$ be a non-pluripolar measure and let $\cK\subset\cP(X,\theta)$ be a compact convex subset such that $L_\mu$ is finite on $\cK$. The following properties are equivalent.
\begin{enumerate}
\item[(i)] $L_\mu$ is continuous on $\cK$. 
\item[(ii)] The map $T:\cK\to L^1(\mu)$ defined by $T(\f):=\f-V_\theta$ is continuous. 
\item[(iii)] The set $T(\cK)\subset L^1(\mu)$ is uniformly integrable, i.e. 
$$\int_{t=k}^{+\infty}\mu\{\f\le V_\theta-t\}dt\to 0$$ 
as $k\to+\infty$, uniformly for $\f\in\cK$. 
\end{enumerate}
\end{thm}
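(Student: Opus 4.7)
My approach is to exhibit the three conditions as the measure-theoretic trinity of continuity, relative weak compactness, and uniform integrability in $L^1(\mu)$, using as central device the $k$-truncation $\f^{(k)} := \max(\f, V_\theta - k)$ together with the auxiliary functional $L^k_\mu(\f) := L_\mu(\f^{(k)})$. A layer-cake computation yields
\[
L^k_\mu(\f) - L_\mu(\f) \;=\; \int_\Omega (V_\theta - \f - k)_+\, d\mu \;=\; \int_k^{+\infty} \mu\{\f \le V_\theta - t\}\, dt \;\ge\; 0,
\]
so $L^k_\mu \searrow L_\mu$ pointwise on $\cK$ as $k\to\infty$, and condition (iii) says precisely that this decreasing convergence is uniform on $\cK$.

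\textbf{The easy implications.} The implication (ii) $\Rightarrow$ (i) is immediate, since $L_\mu = \int T(\cdot)\, d\mu$ factors as the continuous $T$ followed by a bounded linear functional on $L^1(\mu)$. For (ii) $\Rightarrow$ (iii), the continuous image $T(\cK) \subset L^1(\mu)$ of a compact set is compact, hence relatively weakly compact, hence uniformly integrable by the Dunford-Pettis theorem; unpacking the tail criterion for uniform integrability gives (iii). For (i) $\Rightarrow$ (iii), I would first check that each $L^k_\mu$ is upper semi-continuous on $\cK$, using Lemma~\ref{lem:usc} together with the $L^1(dV)$-continuity of $\f \mapsto \f^{(k)}$ (obtained by dominated convergence along almost-everywhere convergent subsequences of the uniformly bounded family $\{\f_j^{(k)} - V_\theta + k\}$). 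Dini's lemma, applied to the decreasing sequence of usc functions $L^k_\mu$ converging pointwise to the continuous $L_\mu$ on the compact $\cK$, then forces uniform convergence, i.e.~(iii).

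\textbf{The main obstacle: (iii) $\Rightarrow$ (ii).} Given $\f_j \to \f$ in $\cK$, I would split via the triangle inequality
\[
\|T(\f_j) - T(\f)\|_{L^1(\mu)} \;\le\; \bigl(L^k_\mu(\f_j) - L_\mu(\f_j)\bigr) \;+\; \bigl\|T(\f_j^{(k)}) - T(\f^{(k)})\bigr\|_{L^1(\mu)} \;+\; \bigl(L^k_\mu(\f) - L_\mu(\f)\bigr),
\]
whose outer two terms are uniformly $o(1)$ as $k \to +\infty$ by (iii). The real content is the middle term for fixed $k$: $\f_j^{(k)}, \f^{(k)}$ are uniformly bounded $\theta$-psh functions with $\f_j^{(k)} \to \f^{(k)}$ in $L^1(dV)$, and $L^1(\mu)$-convergence is required. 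The $\limsup$ inequality $\limsup_j \int \f_j^{(k)}\, d\mu \le \int \f^{(k)}\, d\mu$ is obtained from the monotone envelope $\psi_j := (\sup_{l \ge j} \f_l^{(k)})^*$, which is $\theta$-psh, uniformly bounded, and decreases pointwise off a pluripolar set to $\f^{(k)}$, hence $\mu$-almost-everywhere by non-pluripolarity of $\mu$; dominated convergence yields $\int \psi_j\, d\mu \to \int \f^{(k)}\, d\mu$, and the bound $\f_j^{(k)} \le \psi_j$ concludes. The matching $\liminf$ is the subtlest step and exploits (iii) more essentially: one upgrades the $L^1(dV)$-convergence of the uniformly bounded $\theta$-psh functions $\f_j^{(k)}$ to convergence in Monge-Amp\`ere capacity (via Bedford-Taylor theory for uniformly bounded $L^1$-convergent quasi-psh sequences), and observes that (iii) forces $\mu$ to be suitably dominated by capacity on the sublevel sets intervening here, so that capacity convergence translates into convergence in $\mu$-measure. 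Vitali's theorem, combined with the uniform integrability already provided by (iii), then delivers the sought $L^1(\mu)$-convergence and therefore (ii), closing the cycle.
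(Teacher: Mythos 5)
Your implications (ii)~$\Rightarrow$~(i) and (ii)~$\Rightarrow$~(iii) agree with the paper. Your route (i)~$\Rightarrow$~(iii), via the truncations $L^k_\mu(\f)=L_\mu(\max(\f,V_\theta-k))$, upper semicontinuity of each $L^k_\mu$ (coming from Lemma~\ref{lem:usc} applied to $\max(\f_j,V_\theta-k)\to\max(\f,V_\theta-k)$ in $L^1$), monotone pointwise decrease $L^k_\mu\searrow L_\mu$, and Dini's lemma, is a correct and genuinely different argument: the paper instead establishes (i)~$\Leftrightarrow$~(ii) directly by a Fatou/Scheff\'e-type lemma (uniform upper bound plus $\limsup_j T(\f_j)\le T(\f)$ $\mu$-a.e.\ plus convergence of integrals implies $L^1(\mu)$-convergence), and then deduces (iii) from (ii) via Dunford--Pettis exactly as you do.

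The serious problem is your implication (iii)~$\Rightarrow$~(ii), and specifically the $\liminf$ half of the middle term. You appeal to two facts, neither of which is available. First, Bedford--Taylor theory gives convergence in Monge--Amp\`ere capacity only along \emph{monotone} sequences of locally bounded psh functions; for a uniformly bounded $L^1$-convergent sequence $\f_j^{(k)}\to\f^{(k)}$ there is no such theorem, and indeed Hartogs' lemma only controls the upper half $\ca\{\f_j^{(k)}>\f^{(k)}+\e\}$, while the lower half $\ca\{\f_j^{(k)}<\f^{(k)}-\e\}$ can fail to go to zero because of ``wandering dips''. Second, even granting capacity convergence, you would need a quantitative domination of $\mu$ by $\ca$ (i.e.\ $\ca(A_j)\to0\Rightarrow\mu(A_j)\to0$) to pass to convergence in $\mu$-measure; condition (iii) is a uniform-integrability statement about the family $T(\cK)\subset L^1(\mu)$ and does not yield any such domination of $\mu$ by the capacity. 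Absent both of these, Vitali's theorem has nothing to act on, and the argument does not close.

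The paper circumvents exactly this obstruction. It never attempts to prove $T(\f_j)\to T(\f)$ in $\mu$-measure directly. Instead it establishes the closed-graph property of $T:\cK\to L^1(\mu)$ (needing only that $\mu$ charges no pluripolar set, via the Bedford--Taylor negligibility theorem applied twice, once to the full sequence and once to the a.e.-convergent subsequence), observes via Dunford--Pettis that (iii) makes the nested closed convex hulls $\cC_k=\overline{\conv}\{T(\f_j):j\ge k\}$ weakly compact with nonempty intersection, picks $f$ in that intersection, produces finite convex combinations $\psi_k\in\conv\{\f_j:j\ge k\}\subset\cK$ with $T(\psi_k)\to f$ in norm (Mazur), notes $\psi_k\to\f$ in $\cK$, and concludes $f=T(\f)$ by the closed graph. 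This replaces the unavailable $\liminf$ estimate by a soft functional-analytic argument that uses only non-pluripolarity of $\mu$. I would recommend replacing your (iii)~$\Rightarrow$~(ii) step with an argument of this kind (or, since you already have (i)~$\Rightarrow$~(iii), simply proving (iii)~$\Rightarrow$~(i) this way and then recovering (ii) from (i) by the Fatou/Scheff\'e lemma).
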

\begin{proof} By the Dunford-Pettis theorem, asumption (iii) means that $T(\cK)$ is relatively compact in the weak topology (induced by $L^\infty(\mu)=L^1(\mu)^*$). 

As a first general remark, we claim that graph of $T$ is closed. Indeed let $\f_j\to\f$ be a convergent sequence in $\cK$ and assume that $T(\f_j)\to f$ in $L^1(\mu)$. We have to show that $f=T(\f)$. But $\f_j\to\f$ implies that 
$$\f=(\limsup_{j\to\infty}\f_j)^*$$
everywhere on $X$ by general properties of psh functions. On the other hand the set of points where $(\limsup_{j\to\infty}\f_j)^*>\limsup_{j\to\infty}\f_j$ is negligible hence pluripolar by a theorem of Bedford-Taylor, thus has $\mu$-measure $0$ by assumption on $\mu$. We thus see that $\f=\limsup_j\f_j$ $\mu$-a.e, hence $T(\f)=\limsup_j T(\f_j)$ $\mu$-a.e. Since $T(\f_j)\to f$ in $L^1(\mu)$ there exists a subsequence such that $T(\f_j)\to f$ $\mu$-a.e., and it follows that $f=T(\f)$ $\mu$-a.e. as desired. 

This closed graph property implies that the convex set $T(\cK)$ is closed in the norm topology (hence also in the weak topology by the Hahn-Banach theorem). Indeed if $T(\f_j)\to f$ holds in $L^1(\mu)$, then we may assume that $\f_j\to\f$ in $\cK$ by compactness of the latter space, hence $f=T(\f)$ belongs to $T(\cK)$ by the closed graph property. 

We now prove the equivalence between (i) and (ii). Observe that there exists $C>0$ such that $T(\f)=\f-V_\theta\le C$ for all $\f\in\cK$ since $\sup_X\f=\sup_X(\f-V_\theta)$ is bounded on the compact set $\cK$ by Hartogs' lemma. Given a convergent sequence $\f_j\to\f$ in $\cK$ we have $T(\f)\ge\limsup_{j\to\infty}T(\f_j)$ $\mu$-a.e. as was explained above, thus Fatou's lemma (applied to $C-T(\f_j)\ge 0$) yields the asymptotic lower bound
$$\int T(\f)d\mu\ge\limsup_{j\to\infty}\int T(\f_j)d\mu,$$
and the asymptotic equality case
$$\int T(\f)d\mu=\lim_{j\to\infty}\int T(\f_j)d\mu$$
holds iff $T(\f_j)\to T(\f)$ in $L^1(\mu)$. This follows from a basic lemma in integration theory, 
which proves the desired equivalence. 

If (ii) holds, then the closed convex subset $T(\cK)$ is compact in the norm topology, hence also weakly compact, and (iii) holds by the Dunford-Pettis theorem recalled above. 

Conversely assume that (iii) holds. We will prove (i). Let $\f_j\to\f$ be a convergent sequence in $\cK$. We are to prove that $\int T(\f_j)d\mu\to\int T(\f)d\mu$ in $L^1(\mu)$. We may assume that $\int T(\f_j)d\mu\to L$ for some $L\in\R$ since $T(\cK)$ is bounded, and we have to show that $L=\int T(\f)d\mu$. For each $k$ consider the closed convex envelope
$$\cC_k:=\overline{\conv\{T(\f_j),j\ge k\}}.$$
Each $\cC_k$ is also weakly closed by the Hahn-Banach theorem, hence weakly compact since it is contained in $T(\cK)$. Since $(\cC_k)_k$ is a decreasing sequence of compact subsets there exists $f\in\cap_k\cC_k$. For each $k$ we may thus find a finite convex combination $\psi_k\in\conv\{\f_j,j\ge k\}$ such that $T(\psi_k)\to f$ in the norm topology. Since $\f_j\to\f$ in $\cK$ we also have $\psi_k\to\f$ in $\cK$, hence $f=T(\f)$ by the closed graph property.  On the other hand $\int T(\psi_k)d\mu$ is a convex combination of elements of the form $\int T(\f_j)d\mu$, $j\ge k$, thus $\int T(\psi_k)d\mu\to L$, and we finally get $\int T(\f)d\mu=\int f d\mu=L$ as desired. 
\end{proof}

By H{\"o}lder's inequality a bounded subset of $L^2(\mu)$ is uniformly integrable, hence the previous result applies to yield:

\begin{cor}\label{cor:domcap} Let $\mu$ be a probability measure such that 
$$\mu\le A\ca$$
for some $A>0$. Then $T(\cE_C)$ is bounded in $L^2(\mu)$, and $L_\mu$
is thus continuous on $\cE_C$ for each $C>0$. 
\end{cor}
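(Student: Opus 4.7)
The plan is to verify hypothesis (iii) of the preceding theorem applied to the compact convex set $\cK=\cE_C$, since once $T(\cE_C)$ is bounded in $L^2(\mu)$, Hölder's (Cauchy--Schwarz) inequality automatically gives uniform integrability over the probability measure $\mu$, and continuity of $L_\mu$ on $\cE_C$ then follows from (iii) $\Rightarrow$ (i) of the theorem.

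First, observe that the assumption $\mu\le A\,\ca$ forces $\mu$ to vanish on pluripolar sets, since $\ca$ does (it is the upper envelope of non-pluripolar Monge--Amp\`ere measures of bounded potentials). So the hypothesis of the theorem that $\mu$ be non-pluripolar is satisfied, and the definition $L_\mu(\f)=\int_\Omega(\f-V_\theta)d\mu$ agrees with $\int_X(\f-V_\theta)d\mu$.

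Next I would estimate $\|T(\f)\|_{L^2(\mu)}$ for $\f\in\cE_C$. Since $\sup_X\f\le 0$ forces $\f\le V_\theta$ by the tautological maximum principle (\ref{equ:max}), the function $V_\theta-\f\ge 0$ and the layer-cake formula yields
\begin{equation*}
\int_X(V_\theta-\f)^2\,d\mu=2\int_0^{+\infty}t\,\mu\{\f<V_\theta-t\}\,dt\le 2A\int_0^{+\infty}t\,\ca\{\f<V_\theta-t\}\,dt.
\end{equation*}
By the converse part of Lemma~\ref{lem:criterion}, the right-hand side is bounded by a constant depending only on $C$ (and on $A$), so $T(\cE_C)$ is a bounded subset of $L^2(\mu)$; in particular $L_\mu$ is finite on $\cE_C$.

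Finally, since $\mu$ is a probability measure, Cauchy--Schwarz gives
\begin{equation*}
\int_{\{V_\theta-\f\ge k\}}(V_\theta-\f)\,d\mu\le \Bigl(\int_X(V_\theta-\f)^2 d\mu\Bigr)^{1/2}\mu\{V_\theta-\f\ge k\}^{1/2}\le M\,(Mk^{-2})^{1/2}\to 0
\end{equation*}
uniformly in $\f\in\cE_C$ as $k\to+\infty$, which is exactly the uniform integrability condition (iii) of the theorem (up to the equivalent formulation via the tail integral $\int_k^{+\infty}\mu\{\f\le V_\theta-t\}dt$). Applying the equivalence $(iii)\Leftrightarrow (i)$ of the theorem on the compact convex set $\cK=\cE_C$ (compact by Lemma~\ref{lem:compact}), we conclude that $L_\mu$ is continuous on $\cE_C$. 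No step is really difficult here; the only nontrivial ingredient is the capacity-tail estimate supplied by Lemma~\ref{lem:criterion}, which is what makes the $L^2(\mu)$ bound work under the mere domination $\mu\le A\,\ca$.
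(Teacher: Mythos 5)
Your proof is correct and follows essentially the same route as the paper: combine the converse part of Lemma~\ref{lem:criterion} with the domination $\mu\le A\,\ca$ to bound $\int_0^{+\infty}t\,\mu\{\f<V_\theta-t\}\,dt$ uniformly over $\cE_C$, interpret this (via the layer-cake formula) as $L^2(\mu)$-boundedness of $T(\cE_C)$, and invoke Cauchy--Schwarz plus Chebyshev to obtain uniform integrability, i.e.\ condition (iii) of the preceding theorem. You merely spell out the steps the paper compresses into one line, and the argument is sound (aside from a harmless slip where the final estimate should read $M/k$ rather than $M\,(Mk^{-2})^{1/2}$ if $M$ denotes the uniform bound on $\int(V_\theta-\f)^2d\mu$).
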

\begin{proof} By (ii) of Lemma~\ref{lem:criterion} we have 
$$\int_{t=0}^{+\infty}t\mu\{\f<V_\theta-t\}dt\le A\int_{t=0}^{+\infty}t\ca\{\f<V_\theta-t\}dt\le C_1$$ 
uniformly for $\f\in\cE_C$, and the result follows. 
\end{proof} 

\begin{thm}\label{thm:cont}
 Let $\f\in\cE^1(X,\theta)$ and set $\mu:=\MA(\f)$. 
 Then $L_\mu$ is continuous on $\cE_C$ for each $C>0$ and $F_\mu=E-L_\mu$ is usc on $\cE^1(X,\theta)$. 
\end{thm}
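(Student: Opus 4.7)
The proof naturally splits into two parts: the continuity of $L_\mu$ on $\cE_C$ for each $C$, and the deduction of upper semicontinuity of $F_\mu=E-L_\mu$ on $\cE^1(X,\theta)$.

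\smallskip

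\emph{Part 1 (Continuity on $\cE_C$).} Since $\f\in\cE^1(X,\theta)$ has full Monge-Amp\`ere mass, the measure $\mu=\MA(\f)$ is non-pluripolar, and by Lemma~\ref{lem:2eq4}(ii) $L_\mu$ is finite on $\cE_C$. I would then apply the equivalence $(i)\Leftrightarrow(iii)$ in the theorem preceding the statement: it suffices to show that the family $\{V_\theta-\psi:\psi\in\cE_C\}$ is uniformly integrable in $L^1(\mu)$, i.e.~that
$$
\lim_{k\to\infty}\sup_{\psi\in\cE_C}\int_k^{+\infty}\mu\{\psi<V_\theta-t\}\,dt=0.
$$
To produce this estimate I would normalize $\sup_X\f=\sup_X\psi=0$ using translation invariance (so that $\f,\psi\le V_\theta$), first assume $\f$ has minimal singularities, and remove this assumption at the end by approximating with $\f_N:=\max(\f,V_\theta-N)$ and invoking Proposition~\ref{prop:cont}. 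The inclusion
$$
\{\psi<V_\theta-t\}\subset\{\f<V_\theta-t/2\}\cup\{\psi<\f-t/2\}
$$
splits the tail into two pieces. The first piece is independent of $\psi$, and $\int_{k/2}^{+\infty}\mu\{\f<V_\theta-s\}\,ds\to 0$ as $k\to\infty$ by dominated convergence, using $\int(V_\theta-\f)\,d\MA(\f)<+\infty$ (Proposition~\ref{prop:energy}). For the second piece I would apply the comparison principle to $u_t:=(1-t^{-1})\f+t^{-1}\psi$, exploiting the identity $\{u_t<\f-1\}=\{\psi<\f-t\}$; expanding $\MA(u_t)$ as a binomial sum in the mixed measures $\MA(\f^{(n-j)},\psi^{(j)})$ and bounding their mass on $\{\psi<\f-t\}$ via Chebyshev against the weight $V_\theta-\psi$ (whose integrals are uniformly controlled on $\cE_C$ by Lemma~\ref{lem:mixte}) yields a decay on $\mu\{\psi<\f-t\}$ that is integrable in $t$.

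\smallskip

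\emph{Part 2 (USC on $\cE^1$).} Given $\psi_j\to\psi$ in $\cE^1(X,\theta)$, normalize $\sup_X\psi_j=\sup_X\psi=0$. If $\limsup_jF_\mu(\psi_j)=-\infty$ the conclusion is trivial; otherwise I would pass to a subsequence realizing the $\limsup$ and introduce truncations $\psi_j^M:=\max(\psi_j,V_\theta-M)$, which lie in $\cE_M$ (since $E(\psi_j^M)\ge E(V_\theta-M)=-M$ by the scaling property) and converge in $L^1$ to $\psi^M:=\max(\psi,V_\theta-M)$ as $j\to\infty$ with $M$ fixed. Part~1 then yields $L_\mu(\psi_j^M)\to L_\mu(\psi^M)$, while Proposition~\ref{prop:energy_gen} gives $\limsup_jE(\psi_j^M)\le E(\psi^M)$; hence $\limsup_jF_\mu(\psi_j^M)\le F_\mu(\psi^M)$. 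Letting $M\to+\infty$, continuity of $E$ along decreasing sequences together with monotone convergence for $L_\mu$ give $F_\mu(\psi^M)\to F_\mu(\psi)$, and the same monotone argument applied to each fixed $\psi_j$, combined with the uniform integrability established in Part~1, allows the interchange of $\limsup_j$ with $\lim_M$ and yields $\limsup_jF_\mu(\psi_j)\le F_\mu(\psi)$.

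\smallskip

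The principal obstacle is the quantitative uniform-integrability estimate in Part~1: a naive Chebyshev bound applied to the individual mixed Monge-Amp\`ere terms in the expansion of $\MA(u_t)$ produces only a $1/t$-decay on $\mu\{\psi<\f-t\}$, which is just barely non-integrable over $t$. Extracting the additional power of $t^{-1}$ needed for uniform integrability requires either iterating the comparison-principle estimate, or carefully combining the Chebyshev bound with the uniform control of cross-integrals provided by Lemma~\ref{lem:mixte} and a direct use of the volume bound $\int\MA(\f^{(n-j)},\psi^{(j)})\le 1$. Removing the auxiliary minimal-singularities assumption on $\f$ and the interchange of limits in Part~2 are comparatively routine once the Part~1 estimate is in place.
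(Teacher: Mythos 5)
Your high-level architecture (continuity on each $\cE_C$ first, then upper semicontinuity on all of $\cE^1$) matches the paper's, but both halves diverge in ways that matter.

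For Part~1, you correctly isolate the reduction to the minimal-singularities case, but your proposed tail estimate runs into the difficulty you yourself flag: the naive Chebyshev bound from the binomial expansion of $\MA(u_t)$ only gives $\mu\{\psi<\f-t\}=O(1/t)$, which is not summable against $dt$, and the suggestion to "iterate the comparison-principle estimate" is left as a hope rather than a proof. The paper sidesteps this entirely: when $\f$ has minimal singularities one has $\f\ge V_\theta-t$ for some $t\ge 1$, so $t^{-1}\f+(1-t^{-1})V_\theta$ is a competitor in the definition of $\ca$ and hence $\MA(\f)\le t^n\ca$ pointwise; Corollary~\ref{cor:domcap} then gives $T(\cE_C)$ bounded in $L^2(\mu)$, which dominates uniform integrability with room to spare. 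No tail estimate is needed. Your approach could in principle be pushed through, but as written it is not a proof.

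The more substantial gap is in passing from minimal singularities to general $\f\in\cE^1(X,\theta)$. You write that one removes the auxiliary assumption "by approximating with $\f_N:=\max(\f,V_\theta-N)$ and invoking Proposition~\ref{prop:cont}." Proposition~\ref{prop:cont} only gives weak convergence of $\MA(\f_N)$ to $\MA(\f)$ (and convergence of the weighted measures), which yields pointwise convergence $L_{\MA(\f_N)}(\psi)\to L_\mu(\psi)$ for each fixed $\psi$. But continuity of each $L_{\MA(\f_N)}$ on $\cE_C$ plus pointwise convergence does not give continuity of the limit $L_\mu$: you need \emph{uniform} convergence $L_{\MA(\f_N)}\to L_\mu$ on $\cE_C$. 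This is precisely what Lemma~\ref{lem:uniform} supplies, bounding $\sup_{\cE_C}|L_{\MA(\p_1)}-L_{\MA(\p_2)}|$ by $O(I(\p_1,\p_2)^{1/2})$ via iterated integration by parts and Cauchy--Schwarz, and then using $I(\f_N,\f)\to 0$. Without this (or an equivalent quantitative comparison), the approximation step has no teeth.

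Finally, your Part~2 is more elaborate than needed. Since $F_\mu=E-L_\mu$ is $J$-coercive by Corollary~\ref{cor:proper}, the superlevel set $\{F_\mu\ge A\}$ lands (after normalization) in $\cE_C$ for a suitable $C$, and closedness of this set follows at once from continuity of $L_\mu$ on $\cE_C$ (Part~1) together with usc of $E$. Your truncation scheme $\psi_j^M$ with interchanged limits can be made to work, but the limit interchange is exactly what coercivity avoids, and the uniform integrability you appeal to is over $\cE_C$, so you implicitly rely on the same coercivity anyway.
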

\begin{proof} The second statement follows from the first. Indeed for each $A$ $\{F_\mu\ge A\}$ is contained in $\cE_C$ for some $C$ by Corollary~\ref{cor:proper}, and we conclude that $\{F_\mu\ge A\}$ is closed as desired if we know that $F_\mu$ is usc on $\cE_C$. 

In order to prove the first assertion, assume first that $\f$ has minimal singularities. Then the result follows from Corollary~\ref{cor:domcap}, since we have $\MA(\p)\le A\ca$ for some $A>0$. Indeed pick $t\ge 1$ such that $\p\ge V_\theta-t$. Then $t^{-1}\f+(1-t^{-1})V_\theta$ is a candidate in the definition of $\ca$, and the claim follows since 
$$\MA(\f)\le t^n\,\MA(t^{-1}\f+(1-t^{-1})V_\theta).$$ 
In the general case we write $\f$ as the decreasing limit of its canonical approximants $\f_k:=\max(\f,V_\theta-k)$. By Proposition~\ref{prop:cont} we have $I(\f_k,\f)\to 0$ as $k\to\infty$ and thus Lemma~\ref{lem:uniform} below yields that $L_{\MA(\f_k)}$ converges to $L_\mu$ uniformly on $\cE_C$. The result follows since for each $k$ $L_{\MA(\f_k)}$ is continuous on $\cE_C$ by the first part of the proof.
\end{proof}

\begin{lem}\label{lem:uniform} We have 
$$\sup_{\cE_C}\left|L_{\MA(\p_1)}-L_{\MA(\p_2)}\right|=O\left(I(\p_1,\p_2)^{1/2}\right),$$
uniformly for $\p_1,\p_2\in\cE_C$. 
\end{lem}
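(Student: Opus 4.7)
The plan is to write the difference $L_{\MA(\p_1)}(\f)-L_{\MA(\p_2)}(\f)=\int_\Omega(\f-V_\theta)(\MA(\p_1)-\MA(\p_2))$, telescope the right-hand side, integrate by parts once, and apply Cauchy--Schwarz. The resulting bound will naturally factor as $A(\f)^{1/2}\cdot I(\p_1,\p_2)^{1/2}$, and the content of the lemma is then to produce a uniform bound for $A(\f)$ on $\cE_C$.

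Concretely, I would first reduce to the case where $\f,\p_1,\p_2$ have minimal singularities via the canonical approximants $\f_k:=\max(\f,V_\theta-k)$ (and similarly for $\p_i$), using Proposition~\ref{prop:cont} to pass to the limit in both $\MA$ and the integrals. On the minimal-singularities level the standard telescoping identity gives
$$\MA(\p_1)-\MA(\p_2)=\sum_{j=0}^{n-1} dd^c(\p_1-\p_2)\wedge S_j,\qquad S_j:=(\theta+dd^c\p_1)^j\wedge(\theta+dd^c\p_2)^{n-1-j},$$
and integration by parts (Theorem~1.14 of~\cite{BEGZ08}) yields
$$L_{\MA(\p_1)}(\f)-L_{\MA(\p_2)}(\f)=-\sum_{j=0}^{n-1}\int_\Omega d(\f-V_\theta)\wedge d^c(\p_1-\p_2)\wedge S_j.$$
Applying Cauchy--Schwarz term by term gives
$$\bigl|L_{\MA(\p_1)}(\f)-L_{\MA(\p_2)}(\f)\bigr|\le\sum_{j=0}^{n-1} A_j(\f)^{1/2}B_j(\p_1,\p_2)^{1/2},$$
with $A_j(\f):=\int_\Omega d(\f-V_\theta)\wedge d^c(\f-V_\theta)\wedge S_j$ and $B_j(\p_1,\p_2):=\int_\Omega d(\p_1-\p_2)\wedge d^c(\p_1-\p_2)\wedge S_j$. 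By the identity~\eqref{equ:I}, $\sum_j B_j(\p_1,\p_2)=I(\p_1,\p_2)$, so another Cauchy--Schwarz in the $j$-sum reduces the task to uniformly bounding $A_j(\f)$ on $\cE_C$.

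The main obstacle is precisely this uniform bound. The strategy here is a polarization trick: set $\sigma:=\tfrac{1}{2}(\p_1+\p_2)$, which lies in some $\cE_{C'}$ by convexity of $-E$, and observe that in terms of non-pluripolar products the binomial expansion of $(\theta+dd^c\sigma)^{n-1}$ gives the pointwise comparison
$$\binom{n-1}{j}S_j\le 2^{n-1}(\theta+dd^c\sigma)^{n-1}$$
(valid first for minimal-singularities $\p_i$ by the smooth calculation, then extended by monotone convergence). Consequently
$$A_j(\f)\le\frac{2^{n-1}}{\binom{n-1}{j}}\int_\Omega d(\f-V_\theta)\wedge d^c(\f-V_\theta)\wedge(\theta+dd^c\sigma)^{n-1}.$$
Expanding $d(\f-V_\theta)=d(\f-\sigma)-d(V_\theta-\sigma)$ and using the elementary inequality $(\alpha-\beta)\wedge(\alpha-\beta)^c\le 2\alpha\wedge\alpha^c+2\beta\wedge\beta^c$ for real $1$-forms wedged against a positive current, the last integral is controlled by
$$\int_\Omega d(\f-\sigma)\wedge d^c(\f-\sigma)\wedge(\theta+dd^c\sigma)^{n-1}+\int_\Omega d(V_\theta-\sigma)\wedge d^c(V_\theta-\sigma)\wedge(\theta+dd^c\sigma)^{n-1}.$$
Each of these is in turn bounded, via the $j=n-1$ term of the identity~\eqref{equ:delta}, by a universal multiple of $J_\sigma(\f)$ and $J_\sigma(V_\theta)$ respectively. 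Both of the latter are uniformly bounded for $\f,\sigma\in\cE_{C''}$: indeed $J_\sigma(\f)=E(\sigma)-E(\f)+\int(\f-\sigma)\MA(\sigma)$, and $E$ is bounded while the mixed integrals are bounded by Lemma~\ref{lem:mixte}. Putting everything together gives $A_j(\f)=O(1)$ uniformly on $\cE_C$, and hence the required $O(I(\p_1,\p_2)^{1/2})$ bound.
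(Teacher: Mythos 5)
Your proof is correct and follows the same overall strategy as the paper's: reduce to minimal singularities, telescope $\MA(\p_1)-\MA(\p_2)$, integrate by parts against $\f-V_\theta$, apply Cauchy--Schwarz, and identify $\sum_j B_j$ with $I(\p_1,\p_2)$ via~(\ref{equ:I}). The only substantive divergence is how the gradient-of-$\f$ factors $A_j$ are bounded. The paper's route is shorter: integrating $A_j$ by parts once more, using $-dd^c(\f-V_\theta)=(\theta+dd^cV_\theta)-(\theta+dd^c\f)$, turns it into
$$
\int_X(\f-V_\theta)\MA\bigl(V_\theta,\p_1^{(j)},\p_2^{(n-1-j)}\bigr)-\int_X(\f-V_\theta)\MA\bigl(\f,\p_1^{(j)},\p_2^{(n-1-j)}\bigr),
$$
which is immediately uniformly bounded on $\cE_C$ by Lemma~\ref{lem:mixte}. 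You instead dominate $S_j$ by a multiple of $(\theta+dd^c\sigma)^{n-1}$ via the binomial expansion at $\sigma=\tfrac12(\p_1+\p_2)$, split $d(\f-V_\theta)$ with the parallelogram-type inequality, and use~(\ref{equ:delta}) to recognize the two resulting integrals as bounded multiples of $J_\sigma(\f)$ and $J_\sigma(V_\theta)$, which are in turn controlled by Lemma~\ref{lem:mixte}. Both are valid and land on the same conclusion; the paper's second integration by parts is arguably more economical since it avoids the detour through $\sigma$ and the $J$-functional, while your polarization argument has the appeal of reducing everything to a single reference current $\theta+dd^c\sigma$.
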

\begin{proof} Pick $\f\in\cE_C$ and set
$$a_p:=\int_X(\f-V_\theta)\MA(\p_1^{(p)},\p_2^{(n-p)}).$$
Our goal is to find $C_1>0$ only depending on $C$ and a bound on $|E(\p_1)|,|E(\p_2)|$ such that
$$|a_n-a_0|\le C_1 I(\p_1,\p_2)^{1/2}.$$ 
It is enough to consider the case where $\f,\p_1,\p_2$ furthermore have minimal singularities. Indeed in the general case one can apply the result to the canonical approximants with minimal singularities, and we conclude by continuity of mixed Monge-Amp{\`e}re operators along monotonic sequences. By integration by parts (\cite{BEGZ08} Theorem 1.14) we have
$$a_{p+1}-a_p=\int_\Omega(\f-V_\theta)dd^c(\p_1-\p_2)\wedge(\theta+dd^c\p_1)^p\wedge(\theta+dd^c\p_2)^{n-p-1}$$
$$=-\int_\Omega d(\f-V_\theta)\wedge d^c(\p_1-\p_2)\wedge(\theta+dd^c\p_1)^p\wedge(\theta+dd^c\p_2)^{n-p-1}$$
and the Cauchy-Schwarz inequality yields 
$$|a_{p+1}-a_p|^2\le A_p B_p$$
with 
$$A_p:=\int_\Omega  d(\f-V_\theta)\wedge d^c(\f-V_\theta)\wedge(\theta+dd^c\p_1)^p\wedge(\theta+dd^c\p_2)^{n-p-1}$$
and 
$$B_p:=\int_\Omega  d(\p_1-\p_2)\wedge d^c(\p_1-\p_2)\wedge(\theta+dd^c\p_1)^p\wedge(\theta+dd^c\p_2)^{n-p-1}\le I(\p_1,\p_2)$$
by (\ref{equ:I}). By integration by parts again we get
$$A_p=-\int_\Omega(\f-V_\theta)dd^c(\f-V_\theta)\wedge(\theta+dd^c\p_1)^p\wedge(\theta+dd^c\p_2)^{n-p-1}$$
$$=\int_\Omega(\f-V_\theta)\MA(V_\theta,\p_1^{(p)},\p_2^{(n-p-1)})-\int_\Omega(\f-V_\theta)\MA(\f,\p_1^{(p)},\p_2^{(n-p-1)})$$
which is uniformly bounded in terms of $C$ only by Lemma~\ref{lem:mixte}. We thus conclude that
$$|a_n-a_0|\le|a_n-a_{n-1}|+...+|a_1-a_0|\le C_1I(\p_1,\p_2)^{1/2}$$
for some $C_1>0$ only depending on $C$ as desired. 
\end{proof}

\section{Variational solutions of Monge-Amp{\`e}re equations}

\subsection{Variational formulation}
In this section we prove the following key step in our approach, which extends Theorem A of the introduction to the case of a big class. Recall that we have normalized the big cohomology class $\{\theta\}$ by requiring that $\vol(\theta)=1$. We let $\cM_X$ denote the set of all probability measures on $X$. For any $\mu\in\cM_X$ $E-L_\mu$ descends to a  concave functional
$$F_\mu:\cT^1(X,\theta)\to[-\infty,+\infty[.$$

\begin{thm}\label{thm:var} Given $T\in\cT^1(X,\theta)$ and $\mu\in\cM_X$ we have 
$$F_\mu(T)=\sup_{\cT^1(X,\theta)}F_\mu
\hskip.5cm \text{ iff } \hskip.5cm 
\mu=\langle T^n\rangle.
$$
\end{thm}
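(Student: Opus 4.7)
The plan is to prove the two implications separately. The direction $\mu = \langle T^n\rangle \Rightarrow T$ maximizes $F_\mu$ is relatively soft and rests on concavity of $E$; the converse is the substance of the theorem and I would handle it via an envelope perturbation in the style of Aleksandrov, using the differentiability result from \cite{BB08}.

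\emph{Sufficiency.} Fix $\phi \in \cE^1(X,\theta)$ with $\MA(\phi) = \mu$. The concavity of $E$ on $\cE^1(X,\theta)$ (Proposition~\ref{prop:energy_min}), together with the first-order expansion \eqref{equ:eprime} extended from minimal singularities to general $\phi,\psi \in \cE^1(X,\theta)$ by monotone approximation through the canonical truncations $\max(\cdot, V_\theta - k)$ and Proposition~\ref{prop:cont}, gives the supergradient inequality
$$E(\psi) - E(\phi) \le \int_X (\psi - \phi)\, \MA(\phi) \quad \text{for all } \psi \in \cE^1(X,\theta).$$
Substituting $\MA(\phi) = \mu$ and rearranging yields $F_\mu(\psi) \le F_\mu(\phi)$.

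\emph{Necessity.} Let $\phi$ be a maximizer of $F_\mu$ and fix an arbitrary $\chi \in C^\infty(X)$. The obstacle is that $\cP(X,\theta)$ is not a vector space, so I cannot simply take a directional derivative in the direction of $\chi$. Instead, I would introduce the envelope perturbation
$$g_t := \bigl(\sup\{v \in \cP(X,\theta) : v \le \phi + t\chi\}\bigr)^*.$$
The pointwise sandwich $\phi + t\inf_X \chi \le g_t \le \phi + t\chi$ guarantees that $g_t - \phi$ is bounded, hence $g_t \in \cE^1(X,\theta)$, and $g_0 = \phi$. For $t > 0$, the upper bound together with $\mu \ge 0$ gives $L_\mu(g_t) \le L_\mu(\phi) + t\int_X \chi\, d\mu$. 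Combining this with $F_\mu(g_t) \le F_\mu(\phi)$, dividing by $t > 0$, and applying the key input from \cite{BB08} --- namely
$$\lim_{t \to 0^+}\frac{E(g_t) - E(\phi)}{t} = \int_X \chi\, \MA(\phi)$$
--- I obtain $\int \chi\, \MA(\phi) \le \int \chi\, d\mu$ for every smooth $\chi$. Running the same argument with $-\chi$ in place of $\chi$ reverses the inequality, so $\MA(\phi) = \mu$, i.e.\ $\langle T^n \rangle = \mu$.

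The genuine difficulty lies in the differentiability formula for $t \mapsto E(g_t)$ at $t = 0^+$. This is the complex analogue of Aleksandrov's orthogonality lemma: to first order only the contact set $\{g_t = \phi + t\chi\}$ contributes, and $\MA(\phi)$ turns out to be concentrated there. I would invoke \cite{BB08} as a black box for this fact. Secondary technical issues --- verifying that $g_t$ stays in $\cE^1(X,\theta)$ (handled by the sandwich), and passing the supergradient inequality of the sufficiency step to general $\psi \in \cE^1(X,\theta)$ (handled by Proposition~\ref{prop:cont} and Theorem~\ref{thm:cont}) --- are comparatively routine.
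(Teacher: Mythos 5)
Your proposal is correct and follows essentially the same path as the paper's proof: the sufficiency direction uses the concavity/supergradient inequality for $E$ extended by truncation, and the necessity direction uses the envelope perturbation $P(\phi+t\chi)$ together with the BB08 differentiability formula (Lemma~\ref{lem:key} in the paper). The only cosmetic difference is that you treat $t>0$ and the perturbation $-\chi$ separately, whereas the paper observes that $g(t)=E(P(\phi+tv))-L_\mu(\phi)-t\int_X v\,d\mu$ is maximized at $t=0$ over \emph{all} $t\in\R$ and concludes $g'(0)=0$ directly.
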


\begin{proof} 
Write $T=\theta+dd^c\f$ and suppose that $\mu=\langle T^n \rangle =\MA(\f)$. Since $E$ is concave we have
$$
E(\f)+\int_X(\p-V_\theta)\MA(\f)\ge E(\p)+\int_X(\f-V_\theta)\MA(\f).
$$
Indeed the inequality holds when $\f,\p$ have minimal singularities by (\ref{equ:eprime}) and the general case follows by approximating $\f$ by $\min(\f,V_\theta-j)$ and similarly for $\p$. It follows that 
$$F_\mu(T)=\sup_{\cT^1(X,\theta)}F_\mu.$$
In order to prove the converse we will rely on the differentiability result obtained by the first two authors (\cite{BB08} Theorem B). Given a usc function $u:X\to[-\infty,+\infty[$ we define its $\theta$-psh envelope by
$$P(u)=\sup\{\f\in\cP(X,\theta),\,\f\le u\text{ on } X\}$$
(or as $P(u):\equiv-\infty$ is the set of $\theta$-psh functions on the right is empty). Note that $P(u)$ is automatically usc. Indeed its usc majorant $P(u)^*\ge P(u)$ is $\theta$-psh and satisfies $P(u)^*\le u$ since $u$ is usc, and it follows that $P(u)=P(u)^*$ by definition. Note also that
$$V_\theta=P(0).$$
Now let $v$ be a continuous function on $X$. Since $v$ is in particular bounded, we see that $P(\f+tv)\ge\f-O(1)$ belongs to $\cE^1(X,\theta)$ for every $t\in\R$. We claim that the function 
$$g(t):=E(P(\f+tv))-L_\mu(\f)-t\int_Xv d\mu$$ achieves its maximum at $t=0$. Indeed since $P(\f+tv)\le\f+tv$ we have
$$g(t)\le E(P(\f+tv))- L_\mu(P(\f+tv))\le E-L_\mu(\f)=g(0)$$
by assumption since $P(\f+tu)\in\cE^1(X,\theta)$. By Lemma~\ref{lem:key} below it follows that 
$$0=g'(0)=\int_X v\,\MA(\f)-\int_Xv d\mu.$$
\end{proof}

\begin{lem}\label{lem:key} Given $\f\in\cE^1(X,\theta)$ and a continuous function $v$ on $X$ we have
$$\frac{d}{dt}_{t=0} E(P(\f+tv))=\int_X v\,\MA(\f).$$
\end{lem}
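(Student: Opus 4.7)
The statement is essentially the differentiability theorem of \cite{BB08} adapted to the setting of $\cE^1(X,\theta)$, and I would structure the argument around the interplay between the Aubin--Mabuchi energy $E$, the envelope operator $P$, and the ``orthogonality relation'' satisfied by $\MA(P(u))$. Set $\f_t:=P(\f+tv)$. Since $v$ is continuous, hence bounded, we have $\f-t\|v\|_\infty\le\f_t\le\f+t\|v\|_\infty$, so $\f_t\in\cE^1(X,\theta)$ for all $t\in\R$, and moreover $\f_t$ has minimal singularities whenever $\f$ does.

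I would first reduce to the case where $\f$ has minimal singularities. The map $P$ is easily seen to be concave on the space of bounded usc functions, and since $E$ is concave and non-decreasing on $\cE^1(X,\theta)$, the map $t\mapsto E(\f_t)$ is concave. Thus its left and right derivatives at $t=0$ exist, and it suffices to compute them.

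The key input from \cite{BB08} is the orthogonality property that $\MA(\f_t)$ is concentrated on the contact set $\{\f_t=\f+tv\}$, which gives
$$\int_X(\f_t-\f)\,\MA(\f_t)=t\int_X v\,\MA(\f_t).$$
Combining this with the averaged-derivative identity and monotonicity inequality from Proposition~\ref{prop:energy_min}, namely
$$\int_X(\f_t-\f)\,\MA(\f_t)\le E(\f_t)-E(\f)\le\int_X(\f_t-\f)\,\MA(\f),$$
together with the pointwise bound $\f_t-\f\le tv$ which implies $\int(\f_t-\f)\MA(\f)\le t\int v\,\MA(\f)$ for $t>0$ and the reverse inequality for $t<0$, one squeezes $t^{-1}(E(\f_t)-E(\f))$ between $\int v\,\MA(\f_t)$ and $\int v\,\MA(\f)$ (in the appropriate order depending on the sign of $t$). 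In the minimal singularities case, $\f_t\to\f$ uniformly as $t\to0$, so Bedford--Taylor continuity of the Monge--Amp\`ere operator gives $\int v\,\MA(\f_t)\to\int v\,\MA(\f)$, and the derivative exists and equals $\int_X v\,\MA(\f)$.

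The main obstacle is the passage to a general $\f\in\cE^1(X,\theta)$. I would approximate $\f$ from above by its canonical cut-offs $\f^{(k)}:=\max(\f,V_\theta-k)$, which have minimal singularities and decrease to $\f$. The previous step yields the identity at each level $k$. To pass to the limit, one needs (i) that $P(\f^{(k)}+tv)\searrow P(\f+tv)=\f_t$, which follows from the monotonicity of $P$, so by continuity of $E$ along decreasing sequences $E(P(\f^{(k)}+tv))\to E(\f_t)$, and (ii) that $\MA(\f^{(k)})\to\MA(\f)$, which is exactly the monotonic continuity of the non-pluripolar Monge--Amp\`ere operator (Proposition~\ref{prop:cont}). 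The delicate point is exchanging the derivative in $t$ with the limit in $k$; one handles this by observing that the concave functions $t\mapsto E(P(\f^{(k)}+tv))-t\int v\,\MA(\f)$ converge pointwise and using standard facts on convergence of derivatives of concave functions. This is the step where the concavity structure is essential.
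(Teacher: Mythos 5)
Your route differs from the paper's, which instead proves the integral reformulation $E(P(\f+v))-E(\f)=\int_0^1\langle v,\MA(P(\f+tv))\rangle\,dt$ by applying [BB08] Theorem B along a decreasing sequence of \emph{continuous} functions $u_j\searrow\f$ and passing to the limit via continuity of $E$ and $\MA$ along decreasing sequences. Your squeeze argument is clean, and once the orthogonality relation is granted it in fact works directly for any $\f\in\cE^1(X,\theta)$: the remaining ingredient needed to close the squeeze, namely the continuity of $t\mapsto\int_X v\,\MA(P(\f+tv))$ at $t=0$, follows from $\MA(P(\f+tv))=\MA(P(\f+tv-t\|v\|_\infty))$ (the envelope commutes with adding constants and $\MA$ is insensitive to them) together with Proposition~\ref{prop:cont}, since $\f+tv-t\|v\|_\infty$ increases to $\f$ as $t\searrow 0$. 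Consequently your detour through the cut-offs $\f^{(k)}$ and ``standard facts on convergence of derivatives of concave functions'' is both unnecessary and, as written, inconclusive: pointwise convergence of concave differentiable $g_k\to g$ with $g_k'(0)\to L$ only yields $g'_+(0)\le L\le g'_-(0)$, not differentiability of $g$ at $0$.

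The genuine gap is the orthogonality relation itself. You assert $\int_X(\f_t-\f-tv)\,\MA(\f_t)=0$ with $\f_t=P(\f+tv)$, citing [BB08]. But even in your reduced case where $\f$ has minimal singularities, the obstruction $u=\f+tv$ is merely usc and generally not continuous (minimal singularities does not give continuity, even on $\Amp(\theta)$), so the non-contact set $\{P(u)<u\}$ need not be open. The balayage argument showing that $\MA(P(u))$ puts no mass there hinges on this openness, and [BB08] establishes it for continuous obstructions. Approximating $\f$ by $\max(\f,V_\theta-k)$ does not help, since those are still usc. To justify the orthogonality one would have to approximate $\f$ from above by \emph{continuous} functions and pass to the limit in the orthogonality relation itself — which is nontrivial and is precisely the device the paper's proof uses (applied to the integral identity), bypassing the pointwise orthogonality statement altogether.
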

\begin{proof} By dominated convergence we get the following equivalent integral formulation
\begin{equation}\label{equ:integral} E(P(\f+v))-E(\f)=\int_{t=0}^1\langle v,\MA(P(\f+tv))\rangle dt.
\end{equation}
Since $\f$ is usc, we can write it as the decreasing limit of a sequence of continuous functions $u_j$ on $X$. It is then straightforward to check that for each $t\in\R$ $P(\f+tv)$ is the decreasing limit of $P(u_j+tv)$. By Theorem B of~\cite{BB08} we have
$$E(P(u_j+v))-E(P(u_j))=\int_{t=0}^1\langle v,\MA(P(u_j+tv))\rangle dt$$
for each $j$. By Proposition~\ref{prop:energy_gen} the energy $E$ is continuous along decreasing sequences hence
$$E(P(\f+tv))=\lim_{j\to\infty}E(P(u_j+tv))$$
and 
$$\langle v,\MA(P(\f+tv))\rangle=\lim_{j\to\infty}\langle v,\MA(P(u_j+tv))\rangle$$
by~\cite{BEGZ08} Theorem 1.17 since $P(\f+tv)$ has full Monge-Amp{\`e}re mass. We thus obtain
 (\ref{equ:integral}) by dominated convergence, since the total mass of $\MA(P(u_j+tv))$ is equal to $1$ for each $j$ and $t$.
\end{proof}

We introduce the \emph{Legendre transform} of $E$:
 \begin{defi} The \emph{electrostatic energy} of a probability measure $\mu$ on $X$ is defined as the Legendre transform
$$E^*(\mu):=\sup_{\cT^1(X,\theta)}F_\mu.$$
We will say that $\mu$ has \emph{finite energy} if $E^*(\mu)<+\infty$. 
\end{defi}
Note that $E^*(\mu)\ge 0$ since $E(V_\theta)=L_\mu(V_\theta)=0$. We thus get a convex functional
$$E^*:\cM_X\to[0,+\infty],$$
which is furthermore lower semi-continuous (in the weak topology of measures) by Lemma~\ref{lem:usc}. 

Here is a first characterization of measures $\mu$ with finite energy.  
\begin{lem}\label{lem:finite} A probability measure $\mu$ has finite energy iff $L_\mu$ is finite on $\cE^1(X,\theta)$. In that case $\mu$ is necessarily non-pluripolar. 
\end{lem}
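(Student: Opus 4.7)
The plan is to establish both implications by invoking the two main tools from earlier in the section: the coercivity statement of Corollary~\ref{cor:proper} applied to $L_\mu$, and the pluripolar construction of Corollary~\ref{cor:pluripol}.

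For the easy direction, assume $E^*(\mu)<+\infty$. Given $\f\in\cE^1(X,\theta)$, the tautological maximum principle $\sup_X\f=\sup_X(\f-V_\theta)$ together with the fact that $\mu$ is a probability measure yields the upper bound $L_\mu(\f)\le\sup_X\f<+\infty$. For the lower bound, the identity $L_\mu(\f)=E(\f)-F_\mu(\f)\ge E(\f)-E^*(\mu)$ gives finiteness, since $E(\f)\in\R$ by definition of $\cE^1$ and $E^*(\mu)<+\infty$ by assumption.

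For the converse, assume $L_\mu$ is finite on $\cE^1(X,\theta)$. The functional $L_\mu$ is affine (hence convex) and non-decreasing, and it satisfies the scaling property $L_\mu(\f+c)=L_\mu(\f)+c$ precisely when $\mu(\Omega)=1$. Granting this for now, Corollary~\ref{cor:proper} produces constants $\e>0$, $A>0$ with $F_\mu\le -\e J+A$ on $\cT^1(X,\theta)$; since $J\ge 0$ this gives $E^*(\mu)\le A<+\infty$.

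The main subtlety is the non-pluripolarity of $\mu$, which I expect to be the principal obstacle because it also retroactively justifies the condition $\mu(\Omega)=1$ used in the previous step. Suppose for contradiction that $\mu$ charges some pluripolar Borel set $A$. Apply Corollary~\ref{cor:pluripol} to produce $\f_0\in\cE^1(X,\theta)$ with $A\subset\{\f_0=-\infty\}$. If $\mu(A\cap\Omega)>0$, then $\f_0-V_\theta\equiv-\infty$ on $A\cap\Omega$ (since $V_\theta$ is locally bounded on $\Omega$ by Theorem~\ref{thm:c11}), forcing $L_\mu(\f_0)=-\infty$ and contradicting direction~(1) applied to the finite-energy measure $\mu$. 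Otherwise $A\subset X\setminus\Omega$, so $\mu(\Omega)<1$; then taking $\f=V_\theta+c$ yields $F_\mu(\f)=c\,(1-\mu(\Omega))\to+\infty$ as $c\to+\infty$, contradicting $E^*(\mu)<+\infty$. In either case we reach a contradiction, so $\mu$ is non-pluripolar. The same dichotomy, read in the other direction, confirms that the scaling property used above holds automatically once one knows $L_\mu$ is finite and $E^*(\mu)<+\infty$, closing the argument.
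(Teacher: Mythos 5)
Your proof uses exactly the two tools the paper's one-line argument relies on: Corollary~\ref{cor:proper} for the converse direction, and (implicitly for the non-pluripolarity claim) Corollary~\ref{cor:pluripol}. Your easy direction is clean. You are also right to notice that the scaling hypothesis in Corollary~\ref{cor:proper} for $L=L_\mu$ is precisely $\mu(\Omega)=1$, a point the paper's proof (``and the result follows'') does not make explicit.

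However, your attempted repair of that point is circular. In the converse direction you are given only that $L_\mu$ is finite on $\cE^1(X,\theta)$; the Case~2 ``contradiction'' $F_\mu(V_\theta+c)=c(1-\mu(\Omega))\to+\infty$ contradicts $E^*(\mu)<+\infty$, but $E^*(\mu)<+\infty$ is exactly what the converse direction is trying to prove, and you only obtained it by first assuming the scaling property. So ``the same dichotomy, read in the other direction'' does not actually close the loop. Note also that ``otherwise $A\subset X\setminus\Omega$'' is not quite right: the negation of $\mu(A\cap\Omega)>0$ is $\mu(A\cap\Omega)=0$, and then $\mu(A\setminus\Omega)>0$ gives $\mu(\Omega)<1$; the set inclusion need not hold. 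The honest diagnosis is that, in the big (non-K\"ahler) case, ``$L_\mu$ finite on $\cE^1$'' by itself does not force $\mu(\Omega)=1$: for $p\in X\setminus\Omega$ the measure $\mu=\delta_p$ has $L_\mu\equiv 0$, yet $E(\f)-L_\mu(\f)=E(\f)$ is unbounded above on $\cE^1$. One should read the lemma with the implicit normalization $\mu(\Omega)=1$ (automatic when $\theta$ is K\"ahler, and in every application $\mu$ is non-pluripolar so $\mu(X\setminus\Omega)=0$); under that reading, once $\mu(\Omega)=1$ the scaling property holds, your converse goes through via Corollary~\ref{cor:proper}, and non-pluripolarity follows directly from Corollary~\ref{cor:pluripol} without any appeal to $E^*(\mu)<+\infty$: any pluripolar $A$ with $\mu(A)>0$ must have $\mu(A\cap\Omega)>0$, forcing $L_\mu(\f_0)=-\infty$.
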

\begin{proof} If $L_\mu$ is finite on $\cE^1(X,\theta)$ then $F_\mu:=E-L_\mu$ is $J$-proper on $\cT^1(X,\theta)$ and is bounded on each $J$-sublevel set by Corollary~\ref{cor:proper}, and the result follows. 
\end{proof}

The next result shows that $E$ is in turn the Legendre transform of $E^*$.
\begin{prop}\label{prop:legendre} For any $\f\in\cE^1(X,\theta)$ we have
$$E(\f)=\inf_{\mu\in\cM_X}\left(E^*(\mu)+L_\mu(\f)\right).$$
\end{prop}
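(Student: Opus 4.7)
The result is a Fenchel--Moreau-type biduality statement: the definition of $E^*$ gives one direction of the Legendre pairing for free, and the reverse direction amounts to exhibiting, for each $\f\in\cE^1(X,\theta)$, a measure $\mu$ realizing equality in $E(\f)\le E^*(\mu)+L_\mu(\f)$. My plan is to take $\mu=\MA(\f)$ and invoke the variational characterization (Theorem~\ref{thm:var}) already proved.

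The easy inequality $E(\f)\le\inf_{\mu}(E^*(\mu)+L_\mu(\f))$ comes directly from unwinding the definition: by translation invariance of the scaling property, $E^*(\mu)=\sup\{E(\p)-L_\mu(\p):\p\in\cE^1(X,\theta)\}$, so in particular $E^*(\mu)\ge E(\f)-L_\mu(\f)$ for every $\f\in\cE^1(X,\theta)$ and every $\mu\in\cM_X$. Rearranging and taking the infimum over $\mu$ produces the desired bound.

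For the reverse direction, given $\f\in\cE^1(X,\theta)$, I would set $\mu_\f:=\MA(\f)$. Since $\f$ has full Monge-Amp\`ere mass (by Proposition~\ref{prop:energy}), $\mu_\f$ is a non-pluripolar probability measure on $X$, and Proposition~\ref{prop:energy} furthermore ensures that $L_{\mu_\f}(\f)=\int_X(\f-V_\theta)\MA(\f)$ is finite, precisely because $\f\in\cE^1(X,\theta)$. Applying Theorem~\ref{thm:var} in the direction ``$\mu=\langle T^n\rangle\Rightarrow T$ maximizes $F_\mu$'' to $T:=\theta+dd^c\f$ yields
$$E^*(\mu_\f)=F_{\mu_\f}(T)=E(\f)-L_{\mu_\f}(\f),$$
and rearranging gives $E^*(\mu_\f)+L_{\mu_\f}(\f)=E(\f)$. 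Thus the infimum is attained at $\mu_\f$ and equals $E(\f)$, completing the proof.

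The only mild subtlety is verifying that $L_{\mu_\f}(\f)$ is a finite real number so that the rearrangement above is legitimate as an identity in $\R$; this is exactly what the characterization of $\cE^1(X,\theta)$ in Proposition~\ref{prop:energy} supplies. Beyond this bookkeeping, no additional work is required, since the substantive content of Proposition~\ref{prop:legendre} --- namely, that subdifferentials of the concave functional $E$ at points of $\cE^1(X,\theta)$ are realized by non-pluripolar Monge-Amp\`ere measures --- is precisely what Theorem~\ref{thm:var} has already established.
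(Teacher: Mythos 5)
Your proof is correct and follows essentially the same route as the paper: the inequality $E(\f)\le E^*(\mu)+L_\mu(\f)$ is immediate from the definition of $E^*$, and equality is obtained by choosing $\mu=\MA(\f)$ and invoking the easy direction of Theorem~\ref{thm:var}. The finiteness check you flag via Proposition~\ref{prop:energy} is a reasonable bit of bookkeeping that the paper leaves implicit, but the substance of the argument is identical.
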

\begin{proof} We have $E^*(\mu)\ge E(\f)-L_\mu(\f)$ and equality holds for $\mu=\MA(\f)$ by Theorem~\ref{thm:var}. The result follows immediately.
\end{proof}
We can alternatively relate $E^*$ and $J$ as follows. If $\mu$ is a probability measure on $X$ we define an affine functional $H_\mu$ on $\cT(X,\theta)$ by setting
$$H_\mu(T):=\int(\f-V_\theta)\left(\MA(V_\theta)-\mu\right)$$
with $T=\theta+dd^c\f$. Then we have 
$$E^*(\mu)=\sup_{T\in\cT^1(X,\omega)}\left(H_\mu(T)-J(T)\right),$$
and Theorem~\ref{thm:var} combined with the uniqueness result of~\cite{BEGZ08} says that the supremum is attained (exactly) at $T$ iff $\mu=\langle T^n\rangle$. 
 
\subsection{Direct method} 

We will also use the following technical result.
\begin{lem}\label{lem:bound} Let $\nu$ be a measure with finite energy and let $A>0$. Then $E^*$ is bounded on 
$$\{\mu\in\cM_X|\mu\le A\nu\}.$$
\end{lem}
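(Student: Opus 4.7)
The plan is to combine the sublinear growth estimate of Proposition~\ref{prop:square}(ii) for $L_\nu$ with the pointwise domination $\mu\le A\nu$, in order to obtain a $\mu$-uniform $O(\sqrt{C})$ bound for $|L_\mu|$ on $\cE_C$, and then to use translation invariance of $F_\mu=E-L_\mu$ to reduce $E^*(\mu)$ to a trivial one-dimensional optimization.

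First, I would note that $L_\nu$ is affine (hence convex), non-decreasing, satisfies the scaling property, and is finite on $\cE^1(X,\theta)$ since $\nu$ has finite energy (Lemma~\ref{lem:finite}). Proposition~\ref{prop:square}(ii) then yields a constant $K=K(\nu)>0$ such that
$$
\sup_{\cE_C}|L_\nu|\le K\left(1+\sqrt{C}\,\right)
$$
for every $C>0$.

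Next, for any probability measure $\mu$ with $\mu\le A\nu$ I would observe that $\mu$ is non-pluripolar (since $\nu$ is, by the second assertion of Lemma~\ref{lem:finite}), so that $L_\mu(\f)=\int_X(\f-V_\theta)\,d\mu$. For $\f\in\cE_C$ we have $\sup_X\f\le 0$, hence $\f\le V_\theta$ by the tautological maximum principle~(\ref{equ:max}), and the integrand $V_\theta-\f$ is non-negative $\mu$-a.e. Therefore
$$
|L_\mu(\f)|=\int_X(V_\theta-\f)\,d\mu\le A\int_X(V_\theta-\f)\,d\nu=A|L_\nu(\f)|\le AK\left(1+\sqrt{C}\,\right),
$$
with $K$ independent of $\mu$.

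Finally, $F_\mu=E-L_\mu$ being translation invariant, one has $E^*(\mu)=\sup\{F_\mu(\f):\f\in\cE^1(X,\theta),\,\sup_X\f=0\}$. For any such $\f$, setting $C:=-E(\f)\ge 0$ puts $\f\in\cE_C$, so the previous estimate gives
$$
F_\mu(\f)=-C-L_\mu(\f)\le -C+AK\left(1+\sqrt{C}\,\right),
$$
and maximizing the right-hand side over $C\ge 0$ yields the $\mu$-independent upper bound $E^*(\mu)\le AK+\tfrac{1}{4}A^2K^2$, as required. The only point to identify here is that the $O(\sqrt{C})$ growth of $|L_\nu|$ on $\cE_C$ strictly beats the linear decay $-C$ of $E$; once this is exploited, the pluripolar/non-pluripolar issue is handled by Lemma~\ref{lem:finite} and nothing else is subtle.
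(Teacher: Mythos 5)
Your proof is correct and follows essentially the same route as the paper's: apply the $O(\sqrt{C})$ bound of Proposition~\ref{prop:square}(ii) to $L_\nu$, transfer it to $L_\mu$ using the sign of $V_\theta-\f$ on $\cE_C$ together with $\mu\le A\nu$, and then note that $-C+O(\sqrt{C})$ is bounded above. You have merely made explicit a couple of micro-steps (non-pluripolarity of $\mu$, the sign check) that the paper leaves implicit.
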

\begin{proof} By Proposition~\ref{prop:square} there exists $B>0$ such that
$$\sup_{\cE_C}|I_\nu|\le B(1+C^{1/2})$$
for all $C>0$, hence 
$$\sup_{\cE_C}|L_\mu|\le AB(1+C^{1/2})$$
for all $\mu\in\cM_X$ such that $\mu\le A\nu$. It follows that 
$$E^*(\mu)=\sup_{\cE^1(X,\theta)}(E-L_\mu)$$
$$\le\sup_{C>0}\left(AB(1+C^{1/2})-C\right)<+\infty.$$
\end{proof}

We are now in a position to state one of our main results
(see Theorem A of the introduction).

\begin{thm}\label{thm:main} A probability measure $\mu$ on $X$ has finite energy iff there exists $T\in\cT^1(X,\theta)$ such that $\mu=\langle T^n\rangle$. In that case $T=T_\mu$ is unique and satisfies 
$$n^{-1}E^*(\mu)\le J(T_\mu)\le n E^*(\mu).$$
Furthermore any maximizing sequence $T_j\in\cT^1(X,\theta)$ for $F_\mu$ converges to $T_\mu$. 
\end{thm}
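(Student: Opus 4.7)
The plan splits into the two implications, together with the auxiliary uniqueness, convergence and energy statements.

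The forward direction is immediate: if $\mu=\langle T^n\rangle$ with $T=\theta+dd^c\f\in\cT^1(X,\theta)$, then Theorem~\ref{thm:var} gives $F_\mu(T)=\sup_{\cT^1(X,\theta)}F_\mu$, so
$$E^*(\mu)=F_\mu(T)=E(\f)-\int_X(\f-V_\theta)\MA(\f),$$
and both terms on the right are finite by Proposition~\ref{prop:energy} since $\f\in\cE^1(X,\theta)$.

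For the reverse direction, suppose $E^*(\mu)<+\infty$. By Lemma~\ref{lem:finite}, $L_\mu$ is finite on $\cE^1(X,\theta)$ and $\mu$ is automatically non-pluripolar. Since $L_\mu$ is affine, non-decreasing and scaling-invariant, Corollary~\ref{cor:proper} makes $F_\mu=E-L_\mu$ $J$-coercive, hence $J$-proper on $\cT^1(X,\theta)$. Proposition~\ref{prop:proper} then reduces the existence of a maximiser to the upper semicontinuity of $F_\mu$, and this is the main obstacle: $L_\mu$ is only affine and, by Fatou applied on the ample locus, \emph{a priori} merely upper semicontinuous, so the usc of $E-L_\mu$ does not follow formally from the usc of $E$. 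The plan is to approximate $\mu$ by a sequence of probability measures $\mu_k\to\mu$ weakly with $\mu_k\le A_k\,\ca$ (capacity-dominated) and $E^*(\mu_k)$ uniformly bounded; a workable candidate exploits the measure $\nu:=\MA(V_\theta)$, which satisfies $\nu\le\ca$ by the very definition of the Monge--Amp\`ere capacity (since $V_\theta$ itself is a candidate in the envelope defining $\ca$), after suitable truncation and renormalisation of $\mu$ against $\nu$. For each such $\mu_k$, Corollary~\ref{cor:domcap} provides continuity of $L_{\mu_k}$ on every $\cE_C$, so $F_{\mu_k}$ is usc and $J$-proper; Proposition~\ref{prop:proper} then furnishes a maximiser $T_k=\theta+dd^c\f_k$, and Theorem~\ref{thm:var} identifies $\MA(\f_k)=\mu_k$. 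The uniform bound on $E^*(\mu_k)$ (obtained from convexity of $E^*$, the comparison $\mu_k\lesssim\mu+k\nu$, and Lemma~\ref{lem:bound} applied to the capacity-dominated piece) keeps the normalised $\f_k$ in a fixed compact $\cE_C$ (Lemma~\ref{lem:compact}), and a subsequential limit $\f$ then lies in $\cE^1(X,\theta)$. The most technical step is the passage to the limit in $\MA(\f_k)=\mu_k$: $L^1$-convergence of potentials is not enough for Bedford--Taylor continuity, but the uniform capacity control on sublevel sets from Lemma~\ref{lem:criterion}, combined with the quantitative continuity of Lemma~\ref{lem:uniform}, should identify $\MA(\f)=\mu$. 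Theorem~\ref{thm:var} applied in the forward direction then yields $T_\mu:=\theta+dd^c\f$ as a maximiser of $F_\mu$.

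Uniqueness of $T_\mu\in\cT^1(X,\theta)$ with $\langle T_\mu^n\rangle=\mu$ is the uniqueness theorem for non-pluripolar Monge--Amp\`ere equations of~\cite{Din09,BEGZ08}. For the convergence statement, any $F_\mu$-maximising sequence $T_j$ is confined to $\{J\le C\}$ by $J$-properness; every cluster point maximises $F_\mu$ by the usc obtained above, hence coincides with $T_\mu$ by uniqueness, so the whole sequence $T_j$ converges to $T_\mu$. Finally, writing $T_\mu=\theta+dd^c\f$ and setting $I_j:=\int_X(\f-V_\theta)\MA(\f^{(j)},V_\theta^{(n-j)})$ for $j=0,\dots,n$, the monotonicity~(\ref{equ:monotone}) gives $I_n\le\dots\le I_0$, with $I_0=L_0(\f)$ and $I_n=L_\mu(\f)$. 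Formula~(\ref{equ:diff}) with $\p=V_\theta$ then rewrites
$$(n+1)E^*(\mu)=\sum_{j=0}^{n-1}(I_j-I_n),\qquad(n+1)J(\f)=\sum_{j=1}^n(I_0-I_j),$$
and every summand lies in $[0,I(\f,V_\theta)]$ with $I(\f,V_\theta)=I_0-I_n$; both sums are therefore squeezed between $I(\f,V_\theta)$ and $n\,I(\f,V_\theta)$, which yields the announced bound $n^{-1}E^*(\mu)\le J(T_\mu)\le n\,E^*(\mu)$.
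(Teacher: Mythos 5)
Your forward direction, uniqueness, convergence argument, and explicit $I_j$-bookkeeping for the estimate $n^{-1}E^*(\mu)\le J(T_\mu)\le nE^*(\mu)$ are all correct (the last one is essentially Lemma~\ref{lem:quasisym} unpacked, and works). The reverse direction, however, has two genuine gaps.

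First, you propose to truncate $\mu$ against $\nu:=\MA(V_\theta)$. This cannot work in general, because a finite-energy probability measure $\mu$ need not be absolutely continuous with respect to $\MA(V_\theta)$ — the latter has $L^\infty$ density (Corollary~\ref{cor:MAreg}), whereas $\mu$ can perfectly well be singular with respect to Lebesgue measure (e.g.\ carried by a real hypersurface). Truncating a Radon--Nikodym density that does not exist produces nothing. The paper's argument hinges on the nontrivial fact (Lemma~\ref{lem:rn}, going back to Cegrell and the generalized Radon--Nikodym theorem of~\cite{Rai69} applied to the weak-compact convex set $\{\nu\le\ca\}$) that every non-pluripolar $\mu$ can be written $\mu=f\nu$ with $f\in L^1(\nu)$ for \emph{some} probability measure $\nu\le\ca$; it is this $\nu$, not $\MA(V_\theta)$, that one truncates against. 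This lemma is not optional; it is the key input you are missing.

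Second, the passage to the limit $\MA(\f_k)=\mu_k\Rightarrow\MA(\f)=\mu$ is not justified by the tools you cite. Lemma~\ref{lem:uniform} gives $\sup_{\cE_C}|L_{\MA(\f_k)}-L_{\MA(\f)}|=O(I(\f_k,\f)^{1/2})$, but you have not shown (and one cannot show from $L^1$ convergence plus an energy bound alone) that $I(\f_k,\f)\to 0$; likewise Lemma~\ref{lem:criterion} controls sublevel-set capacities, not convergence of the Monge--Amp\`ere operator. The paper sidesteps this entirely by choosing $\mu_k=(1+\e_k)\min(f,k)\,\nu$ with densities increasing to $f$, and then invoking the one-sided lower semicontinuity of the non-pluripolar product (\cite{BEGZ08} Corollary 2.21): $\langle T^n\rangle\ge\liminf_k\mu_k=\mu$, with equality because both sides are probability measures. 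This monotone structure of the approximants and the one-sided estimate are exactly what make the limit go through, and neither appears in your sketch.
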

\begin{proof} Suppose first that $\mu=\langle T^n\rangle$ for some $T\in\cE^1(X,\theta)$. Then $\mu$ has finite energy by Lemma~\ref{lem:finite}. Uniqueness follows from~\cite{BEGZ08}, where it was more generally proved that a current $T\in\cT(X,\theta)$ with full Monge-Amp{\`e}re mass is determined by $\langle T^n\rangle$ by adapting Dinew's proof~\cite{Din09} in the K{\"a}hler case. 

Write $T=\theta+dd^c\f$. By the easy part of Theorem~\ref{thm:var} we have
$$E^*(\mu)=E(\f)-\int_X(\f-V_\theta)\MA(\f)=J_\f(V_\theta)$$
and the second assertion follows from Lemma~\ref{lem:quasisym}. 

Now let $T_j\in\cT^1(X,\theta)$ be a maximizing sequence for $F_\mu$. Since $F_\mu$ is $J$-proper the $T_j$'s stay in a compact set, so we may assume that they converge towards $S\in\cT^1(X,\theta)$ and we are to show that $S=T$. Now $F_\mu$ is usc by Theorem~\ref{thm:cont} thus $F_\mu(S)$ has to be equal to $\sup_{\cT^1(X,\theta)}F_\mu$. By Theorem~\ref{thm:var} we thus get 
$$\langle S^n\rangle=\mu=\langle T^n\rangle$$
hence $S=T$ as desired by uniqueness. 

We now come to the main point. Assume that $\mu$ has finite energy in the above sense that $E^*(\mu)<+\infty$. In order to find $T\in\cT^1(X,\theta)$ such that $\langle T^n\rangle=\mu$ it is enough to show by Theorem~\ref{thm:var} that $F_\mu$ achieves its supremum on $\cT^1(X,\theta)$. Since $F_\mu$ is $J$-proper it is even enough to that $F_\mu$ is usc, which we know holds true \emph{a posteriori} by Theorem~\ref{thm:cont}. 

We are unfortunately unable to establish this \emph{ a priori}, thus we resort to a more indirect argument. Assume first that $\mu\le A\ca$ for some $A>0$. Corollary~\ref{cor:domcap} then implies that $L_\mu$ is continuous on $\cE_C$ for each $C$, hence $F_\mu$ is usc in that case and we infer that $\mu=\langle T^n\rangle$ for some $T\in\cT^1(X,\theta)$ as desired. 

In the general case we rely on the following result already used in~\cite{GZ07,BEGZ08} and which basically goes back to Cegrell~\cite{Ceg98}. 

\begin{lem}\label{lem:rn} Let $\mu$ be a probability measure that puts no mass on pluripolar subsets. Then $\mu$ is absolutely continuous with respect to a probability measure $\nu$ such that $\nu\le\ca$. 
\end{lem}
\begin{proof} As in~\cite{Ceg98} we apply the generalised Radon-Nikodym theorem to the compact convex set of measures 
$$\cC:=\{\nu\in\cM_X, \nu\le\ca\}.$$
By Proposition~\ref{prop:closed} this is indeed a closed subset of $\cM_X$ hence is compact. By~\cite{Rai69} there exists $\nu\in\cC$, $\nu'\perp\cC$ and $f\in L^1(\nu)$ such that
$$\mu=f\nu+\nu'.$$
Since $\mu$ puts no mass on pluripolar sets and $\cC$ characterises such sets, it follows that $\nu'=0$, qed.
\end{proof}

Since $\mu$ is non-pluripolar by Lemma~\ref{lem:finite}, we can use  Lemma~\ref{lem:rn} and write $\mu=f\nu$ with $\nu\le\ca$ and $f\in L^1(\nu)$. Now set
$$
\mu_k:=(1+\e_k)\min(f,k)\nu
$$
where $\e_k\ge 0$ is chosen so that $\mu_k$ has total mass $1$. We thus have $\mu_k\le 2k\ca$ thus by the above first part of the proof we have $\mu_k=\langle T_k^n\rangle$ for some $T_k\in\cT^1(X,\theta)$. On the other hand we have $\mu_k\le 2\mu$ for all $k$ thus $E^*(\mu_k)$ is uniformly bounded by Lemma~\ref{lem:bound}. By the first part of the proof it follows that all $T_k$ stay in a sublevel set $\{J\le C\}$. 
Since the latter is compact we may assume that $T_k\to T$ for some $T\in\cT^1(X,\theta)$. In particular $T$ has full Monge-Amp{\`e}re mass and~\cite{BEGZ08} Corollary 2.21 thus yields
$$\langle T^n\rangle\ge(\liminf_{k\to\infty}(1+\e_k)\min(f,k))\nu=\mu,$$
hence $\langle T^n\rangle=\mu$ since both measures have total mass 1, qed.  
\end{proof}

By a similar argument we can now recover the main result of~\cite{BEGZ08}. 

\begin{cor} \label{cor:begz}
Let $\mu$ be a non-pluripolar probability measure on $X$. Then there exists $T\in\cT(X,\theta)$ such that $\mu=\langle T^n\rangle$. 
\end{cor}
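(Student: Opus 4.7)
The plan is to reduce Corollary~\ref{cor:begz} to Theorem~\ref{thm:main} by approximating $\mu$ from below by probability measures of finite energy. Since $\mu$ is non-pluripolar, Lemma~\ref{lem:rn} provides a decomposition $\mu = f\nu$ with $\nu$ a probability measure satisfying $\nu \le \ca$ and $f\in L^1(\nu)$. I would then set
\[
\mu_k := c_k \min(f,k)\,\nu
\]
where $c_k \ge 1$ is the (unique) constant making $\mu_k$ a probability measure; evidently $c_k \searrow 1$ and $\mu_k \to \mu$ weakly. Since $\mu_k \le 2k\,\ca$ for $k$ large, Corollary~\ref{cor:domcap} together with Lemma~\ref{lem:finite} implies that each $\mu_k$ has finite energy, so Theorem~\ref{thm:main} yields $T_k = \theta + dd^c\f_k \in \cT^1(X,\theta)$ with $\langle T_k^n\rangle = \mu_k$; after a harmless translation I may assume $\sup_X \f_k = 0$.

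The next step is to extract a limit. Unlike in the final paragraph of the proof of Theorem~\ref{thm:main}, Lemma~\ref{lem:bound} is not available here, so the $T_k$ need not stay in any $J$-sublevel set of $\cT^1(X,\theta)$. However, the compactness of $\{\f \in \cP(X,\theta) : \sup_X \f = 0\}$ still allows me to extract a subsequence with $\f_k \to \f$ in $L^1(X)$, and consequently $T_k \to T := \theta + dd^c \f$ weakly in the (merely) compact set $\cT(X,\theta)$. Everything now hinges on identifying $\langle T^n\rangle$ with $\mu$.

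This identification is the principal obstacle: the non-pluripolar product is not weakly continuous, and without the energy bound I cannot a priori guarantee that $T$ has full Monge-Amp\`ere mass, which was precisely the hypothesis used to invoke \cite{BEGZ08}~Corollary~2.21 in the proof of Theorem~\ref{thm:main}. To circumvent this, I would exploit the non-pluripolarity of $\mu$ through Bedford--Taylor truncations. For each $M > 0$, the truncated potentials $\f_k^M := \max(\f_k, V_\theta - M)$ have minimal singularities, and by plurifine locality of the Monge--Amp\`ere operator one has $\MA(\f_k^M) \ge \mathbf{1}_{\{\f_k > V_\theta - M\}}\,\mu_k$. Because the $\f_k^M$ are uniformly bounded $\theta$-psh functions converging in $L^1(X)$ to $\f^M := \max(\f, V_\theta - M)$, Bedford--Taylor continuity gives $\MA(\f_k^M) \to \MA(\f^M)$ weakly; combined with $\mu_k \to \mu$ weakly, a standard lower-semicontinuity argument applied on the plurifine open set $\{\f > V_\theta - M\}$ yields $\MA(\f^M) \ge \mathbf{1}_{\{\f > V_\theta - M\}}\,\mu$.

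Letting $M \to +\infty$, the left-hand side increases to $\langle T^n\rangle$ by the very construction of the non-pluripolar product, while the right-hand side increases to $\mu$ since $\mu$ puts no mass on the pluripolar set $\{\f = -\infty\}$ (by non-pluripolarity). Hence $\langle T^n\rangle \ge \mu$ as Borel measures, and since $\int_X \langle T^n\rangle \le \vol(\theta) = 1 = \mu(X)$, equality of total masses forces $\langle T^n\rangle = \mu$ (and, as a byproduct, that $T$ has full Monge--Amp\`ere mass). This completes the proof.
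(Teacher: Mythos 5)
Your overall strategy parallels the paper's: both use Lemma~\ref{lem:rn} to write $\mu = f\nu$, approximate by $\mu_k \propto \min(f,k)\nu$, solve for $T_k$ via Theorem~\ref{thm:main}, extract a weak limit $T$, and show $\langle T^n\rangle = \mu$. The divergence comes in the final identification, and this is precisely where your argument has a genuine gap.

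The fatal step is the claim that because the truncations $\f_k^M$ are (uniformly, relative to $V_\theta$) bounded $\theta$-psh functions converging in $L^1(X)$ to $\f^M$, ``Bedford--Taylor continuity gives $\MA(\f_k^M) \to \MA(\f^M)$ weakly.'' This is not a valid application of Bedford--Taylor: their continuity theorem holds along \emph{monotone} sequences (decreasing, or increasing a.e.), and more generally along sequences converging \emph{in capacity}, but not along sequences of uniformly bounded psh functions that merely converge in $L^1$ (equivalently, weakly). For $n \ge 2$ the Monge--Amp\`ere operator is genuinely nonlinear and is known to be discontinuous under weak $L^1$ convergence even with a uniform $L^\infty$ bound --- the classical counterexamples go back to Cegrell. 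You correctly observe that without an energy bound you cannot keep the $T_k$ in a $J$-sublevel set, but that same lack of energy control is what forecloses any hope of upgrading the $L^1$ convergence of $\f_k$ (and hence of $\f_k^M$) to convergence in capacity, which is what you would need. There is also a secondary difficulty in passing to the limit in the pointwise inequality $\MA(\f_k^M) \ge \mathbf{1}_{\{\f_k > V_\theta - M\}}\mu_k$, since both the indicator set and the measure $\mu_k$ depend on $k$, but this is moot given the first gap.

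The paper sidesteps this entirely: rather than trying to take a limit of Monge--Amp\`ere measures of potentials with uncontrolled energy, it produces a \emph{uniform} $\chi$-weighted energy bound for the $\f_k$ (choosing a convex weight $\chi$ with slow growth adapted to $\mu$ via the Orlicz-duality statement of \cite{BEGZ08}~Lemma~3.3, using that $\mu_k \le 2\mu$ and that $L_\nu(\f_k)$ is uniformly bounded by Corollary~\ref{cor:domcap}). Lower semicontinuity of the $\chi$-energy then forces the limit $\f$ to lie in the corresponding finite-energy class, which in particular gives full Monge--Amp\`ere mass, at which point \cite{BEGZ08}~Corollary~2.21 applies exactly as in the proof of Theorem~\ref{thm:main}. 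In short, the paper replaces the $J$-energy bound you noted was unavailable by a weaker but still coercive $\chi$-energy bound; your argument has no substitute for this compactness input, and the Bedford--Taylor invocation does not supply one.
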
 

\begin{proof} Using Lemma~\ref{lem:rn} as above we can write $\mu=f\nu$ with $\nu\le\ca$ and $f\in L^1(\nu)$, and we set $\mu_k=(1+\e_k)\min(f,k)\nu$ as above. By Theorem~\ref{thm:main} there exists $T_k\in\cT^1(X,\theta)$ such that $\mu_k=\langle T_k^n\rangle$.  We may assume that $T_k$ converges to some $T\in\cT(X,\theta)$.

We claim that $T$ has full Monge-Amp{\`e}re mass, which will imply $\langle T^n\rangle=\mu$ by~\cite{BEGZ08} Corollary 2.21 just as above. Write $T=\theta+dd^c\f$ and $T_k=\theta+dd^c\f_k$ with $\sup_X\f=\sup_X\f_k=0$ for all $k$. By general Orlicz space theory (\cite{BEGZ08} Lemma 3.3) there exists a convex non-decreasing function $\chi:\R_-\to\R_-$ with a sufficiently slow growth at $-\infty$ and $C>0$ such that
$$\int_X(-\chi)(\p-V_\theta)d\mu\le\int_X(\p-V_\theta)d\nu+C$$
for all $\p\in\cP(X,\theta)$ normalized by $\sup_X\p=0$. Now $\int_X(\f_k-V_\theta)d\nu=L_\mu(\f_k)$ is uniformly bounded by Corollary~\ref{cor:domcap}, and we infer that
$$\int_X(-\chi)(\f_k-V_\theta)\MA(\f_k)\le 2\int_X(-\chi)(\f_k-V_\theta)d\mu$$
is uniformly bounded. This means that the $\chi$-weighted energy (cf.~\cite{BEGZ08}) of $\f_k$ is uniformly bounded (since $\f_k$ has full Monge-Amp{\`e}re mass) and we conclude that $\f$ has finite $\chi$-energy by semi-continuity of the $\chi$-energy. This implies in turn that $\f$ has full Monge-Amp{\`e}re as desired. 
\end{proof}

\section{\label{sec:pluri}Pluricomplex electrostastics}
We assume throughout this section that $\theta=\omega$ is a K{\"a}hler form (still normalized by $\int_X\omega^n=1$). We then have $V_\omega=0$. 
 
\subsection{Pluricomplex energy of measures}  
We first record the following useful explicit formulas. 
\begin{lem}\label{lem:formules} Let $\mu$ be a probability measure with finite energy, and write $\mu=(\omega+dd^c\f)^n$ with $\f\in\cE^1(X,\omega)$. Then we have
\begin{equation}\label{equ:energymes}
E^*(\mu)=\frac{1}{n+1}\sum_{j=0}^{n-1}\int_X\f\left((\omega+dd^c\f)^j\wedge\omega^{n-j}-\mu\right)$$
$$=\sum_{j=0}^{n-1}\frac{j+1}{n+1}\int_X d\f\wedge d^c\f\wedge (\omega+dd^c\f)^j\wedge\omega^{n-j}.
\end{equation}
\end{lem}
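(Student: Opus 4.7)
The plan is to reduce everything to the formula $E^*(\mu)=E(\f)-\int_X\f\,\mu$ that has already been established, and then perform purely formal algebraic manipulations (a telescoping identity plus integration by parts).

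First I would observe that since $\theta=\omega$ is K\"ahler we have $V_\omega\equiv 0$, so the computation carried out in the proof of Theorem~\ref{thm:main} (using the easy direction of Theorem~\ref{thm:var}) gives
\[
E^*(\mu)=E(\f)-L_\mu(\f)=E(\f)-\int_X\f\,\mu.
\]
Substituting the definition
\[
E(\f)=\frac{1}{n+1}\sum_{j=0}^{n}\int_X\f\,(\omega+dd^c\f)^j\wedge\omega^{n-j}
\]
and writing $\int_X\f\,\mu=\frac{n+1}{n+1}\int_X\f\,(\omega+dd^c\f)^n$ so as to distribute it equally inside the sum, the $j=n$ term cancels and I obtain the first claimed identity.

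For the second identity I would use the telescoping identity
\[
(\omega+dd^c\f)^n-(\omega+dd^c\f)^j\wedge\omega^{n-j}
=\sum_{k=j}^{n-1}dd^c\f\wedge(\omega+dd^c\f)^k\wedge\omega^{n-k-1},
\]
obtained from $(\omega+dd^c\f)-\omega=dd^c\f$. Plugging this in, integrating by parts (so that $-\int_X\f\,dd^c\f\wedge S=\int_X d\f\wedge d^c\f\wedge S$ for the closed positive current $S=(\omega+dd^c\f)^k\wedge\omega^{n-k-1}$), and swapping the order of the double summation $\sum_{j=0}^{n-1}\sum_{k=j}^{n-1}$ into $\sum_{k=0}^{n-1}\sum_{j=0}^{k}$ produces the factor $(k+1)$ and yields the desired expression.

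The only delicate point is justifying the integration by parts when $\f\in\cE^1(X,\omega)$ is merely of finite energy and possibly unbounded. The plan to handle this is to work first with the canonical approximants $\f_m:=\max(\f,-m)$, which have minimal singularities, so that both the telescoping and the integration by parts are licit by \cite{BEGZ08} Theorem 1.14. The formula is then established for each $\f_m$, and one passes to the limit using Proposition~\ref{prop:cont} to control the non-pluripolar mixed Monge--Amp\`ere measures along the decreasing sequence, together with the finiteness of $\int_X(-\f)\MA(\f)$ (Proposition~\ref{prop:energy}) to dominate the boundary terms. The energy identities $E(\f_m)\to E(\f)$ and $\int_X\f_m\,\MA(\f_m)\to\int_X\f\,\MA(\f)$ follow from the monotone continuity of $E$ and the second part of Proposition~\ref{prop:cont}, which is exactly what is needed to transport both expressions in the statement from the bounded case to the finite-energy case.
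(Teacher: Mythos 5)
Your argument is correct and follows essentially the same route as the paper: both start from $E^*(\mu)=E(\f)-\int_X\f\,d\mu$ (the easy half of Theorem~\ref{thm:var}), after which the paper simply identifies this quantity with $J_\f(0)$ and cites the already-recorded explicit expression~(\ref{equ:delta}) for $J_\p$, whereas you re-derive that identity ab initio via the telescoping expansion and integration by parts, with the approximation by $\max(\f,-m)$ and Proposition~\ref{prop:cont} handling the passage from minimal singularities to general $\f\in\cE^1(X,\omega)$ exactly as the paper does elsewhere. Incidentally your dimension count is the correct one and exposes a harmless typo in the printed statement: the second line of~(\ref{equ:energymes}) should read $\omega^{n-1-j}$, not $\omega^{n-j}$.
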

\begin{proof} By the easy part of Theorem~\ref{thm:var} we have 
$$E^*(\mu)=E(\f)-\int_X\f d\mu=J_\f(0)$$
and the formulas follow from the explicit formulas for $E$ and $J_\f(\p)$ given in Section~\ref{sec:energy}. 
\end{proof}

When $X$ is a compact Riemann surface ($n=1$) a given probability measure $\mu$ may be written $\mu=\omega+dd^c\f$ by solving Laplace's equation. Then $E^*(\mu)<+\infty$ iff $\f$ belongs to the Sobolev space $L^2_1(X)$, and in that case 
$$2E^*(\mu)=\int_X\f(\omega-\mu)=\int_X d\f\wedge d^c\f$$
is nothing but the classical Dirichlet functional applied to the potential $\f$. 

\smallskip

We now indicate the relation with the classical \emph{logarithmic energy} (cf.~\cite{ST}  Chapter 1). Recall that a signed measure $\lambda$ on $\C$ is said to have finite logarithmic energy if $(z,w)\mapsto\log|z-w|$ belongs to $L^1(|\lambda|\otimes|\lambda|)$, and its logarithmic energy is then defined by
$$I(\lambda)=\int\int\log|z-w|^{-1}\lambda(dz)\lambda(dw).$$
When $\lambda$ has finite energy its \emph{logarithmic potential} 
$$U_\lambda(z)=\int\log|z-w|\lambda(dw)$$
belongs to $L^1(|\lambda|)$ and we have
$$I(\lambda)=-\int U_\lambda(z)\lambda(dz).$$
The Fubiny-Study form $\omega$ (normalized to mass $1$) has finite energy and a simple computation in polar coordinates yields $I(\omega)=-1/2$. We also have
$$U_\omega(z)=\frac{1}{2}\log(1+|z|^2).$$
The logarithmic energy $I$ can be polarized into a quadratic form
$$I(\lambda,\mu):= \int\int\log|z-w|^{-1}\lambda(dz)\mu(dw)$$
on the vector space of signed measures with finite energy, which then splits into the $I$-orthogonal sum of $\R\omega$ and of the space of signed measures with total mass $0$. The quadratic form $I$ is positive definite on the latter space (\cite{ST} Lemma I.1.8). 

\begin{lem}\label{lem:p1} 
Let $X=\PP^1$ and $\omega$ to be the Fubini-Study form normalized to mass $1$. If $\mu$ is a probability measure on $\C\subset\PP^1$ then $E^*(\mu)<+\infty$ iff $\mu$ has finite logarithmic energy and in that case we have
$$E^*(\mu)=\frac{1}{2}I(\mu-\omega).$$
\end{lem}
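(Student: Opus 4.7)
The plan is to combine Lemma~\ref{lem:formules} in the case $n=1$ with Theorem~\ref{thm:main} and the classical identification of $\omega$-psh potentials on $\PP^1$ with logarithmic potentials. Since $V_\omega = 0$, Lemma~\ref{lem:formules} reduces for $n=1$ to
$$E^*(\mu) = \tfrac{1}{2}\int_X \f(\omega-\mu)$$
whenever $\mu = \omega + dd^c\f$ with $\f \in \cE^1(X,\omega)$.

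First I would assume $E^*(\mu) < +\infty$ and apply Theorem~\ref{thm:main} to obtain a unique (up to additive constant) $\f \in \cE^1(X,\omega)$ with $\omega + dd^c\f = \mu$. In the affine coordinate on $\C$ we have $dd^c U_\mu = \mu$ and $dd^c U_\omega = \omega$, so $\f - (U_\mu - U_\omega)$ is harmonic on $\C$. Since $\f$ is bounded above on $\PP^1$ while the $\log|z|$ growth of $U_\mu$ and $U_\omega$ cancels as $|z| \to \infty$ (both being probability measures), this difference must be a bounded harmonic function, hence a constant $c$. Substituting $\f = U_\mu - U_\omega + c$, the constant drops out because $\omega - \mu$ has total mass zero, and Fubini combined with the linearity $U_{\mu-\omega} = U_\mu - U_\omega$ yields
$$\int_X \f(\omega-\mu) = \int U_{\mu-\omega}\, d(\omega-\mu) = I(\mu-\omega),$$
so that $E^*(\mu) = \tfrac{1}{2} I(\mu - \omega)$.

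For the converse, if $I(\mu - \omega) < +\infty$ I would set $\f := U_\mu - U_\omega + c$ with $c$ chosen so that $\sup_X \f = 0$. Then $\omega + dd^c\f = \mu$ holds globally on $\PP^1$, so $\f$ is $\omega$-psh with full Monge-Amp\`ere mass $1$. The hypothesis $\log|z-w| \in L^1((\mu+\omega)^{\otimes 2})$ implies finiteness of $\int U_\omega\, d\mu$ and of $I(\mu) = -\int U_\mu\, d\mu$, whence $\int \f\, d\mu > -\infty$. By Proposition~\ref{prop:energy} this places $\f$ in $\cE^1(X,\omega)$, and Theorem~\ref{thm:main} then shows that $\mu$ has finite energy.

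The main obstacle is the global identification of the $\omega$-psh potential $\f$ on $\PP^1$ with the logarithmic potential $U_\mu - U_\omega$ defined only on $\C$: one must verify that $\f$ admits a continuous extension through $\infty$, and that the identity $\mu = \omega + dd^c\f$ extends from $\C$ to all of $\PP^1$ without producing spurious mass at the point at infinity. Both points are controlled by the cancellation $U_\mu - U_\omega \to 0$ as $|z| \to \infty$, which holds precisely because $\mu$ and $\omega$ have equal total mass.
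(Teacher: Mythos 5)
Your proof is correct and follows essentially the same route as the paper's: identify the $\omega$-psh potential of $\mu$ with the logarithmic potential $U_\mu - U_\omega$, feed it into the formula from Lemma~\ref{lem:formules}, and translate finite pluricomplex energy into finite logarithmic energy. The difference is one of explicitness rather than substance. The paper dispatches the equivalence ``$E^*(\mu)<+\infty$ iff finite log energy'' in one sentence by invoking the preceding paragraph's observation that $E^*(\mu)<+\infty$ iff $\f\in L^2_1(\PP^1)$, together with the ``classical fact'' that finite log energy is equivalent to $U_\mu-U_\omega\in L^2_1$. You instead run the equivalence through Theorem~\ref{thm:main} (to produce the potential $\f\in\cE^1$ from finiteness of $E^*$) and Proposition~\ref{prop:energy} (to characterize $\cE^1$-membership by $\int\f\,d\mu>-\infty$), and you also pin down the normalizing constant via the Liouville argument at $\infty$. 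This is a more self-contained derivation, trading the paper's appeal to Sobolev-space folklore for a direct check inside the energy framework of the article.

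Two small points worth tightening: the lemma's hypothesis ``$\mu$ has finite logarithmic energy'' is by the paper's own definition $\log|z-w|\in L^1(\mu\otimes\mu)$, whereas in your converse you silently pass to the stronger-looking condition $\log|z-w|\in L^1((\mu+\omega)^{\otimes 2})$ (equivalently, finiteness of $I(\mu)$, $I(\omega)$ and $I(\mu,\omega)$ separately). These are in fact equivalent for probability measures whose logarithmic potential is well-defined, but it deserves a sentence, since otherwise the converse is proved under an apparently different hypothesis. Secondly, in the forward direction the well-definedness of $U_\mu$ (i.e.\ $\int\log^+|w|\,d\mu(w)<\infty$, equivalently $\int U_\omega\,d\mu<\infty$) should be recorded before invoking the asymptotics $U_\mu(z)-\log|z|\to 0$; this follows from $\f\in\cE^1$ via Lemma~\ref{lem:2eq4}, so the tools are available, but it is a step that needs to be present for the Liouville argument to get off the ground.
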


\begin{proof} 
We have $\mu=\omega+dd^c(U_\mu-U_\omega)$, so the first assertion means that $\mu$ has finite logarithmic energy iff $U_\mu-U_\omega$ belongs to the Sobolev space $L^2_1(\PP^1)$, which is a classical fact. The second assertion follows from (\ref{equ:energymes}), which yields
$$2E^*(\mu)=-\int(U_\mu-U_\omega)(\mu-\omega)=I(\mu-\omega).$$
\end{proof}

\subsection{A pluricomplex electrostatic capacity}
As in~\cite{BB08} we consider a \emph{weighted subset} consisting of a compact subset $K$ of $X$ together with a continuous function $v\in C^0(K)$, and we define the \emph{equilibrium weight} of $(K,v)$ as the extremal function
 
 $$
 P_Kv:=\text{sup}^*\{\f|\f\in\cP(X,\omega),\,\f\le v\text{ on }K\}.
 $$

The function $P_Kv$ belongs to $\cP(X,\omega)$ if $K$ is non-pluripolar and satisfies $P_Kv\equiv+\infty$ otherwise (cf.~\cite{Sic81}, \cite{GZ05}). 

If $K$ is a compact subset of $\C^n$ and 
$$\f_{FS}:=\frac{1}{2}\log (1 + \vert z\vert^2)$$
denotes the potential on $\C^n$ of the Fubiny-Study metric, then $P_K(-\f_{FS})+\f_{FS}$ coincides with Siciak's extremal function, i.e. the usc upper envelope of the family of all psh functions $u$ on $\C^n$ with logarithmic growth such that $u\le 0$ on $K$. 

The \emph{equilibrium measure} of a non-pluripolar weighted compact set $(K,v)$ is defined as 

 $$
 \eq(K,v):=\MA(P_K v)
 $$

 and its \emph{energy at equilibrium} is 
 $$
 \eneq(K,v):=E(P_K v).
 $$ 

The functional $v\mapsto\eneq(K,v)$ is concave and G{\^a}teaux differentiable on $C^0(K)$, with directional derivative at $v$ given by integration against $\eq(K,v)$ by Theorem B of~\cite{BB08}. As a consequence of Theorem~\ref{thm:var} we get the following related variational characterization of $\eq(K,v)$. 

Let $\cM_K$ denote the set of all probability measures on $K$.  

 \begin{thm}\label{thm:equi} 
 If $(K,v)$ is a non-pluripolar weighted compact subset then we have
 $$
 \eneq(K,v)=\inf_{\mu\in\cM_K}\left(E^*(\mu)+\langle v,\mu\rangle\right)
 $$
 and the infimum is achieved precisely for $\mu=\eq(K,v)$. 
 
Conversely if $K$ is pluripolar then $E^*(\mu)=+\infty$ for each $\mu\in\cM_K$. 
\end{thm}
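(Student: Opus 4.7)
The strategy is to apply the Legendre duality of Proposition \ref{prop:legendre} to the envelope $\f = P_K v$, and then to rewrite the resulting infimum over $\cM_X$ as an infimum over $\cM_K$.

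First, when $K$ is non-pluripolar, $P_K v$ is a bounded $\omega$-psh function: it is bounded above because every $\omega$-psh function on the compact K\"ahler manifold $X$ is so, and it is bounded below by the finite constant $\inf_K v$, which itself is a competitor in the defining envelope. Consequently $P_K v$ has minimal singularities and lies in $\cE^1(X,\omega)$. Since $V_\omega=0$, Proposition \ref{prop:legendre} combined with Theorem \ref{thm:var} (the equality case of the Legendre transform) yield
\begin{equation*}
\eneq(K,v)=E(P_K v)=\inf_{\mu\in\cM_X}\Bigl(E^*(\mu)+\int_X P_K v\,d\mu\Bigr),
\end{equation*}
with the infimum achieved exactly at $\mu=\MA(P_K v)=\eq(K,v)$.

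To replace $\cM_X$ by $\cM_K$, I would use two complementary facts. On the one hand, for any $\mu\in\cM_K$ of finite energy, $\mu$ is non-pluripolar by Lemma \ref{lem:finite}, and the pointwise inequality $P_K v\le v$ on $K$ (valid before usc regularisation and preserved outside a pluripolar set after) passes to a $\mu$-almost everywhere inequality, giving
\begin{equation*}
E^*(\mu)+\langle v,\mu\rangle\ \ge\ E^*(\mu)+\int_X P_K v\,d\mu\ \ge\ \eneq(K,v).
\end{equation*}
On the other hand, the equilibrium measure $\eq(K,v)=\MA(P_K v)$ is concentrated on $K$ and satisfies $P_K v=v$ $\eq(K,v)$-almost everywhere, by the standard support property of weighted pluripotential envelopes (\emph{cf.}~\cite{BB08}); granting this, $\eq(K,v)\in\cM_K$ and both inequalities above become equalities at $\mu=\eq(K,v)$, so the infimum is realised.

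Uniqueness reduces to tracking the equality case: any minimiser $\mu\in\cM_K$ must saturate the second inequality above, which is the equality case of Proposition \ref{prop:legendre} for $\f=P_K v$, and by Theorem \ref{thm:var} this forces $\mu=\MA(P_K v)=\eq(K,v)$. Finally, for the converse statement, if $K$ is pluripolar and $\mu\in\cM_K$, then $\mu(K)=1$ so $\mu$ charges a pluripolar set, contradicting the non-pluripolarity of finite-energy measures from Lemma \ref{lem:finite}; hence $E^*(\mu)=+\infty$. The main obstacle in the whole argument is invoking the support property of $\eq(K,v)$ that underlies the equality $\int v\,d\eq(K,v)=\int P_K v\,d\eq(K,v)$; once this is in hand, the rest is a direct consequence of Proposition \ref{prop:legendre} and Theorem \ref{thm:var}.
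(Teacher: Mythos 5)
Your argument is correct, but it takes a genuinely different route from the paper's own proof. The paper applies abstract Fenchel--Moreau duality to the concave functional $v\mapsto\eneq(K,v)=E(P_Kv)$ on $C^0(K)$: it first observes that the Legendre transform $F^*$ of this functional is infinite outside $\cM_K$ (by monotonicity and the scaling property), then identifies $F^*$ with $E^*|_{\cM_K}$ by two one-sided estimates. The inequality $F^*\le E^*$ follows from $P_Kv\le v$ on $K$, exactly as in your first chain; the reverse inequality is obtained by approximating an arbitrary $\f\in\cE^1(X,\omega)$ from above by smooth $\omega$-psh functions $\f_j$ (Demailly's regularization, using that finite-energy weights have zero Lelong numbers) and restricting $\f_j$ to $K$. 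You instead invoke Proposition~\ref{prop:legendre} for the specific function $P_Kv$, pass from $\cM_X$ to $\cM_K$ via $P_Kv\le v$ (modulo a pluripolar set), and close the loop using the orthogonality/support property $\supp\eq(K,v)\subset K$ together with $P_Kv=v$ $\eq(K,v)$-a.e.\ (the ``domination principle'' for envelopes, available in \cite{BB08}). The trade-off is transparent: the paper's route is self-contained within its toolkit (plus \cite{Dem92}, already used elsewhere) and never needs to know where $\eq(K,v)$ lives, whereas your argument is shorter once the support property is granted, but it does make that additional fact a load-bearing ingredient --- something you correctly flag as the main obstacle. Your treatment of the pluripolar case coincides with the paper's. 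The uniqueness step, via the equality case in the Legendre duality and Theorem~\ref{thm:var}, is also sound.
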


\begin{proof} 
Assume first that $K$ is non-pluripolar. The concave functional $F:=\eneq(K,\cdot)$ is non-decreasing on $C^0(K)$ and satisfies the scaling property $F(v+c)=F(v)+c$ so its Legendre transform 
$$F^*(\mu):=\sup_{v\in C^0(K)}\left(F(v)-\langle v,\mu\rangle\right)$$
is necessarily infinite outside $\cM_K\subset C^0(K)^*$. The basic theory of convex functions thus yields 
$$F(v)=\inf_{\mu\in\cM_K}\left(F^*(\mu)+\langle v,\mu\rangle\right)$$
and the infimum is achieved exactly at $\mu=F'(v)=\eq(K,v)$. What we have to show is thus
$F^*=E^*|_{\cM_K}$. But on the one hand $P_K(v)\le v$ on $K$ implies
$$F^*(\mu)\le\sup_{v\in C^0(K)}\left( E(P_Kv)-\langle P_Kv,\mu\rangle\right)$$
$$\le\sup_{\f\in\cE^1(X,\omega)}\left(E(\f)-\langle\f,\mu\rangle\right)=E^*(\mu).$$
On the other hand every $\f\in\cE^1(X,\omega)$ has identically zero Lelong numbers, so it can be written as a decreasing limit of \emph{smooth} $\omega$-psh functions $\f_j$ by~\cite{Dem92}. For each $j$ the function $v_j:=\f_j|_K\in C^0(K)$ satisfies $\f_j\le P_K(v_j)$ hence 
$$E(\f_j)-\langle\f_j,\mu\rangle\le E(P_K v_j)-\langle v_j,\mu\rangle\le F^*(\mu)$$
and we infer $E^*(\mu)\le F^*(\mu)$ as desired since 
$$E(\f)-\langle\f,\mu\rangle=\lim_{j\to\infty}\left(E(\f_j)-\langle\f_j,\mu\rangle\right)$$
by Proposition~\ref{prop:energy_min} and monotone convergence respectively. 
 
Now assume that $K$ is pluripolar. If there exists $\mu\in\cM_K$ with $E^*(\mu)<+\infty$. then Theorem A implies in particular that $\mu$ puts no mass on pluripolar sets, which contradicts $\mu(K)=1$. 
\end{proof}

 \smallskip

 One can interpret Theorem~\ref{thm:equi} as a pluricomplex version of weighted electrostatics where $K$ is  a condenser, $\mu$ describes  a charge distribution on $K$, $E^*(\mu)$ is its internal electrostatic energy and $\langle v,\mu\rangle$ is the external energy induced by the  
 field $v$. The equilibrium distribution $\eq(K,v)$ is then the unique minimizer of the total energy $E^*(\mu)+\langle v,\mu\rangle$ of the system. 

In view of Theorem~\ref{thm:equi} it is natural to define the \emph{electrostatic capacity} of a weighted compact subset $(K,v)$ by
$$-\log C_e(K,v)=\frac{n+1}{n}\inf\{E^*(\mu)+\langle v,\mu\rangle,\,\mu\in\cM_K\}.$$
We then have $C_e(K,v)=0$ iff $K$ is pluripolar, and 
$$C_e(K,v)=\exp\left(-\frac{n+1}{n}\eneq(K,v)\right)$$
when $K$ is non-pluripolar. 

Our choice of constants is guided by~\cite{BB08} Corollary A, which shows that $C_e(K,v)$ coincides (up to a multiplicative constant) with the natural generalization of Leja-Zaharjuta's \emph{transfinite diameter} when $\omega$ is the curvature form of a metric on ample line bundle $L$ over $X$. In particular this result shows that the Leja-Zaharjuta transfinite diameter $d_\infty(K)$ of a compact subset $K\subset\C^n$, normalized so that 
$$d_\infty(tK)=t d_\infty(K)$$ 
for each $t>0$, is proportional to $C_e(K,-\f_{FS})$.

By the continuity properties of extremal functions and of the energy functional along monotone sequences, it follows that the capacity $C_{e}(\cdot,v)$ can be extended in the usual way as an outer Choquet capacity on $X$ which vanishes exactly on pluripolar sets. In view of Lemma~\ref{lem:p1} this electrostatic capacity extends the classical logarithmic capacity of a compact subset $K\subset\C$, which is equal to
 $$\exp\left(-\inf\{I (\mu),\,\mu \in \mathcal M_K\}\right).
 $$
On the other hand the \emph{Alexander-Taylor capacity} of a weighted compact subset $(K,v)$ may be defined by 
$$
 T(K,v) :=\exp(-\sup_X P_Kv).
 $$ 
(compare~\cite{AT84,GZ05}). We thus have $T(K,v)=0$ iff $K$ is pluripolar.  We have for instance 
$$T(B_R,0)=\frac{R}{(1+R^2)^{1/2}}$$
when $X=\PP^n$ and $B_R\subset\C^n$ is the ball of radius $R$ (cf.~\cite{GZ05} Example 4.11). In particular this implies $T(B_R,v)\simeq R$ as $R\to 0$. 

The two capacities compare as follows.
\begin{prop} There exists $C>0$ such that 
$$
T(K,v)^{1+1/n}\le C_e(K,v)\le C e^{M}T(K,v)^{1/n}
$$
for each $M>0$ and each weighted compact subset $(K,v)$ so that $v\ge -M$ on $K$. 
\end{prop}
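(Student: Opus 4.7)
The plan is to take logarithms and reduce both inequalities to one-line estimates on $s:=\sup_X P_K v$ and $E(P_K v)$. In the K\"ahler setting of this section $V_\omega=0$, so by definition $-\log T(K,v)=s$ and, when $K$ is non-pluripolar, $-\log C_e(K,v)=\tfrac{n+1}{n}E(P_K v)$ in view of Theorem~\ref{thm:equi}; when $K$ is pluripolar both capacities vanish and the inequalities are trivial, so I assume throughout that $K$ is non-pluripolar. Taking $-\log$ then shows that the lower bound $T^{1+1/n}\le C_e$ is equivalent to $E(P_K v)\le s$, while the upper bound $C_e\le C\,e^{M}\,T^{1/n}$ is equivalent to $s-(n+1)E(P_K v)\le n\log C + nM$.

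The lower bound is immediate: since $P_K v\le s$ pointwise and the constant function $s$ is $\omega$-psh, the monotonicity of $E$ (Proposition~\ref{prop:energy_min}) together with the scaling relation $E(\f+c)=E(\f)+c$ gives $E(P_K v)\le E(s)=E(V_\omega)+s=s$.

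For the upper bound I will first use the scaling $P_K(v+c)=P_K v + c$, which induces the obvious shifts in $s$ and in $E(P_K v)$, to reduce to the case $M=0$, i.e.\ $v\ge 0$ on $K$. In that case the constant $0$ is $\omega$-psh and satisfies $0\le v$ on $K$, so it is a legitimate candidate in the defining supremum of $P_K v$; hence $P_K v\ge 0$ on all of $X$. Expanding
$$
(n+1)E(P_K v)=\sum_{j=0}^{n}\int_X (P_K v)\,(\omega+dd^c P_K v)^{j}\wedge\omega^{n-j},
$$
each summand is non-negative (as the integral of a non-negative function against a positive measure), and the $j=0$ summand equals $\int_X (P_K v)\,\omega^n$, hence $(n+1)E(P_K v)\ge\int_X (P_K v)\,\omega^n$. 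I will then invoke the standard Hartogs-type bound: the compactness of $\{\f\in\cP(X,\omega):\sup_X\f=0\}$ in $L^1$ yields a constant $C_0=C_0(X,\omega)>0$ with $\sup_X\f\le\int_X\f\,\omega^n + C_0$ for every $\f\in\cP(X,\omega)$. Applied to $\f=P_K v$ this gives $s\le(n+1)E(P_K v) + C_0$, which is the desired inequality in the reduced case; translating back by $-M$ restores the factor $nM$, and one can take $C:=e^{C_0/n}$.

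The main conceptual step is the combination of the sign $P_K v\ge 0$ (after the reduction $v\ge 0$) with the observation that the single $j=0$ term $\int_X P_K v\,\omega^n$ already controls $(n+1)E(P_K v)$ from below via the standard Hartogs bound; all other steps are elementary bookkeeping with the scaling properties of $P_K$, $E$ and $\sup_X$.
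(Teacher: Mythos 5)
Your proof is correct and follows essentially the same route as the paper's: both bounds reduce, after taking $-\log$, to $E(P_K v)\le \sup_X P_K v$ and to the combination of $(n+1)E(P_K v)\ge \int_X(P_K v)\omega^n - nM$ (which you recover after translating to $v\ge 0$) with the Hartogs-type estimate $\sup_X\f\le\int_X\f\,\omega^n+C$. The only cosmetic difference is that you normalize $v$ so that $P_K v\ge 0$ before dropping the $j\ge 1$ terms, whereas the paper keeps $v\ge-M$ and absorbs the $-nM$ directly into the estimate.
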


\begin{proof} 
The definition of $E$ immediately implies that 
$$\eneq(K,v)=E(P_Kv)\le\sup_X P_Kv$$ 
hence the left-hand inequality. Conversely $v\ge -M$ implies $P_Kv\ge -M$ hence Proposition~\ref{prop:energy_min} yields
$$\int_X (P_Kv)\omega^n-nM\le (n+1)\eneq(K,v).$$
But there exists a constant $C>0$ such that 
$$\sup_X\f\le\int_X\f\omega^n+C$$
for all $\f\in\cP(X,\omega)$ by compactness of $\cT(X,\omega)$, and we get
$$\frac{1}{n}\sup_X P_Kv\le\frac{n+1}{n}\eneq(K,v)+M+C'$$
as desired. 
\end{proof}

Observe that when $K$ lies in the unit ball of $\C^n\subset \PP^n$ and 
$$
v(z)= -\frac{1}{2}\log (1 + \vert z\vert^2)
$$ 
then we get $v\ge -\log\sqrt{2}$ on $K$ and the above results improve on~\cite{LT83}.

\section{\label{sec:KE}Variational principles for K{\"a}hler-Einstein metrics}

In this section we use the variational approach to study the existence of K{\"a}hler-Einstein metrics on manifolds  with definite
first Chern class. The Ricci-flat case is an easy consequence of Theorem A. In Section~\ref{sec:gentype} we treat the case of manifolds
of general type and prove Theorem C. The more delicate case of Fano manifolds occupies the remaining sections:
in Section~\ref{sec:cont} we construct continuous geodesics in the space of positive closed currents with precribed cohomology class,
we then prove Theorem D  in Section~\ref{sec:fano}, while uniqueness of (singular) K{\"a}hler-Einstein metrics with
positive curvature (Theorem E) is established in Section~\ref{sec:unique}. We will use throughout the convenient language of weights, i.e. view metrics additively. We refer for instance to~\cite{BB08} for explanations.

\subsection{\label{sec:gentype} Manifolds of general type}

Let $X$ be a smooth projective variety of \emph{general type}, i.e.~such that $K_X$ is big. A weight $\phi$ on $K_X$ induces a volume form $e^{2\phi}$. By a \emph{singular K{\"a}hler-Einstein weight} we mean a psh weight on $K_X$ such that $\MA(\phi)=e^{2\phi}$ \emph{and} such that $\int_X e^{2\phi}=\vol(K_X)=:V$, or equivalently such that $\MA(\phi)$ has \emph{full Monge-Amp{\`e}re mass}. 

In~\cite{EGZ09} a singular K{\"a}hler-Einstein weight was constructed using the existence of the \emph{canonical model} 
$$X_{\text{can}}:=\text{Proj}\oplus_{m\ge 0}H^0(X,mK_X)$$
provided by the fundamental result of~\cite{BCHM06}. In~\cite{Tsu06} a direct proof of the existence of a singular K{\"a}hler-Einstein weight was sketched and the argument was expanded in~\cite{ST08}. In~\cite{BEGZ08}  existence and uniqueness of singular K{\"a}hler-Einstein weights was established using a generalized comparison principle, and the unique singular K{\"a}hler-Einstein weight was furthermore shown to have \emph{minimal singularities} in the sense of Demailly.

\smallskip

We propose here to give a direct variational proof of the existence of a singular K{\"a}hler-Einstein weight in $\cE^1(K_X)$ (we therefore don't recover the full force of the result in~\cite{BEGZ08}). 
We proceed as before but replacing the functional $F_\mu$ by $F_+:=E-L_+$ where we have set
$$
L_+(\phi):=\frac{1}{2}\log\int_X e^{2\phi}.
$$

\noindent {\bf Proof of Theorem C.}
Note that $e^{2\phi}$ has $L^\infty$-density with respect to Lebesgue measure. Indeed if $\phi_0$ is a given smooth weight on $K_X$ we have $e^{2\phi}=e^{2\phi-2\phi_0}e^{2\phi_0}$ where $e^{2\phi_0}$ is a smooth positive volume form and the \emph{function} $\phi-\phi_0$ is bounded from above on $X$. Given $\phi_1,\phi_2\in\cP(K_X)$, we can in particular consider the integral
$$
\int_X(\phi_1-\phi_2)e^{2\phi}:=\int_X(\phi_1-\phi_0)e^{2\phi}-\int_X(\phi_2-\phi_0)e^{2\phi},
$$
which is of course independent of the choice of $\phi_0$. 

\begin{lem}\label{lem:direc} The directional derivatives of $L_+$ on $\cP(K_X)$ are given by
$$\frac{d}{dt}_{t=0_+}L_+(t\phi+(1-t)\psi)=\frac{\int_X(\phi-\psi)e^{2\psi}}{\int_X e^{2\psi}}.$$
\end{lem}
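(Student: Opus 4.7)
The plan is to differentiate the integral $\int_X e^{2\phi_t}$ in $t$ under the integral sign, where $\phi_t := (1-t)\psi + t\phi$, and then apply the chain rule for the outer $\frac{1}{2}\log$. Setting $u := \phi - \psi$, I can write pointwise on the complement of $\{\psi = -\infty\}$ (a set of zero $e^{2\psi}$-mass) the identity
$$\frac{e^{2\phi_t} - e^{2\psi}}{t} = e^{2\psi} \cdot \frac{e^{2tu} - 1}{t}.$$

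The key observation I would use is that for every $s \in [-\infty, +\infty)$ the function $t \mapsto (e^{2ts} - 1)/t$ is monotonically nondecreasing on $t > 0$; this is elementary, since the numerator $N(t) := 2st\, e^{2ts} - e^{2ts} + 1$ of its derivative satisfies $N(0) = 0$ and $N'(t) = 4 s^2 t\, e^{2ts} \geq 0$. Since $(e^{2ts} - 1)/t \to 2s$ as $t \downarrow 0$, one deduces that $\frac{e^{2\phi_t} - e^{2\psi}}{t}$ decreases monotonically almost everywhere (with respect to $e^{2\psi}$) to $2(\phi - \psi)\, e^{2\psi}$ as $t \downarrow 0$. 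Fixing any $t_0 \in (0,1]$, the argument recalled just before the lemma gives $e^{2\phi_{t_0}} \in L^\infty$, so $\int_X \frac{e^{2\phi_{t_0}} - e^{2\psi}}{t_0}$ is finite. Monotone convergence for decreasing sequences then yields
$$\lim_{t \to 0_+} \int_X \frac{e^{2\phi_t} - e^{2\psi}}{t} = 2 \int_X (\phi - \psi)\, e^{2\psi} \in [-\infty, +\infty),$$
where the right-hand side is interpreted via the convention introduced just before the lemma. Combined with the chain rule for $\frac{1}{2}\log$ and the observation that $\int_X e^{2\phi_t} \to \int_X e^{2\psi} > 0$ as $t \to 0_+$ (which also follows from the same monotone convergence), this produces the desired formula.

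The main (and rather mild) difficulty is the possible $-\infty$ values of $u = \phi - \psi$ on the set $\{\phi = -\infty\} \setminus \{\psi = -\infty\}$, which would preclude a direct dominated-convergence argument. Using monotonicity in $t$ circumvents the need for integrable domination, so the limit integral is permitted to be $-\infty$ and the claimed equality then simply holds in the extended sense in $[-\infty, +\infty)$.
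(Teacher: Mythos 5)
Your proof is correct and follows essentially the same route as the paper's: reduce by the chain rule to differentiating $\int_X e^{2\phi_t}$, factor the difference quotient as $e^{2\psi}\,(e^{2t(\phi-\psi)}-1)/t$, observe that this is pointwise nondecreasing in $t$ (the paper simply cites convexity of $\exp$, where you verify it by a short calculus computation), and conclude by monotone convergence for a decreasing family. The only caveat worth noting is that you leave open the possibility that $\int_X(\phi-\psi)e^{2\psi}=-\infty$, whereas this integral is in fact finite: by the remark preceding the lemma both $\phi-\phi_0$ and $\psi-\phi_0$ are quasi-psh hence Lebesgue-integrable, and $e^{2\psi}$ has $L^\infty$ density, so the convention makes the integral a difference of two finite quantities.
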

\begin{proof} By the chain rule it is enough to show that
$$\frac{d}{dt}_{t=0_+}\int_Xe^{t\phi+(1-t)\psi}=\int_X(\phi-\psi)e^{\psi}.$$
One has to be a little bit careful since $\phi-\psi$ is not bounded on $X$. But we have
$$\int_X\left(e^{t\phi+(1-t)\psi}-e^{\psi}\right)=\int_X\left(e^{t(\phi-\psi)}-1\right)e^\psi.$$
Now $(e^{t(\phi-\psi)}-1)/t$ decreases pointwise to $\phi-\psi$ as $t$ decreases to $0$ by convexity of $\exp$ and the result indeed follows by monotone convergence.
\end{proof}

Using this fact and arguing exactly as in Theorem~\ref{thm:var} proves that 
\begin{equation}\label{equ:max+}F_+(\phi)=\sup_{\cE^1(K_X)}F_+
\end{equation}
implies
\begin{equation}\label{equ:MA+}\MA(\phi)=e^{2\phi+c}
\end{equation}
for some $c\in\R$. Indeed apart from~\cite{BB08} the main point of the proof of Theorem~\ref{thm:var} is that $E(P(\phi+v))-L_\mu(\phi+v)$ is maximum for $v=0$ if $E-L_\mu$ is maximal at $\phi$, and this only relied on the fact that $L_\mu$ is non-decreasing, which is also the case for $L_+$. 

Conversely $E$ is concave while $L_+$ is convex by H{\"o}lder's inequality, thus $F_+$ is concave and (\ref{equ:MA+}) implies (\ref{equ:max+}) as in
Theorem~\ref{thm:var}. 

In order to conclude the proof of Theorem C we need to prove that $F_+$ achieves its supremum on $\cE^1(K_X)$, or equivalently on $\cT^1(K_X)$. Now Corollary~\ref{cor:proper} applies to $F_+=E-L_+$ since $L_+$ is non-decreasing, convex and satisfies the scaling property, and we conclude that $F_+$ is $J$-proper as before. It thus remains to check that $F_+$ is upper semicontinuous, which will follow if we prove that $L_+$ is usc on $\cE_C$ for each $C$ as before.

But we claim that $L_+$ is actually continuous on $\cP(K_X)$. Indeed let $\phi_j\to\phi$ be a convergent sequence in $\cP(K_X)$. Upon extracting we may assume that $\phi_j\to\phi$ a.e. On the other hand, given a reference weight $\phi_0$, $\sup_X(\phi_j-\phi_0)$ is uniformly bounded by Hartogs' lemma, thus $e^{2(\phi_j-\phi_0)}$ is uniformly bounded and we get $\int_X e^{2\phi_j}\to\int_Xe^{2\phi}$ as desired by dominated convergence.

\subsection{\label{sec:cont} Continuous geodesics}
Let $\omega$ be a semi-positive $(1,1)$-form on $X$. If $Y$ is a complex manifold, then a map $\Phi:Y\to\cP(X,\omega)$ will be said to be psh (resp.~locally bounded, continuous, smooth) iff the induced function $\Phi(x,y):=\Phi(y)(x)$ on $X\times Y$ is $\pi_X^*\omega$-psh (resp.~locally bounded, continuous, smooth). We shall also say that $\Phi$ is \emph{maximal} if it is psh, locally bounded and 
$$(\pi_X^*\omega+dd^c_{(x,y)}\Phi)^{n+m}=0$$
where $m:=\dim Y$ and $dd^c_{(x,y)}$ acts on both variables $(x,y)$. If $Y$ is a radially symmetric domain in $\C$ and $\Phi$ is smooth on $X\times\overline{Y}$ such that $\omega+dd^c_x\Phi(\cdot,y)>0$ for each $y\in Y$ then 
by definition $\Phi$ is flat iff $\Phi(e^t)$ is a \emph{geodesic}  for the Riemannian metric on
$$
\{\f\in C^\infty(X),\omega+dd^c\f>0\}
$$
defined in~\cite{Mab87,Sem92,Don99}. 

\begin{prop}\label{prop:flat} 
If $\Phi:Y\to\cP(X,\omega)$ is a psh map then $E\circ\Phi$ is a psh function on $Y$ (or is indentically $-\infty$ on some component of $Y$). When $\Phi$ is furthermore locally bounded we have
\begin{equation}\label{equ:ddcE}
dd^c_y(E\circ\Phi)=(\pi_Y)_*\left((\pi_X^*\omega+dd^c_{(x,y)}\Phi)^{n+1}\right).
\end{equation}
In particular if $\dim Y=1$ then $E\circ\Phi$ is harmonic on $Y$ if $\Phi$ is maximal (=harmonic in this case). 
\end{prop}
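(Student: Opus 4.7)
The plan is to prove the identity (\ref{equ:ddcE}) first for smooth $\Phi$ by direct computation, then extend to locally bounded psh maps by regularisation and continuity of Bedford-Taylor, and finally treat general psh $\Phi$ by truncation. The harmonicity statement will then be immediate from (\ref{equ:ddcE}) together with the definition of maximality.

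First I would treat the smooth case, where $\Phi\in C^\infty(X\times Y)$ satisfies $\omega+dd^c_x\Phi(\cdot,y)>0$ fiberwise. In this regime $E\circ\Phi$ is a smooth function of $y$, and differentiating the defining expression $E(\Phi(y))=\tfrac{1}{n+1}\sum_{j=0}^n\int_X\Phi\,(\omega+dd^c_x\Phi)^j\wedge\omega^{n-j}$ together with Stokes' theorem on $X$ (used to kill $d_x$-exact forms at each fixed $y$) produces a formula for $dd^c_y(E\circ\Phi)$ that matches the $(1,1)_y$-component of the pushforward of $(\pi_X^*\omega+dd^c_{(x,y)}\Phi)^{n+1}$. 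The main bookkeeping is organising the expansion of $\Omega^{n+1}$ according to the bidegree splitting $d=d_x+d_y$ and recognising that only the terms of total bidegree $(n,n)$ in $x$ and $(1,1)$ in $y$ survive after integration along the fibers of $\pi_Y$; the mixed contributions $d_xd^c_y\Phi$ and $d_yd^c_x\Phi$ pair off correctly with the pure $dd^c_y\Phi$ term after one iteration of integration by parts in $X$.

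For locally bounded psh $\Phi$, I would approximate by smooth psh maps via Demailly's regularisation (applied after enlarging $\pi_X^*\omega$ by a small multiple $\varepsilon\,\pi_Y^*\omega_Y$ of a K\"ahler form on a relatively compact $Y'\Subset Y$ to make the reference form K\"ahler on $X\times Y'$, then passing to the limit $\varepsilon\to 0$). This produces a sequence of smooth psh approximants $\Phi_j\searrow\Phi$ on each $Y'$. Both sides of (\ref{equ:ddcE}) are continuous along this approximation: the left-hand side by Proposition~\ref{prop:energy_gen} (continuity of $E$ along decreasing sequences), and the right-hand side by Bedford-Taylor's theorem for decreasing sequences of locally bounded psh functions, composed with the weak continuity of the pushforward $(\pi_Y)_*$.

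For a general psh map $\Phi$, truncate by $\Phi_k:=\max(\Phi,-k)$ (recall $V_\omega=0$): each $\Phi_k$ is locally bounded psh and $\Phi_k\searrow\Phi$. By the previous step each $E\circ\Phi_k$ is psh on $Y$, and by Proposition~\ref{prop:energy_gen} the sequence $E\circ\Phi_k$ decreases pointwise to $E\circ\Phi$, which is therefore psh on each component of $Y$ or identically $-\infty$ there. Finally, when $\dim Y=1$ and $\Phi$ is maximal, $\Omega^{n+1}=0$ by definition, so (\ref{equ:ddcE}) gives $dd^c_y(E\circ\Phi)=0$, i.e.~$E\circ\Phi$ is harmonic. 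The principal obstacle is the smooth-case identity: the integration-by-parts and bidegree bookkeeping required to match the two expressions is elementary but delicate, and everything downstream rests on it.
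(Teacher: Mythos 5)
Your proposal matches the paper's proof in structure and substance: smooth case by direct computation and integration by parts, then extension to locally bounded $\Phi$ by regularisation, then the general case by truncating with $\max(\Phi,-k)$ and using continuity of $E$ along decreasing sequences together with Bedford--Taylor. The one point where you diverge in detail is the bounded case: you invoke Demailly's global regularisation on $X\times Y'$ (after adding $\varepsilon\,\pi_Y^*\omega_Y$), whereas the paper observes that the smooth-case integration by parts consists of \emph{local} Stokes identities $u=dv$, each of which extends to the bounded psh case by local regularisation (convolution). The paper's route avoids the compactness issue your version faces, since $X\times Y'$ is not a closed manifold and global Demailly regularisation does not directly apply there; if you pursue your version you would want to either compactify $Y'$ or replace the appeal to Demailly by a local mollification argument as the paper does. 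This is a technical detail, not a conceptual gap, and the rest of your argument is sound.
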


\begin{proof} 
Assume first that $\Phi$ is smooth. Then  we can consider
\begin{equation}\label{equ:ecirc}E\circ\Phi:=\frac{1}{n+1}(\pi_Y)_*\left(\Phi\sum_{j=0}^n(\pi_X^*\omega+dd^c_x\Phi)^j\wedge\pi_X^*\omega^{n-j}\right).\end{equation}
The formula
$$dd^c_y(E\circ\Phi)=(\pi_Y)_*\left((\pi_X^*\omega+dd^c_{(x,y)}\Phi)^{n+1}\right)$$
follows from an easy but tedious computation relying on integration by parts and will be left to the reader. 

When $\Phi(x,y)$ is bounded and $\pi_X^*\omega$-psh the same argument works. Indeed integration by parts is a consequence of Stokes formula applied to a \emph{local} relation of the form $u=dv$, and the corresponding relation in the smooth case can be extended to the bounded case by a local regularization argument. 

Finally let $\Phi(x,y)$ be an arbitrary $\pi_X^*\omega$-psh function. We may then write $\Phi$ as the decreasing limit of $\max(\Phi,-k)$ as $k\to\infty$, and by Proposition~\ref{prop:energy_gen} $E\circ\Phi$ is then the pointwise decreasing limit of $E\circ\Phi_k$, whereas 
$$(
\pi_X^*\omega+dd^c_{(x,y)}\Phi_k)^{n+1}\to (\pi_X^*\omega+dd^c_{(x,y)}\Phi)^{n+1}
$$
by Bedford-Taylor's monotonic continuity theorem.
\end{proof} 

\begin{prop}\label{prop:extend} Let $\Omega\Subset\C^m$ be a smooth strictly pseudoconvex domain and let $\f:\partial\Omega\to\cP(X,\omega)$ be a continuous map. Then there exists a unique continuous extension $\Phi:\overline\Omega\to\cP(X,\omega)$ of $\f$ which is maximal on $\Omega$. 
\end{prop}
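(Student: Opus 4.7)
The plan is to construct $\Phi$ as a Perron--Bremermann--Walsh envelope and then establish its continuity on $X\times\overline\Omega$ via barrier arguments. Writing $\tilde\f(x,y):=\f(y)(x)$ for the continuous boundary data on $X\times\partial\Omega$, I would take
\[
\Phi := \bigl(\sup\{\Psi \in \cP(X\times\Omega,\pi_X^*\omega) : \limsup_{(x',y')\to(x,y)}\Psi(x',y')\le\tilde\f(x,y)\ \forall\,(x,y)\in X\times\partial\Omega\}\bigr)^{\!*}.
\]
Choquet's lemma together with a local balayage---applied on coordinate polydiscs $B\Subset X\times\Omega$ where $\pi_X^*\omega=dd^cg$ for a smooth potential $g$, via the classical Bedford--Taylor Dirichlet problem for the homogeneous Monge--Amp\`ere equation on $B$---show that $\Phi$ is $\pi_X^*\omega$-psh and satisfies $(\pi_X^*\omega+dd^c_{(x,y)}\Phi)^{n+m}=0$ on $X\times\Omega$. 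Each slice $\Phi(\cdot,y)$ is $\omega$-psh, so $\Phi$ does define a map $\overline\Omega\to\cP(X,\omega)$.

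The main work is boundary continuity at a fixed $(x_0,y_0)\in X\times\partial\Omega$. Let $\rho$ be a smooth strictly psh defining function of $\Omega$ and set $h(y):=\rho(y)-c|y-y_0|^2$, still strictly psh for $c>0$ small, so that $h\le 0$ on $\overline\Omega$, $h(y_0)=0$, and $h(y)\le -c\delta^2$ for $y\in\partial\Omega$ with $|y-y_0|\ge\delta$. Given $\e>0$, the $\pi_X^*\omega$-psh competitor
\[
\Psi_A(x,y):=\tilde\f(x,y_0)-\e+Ah(y)
\]
(whose Hessian decomposes as a non-negative $X$-block $\omega+dd^c_x\tilde\f(\cdot,y_0)$, a non-negative $y$-block $A\,dd^c h$, and a vanishing mixed part) satisfies $\Psi_A\le\tilde\f$ on $X\times\partial\Omega$ for $A$ sufficiently large, by uniform continuity of $\tilde\f$ on the compact set $X\times\partial\Omega$ together with the peak property of $h$; this yields the lower bound $\liminf_{(x,y)\to(x_0,y_0)}\Phi(x,y)\ge\tilde\f(x_0,y_0)-\e$.

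The matching upper bound is the principal technical difficulty, since the semi-positivity of $\pi_X^*\omega$ along $\Omega$-directions rules out a direct mirror of the subsolution construction. My approach is to exploit the fact that, for any competitor $\Psi$ and any fixed $x\in X$, the slice $y\mapsto\Psi(x,y)$ is genuinely psh on $\Omega$, as seen locally where $\pi_X^*\omega=dd^cg$ by restricting the psh function $g\circ\pi_X+\Psi$ to $\{x\}\times\Omega$. Consequently, the usc regularization $\sigma_\Psi$ of $y\mapsto\sup_{x\in X}\bigl(\Psi(x,y)-\tilde\f(x,y_0)\bigr)$ is psh on $\Omega$, globally bounded thanks to the fiberwise maximum principle, with boundary behavior controlled by the continuous function $B(y'):=\sup_{x\in X}(\tilde\f(x,y')-\tilde\f(x,y_0))$, which vanishes at $y_0$ by uniform continuity of $\tilde\f$. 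Applying the classical Walsh--Bremermann Dirichlet solution $P_B$ on the strictly pseudoconvex $\Omega\subset\C^m$---which is continuous on $\overline\Omega$ with $P_B|_{\partial\Omega}=B$ and $P_B(y_0)=0$---then reduces the upper bound on $\Phi$ to the classical one-variable situation and yields $\sigma_\Psi(y)\le P_B(y)\to 0$ as $y\to y_0$, uniformly over all competitors, hence $\limsup_{(x,y)\to(x_0,y_0)}\Phi(x,y)\le\tilde\f(x_0,y_0)$.

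Uniqueness, and continuity of $\Phi$ on all of $X\times\overline\Omega$, will both follow from the comparison principle for $\pi_X^*\omega$-psh functions, which reduces locally to the classical Bedford--Taylor comparison via potentials of $\omega$: applied to the difference of two continuous maximal extensions of $\tilde\f$ it forces them to coincide, and applied to the usc and lsc regularizations of our Perron envelope it upgrades the matched boundary barriers just constructed into global continuity on $X\times\overline\Omega$.
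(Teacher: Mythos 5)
Your boundary-continuity argument is a genuinely different route from the paper's: you use two-sided pointwise barriers at each boundary point (a strictly psh peak function for the lower bound, and a clever reduction via the fiberwise slices $y\mapsto\Psi(x,y)$ and the classical Walsh--Bremermann envelope on $\Omega$ for the upper bound), whereas the paper constructs a single \emph{smooth psh extension} $\widetilde\f\in\cF$ of the boundary data (via a retraction, a partition of unity and a strictly psh defining function $\chi$). Your route has the advantage of working directly for continuous boundary data without the smooth-then-Richberg reduction, and the upper barrier via fiberwise restriction to $\{x\}\times\Omega$ is a nice observation. The upper-bound step can even be streamlined: you do not need $\sigma_\Psi$ to be psh; just note that for each fixed $x$ the slice $\Psi(x,\cdot)-\tilde\f(x,y_0)$ is itself a competitor in the Walsh envelope $P_B$, so $\Psi(x,y)\le\tilde\f(x,y_0)+P_B(y)$ pointwise, whence $\Phi(x,y)\le\tilde\f(x,y_0)+P_B(y)$.

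The genuine gap is in the \emph{interior} continuity. You write that the comparison principle ``applied to the usc and lsc regularizations of our Perron envelope'' upgrades boundary continuity to global continuity, but the lsc regularization $\Phi_*$ is not $\pi_X^*\omega$-psh, so no comparison principle can be applied to it. The natural substitute --- the translation argument $\Phi_\tau(x,y):=\Phi(x,y+\tau)$ combined with maximality of $\Phi$ --- only yields uniform continuity of $\Phi$ in the $y$-variable; it gives no information about continuity in $x$, and joint upper semicontinuity plus uniform continuity in $y$ does not imply joint continuity (the slices $\Phi(\cdot,y_0)$ are $\omega$-psh but need not be continuous, since maximality of $\Phi$ on the product does not force the fiberwise slices to be maximal on $X$). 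This is exactly what the paper's construction is designed to circumvent: having the explicit smooth competitor $\widetilde\f\in\cF$, they apply Demailly's regularization to $\Phi^*$ to obtain smooth $\Phi_k\downarrow\Phi^*$ with a controlled negativity of $dd^c\Phi_k$, correct to $\pi_X^*\omega$-psh functions $\Psi_k$ using the global strictly psh function $\chi$, and then observe that $\max(\Psi_k-\e,\widetilde\f)$ is a continuous competitor sandwiching $\Phi$; this forces $\Phi_k\to\Phi$ uniformly, whence continuity. Your proposal has no counterpart to this mechanism, so as written the proof of interior continuity is not complete.
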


The proof is a simple adaptation of Bedford-Taylor's techniques to the present situation.
Although it has recently appeared in \cite{BD09} we include a proof as a courtesy to the reader.

\begin{proof} 
Uniqueness follows from the maximum principle. Let $\cF$ be the set of all continuous psh maps $\Psi:\overline\Omega\to\cP(X,\omega)$ such that $\Psi\le\f$ on $\partial\Omega$. Note that $\cF$ is non-empty since it contains all sufficiently negative constant functions of $(x,y)$. Let $\Phi$ be the upper envelope of $\cF$. We are going to show that $\Phi=\f$ on $\partial\Omega$ and that $\Phi$ is continuous. The latter property will imply that $\Phi$ is $\pi_X^*\omega$-psh, and it is then standard to show that $\Phi$ is maximal on $\Omega$ by using local solutions to the homogeneous Monge-Amp{\`e}re equation (compare.~\cite{Demsurv} P.17, \cite{BB08} Proposition 1.10). 

Assume first that $\f$ is a smooth. We claim that $\f$ admits a smooth psh extension $\widetilde{\f}:\overline\Omega\to\cP(X,\omega)$. Indeed we first cover $\overline\Omega$ by two open subsets $U_1,U_2$ such that $U_1$ retracts smoothly to $\partial\Omega$. We can then then extend $\f$ to a smooth map $\f_1:U_1\to\cP(X,\omega)$ using the retraction and pick any constant map $\f_2:U_2\to\cP(X,\omega)$. Since $\cP(X,\omega)$ is convex $\theta_1\f_1+\theta_2\f_2$ defines a smooth extension $\overline\Omega\to\cP(X,\omega)$ (where $\theta_1,\theta_2$ is a partition of unity adapted to $U_1,U_2$). Now let $\chi$ be a smooth strictly psh function on $\overline\Omega$ vanishing on the boundary of $\Omega$. Then $\widetilde{\f}:=\theta_1\f_1+\theta_2\f_2+C\chi$ yields the desired smooth psh extension of $\f$. 

Since $\widetilde{\f}$ belongs to $\cF$ we get in particular $\widetilde{\f}\le\Phi$ hence $\Phi=\f$ on $\partial\Omega$. We now take care of the continuity of $\Phi$, basically following~\cite{Demsurv} P.13. By~\cite{Dem92} the exists a sequence $\Phi_k$ of smooth functions on $X\times\overline\Omega$ which decrease pointwise to the usc regularization $\Phi^*$ and such that 
$$dd^c\Phi_k\ge-\e_k(\pi_X^*\omega+dd^c\chi).$$
Note that $\Psi_k:=(1-\e_k)(\Phi_k+\e_k\chi)$ is thus $\pi_X^*\omega$-psh. Given $\e>0$ we have $\Phi^*<\widetilde{\f}+\e$ on a compact neighbourhood $U$ of $X\times\partial\Omega$ thus $\Psi_k<\widetilde{\f}+\e$ on $U$ for $k\gg 1$. It follows that $\max(\Psi_k-\e,\widetilde{\f})$ belongs to $\cF$, so that $\Psi_k-\e\le\Phi$, and we get
$$\Phi\le\Phi^*\le\Phi_k\le(1-\e_k)^{-1}(\Phi+\e)-\e_k\chi,$$
which in turn implies that $\Phi_k$ converges to $\Phi$ uniformly on $X\times\overline\Omega$. We conclude that $\Phi$ is continuous in that case as desired. 

Let now $\f:\partial\Omega\to\cP(X,\omega)$ be an arbitrary continuous map. By Richberg's approximation theorem (cf.~e.g.~\cite{Dem92}) we may find a sequence of smooth functions $\f_k:\partial\Omega\to\cP(X,\omega)$ such that $\sup_{X\times\partial\overline\Omega}|\f-\f_k|=:\e_k$ tends to $0$. The corresponding envelopes $\Phi_k$ then satisfy $\Phi_k-\e_k\le\Phi\le\Phi_k+\e_k$, which shows that $\Phi_k\to\Phi$ uniformly on $X\times\overline\Omega$, and the result follows.
\end{proof}

\subsection{\label{sec:fano} Fano manifolds}

Let $X$ be a Fano manifold. Our goal in this section is to prove that singular K{\"a}hler-Einstein weights, i.e.~weights $\phi\in\cE^1(-K_X)$ such that $\MA(\phi)=e^{-2\phi}$, can be characterized by a variational principle. 

\begin{lem} 
The map $\cE^1(X,\omega)\to L^1(X)$ $\f\mapsto e^{-\f}$ is continuous. 
\end{lem}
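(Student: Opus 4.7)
The plan is to combine a.e.\ convergence along subsequences with a uniform $L^p$-bound ($p>1$) obtained from Zeriahi's uniform version of Skoda's integrability theorem~\cite{Zeh01}, and then conclude by Vitali's convergence theorem.

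First, let $\f_j\to\f_0$ be a convergent sequence in $\cE^1(X,\omega)$ (with the topology induced from $\cP(X,\omega)$, i.e.\ the $L^1$-topology). By Hartogs' lemma $\sup_X\f_j$ is bounded and converges to $\sup_X\f_0$, so after subtracting these (bounded) constants, which only multiplies $e^{-\f_j}$ and $e^{-\f_0}$ by a convergent scalar factor, we may normalize $\sup_X\f_j=\sup_X\f_0=0$. Extracting a subsequence if needed, we also have $\f_j\to\f_0$ a.e.\ on $X$, hence
$$
e^{-\f_j}\longrightarrow e^{-\f_0}\quad\text{a.e.}
$$

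Next I would establish a uniform $L^p$-bound on $\{e^{-\f_j}\}$. Recall that, in the K\"ahler setting, every function in $\cE^1(X,\omega)$ has identically vanishing Lelong numbers (this is exactly the zero-Lelong-number property already invoked in the introductory discussion of the Fano case, following \cite{GZ07}). Consequently the $L^1$-compact set
$$
\cK:=\{\f_j\}_{j\ge 1}\cup\{\f_0\}\subset\cP(X,\omega)
$$
has $\nu(\cK):=\sup_{\psi\in\cK}\sup_{x\in X}\nu(\psi,x)=0$. Zeriahi's uniform Skoda theorem~\cite{Zeh01} then yields, for \emph{every} $\alpha>0$, a constant $C_\alpha>0$ such that
$$
\int_X e^{-\alpha\psi}\,\omega^n\le C_\alpha\qquad\text{for all }\psi\in\cK.
$$
In particular, taking $\alpha=2$ gives a uniform $L^2$-bound $\sup_j\|e^{-\f_j}\|_{L^2(\omega^n)}<+\infty$, which forces the family $\{e^{-\f_j}\}$ to be uniformly integrable on $X$.

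Combining this uniform integrability with the a.e.\ convergence of Step~1, Vitali's convergence theorem gives $e^{-\f_j}\to e^{-\f_0}$ in $L^1(X)$ along the extracted subsequence. Since the same reasoning applies to any subsequence of the original sequence and always produces the same limit $e^{-\f_0}$, the whole sequence $e^{-\f_j}$ converges to $e^{-\f_0}$ in $L^1(X)$ by the standard subsequence principle. The genuine difficulty of the argument lies entirely in Step~2: mere $L^1$-convergence $\f_j\to\f_0$ does not by itself prevent concentration phenomena on thin sets where the $\f_j$ take very negative values, and without a uniform exponential integrability estimate one cannot pass to the limit under the exponential. It is precisely the vanishing of Lelong numbers of functions in $\cE^1$, together with the uniform Skoda estimate, that rules out such concentration and makes the lemma work.
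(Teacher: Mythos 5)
Your proof is correct and follows essentially the same route as the paper's: pass to a.e.\ convergence, invoke the vanishing of Lelong numbers for functions in $\cE^1$ plus Zeriahi's uniform Skoda estimate to get a uniform $L^2$-bound on $e^{-\f_j}$, and conclude by uniform integrability (Vitali). You are in fact slightly more careful than the paper in making explicit the passage to a subsequence for a.e.\ convergence and the final subsequence principle, which the paper glosses over.
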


\begin{proof} As already observed every $\f\in\cP(X,\omega)$ with full Monge-Amp{\`e}re mass has identically zero Lelong numbers (cf.~\cite{GZ07} Corollary 1.8), which amounts to saying that $e^{-\f}$ belongs to $L^p(X)$ for all $p<+\infty$ by Skoda's integrability criterion. Now let $\f_j\to\f$ be a convergent sequence in $\cE^1(X,\theta)$. Then $e^{-\f_j}\to e^{-\f}$ a.e.. On the other hand $\sup_X\f_j$ is uniformly bounded, thus is follows from the uniform version of Skoda's theorem~\cite{Zeh01} that $e^{-\f_j}$ stays in a bounded subset of $L^2(X)$. In particular $e^{-\f_j}$ is uniformly integrable, and it follows that $e^{-\f_j}\to e^{-\f}$ in $L^1(X)$.  
\end{proof}

Set $L_-(\phi):=-\frac{1}{2}\log\int_Xe^{-2\phi}$ and $F_-:=E-L_-$. Note that $L_-$ is now \emph{concave} on $\cE^1(-K_X)$ by H{\"o}lder's inequality, so that $E-L_-$ is merely the difference of two concave functions. However we have the following psh analogue of Prekopa's theorem, which follows from Berndtsson's results on the psh variation of Bergman kernels
and shows that $L_-$ is \emph{geodesically convex}:

\begin{lem}\label{lem:bernd} 
Let $\Phi:Y\to\cP(-K_X)$ be a psh map. Then $L_-\circ\Phi$ is psh on $Y$. 
\end{lem}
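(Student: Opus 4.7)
The plan is to recognize $L_-\circ\Phi$ as (minus one half the log of) the squared $L^2$-norm of the constant section on a direct image line bundle over $Y$, and then invoke Berndtsson's positivity theorem for direct images. View the family $\Phi(x,y)$ as a psh weight on the pulled-back line bundle $\pi_X^*(-K_X)$ over $X\times Y$. Via the canonical identification
$$
K_{(X\times Y)/Y} \otimes \pi_X^*(-K_X) \cong \cO_{X\times Y},
$$
the $L^2$-pairing determined by the weight $2\Phi$ endows the direct image $(\pi_Y)_*\bigl(K_{(X\times Y)/Y}\otimes \pi_X^*(-K_X)\bigr) = \cO_Y$ with a Hermitian metric whose squared norm on the constant section $1$ equals $\int_X e^{-2\Phi(\cdot,y)}$. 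In the sign convention of the paper, the weight of this metric is $-\tfrac12\log\int_X e^{-2\Phi(\cdot,y)} = L_-(\Phi(y))$, so plurisubharmonicity of $L_-\circ\Phi$ on $Y$ is exactly semi-positivity of this direct image metric, which is the content of Berndtsson's theorem.

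I would carry out the proof in two steps. First, assume $\Phi$ is locally bounded on $X\times Y$; then $2\Phi$ is a bounded psh weight on $\pi_X^*(-K_X)$ and Berndtsson's positivity theorem for direct images applies, yielding $L_-\circ\Phi$ psh on $Y$. Second, reduce the general case by approximation from above: with $\psi_0$ a fixed smooth weight on $-K_X$, set
$$
\Phi_k(x,y) := \max\bigl(\Phi(x,y),\,\psi_0(x)-k\bigr),
$$
so that each $\Phi_k$ is locally bounded and psh, and $\Phi_k\searrow\Phi$ pointwise. By step one, each $L_-\circ\Phi_k$ is psh on $Y$. Monotone convergence applied to $e^{-2\Phi_k}\nearrow e^{-2\Phi}$ gives $\int_X e^{-2\Phi_k(\cdot,y)}\nearrow\int_X e^{-2\Phi(\cdot,y)}$; the limit is finite because any $\phi\in\cE^1(-K_X)$ has identically zero Lelong numbers (by the lemma immediately preceding the statement, together with Skoda's integrability theorem). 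Consequently $L_-\circ\Phi_k\searrow L_-\circ\Phi$ pointwise, and $L_-\circ\Phi$ is psh as a decreasing limit of psh functions (the limit is not $\equiv -\infty$ since the integrals are finite).

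The main obstacle I expect is verifying Berndtsson's direct image positivity in the form needed. In its original formulation the curvature of the twisting line bundle is often assumed smooth and semi-positive, whereas here it is merely locally bounded and semi-positive after passing to $\Phi_k$. Accessing the statement for locally bounded psh weights either uses the extension of Berndtsson's theorem to singular metrics, or requires an additional smoothing step by Demailly regularization (available since $\pi_X^*(-K_X)$ is a pullback of a positive line bundle and the total space carries plenty of strictly psh weights). Once the locally bounded case of the direct image theorem is secured, the monotone approximation in step two is routine.
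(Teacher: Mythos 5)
Your proof follows the same route as the paper: identify $\pi_X^*(-K_X)$ with the relative anticanonical bundle of $X\times Y\to Y$, recognize $L_-\circ\Phi$ as the weight of the induced $L^2$-metric on the (rank-one) direct image $(\pi_Y)_*\cO(K_{Z/Y}+\pi_X^*(-K_X))\cong\cO_Y$, and invoke Berndtsson's positivity theorem. The paper cites \cite{Bern09a} directly and leaves the reduction to the smooth case implicit; your extra step of truncating by $\max(\Phi,\psi_0-k)$ and passing to a decreasing limit is a reasonable way to handle the fact that Berndtsson's theorem is stated for regular fibre metrics, though as you note the truncation only gives locally bounded weights, so one either needs the singular version of the direct-image theorem or a further Demailly smoothing (both are standard). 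One small inaccuracy: the lemma's hypothesis is $\Phi:Y\to\cP(-K_X)$, not $\cE^1(-K_X)$, so you cannot appeal to zero Lelong numbers to conclude $\int_X e^{-2\Phi(\cdot,y)}<\infty$; in general the limit could be $\equiv-\infty$, which is still an acceptable endpoint for a decreasing sequence of psh functions, so the conclusion is unaffected.
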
 

\begin{proof} Consider the product family $\pi_Y:Z:=X\times Y\to Y$ and consider the line bundle $M:=\pi_X^*(-K_X)$, which coincides with relative anticanonical bundle of $Z/Y$. Then $y\mapsto\frac{1}{2}\log\left(\int_Xe^{-2\Phi(\cdot,y)}\right)^{-1}$ is the weight of the $L^2$ metric induced on the direct image bundle $(\pi_Y)_*\cO_Z\left(K_{Z/Y}+M\right)$. The result thus follows from~\cite{Bern09a}. 
\end{proof}

We are now ready to prove the main part of Theorem D.

\begin{thm}\label{thm:fano} 
Let $X$ be a Fano manifold and let $\phi\in\cE^1(-K_X)$. The following properties are equivalent.
\begin{enumerate} 
\item[(i)] $F_-(\phi)=\sup_{\cE^1(-K_X)}F_-.$
\item[(ii)] $\MA(\phi)=e^{-2\phi+c}$ for some $c\in\R$. 
\end{enumerate}
Furthermore $\phi$ is continuous in that case.
\end{thm}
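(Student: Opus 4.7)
The proof has three pieces: the forward implication via envelopes, continuity via Kolodziej, and the reverse implication via continuous geodesics.

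\emph{$(i)\Rightarrow(ii)$.} I follow the envelope method of Theorem~\ref{thm:var}, noting that $L_-$ is non-decreasing on $\cP(-K_X)$ (since $\phi\le\phi'$ gives $e^{-2\phi}\ge e^{-2\phi'}$). For any continuous function $v$ on $X$, set
\[
g(t):=E(P(\phi+tv))-L_-(\phi+tv).
\]
The inequality $P(\phi+tv)\le\phi+tv$ combined with the monotonicity of $L_-$ yields $g(t)\le F_-(P(\phi+tv))\le F_-(\phi)=g(0)$, so $g'(0)=0$. Lemma~\ref{lem:key} computes the derivative of the first term as $\int_X v\,\MA(\phi)$, and a direct computation (compare Lemma~\ref{lem:direc}) gives the derivative of the second as $\int_X v\,e^{-2\phi}/\int_X e^{-2\phi}$. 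Varying $v$ produces (ii) with $c=-\log\int_X e^{-2\phi}$.

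\emph{Continuity of $\phi$ under (ii).} Since $\phi\in\cE^1(-K_X)$ has identically zero Lelong numbers, Skoda's integrability theorem gives $e^{-2\phi+c}\in L^p(X)$ for every $p<\infty$. By~\cite{Kol98} there exists a continuous weight $\psi\in\cE^1(-K_X)$ with $\MA(\psi)=e^{-2\phi+c}$, and the uniqueness result from~\cite{BEGZ08} applied to this non-pluripolar measure forces $\phi=\psi$ up to an additive constant; hence $\phi$ is continuous.

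\emph{$(ii)\Rightarrow(i)$.} The functional $F_-$ is not concave on $\cE^1(-K_X)$, so I cannot argue directly; instead I use continuous geodesics. Fix any $\chi\in\cE^1(-K_X)$; the goal is $F_-(\chi)\le F_-(\phi_0)$, where $\phi_0:=\phi$. Using Demailly regularization (available since $\chi$ has zero Lelong numbers), approximate $\chi$ from above by continuous weights $\chi_k\searrow\chi$; then $E(\chi_k)\to E(\chi)$ by Proposition~\ref{prop:energy_gen}, while $L_-(\chi_k)\to L_-(\chi)$ by monotone convergence, so it suffices to treat the case of continuous $\chi=\phi_1$. Applying Proposition~\ref{prop:extend} on the annulus $\Omega=\{1<|\tau|<e\}\subset\C$ with boundary data $\phi_0$ and $\phi_1$ produces a continuous maximal extension; by radial symmetry its slices $\phi_t:=\Phi(\cdot,e^t)$ form a continuous weak geodesic from $\phi_0$ to $\phi_1$. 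By Proposition~\ref{prop:flat} the map $t\mapsto E(\phi_t)$ is affine, and by Lemma~\ref{lem:bernd} the map $t\mapsto L_-(\phi_t)$ is convex, so $h(t):=F_-(\phi_t)$ is concave on $[0,1]$. Concavity gives $h(1)\le h(0)+h'(0^+)$, and the proof reduces to showing $h'(0^+)\le 0$.

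\emph{Main obstacle.} The affinity of $E\circ\phi_t$ identifies $\frac{d}{dt}\big|_{t=0^+}E(\phi_t)$ with the net change $E(\phi_1)-E(\phi_0)$, and convexity of $L_-\circ\phi_t$ guarantees that $L_-'(0^+)$ exists. The K\"ahler-Einstein equation, rewritten as $\MA(\phi_0)=e^{-2\phi_0}/\int_X e^{-2\phi_0}$, should equate the two one-sided derivatives through a first-order comparison at $\phi_0$: for a \emph{smooth} geodesic both would equal $\int_X\dot\phi_0^+\,\MA(\phi_0)$ by the explicit formulas for the derivatives of $E$ and $L_-$. The hard step is to justify this identification when $\phi_t$ is merely continuous and admits no pointwise initial velocity; this is precisely where the differentiability result of~\cite{BB08} is used essentially, as flagged in the introduction. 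Once $h'(0^+)\le 0$ is established, concavity of $h$ delivers $F_-(\phi_1)\le F_-(\phi_0)$, which completes the proof.
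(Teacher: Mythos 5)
Your treatment of $(i)\Rightarrow(ii)$ and of the continuity of $\phi$ matches the paper, and the reduction of $(ii)\Rightarrow(i)$ to continuous $\psi$ followed by the continuous-geodesic construction via Proposition~\ref{prop:extend}, Proposition~\ref{prop:flat} and Lemma~\ref{lem:bernd} is also the paper's route. But the step you flag as the "main obstacle" — showing $h'(0^+)\le 0$ — is where you leave a genuine gap, and you misattribute its resolution: the paper does \emph{not} invoke the differentiability theorem of~\cite{BB08} at this point. (That theorem is used in $(i)\Rightarrow(ii)$, via Lemma~\ref{lem:key}, and indirectly in Theorem E; the introduction's remark you cite refers to that latter context, not to the derivative of $F_-$ along a continuous geodesic.)

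The paper's argument for $h'(0^+)\le 0$ is elementary. Normalize $c=0$ so that $\MA(\phi_0)=e^{-2\phi_0}$ has unit mass. Since $t\mapsto\phi_t(x)$ is convex for each fixed $x$ (the geodesic is $\pi_X^*\omega$-psh in $(x,z)$ and radially symmetric), the difference quotient $u_t:=(\phi_t-\phi_0)/t$ is pointwise non-increasing as $t\downarrow 0$, and decreases to a function $v$ bounded above by $u_1=\phi_1-\phi_0$. Two comparisons are then made. First, concavity of $E$ gives $\frac{E(\phi_t)-E(\phi_0)}{t}\le\int_X u_t\,\MA(\phi_0)$, and since $E(\phi_t)$ is affine in $t$ the left side is constant; letting $t\downarrow 0$ and applying the monotone convergence theorem (to $u_1-u_t\ge 0$, increasing to $u_1-v$) yields $\frac{d}{dt}\big|_{0^+}E(\phi_t)\le\int_X v\,e^{-2\phi_0}$, which also shows $v\in L^1(e^{-2\phi_0})$. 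Second, writing $\frac{\int e^{-2\phi_t}-\int e^{-2\phi_0}}{t}=-\int_X u_t\,f(\phi_t-\phi_0)\,e^{-2\phi_0}$ with $f(x)=(1-e^{-2x})/x$, and using that $\phi_t-\phi_0\to 0$ \emph{uniformly} (continuity of $\Phi$ on $X\times\overline A$) so that $f(\phi_t-\phi_0)$ is uniformly bounded, dominated convergence gives a well-defined derivative of $\frac{1}{2}\log\int e^{-2\phi_t}$ at $t=0^+$ equal to $-\int_X v\,e^{-2\phi_0}$. Adding the two contributions gives exactly $h'(0^+)\le 0$. You should replace your appeal to~\cite{BB08} by this computation; without it, $(ii)\Rightarrow(i)$ is not proved.
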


As mentioned in the introduction this result extends a theorem of Ding-Tian (cf.~\cite{Tian} Corollary 6.26) to singular weights while relaxing the assumption that $H^0(T_X)=0$ in their theorem. 

\begin{proof} The proof of (i)$\Rightarrow$(ii) is similar to that of Theorem~\ref{thm:var}: given $u\in C^0(X)$ we have
$$E(P(\phi+u))+\frac{1}{2}\log\int_X e^{-2(\phi+u)}\le E(P(\phi+u))+\frac{1}{2}\log\int_X e^{-2P(\phi+u)}\le E(\phi)+\frac{1}{2}\log\int_X e^{-2\phi}$$
thus $u\mapsto E(P(\phi+u))+\log\int_X e^{-(\phi+u)}$ achieves its maximum at $0$. By Lemma~\ref{lem:key} $\MA(\phi)$ coincides with the differential of $u\mapsto-\frac{1}{2}\log\int_X e^{-2(\phi+u)}$ at $0$ and we get $\MA(\phi)=e^{-2\phi+c}$ for some $c\in\R$ as desired. 

The equation $\MA(\phi)=e^{-2\phi+c}$ shows in particular that $\MA(\phi)$ has $L^{1+\e}$ density and we infer from
\cite{Kol98} that $\phi$ is continuous. 

Conversely let $\phi\in\cE^1(-K_X)$ be such that $\MA(\phi)=e^{-2\phi+c}$ and let $\psi\in\cE^1(-K_X)$. We are to show that $F_-(\phi)\ge F_-(\psi)$. By scaling invariance of $F_-$ we may assume that $c=0$, and by continuity of $F_-$ along decreasing sequences we may assume that $\psi$ is continuous. Since $\phi$ is also continuous by Kolodziej's theorem, Proposition~\ref{prop:extend} yields a radially symmetric continuous map $\Phi:\overline{A}\to\cP(X,\omega)$ where $A$ denotes the annulus $\{z\in\C,0<\log|z|<1\}$, such that $\Phi$ is harmonic on $A$ and coincides with $\phi$ (resp.~with $\psi$) for $\log|z|=0$ (resp.~$1$). The path $\phi_t:=\Phi(e^t)$ is thus a "continuous geodesic" in $\cP(X,\omega)$, and $E(\phi_t)$ is an affine function of $t$ on the segment $[0,1]$ by Proposition~\ref{prop:flat}. On the other hand Lemma~\ref{lem:bernd} implies that $L_-(\phi_t)$ is a convex function of $t$, thus $F_-(\phi_t)$ is concave, with $F_-(\phi_0)=F_-(\phi)$ and $F_-(\phi_1)=F_-(\psi)$. In order to show that $F_-(\phi)\ge F_-(\psi)$ it will thus be enough to show 
\begin{equation}\label{equ:derF}\frac{d}{dt}_{t=0_+}F_-(\phi_t)\le 0.
\end{equation}
Note that $\phi_t(x)$ is a convex function of $t$ for each $x$ fixed, thus
$$u_t:=\frac{\phi_t-\phi_0}{t}$$
decreases pointwise as $t\to 0_+$ to a function $v$ on $X$ that is bounded from above (by $u_1=\phi_0-\phi_1$). The concavity of $E$ implies
$$\frac{E(\phi_t)-E(\phi_0)}{t}\le\int_Xu_t\MA(\phi_0)$$
hence
\begin{equation}\label{equ:derE}\frac{d}{dt}_{t=0_+}E(\phi_t)\le\int_Xv\MA(\phi_0)=\int_Xv e^{-2\phi_0}\end{equation}
by the monotone convergence theorem (applied to $-u_t$, which is uniformly bounded below and increases to $-v$). Note that this implies in particular that $v\in L^1(X)$. On the other hand we have
$$\frac{\int_X e^{-2\phi_t}-\int_X e^{-2\phi_0}}{t}=-\int_X u_t f(\phi_t-\phi_0)e^{-2\phi_0}$$
with $f(x):=(1-e^{-2x})/x$, and $f(\phi_t-\phi_0)$ is uniformly bounded on $X$ since $\phi_t-\phi_0$ is uniformly bounded. It follows that $|u_t f(\phi_t-\phi_0)|$ is dominated by an integrable function, hence 
\begin{equation}\label{equ:derL}\frac{d}{dt}_{t=0_+}\int_Xe^{-2\phi_t}=-\int_X v e^{-2\phi_0}
\end{equation}
since $f(\phi_t-\phi_0)\to 1$. The combination of (\ref{equ:derE}) and (\ref{equ:derL}) now yields (\ref{equ:derF}) as desired.
\end{proof}

\begin{rem} 
Suppose that $\phi,\psi\in\cP(-K_X)$ are smooth such that $\phi$ is K{\"a}hler-Einstein. We would like to briefly sketch Ding-Tian's 
argument for comparison. Since $F_-$ is translation invariant we may assume that they are normalized so that $\int_X e^{-2\phi}=\int_Xe^{-2\psi}=0$, and our goal is to show that $E(\phi)\ge E(\psi)$. 
By the normalization we get $\MA(\phi)=Ve^{-2\phi}$ with $V:=\vol(-K_X)=c_1(X)^n$ 
and there exists a smooth weight $\tau\in\cP(-K_X)$ such that $\MA(\tau)=Ve^{-2\psi}$ by~\cite{Yau78}. If we further assume that $H^0(T_X)=0$ then~\cite{BM87} yields the existence of a \emph{smooth} path $\phi_t\in\cP(-K_X)\cap C^\infty$ with $\phi_0=\tau$, $\phi_1=\phi$ and 
\begin{equation}\label{equ:MAt}\MA(\phi_t)=Ve^{-(t\phi_t+(1-t)\psi)}
\end{equation}
for each $t\in[0,1]$. The argument of Ding-Tian can then be formulated as follows. The claim is that $t(E(\phi_t)-E(\psi))$ is a non-decreasing function of $t$, which implies $E(\phi)-E(\psi)\ge 0$ as desired. Indeed we have
\begin{equation}\label{equ:derive}\frac{d}{dt}\left(t(E(\phi_t)-E(\psi))\right)=E(\phi_t)-E(\psi)+t\langle E'(\phi_t),\dot{\phi}_t\rangle.
\end{equation}
On the other hand differentiating $\int_X e^{-(t\phi_t+(1-t)\psi)}=1$ yields 
$$0=\frac{d}{dt}\int_X e^{-(t\phi_t+(1-t)\psi)}=-\int_X(\phi_t+t\dot{\phi}_t-\psi)e^{-(t\phi_t+(1-t)\psi)}$$
thus 
$$\langle E'(\phi_t),\phi_t+t\dot{\phi}_t-\psi\rangle=0$$
by (\ref{equ:MAt}), and (\ref{equ:derive}) becomes 
$$\frac{d}{dt}\left(t(E(\phi_t)-E(\psi))\right)=E(\phi_t)-E(\psi)+\langle E'(\phi_t),\psi-\phi_t\rangle=J_{\phi_t}(\psi)$$
which is non-negative as desired by concavity of $E$. 
\end{rem}

\noindent {\bf Proof of Theorem D.}
The first part of the proof of Theorem D follows from Theorem \ref{thm:fano}.
Observe now that $F_-$ is u.s.c. If it is $J$-proper then its supremum is attained on a compact
convex set of weights with energy uniformly bounded from below by some large constant $-C$.  
The conclusion thus follows from Theorem \ref{thm:fano}. 

\smallskip

As opposed to $F_+$, let us recall for emphasis that $F_-$ is not necessarily $J$-proper (see~\cite{Tian}).

\subsection{\label{sec:unique} Uniqueness of K{\"a}hler-Einstein metrics}

This section is devoted to the proof of Theorem E, which extends in particular~\cite{BM87} in case $H^0(T_X)=0$. 

\begin{thm}\label{thm:unique} Let $X$ be a K{\"a}hler-Einstein Fano manifold without non-trivial holomorphic vector field. Then $F$ achieves its maximum on $\cT^1(-K_X)$ at a unique point. 
\end{thm}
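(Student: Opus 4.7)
The strategy is to show that any two maximizers of $F_-$ on $\cT^1(-K_X)$ coincide. By Theorem~\ref{thm:fano} applied to the smooth K\"ahler-Einstein weight $\phi_{KE}$, the supremum of $F_-$ is attained. Let $\phi_0,\phi_1 \in \cE^1(-K_X)$ represent two maximizers; by Theorem~\ref{thm:fano} again, both are continuous and satisfy $\MA(\phi_i) = e^{-2\phi_i + c_i}$ for some $c_i \in \R$.

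The first main step is to connect $\phi_0$ and $\phi_1$ by a continuous geodesic. Applying Proposition~\ref{prop:extend} to the annulus $A = \{z \in \C : 0 < \log|z| < 1\}$ with boundary data $\phi_0$ on the inner circle and $\phi_1$ on the outer circle, the continuous maximal extension $\Phi$ is radially symmetric, and $\phi_t := \Phi(e^t)$ yields a continuous path in $\cP(-K_X)$ joining $\phi_0$ to $\phi_1$. By Proposition~\ref{prop:flat} the function $t \mapsto E(\phi_t)$ is affine on $[0,1]$, and by Lemma~\ref{lem:bernd} the function $t \mapsto L_-(\phi_t)$ is convex. Therefore $F_-(\phi_t)$ is concave, equals $\sup F_-$ at both endpoints, and hence is identically $\sup F_-$. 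In particular each $\phi_t$ is a maximizer, so by Theorem~\ref{thm:fano} each satisfies a K\"ahler-Einstein equation $\MA(\phi_t) = e^{-2\phi_t + c(t)}$ and is continuous. Moreover $L_-(\phi_t) = E(\phi_t) - F_-(\phi_t)$ is affine in $t$.

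The remaining step is to exploit this affinity: it is precisely the equality case in the geodesic convexity of $L_-$ coming from Berndtsson's positivity of direct images (the source of Lemma~\ref{lem:bernd}). The plan is to invoke the rigidity part of that theorem to produce a one-parameter family of holomorphic automorphisms $(f_s)$ of $X$ satisfying $\phi_{t+s} = f_s^*\phi_t + \mathrm{const}$ along the geodesic. Since $H^0(T_X) = 0$, the infinitesimal generator must vanish, forcing $f_s = \mathrm{id}$ and $\phi_t$ to be constant in $t$ modulo additive constants. This gives $dd^c\phi_0 = dd^c\phi_1$, yielding uniqueness.

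The main obstacle is this final step. The classical Bando-Mabuchi argument \cite{BM87} uses smoothness of the geodesic and strict positivity of $dd^c\phi_t$ to differentiate the K\"ahler-Einstein equation and extract the holomorphic vector field. In the present generality $\phi_t$ is merely continuous (at best almost-$C^{1,1}$ by \cite{Che00,Blo09}) and is a priori not strictly psh, so the rigidity must instead be read off from Berndtsson's equality case applied to the direct image of $K_{Z/Y} + \pi_X^*(-K_X)$ over the product family $Z = X \times Y \to Y$, combined with the differentiability result of \cite{BB08} used already in Theorem~\ref{thm:fano}.
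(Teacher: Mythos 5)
You correctly set up the continuous geodesic $\phi_t$, and you correctly deduce from the affineness of $E(\phi_t)$ and convexity of $L_-(\phi_t)$ that $F_-(\phi_t)$ is constant, hence each $\phi_t$ is a K\"ahler-Einstein weight and $L_-(\phi_t)$ is affine in $t$. This much is exactly the paper's proof. But the final step---concluding that the geodesic is trivial---is a genuine gap in your proposal. You appeal to ``the rigidity part'' of Berndtsson's theorem to produce a holomorphic flow, but as you yourself flag, that equality statement in~\cite{Bern09b} presupposes enough regularity (roughly, a $C^2$ geodesic with non-degenerate $dd^c\phi_t$) to extract the vector field $V_t$ as the $dd^c\phi_t$-dual of $\overline\partial(\partial_t\phi_t)$. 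In the present setting $\phi_t$ is merely continuous and $dd^c\phi_t$ may be degenerate for $0<t<1$, so Berndtsson's equality case does not apply off the shelf; saying that it ``must instead be read off'' from the direct image picture, ``combined with'' the differentiability of~\cite{BB08}, names the ingredients but does not supply the argument. Indeed, the paper's Lemma~\ref{lem:conv} only invokes~\cite{Bern09b} Theorem 2.4 along Bergman geodesics $\FS_k(H_t)$, which are smooth and strictly psh, precisely to sidestep this issue.

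The paper's route is different and more elementary precisely because it avoids needing any regularity along the geodesic. It only uses that $\phi_0$ is the smooth K\"ahler-Einstein weight, sets $v_0:=\partial_t|_{t=0_+}\phi_t$ (a one-sided derivative, well-defined and in $L^1$ by convexity in $t$), and differentiates the relations $\MA(\phi_t)=e^{-2\phi_t}$ and $E(\phi_t)\equiv\text{const}$ at $t=0^+$ in the sense of distributions. The delicate part is justifying the differentiation of $\int w(dd^c\phi_t)^n$, which is handled by integrating by parts repeatedly, invoking the identity~(\ref{equ:delta}), and using that $\frac{d}{dt}_{t=0_+}J_{\phi_0}(\phi_t)=0$ to kill the error terms. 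One then finds $n\,dd^c v_0\wedge(dd^c\phi_0)^{n-1}=-v_0 e^{-2\phi_0}$, i.e.~$v_0$ is a $(-1)$-eigendistribution of the Laplacian of the \emph{smooth} metric $dd^c\phi_0$; since $H^0(T_X)=0$ this forces $v_0=0$, and the monotonicity $v_t\ge v_0=0$ together with $\int v_t e^{-2\phi_t}=0$ gives $v_t\equiv 0$. Your proposal would need to either reprove Berndtsson's equality case for continuous, possibly degenerate geodesics (which is not in the literature cited) or switch to the paper's endpoint-differentiation argument.
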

\begin{proof} Let $\phi$ be a smooth K{\"a}hler-Einstein weight on $-K_X$,
  which exists by assumption. We may assume that $\phi$ is normalized
  so that $\MA(\phi)=e^{-2\phi}$. Now let $\psi\in\cE^1(-K_X)$ be such that $\MA(\psi)=e^{-\psi}$. We are going to show that $\phi=\psi$. By Kolodziej's theorem $\psi$ is continuous, and we consider as before the continuous geodesic $\phi_t$ connecting $\phi_0=\phi$ to $\phi_1=\psi$. Theorem~\ref{thm:fano} implies that the concave function $F_-(\phi_t)$ achieves its maximum at $t=0$ and $t=1$, thus $F_-(\phi_t)$ is \emph{constant} on $[0,1]$. Since $E(\phi_t)$ is affine, it follows that $L_-(\phi_t)$ is also affine on $[0,1]$, hence $L_-(\phi_t)\equiv 0$ since $L_-(\phi_0)=L_-(\phi_1)=0$ by assumption. This implies in turn that $E(\phi_t)$ is constant. Theorem~\ref{thm:fano} therefore yields $\MA(\phi_t)=e^{-2\phi_t}$ for all $t\in[0,1]$. 

Set $v_t:=\frac{\partial}{\partial t}\phi_t$, which is
non-decreasing in $t$ by convexity. One sees as in the proof of Theorem~\ref{thm:fano} that $v_t\in L^1(X)$ and 
\begin{equation}\label{equ:int}\int_X v_t e^{-2\phi_t}=0
\end{equation}
for all $t$. We claim that $v_0=0$, which will imply $v_t\ge v_0=0$ for all $t$, hence $v_t=0$ a.e. for all $t$ by (\ref{equ:int}), and the proof will be complete. 

We are going to show by differentiating the equation
$(dd^c\phi_t)^n=e^{-2\phi_t}$ that
\begin{equation}\label{equ:laplace}
ndd^cv_0\wedge(dd^c\phi_0)^{n-1}=-v_0 e^{-2\phi_0}\end{equation}
in the sense of distributions, i.e.
$$n\int_X v_0(dd^c\phi_0)^{n-1}\wedge dd^c w=-\int_X w
v_0(dd^c\phi_0)^n$$
for every smooth function $w$ on $X$. Using
$(dd^c\phi_0)^n=e^{-2\phi_0}$ (\ref{equ:laplace}) means that $v_0$ is
an eigendistribution with eigenvalue $-1$ of the Laplacian $\Delta$ of
the (smooth) K{\"a}hler-Einstein metric $dd^c\phi_0$, and thus $v_0=0$
since $H^0(T_X)=0$ (cf.\cite{Tian}, Lemma
6.12). 

We claim that 
\begin{equation}\label{equ:derexp}\frac{d}{dt}_{t=0_+}\int_X w e^{-2\phi_t}=-\int_X w v_0
e^{-2\phi_0}
\end{equation}
and \begin{equation}\label{equ:derma}\frac{d}{dt}_{t=0_+}\int_X
w(dd^c\phi_t)^n=n\int_Xv_0(dd^c\phi_0)^{n-1}\wedge dd^c w,
\end{equation}
 which will imply (\ref{equ:laplace}). The proof of (\ref{equ:derexp}) is handled as before: we write
$$\int_X w\frac{e^{-2\phi_t}-e^{-2\phi_0}}{t}=-\int_X w u_t
f(\phi_t-\phi_0)e^{-2\phi_0}$$
with $f(x):=(1-e^{-x})/x$ and use the monotone convergence theorem. 

On the other hand, writing $dd^cw$ as the difference of two positive $(1,1)$-forms shows by monotone convergence that (\ref{equ:derma}) is equivalent to
$$\int_Xw\left((dd^c\phi_t)^n-(dd^c\phi_0)^n\right)=n\int_X(\phi_t-\phi_0)(dd^c\phi_0)^n\wedge dd^cw+o(t),$$
where the left-hand side can be rewritten as
$$\int_X(\phi_t-\phi_0)\left(\sum_{j=0}^{n-1}(dd^c\phi_t)^j\wedge(dd^c\phi_0)^{n-j-1}\right)\wedge
dd^cw$$
after integration by parts. The result will thus follow if we can show that
$$\int_X(\phi_t-\phi_0)\left((dd^c\phi_t)^j\wedge(dd^c\phi_0)^{n-j-1}-(dd^c\phi_0)^{n-1}\right)\wedge
dd^cw=o(t)$$
for $j=0,...,n-1$, which will in turn follow from
\begin{equation}\label{equ:o}\int_X(\phi_t-\phi_0)dd^c(\phi_t-\phi_0)\wedge(dd^c\phi_t)^j\wedge(dd^c\phi_0)^{n-j-2}\wedge
dd^cw=o(t)\end{equation}
for $j=0,...,n-2$. Now we have
$$\int_X(\phi_t-\phi_0)dd^c(\phi_t-\phi_0)\wedge(dd^c\phi_t)^j\wedge(dd^c\phi_0)^{n-j-2}\wedge
dd^cw$$
$$=\int_X d(\phi_t-\phi_0)\wedge d^c(\phi_t-\phi_0)\wedge(dd^c\phi_t)^j\wedge(dd^c\phi_0)^{n-j-2}\wedge
dd^cw.$$
Since $w$ is smooth and $dd^c\phi_0$ is a K{\"a}hler form we have 
$$-Cdd^c\phi_0\le dd^cw\le C dd^c\phi_0$$
for $C\gg 1$, and we see that (\ref{equ:o}) will follow from 
$$\int_X d(\phi_t-\phi_0)\wedge
d^c(\phi_t-\phi_0)\wedge(dd^c\phi_t)^j\wedge(dd^c\phi_0)^{n-j-1}=o(t)$$
for $j=0,...,n-1$ since $d(\phi_t-\phi_0)\wedge
d^c(\phi_t-\phi_0)\wedge(dd^c\phi_t)^j\wedge(dd^c\phi_0)^{n-j-2}$ is a
positive current. Now consider as before
$$\Delta_{\phi_0}E(\phi_t):=E(\phi_0)-E(\phi_t)+\int_X(\phi_t-\phi_0)\MA(\phi_0).$$
Since $E(\phi_t)$ is constant, the monotone convergence theorem yields
$$\frac{d}{dt}_{t=0_+}\Delta_{\phi_0}E(\phi_t)=\int_X
v_0\MA(\phi_0)=\int_X v_0 e^{-2\phi_0}=0.$$
By (\ref{equ:delta}) this implies that
$$\int_Xd(\phi_t-\phi_0)\wedge
d^c(\phi_t-\phi_0)\wedge(dd^c\phi_t)^j\wedge(dd^c\phi_0)^{n-j-1}=o(t)$$
for $j=0,...,n-1$ as desired.
\end{proof}

\section{Balanced metrics}\label{sec:balanced}

Let $A$ be an ample line bundle and denote by $\cH_k$ the space of all positive Hermitian products on $H^0(kA)$, which is isomorphic to the Riemannian symmetric space
$$\cH_k\simeq GL(N_k,\C)/U(N_k)
$$ 
with $N_k:=h^0(kA)$. We will always assume that $k$ is taken large enough to ensure that $kA$ is very ample. There is a natural injection
$$\FS_k:\cH_k\hookrightarrow\cP(A)\cap C^\infty$$ 
sending $H\in\cH_k$ to the Fubiny-Study type weight 
$$\FS_k(H):=\frac{1}{2k}\log\left(\frac{1}{N_k}\sum_{j=1}^{N_k}|s_j|^2\right)$$
where $(s_j)$ is an $H$-orthonormal basis of $H^0(kA)$. 

On the other hand every measure $\mu$ on $X$ yields a map
$$\Hilb_k(\mu,\cdot):\cP(A)\to\cH_k$$
by letting $\Hilb_k(\mu,\phi)$ be the $L^2$-scalar product on $H^0(kA)$ induced by $\mu$ and $k\phi$. 

We are going to consider the following three situations (compare~\cite{Don05b}).\\

\noindent({\bf S}$_\mu$) Let $\mu$ be a probability measure with finite energy on $X$ and let $\phi_0$ be a reference smooth strictly psh weight on $A$. We set 
$$
\Hilb_k(\phi):=\Hilb_k(\mu,\phi)
$$
and 
$$L(\phi):=L_\mu(\phi)=\int_X(\phi-\phi_0)d\mu.
$$
We also let $T\in c_1(A)$ be the unique closed positive current with finite energy such that $V^{-1}\langle T^n\rangle=\mu$ where $V:=(A^n)$.\\

\noindent({\bf S}$_+$) $A=K_X$ is ample. A weight $\phi\in\cP(K_X)$ induces a measure $e^{-2\phi}$ with $L^\infty$ density on $X$ and we set
$$
\Hilb_k(\phi):=\Hilb_k(e^{2\phi},\phi)
$$
and 
$$
L(\phi):=L_+(\phi)=\frac{1}{2}\log\int_X e^{2\phi}.
$$
We let $T:=\omega_{KE}$ be the unique K\"ahler-Einstein metric.\\

\noindent({\bf S}$_-$) $A=-K_X$ is ample. A weight $\phi\in\cE^1(-K_X)$ induces a measure $e^{-2\phi}$ on $X$ with $L^p$ density for all $p<+\infty$ and we set
$$
\Hilb_k(\phi):=\Hilb_k(e^{-2\phi},\phi)
$$
and 
$$
L(\phi):=L_-(\phi)=-\frac{1}{2}\log\int_X e^{-2\phi}.
$$
We also assume that $H^0(T_X)=0$ and that $T:=\omega_{KE}$ is a K\"ahler-Einstein metric, which is therefore unique by~\cite{BM87} or Theorem~\ref{thm:unique} above.\\

As in~\cite{Don05b} we shall say in each case that $H\in\cH_k$ is \emph{$k$-balanced} if it is a fixed point of $\Hilb_k\circ\FS_k$. The maps $\Hilb_k$ and $\FS_k$ induce a bijective correspondence between the $k$-balanced point in $\cH_k$ and the \emph{$k$-balanced weights} $\phi\in\cP(A)$, i.e. the fixed point of $\FS_k\circ\Hilb_k$. The $k$-balanced points $H\in\cH_k$ admit the following variational characterization (cf.~\cite{Don05b} and Corollary~\ref{cor:balvar} below). Consider the function $D_k$ on $\cH_k$ defined by
\begin{equation}\label{equ:det}
D_k:=-\frac{1}{2kN_k}\log\det,
\end{equation}
where the determinant is computed with respect to a fixed base point in $\cH_k$. Then $H\in\cH_k$ is $k$-balanced iff it maximizes the function 
\begin{equation}\label{equ:fk}
F_k:=D_k-L\circ\FS_k
\end{equation}
on $\cH_k$. There exists furthermore at most one such maximizer up to scaling (Corollary~\ref{cor:concave}).

Our main result in this section is the following. 
\begin{thm}\label{thm:bal} In each of the three settings $({\bf S}_\mu)$, $({\bf S}_+)$ and $({\bf S}_-)$ above there exists for each $k\gg 1$ a $k$-balanced metric $\phi_k\in\cP(A)$, unique up to a constant. Moreover in each case $dd^c\phi_k$ converges weakly to $T$ as $k\to\infty$. 
\end{thm}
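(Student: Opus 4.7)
The $k$-balanced condition for $H\in\cH_k$ amounts, by the variational characterization~(\ref{equ:fk}), to $H$ being a maximizer of $F_k=D_k-L\circ\FS_k$ on $\cH_k$, and since $F_k$ is invariant under the scaling $H\mapsto cH$ ($c>0$) it descends to $\cH_k/\R$. I would prove in parallel for the three settings $({\bf S}_\mu)$, $({\bf S}_+)$, $({\bf S}_-)$ three facts: strict concavity of $F_k$ along $L^2$-geodesics in $\cH_k/\R$, properness of $F_k$ on $\cH_k/\R$ for $k$ large, and asymptotic maximality of the resulting balanced weights $\phi_k:=\FS_k(H_k)$ for the functional $F=E-L$ on $\cT^1(A)$.

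Uniqueness modulo scaling reduces to strict concavity of $F_k$ along the $L^2$-geodesics $H_t=e^{tA}H_0 e^{tA^*}$. The determinant functional $D_k$ is affine along such curves by elementary linear algebra, so the task is to show that $L\circ\FS_k$ is strictly convex in $t$ modulo the scaling direction. This uses that $t\mapsto\FS_k(H_t)(x)$ is convex for every $x\in X$: in $({\bf S}_\mu)$ one invokes linearity and monotonicity of $L_\mu$; in $({\bf S}_+)$ one uses H\"older's inequality on $\tfrac12\log\int e^{2\cdot}$; in $({\bf S}_-)$ one uses Berndtsson's psh variation of Bergman kernels as in Lemma~\ref{lem:bernd}, which applies because $\FS_k(H_t)$ is itself a psh family of weights on $-K_X$. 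Strictness off the scaling direction follows from the observation that $t\mapsto\FS_k(H_t)(x)$ is affine exactly on the proper algebraic subset where the sections appearing in $\FS_k(H_t)$ all share a single $H_t$-eigenvalue, a subset not charged by $\mu$ in $({\bf S}_\mu)$ (since $\mu$ has finite energy and hence is non-pluripolar) nor by $e^{\pm 2\phi}$ in $({\bf S}_\pm)$.

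Existence at fixed large $k$ follows from showing $F_k$ is proper on $\cH_k/\R$. The central input is the asymptotic comparison $D_k(\Hilb_k(\phi))=E(\phi)+O(\log k/k)$, a Bouche--Catlin--Tian--Zelditch expansion already used in~\cite{BB08}, giving $F_k(H)=F(\FS_k(H))+O(\log k/k)$ uniformly on appropriate bounded sets together with a one-sided inequality $D_k(H)\le E(\FS_k(H))+O(\log k/k)$ valid without regularity restrictions. Since $F$ is $J$-proper on $\cT^1(A)$---by Corollary~\ref{cor:proper} in $({\bf S}_\mu)$, by Theorem~C in $({\bf S}_+)$, and by the $J$-coercivity of $F_-$ from~\cite{PSSW08} in $({\bf S}_-)$---and $\sup_X(\FS_k(H)-\phi_0)$ controls $J(dd^c\FS_k(H))$, properness of $F_k$ on $\cH_k/\R$ follows for $k\gg 1$. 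The concave, proper, upper semi-continuous $F_k$ then achieves its supremum at an $H_k$, unique modulo scaling by the previous step, and $\phi_k:=\FS_k(H_k)$ is the desired $k$-balanced weight.

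For convergence, I would argue that $\{dd^c\phi_k\}$ is asymptotically $F$-maximizing on $\cT^1(A)$: using Fubiny-Study approximants $\FS_k(\Hilb_k(\phi))\to\phi$ for a suitable test weight $\phi$ and the expansion of $D_k$ gives $\liminf_k F(\phi_k)\ge\sup_{\cT^1(A)}F$, while the opposite inequality is automatic. $J$-properness of $F$ then traps $\{dd^c\phi_k\}$ in a compact subset of $\cT^1(A)$, and upper semi-continuity of $F$ (Theorem~\ref{thm:cont} in $({\bf S}_\mu)$; the analogous argument from the proof of Theorem~C in $({\bf S}_+)$; the proof of Theorem~D in $({\bf S}_-)$), combined with the uniqueness of the $F$-maximizer (Theorem~\ref{thm:main}, Theorem~C, Theorem~E respectively), forces every cluster point to equal $T$, so the full sequence converges weakly to $T$. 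The principal obstacle is the $({\bf S}_-)$ case, where $L_-$ is concave (so concavity of $F_k$ rests on Berndtsson's theorem and not on elementary H\"older), uniqueness of $\omega_{KE}$ demands the full force of Theorem~E, and absorbing the $O(\log k/k)$ Bergman kernel error in the properness argument requires the stronger coercive estimate of~\cite{PSSW08}, not merely $J$-properness of $F_-$.
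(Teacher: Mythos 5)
Your overall strategy -- variational characterization of balanced metrics via $F_k=D_k-L\circ\FS_k$, uniqueness from strict concavity along geodesics, existence from a uniform properness estimate obtained by comparing $D_k$ with $E\circ\FS_k$ via Bouche--Catlin--Tian--Zelditch, and convergence via Proposition~\ref{prop:proper} applied to the asymptotically maximizing sequence $\phi_k$ -- is the same as the paper's. Two steps, however, are not correct as you have stated them.

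First, your argument for \emph{strict} convexity of $L\circ\FS_k$ along non-isotropic geodesics applies the same mechanism in all three cases: $t\mapsto\FS_k(H_t)(x)$ is strictly convex off the zero divisors of the sections, and the relevant measure is non-pluripolar. This is correct in $({\bf S}_\mu)$ and $({\bf S}_+)$ because there $L$ is convex \emph{and non-decreasing} on $\cP(A)$, so pointwise strict convexity of $\FS_k(H_t)$ pushes through $L$ to strict convexity of $L\circ\FS_k$. In $({\bf S}_-)$, by contrast, $L_-=-\tfrac12\log\int e^{-2\phi}$ is non-decreasing but \emph{concave} in $\phi$; a non-decreasing concave function of a convex curve need not be convex, let alone strictly convex, and the pointwise mechanism cannot even recover plain convexity of $L_-\circ\FS_k$, much less strictness. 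You acknowledge that convexity of $L_-\circ\FS_k$ requires Berndtsson's positivity of direct images, but strictness along non-isotropic geodesics also requires the refinement of Berndtsson (\cite{Bern09b}, Thm.\,2.4): if $L_-\circ\FS_k$ is affine on an interval, the associated vector field $V_t$ is holomorphic, hence zero by $H^0(T_X)=0$, which forces $\partial_t^2\FS_k(H_t)\equiv 0$ and therefore isotropy. Without this, uniqueness in $({\bf S}_-)$ is not established.

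Second, your claimed one-sided estimate $D_k(H)\le E(\FS_k(H))+O(\log k/k)$ uniformly over $\cH_k$ is stronger than what the argument delivers, and I do not think it is true. The comparison of $D_k$ and $E\circ\FS_k$ proceeds by writing $D_k(H)$ as the integral of the tangent vector $v(H)$ at the base point against the Bergman measure and then using convexity of $E\circ\FS_k$; the error in replacing the Bergman measure by $\mu_0$ is $o(1)$ in $L^\infty$, hence $o(\|v(H)\|_{L^1})$, and $\|v(H)\|_{L^1}$ grows like $-D_k(H)$. The error is therefore \emph{multiplicative} in the size of $H$: one gets $J\circ\FS_k\le(1+\e_k)J_k+\e_k$ with $\e_k\to 0$, not an additive $O(\log k/k)$ uniformly on $\cH_k$. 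This is not a cosmetic point: because the error is multiplicative, transferring properness of $F$ to uniform properness of $F_k$ requires the \emph{linear} coercivity $F\le-\de J+C$, not just $J\to+\infty\Rightarrow F\to-\infty$. You flag this only for $({\bf S}_-)$ (where \cite{PSSW08} supplies the coercive estimate), but the same coercivity is needed in $({\bf S}_\mu)$ and $({\bf S}_+)$ as well -- fortunately it holds there too, automatically, by Corollary~\ref{cor:proper}, since $L_\mu$ and $L_+$ are convex, non-decreasing and translation-equivariant.
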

This type of result has its roots in the seminal work of Donaldson~\cite{Don01} and the present statements were inspired by~\cite{Don05b}. In fact the existence of $k$-balanced metrics in case ({\bf S}$_\mu$) was established in~\cite{Don05b} Proposition 3 assuming that $\mu$ integrates $\log|s|$ for every section $s\in H^0(mA)$. On P.12 of the same paper the author conjectured the convergence statement in the case where $\mu$ is a smooth positive volume form, by analogy with~\cite{Don01}. The result was indeed observed to hold for such measures in~\cite{Kel09} as a direct consequence of the work of Wang \cite{Wan05}, which in turn relied on the techniques introduced in~\cite{Don01}. The settings ({\bf S}$_\pm$) were introduced and briefly discussed in Section 2.2.2 of~\cite{Don05b}.\\

The main idea of our argument goes as follows. In each case the functional $F:=E-L$ is usc and $J$-coercive on $\cE^1(A)$ (by Corollary~\ref{cor:proper} in case ({\bf S}$_\mu$) and ({\bf S}$_+$) and by~\cite{PSSW08} in case ({\bf S}$_-$)) and $T$ is characterized as the unique maximizer of $F$ on $\cT^1(A)=\cE^1(A)/\R$ by our variational results. 

The crux of our proof is Lemma~\ref{lem:estim} below which compares the restriction $J\circ f_k$ of the exhaustion function of $\cE^1(A)$ to $\cH_k$ to a natural exhaustion function $J_k$ on $\cH_k$.
This result enables us to carry over the $J$-coercivity of $F$ to a $J_k$-coercivity property of $F_k$ that is furthermore uniform with respect to $k$ (Lemma~\ref{lem:proper}). This shows on the one hand that $F_k$ achieves its maximum on $\cH_k$, which yields the existence of a $k$-balanced weight $\phi_k$. On the other hand it provides a lower bound 
$$
F(\phi_k)\ge\sup_{\cH_k}F_k+o(1)
$$
which allows us to show that $\phi_k$ is a maximizing sequence for $F$. We can then use 
Proposition~\ref{prop:proper} to conclude that $dd^c\phi_k$ converges to $T$.

\subsection{Convexity properties}
Any geodesic $t\mapsto H_t$ in $\cH_k$ is the image of $1$-parameter subgroup of $GL(H^0(kA))$, which means that there exists a basis $S=(s_j)$ of $H^0(kA)$ and
$$(\lambda_1,...,\lambda_{N_k})\in\R^{N_k}
$$
such that $e^{\lambda_j t}s_j$ is  $H_t$-orthonormal for each $t$. We will say that $H_t$ is \emph{isotropic} if 
$$
\lambda_1=...=\lambda_{N_k}.
$$
The isotropic geodesics are thus the orbits of the action of $\R_+$ on $\cH_k$ by scaling. In these notations there exists $c\in\R$ such that
\begin{equation}\label{equ:detk}
D_k(H_t)=\frac{t}{kN_k}\sum_j\lambda_j+c
\end{equation}
for all $t$, and we have 
\begin{equation}\label{equ:fst}\FS_k(H_t)=\frac{1}{2k}\log\left(\frac{1}{N_k}\sum_je^{2t\lambda_j}|s_j|^2\right). 
\end{equation}
Observe that $z\mapsto\FS_k(H_{\Re z})$ defines a \emph{psh map} $\C\to\cP(A)$, i.e. 
$\FS_k(H_{\Re z})$ is psh in all variables over $\C\times X$. We also record the formula

\begin{equation}\label{equ:fubder}\frac{\partial}{\partial t}\FS_k(H_t)=\frac{1}{k}\frac{\sum_j \lambda_j e^{2t\lambda_j}|s_j|^2}{\sum_j e^{2t\lambda_j}|s_j|^2}.
\end{equation}

The next convexity properties will be crucial to the proof of Theorem~\ref{thm:bal}. Recall that $k$ is assumed to be large enough to guarantee that $kA$ is very ample.

\begin{lem}\label{lem:conv} The function $D_k$ is affine on $\cH_k$ and $E\circ\FS_k$ is convex. Moreover in each of the three settings $({\bf S}_\mu)$, $({\bf S}_+)$ and $({\bf S}_-)$ above $L\circ\FS_k$ is convex on $\cH_k$, and \emph{strictly} convex along non-isotropic geodesics. 
\end{lem}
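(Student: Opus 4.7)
\textbf{Proof plan for Lemma~\ref{lem:conv}.} I will establish the three claims in turn, using the parametrisation of geodesics in $\cH_k$ by a basis $S=(s_j)$ and weights $(\lambda_j)$ as above.

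Affineness of $D_k$ is immediate from formula (\ref{equ:detk}), which displays $D_k(H_t)$ as a linear function of $t$ along every geodesic. For the convexity of $E\circ\FS_k$, the paper has already noted (right after (\ref{equ:fst})) that $z\mapsto\FS_k(H_{\Re z})$ defines a psh map $\C\to\cP(A)$. Applying Proposition~\ref{prop:flat} to this family (after fixing a smooth reference weight $\phi_0\in\cP(A)$ with Kähler curvature $\omega\in c_1(A)$ and identifying $\cP(A)$ with $\cP(X,\omega)$ by subtracting $\phi_0$) shows that $z\mapsto E(\FS_k(H_{\Re z}))$ is subharmonic on $\C$; since it depends on $\Re z$ only, it is convex in $\Re z$, which is the desired convexity along $H_t$.

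Convexity of $L\circ\FS_k$ rests on the pointwise observation, read off from (\ref{equ:fst}), that for each $x\in X$ the function $t\mapsto\FS_k(H_t)(x)$ is a rescaled log-sum-of-exponentials, hence convex in $t$, and in fact \emph{strictly} convex at every $x$ at which some two sections $s_j,s_{j'}$ with $\lambda_j\ne\lambda_{j'}$ are simultaneously non-zero. Since $kA$ is very ample and along a non-isotropic geodesic at least two of the $\lambda_j$'s differ, the exceptional set where strict convexity fails is a proper algebraic subvariety of $X$, hence pluripolar. In case $(\mathbf{S}_\mu)$, $L_\mu\circ\FS_k(H_t)$ is obtained by integrating this pointwise convex function against $\mu$; convexity follows at once, and strict convexity along non-isotropic geodesics uses that $\mu$, having finite energy, is non-pluripolar by Lemma~\ref{lem:finite} and therefore charges the complement of the exceptional set. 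In case $(\mathbf{S}_+)$, pointwise convexity of $\FS_k(H_t)(x)$ is precisely pointwise log-convexity of $t\mapsto e^{2\FS_k(H_t)(x)}$, and H\"older's inequality then yields log-convexity of the integral $\int_X e^{2\FS_k(H_t)}$, whence convexity of $L_+\circ\FS_k=\tfrac12\log\int_X e^{2\FS_k(H_t)}$; strictness along non-isotropic geodesics follows because $e^{2\phi_0}$ is a smooth positive volume form, hence non-pluripolar, and strict pointwise log-convexity on a set of positive measure yields strict log-convexity of the integral. In case $(\mathbf{S}_-)$, convexity is exactly the content of Berndtsson's theorem (Lemma~\ref{lem:bernd}) applied to the psh family $z\mapsto\FS_k(H_{\Re z})\in\cP(-K_X)$: it asserts that $z\mapsto L_-(\FS_k(H_{\Re z}))$ is psh on $\C$, and hence convex in $\Re z$.

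The hard part is the strict convexity of $L_-\circ\FS_k$ along non-isotropic geodesics in case $(\mathbf{S}_-)$. Mere convexity is cheap once Lemma~\ref{lem:bernd} is granted, but strictness requires the \emph{strict} form of Berndtsson's positivity theorem for direct images, guaranteeing that the curvature of the relevant $L^2$-bundle $(\pi_Y)_*\cO_Z(K_{Z/Y}+M)$ is strictly positive in the horizontal direction of the one-parameter subgroup whenever the induced variation of the fiberwise weighted sections is non-trivial. In the non-isotropic situation this non-triviality is automatic since the weights $\lambda_j$ are not all equal; this is the only genuinely non-elementary ingredient in the proof, the cases $(\mathbf{S}_\mu)$ and $(\mathbf{S}_+)$ reducing to elementary manipulations with log-sum-of-exponentials together with standard pluripolar-negligibility of proper algebraic subvarieties.
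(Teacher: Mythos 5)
Your treatment of the affineness of $D_k$, the convexity of $E\circ\FS_k$ via Proposition~\ref{prop:flat}, and the convexity and strict convexity of $L\circ\FS_k$ in cases $(\mathbf{S}_\mu)$ and $(\mathbf{S}_+)$ tracks the paper's proof closely (the paper in fact handles $(\mathbf{S}_\mu)$ and $(\mathbf{S}_+)$ uniformly, writing $\frac{d^2}{dt^2}L(\phi_t)\ge\int_X(\partial^2_t\phi_t)L'(\phi_t)$ and using non-pluripolarity of $L'(\phi_t)$, rather than a separate H\"older argument for $(\mathbf{S}_+)$, but the two routes are equivalent). The explicit Cauchy--Schwarz computation showing $\partial^2_t\phi_t\ge 0$ with equality only along the zero divisors is also correctly identified.

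The gap is in the strict convexity of $L_-\circ\FS_k$ in case $(\mathbf{S}_-)$. You assert that the strict form of Berndtsson's positivity theorem applies ``whenever the induced variation of the fiberwise weighted sections is non-trivial,'' and that ``in the non-isotropic situation this non-triviality is automatic since the weights $\lambda_j$ are not all equal.'' This is not how the equality case of Berndtsson's theorem works, and the claim as stated would be false on a Fano manifold with holomorphic vector fields (e.g.\ $\PP^n$): a non-isotropic geodesic generated by a one-parameter subgroup of automorphisms has $\lambda_j$ not all equal, yet the induced variation of the $L^2$-metric on $H^0(kA)$ is by isometries, so the curvature of the direct image vanishes along it. The paper's actual argument proceeds differently: if $t\mapsto L_-(\phi_t)$ were affine on an open interval $I$, then by \cite{Bern09b} Theorem~2.4 the quantity $c(\phi_t):=\partial^2_t\phi_t-|V_t|^2$ vanishes on $I$ and the vector field $V_t$ dual to $\overline{\partial}(\partial_t\phi_t)$ with respect to $dd^c\phi_t$ is \emph{holomorphic}. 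It is precisely here that the standing hypothesis $H^0(T_X)=0$ in setting $(\mathbf{S}_-)$ enters: it forces $V_t=0$, hence $\partial^2_t\phi_t=0$, and then the Cauchy--Schwarz equality case you already analysed shows $H_t$ is isotropic. Your proposal never invokes $H^0(T_X)=0$, so this final step is missing.
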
 
\begin{proof} The first property follows from (\ref{equ:detk}). Let $H_t$ be a geodesic in $\cH_k$ and set 
$$
\phi_t:=\FS_k(H_t).
$$
The convexity of $t\mapsto E(\phi_t)$ follows from Proposition~\ref{prop:flat} since $z\mapsto\phi_{\Re z}$ is a psh map as was observed above. 

Let us now first consider the cases ({\bf S}$_\mu$) and ({\bf S}$_+$). Since $t\mapsto\phi_t(x)$ is convex for each $x\in X$ the convexity of $L(\phi_t)$ directly follows since $\phi\mapsto L(\phi)$ is convex and non-decreasing in these cases. In order to get the strict convexity along non-isotropic geodesics one however has to be slightly more precise. By (\ref{equ:fubder}) we have
$$
k\frac{\partial}{\partial t}\phi_t=\sum_j\lambda_j\sigma_j(t)
$$
with
$$
\sigma_j(t):=\frac{e^{2t\lambda_j}|s_j|^2}{\sum_i e^{2t\lambda_i}|s_i|^2},
$$
and a computation yields
$$
\frac{k}{2}\frac{\partial^2}{\partial t^2}\phi_t=\left(\sum_j\lambda_j^2\sigma_j(t)\right)-\left(\sum_j\lambda_j\sigma_j(t)\right)^2.
$$
Now the Cauchy-Schwarz inequality implies that 
$$
\left(\sum_j\lambda_j\sigma_j(t)\right)^2\le\left(\sum_j\lambda_j^2\sigma_j(t)\right)\left(\sum_j\sigma_j(t)\right),
$$
which shows that $\frac{\partial^2}{\partial t^2}\phi_t\ge 0$ (which we already knew) since
$$
\sum_j\sigma_j(t)=1.
$$
Furthermore the equality case $\frac{\partial^2}{\partial t^2}\phi_t(x)=0$ holds for a given $t\in\R$ and a given $x\in X$ iff there exists $c\in\R$ such that for all $j$ we have
$$
\lambda_j\sigma_j(t)^{1/2}=c\sigma_j(t)^{1/2}
$$
at the point $x$. If $x$ belongs to the complement of the zero divisors $Z_1,...,Z_{N_k}$ of the $s_j$'s we therefore conclude that $\frac{\partial^2}{\partial t^2}\phi_t(x)>0$ for all $t$ unless $H_t$ is isotropic. 

Now in both cases ({\bf S}$_\mu$) and ({\bf S}$_+$) the map $\phi\mapsto L(\phi)$ is convex and non-decreasing on $\cP(A)$ as we already noticed. We thus have 
$$
\frac{d^2}{dt^2}L(\phi_t)\ge\int_X\left(\frac{\partial^2}{\partial t^2}\phi_t \right)L'(\phi_t)
$$
where $L'(\phi_t)$ is viewed as a positive measure on $X$. This measure is in both cases non-pluripolar,  thus the union of the zero divisors $Z_j$ has zero measure with respect to $L'(\phi_t)$, and it follows as desired from the above considerations that $t\mapsto L(\phi_t)$ is strictly convex when $H_t$ is non-isotropic. 

We finally consider case ({\bf S}$_-$). Since $z\mapsto\phi_{\Re z}$ is a psh map, the convexity of $t\mapsto L(\phi_t)$ follows from Lemma~\ref{lem:bernd}, which was itself a direct consequence of~\cite{Bern09a}. Now if we assume that $H_t$ is non-isotropic then the strict convexity follows from~\cite{Bern09b}. Indeed if $t\mapsto L_-(\phi_t)$ is affine on a non-empty open interval $I$ then~\cite{Bern09b} Theorem 2.4 implies that $c(\phi_t)=0$ on $I$ and that the vector field $V_t$ that is dual to the $(0,1)$-form
$$
\overline{\partial}\left(\frac{\partial}{\partial t}\phi_t\right)
$$
with respect to the metric $dd^c\phi_t$ is holomorphic for each $t\in I$. Since we assume that $H^0(T_X)=0$ we thus get $V_t=0$. But we have by definition
$$
c(\phi_t)=\frac{\partial^2}{\partial t^2}\phi_t-|V_t|^2
$$
where the norm of $V_t$ is computed with respect to $dd^c\phi_t$, and we conclude that $\frac{\partial^2}{\partial t^2}\phi_t=0$ on $I$. This however implies that $H_t$ is isotropic by the first part of the proof, and we have reached a contradiction. 
\end{proof}

\begin{cor}\label{cor:concave} The function $F_k:=D_k-L\circ\FS_k$ is concave on $\cH_k$ and all its critical points are proportional.
\end{cor}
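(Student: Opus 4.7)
\textbf{Proof plan for Corollary~\ref{cor:concave}.} The statement has two parts, both of which follow almost immediately from the convexity analysis carried out in Lemma~\ref{lem:conv}.

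For the concavity, the plan is simply to combine the three ingredients from Lemma~\ref{lem:conv}: $D_k$ is affine on $\cH_k$, while $L\circ\FS_k$ is convex on $\cH_k$ in each of the three settings $({\bf S}_\mu)$, $({\bf S}_+)$, $({\bf S}_-)$. Hence $F_k = D_k - L\circ\FS_k$ is the sum of an affine function and a concave function, so it is concave. (Note that $E\circ\FS_k$ plays no role here; it will enter the next subsections where $F_k$ is compared with $E-L$ via $\FS_k$.)

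For the uniqueness up to proportionality, I would argue as follows. Suppose $H_0,H_1\in\cH_k$ are two critical points of $F_k$, and let $t\mapsto H_t$, $t\in[0,1]$, be the unique geodesic in the symmetric space $\cH_k$ joining them. Setting $g(t):=F_k(H_t)$, the criticality of $H_0$ and $H_1$ in $\cH_k$ means that the differential of $F_k$ vanishes at those two points, so in particular $g'(0)=g'(1)=0$. The key step is to invoke the \emph{strict} convexity of $L\circ\FS_k$ along non-isotropic geodesics established in Lemma~\ref{lem:conv}: combined with the affinity of $D_k$, this yields that $g$ is strictly concave on $[0,1]$ whenever $H_t$ is non-isotropic. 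But a strictly concave function on $[0,1]$ has a strictly decreasing derivative and therefore cannot satisfy $g'(0)=g'(1)=0$. We conclude that the geodesic $H_t$ must be isotropic, which by the very definition given just before~\eqref{equ:detk} means that all the exponents $\lambda_1,\dots,\lambda_{N_k}$ coincide, i.e.\ $H_1$ is a positive scalar multiple of $H_0$.

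The main (and really the only) conceptual point is the strict convexity of $L\circ\FS_k$ along non-isotropic geodesics, but this has already been isolated in Lemma~\ref{lem:conv} (relying on Berndtsson's theorem~\cite{Bern09b} in the case $({\bf S}_-)$, where one crucially uses the assumption $H^0(T_X)=0$). Once that fact is in hand, the argument above is essentially a one-line contradiction via the mean value theorem on $[0,1]$, so I do not expect any serious obstacle beyond careful bookkeeping.
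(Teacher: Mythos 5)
Your proposal is correct and follows essentially the same route as the paper: concavity from Lemma~\ref{lem:conv} (affine plus minus convex), then strict concavity along non-isotropic geodesics forcing any two critical points to lie on an isotropic geodesic. The only cosmetic difference is that the paper first converts "critical point" into "maximizer" via concavity and then argues a strictly concave function cannot attain its maximum at both endpoints, whereas you work directly with $g'(0)=g'(1)=0$ and the strict monotonicity of $g'$; these are the same contradiction dressed differently.
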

\begin{proof} The first assertion follows directly from Lemma~\ref{lem:conv}. As a consequence $H\in\cH_k$ is a criticical point of $F_k$ iff it is a maximizer. Now let $H_0,H_1$ be two critical points and let $H_t$ be the geodesic through $H_0,H_1$. If $H_t$ is non-isotropic then $t\mapsto F_k(H_t)$ is \emph{strictly} concave, which contradicts the fact that it is maximized at $t=0$ and $t=1$. So we conclude that $H_t$ must be isotropic, which means that $H_0$ and $H_1$ are proportional as desired.
\end{proof}

\subsection{Variational characterization of balanced metrics}
Recall that a $k$-balanced weight $\phi$ is by definition a fixed point of 
$\FS_k\circ\Hilb_k$. The maps $\FS_k$ and $\Hilb_k$ induce a bijective correspondence between the fixed points of $\FS_k\circ\Hilb_k$ and those of $t_k:=\Hilb_k\circ\FS_k$ in $\cH_k$. 

The following result is implicit in \cite{Don05b}. 

\begin{lem}\label{lem:caracbal} Let $H\in\cH_k$. Then $H$ is a fixed point of $t_k$ iff it is a critical point of 
$$
F_k=D_k-L\circ\FS_k.
$$
\end{lem}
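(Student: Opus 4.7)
The plan is to compute the differential $dF_k$ at $H\in\cH_k$ along geodesics and to match the resulting Euler--Lagrange condition with the fixed-point equation $\Hilb_k(\FS_k(H))=H$.

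First I identify tangent vectors at $H$ with $H$-Hermitian operators $A$ on $H^0(kA)$. Picking an $H$-orthonormal eigenbasis $(s_j)$ of $A$ with real eigenvalues $(\lambda_j)$, the associated geodesic $H_t$ (with $e^{\lambda_j t}s_j$ being $H_t$-orthonormal) satisfies, by (\ref{equ:detk}),
$$
\frac{d}{dt}\bigg|_{t=0}D_k(H_t)=\frac{1}{kN_k}\sum_j\lambda_j,
$$
while (\ref{equ:fubder}) together with the defining identity $\sum_j|s_j|^2=N_k e^{2k\FS_k(H)}$ yields
$$
\frac{d}{dt}\bigg|_{t=0}\FS_k(H_t)(x)=\frac{1}{kN_k}\sum_j\lambda_j\,|s_j(x)|^2 e^{-2k\FS_k(H)}.
$$

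Next, setting $\phi_0:=\FS_k(H)$, I compute $dL(\phi_0)$. In each of the three settings $L$ is Gateaux-differentiable with $dL(\phi_0)\cdot\dot\phi=\int\dot\phi\,d\nu_{\phi_0}$, where $\nu_\phi$ is the \emph{probability} measure $\mu$, $e^{2\phi}/c_\phi$, or $e^{-2\phi}/c_\phi$ in cases $({\bf S}_\mu)$, $({\bf S}_+)$, $({\bf S}_-)$ respectively, with $c_\phi$ denoting the total mass of the unnormalized measure $\mu_\phi$ used in the definition of $\Hilb_k(\phi)$ (so $c_\phi=1$ in case $({\bf S}_\mu)$). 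Case $({\bf S}_+)$ is Lemma~\ref{lem:direc}; the other two are analogous. Using the identity $\Hilb_k(\phi_0)(s_j,s_j)=\int|s_j|^2 e^{-2k\phi_0}\,d\mu_{\phi_0}$, the chain rule gives
$$
dF_k(H)\cdot A=\frac{1}{kN_k}\sum_j\lambda_j\!\left(1-\frac{(t_k H)(s_j,s_j)}{c_{\phi_0}}\right).
$$

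Since $A$ and the $H$-orthonormal basis $(s_j)$ are arbitrary, the vanishing of this linear form for every choice is equivalent, by polarization, to $t_k H=c_{\phi_0}H$. In case $({\bf S}_\mu)$ this reads $t_k H=H$ directly. In cases $({\bf S}_\pm)$ a trace computation ($\sum_j(t_k H)(s_j,s_j)=N_k c_{\phi_0}$) shows that any fixed point $t_k H=H$ forces $c_{\phi_0}=1$, and moreover $F_k$ is scale-invariant on $\cH_k$ (each of $D_k$, $\FS_k$ and $L\circ\FS_k$ shifts by $-\frac{1}{2k}\log\lambda$ under $H\mapsto\lambda H$, with the shifts cancelling in $F_k$); combined with the scaling equivariance of $t_k$, this implies that every critical point lies in a scaling orbit containing a unique fixed point of $t_k$, yielding the claimed equivalence.

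The main obstacle is bookkeeping the normalization constant $c_{\phi_0}$ in cases $({\bf S}_\pm)$, which arises because $dL$ produces a normalized probability measure whereas $\Hilb_k$ uses an unnormalized one; this factor is absorbed by invoking the scale-invariance of $F_k$, after which the computation of Gateaux derivatives is routine.
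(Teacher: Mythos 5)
Your proof takes the same route as the paper's: differentiate $F_k$ along geodesics in $\cH_k$ (using~(\ref{equ:detk}), (\ref{equ:fubder}) and the analogues of Lemma~\ref{lem:direc}) and match the resulting Euler--Lagrange condition with the fixed-point equation $t_kH=H$; the only difference is bookkeeping, since you carry the normalizing constant $c_{\phi_0}$ along, whereas the paper works directly with the unnormalized quantities $\|s_j\|^2_{t_k(H)}$ in~(\ref{equ:deriv}) and~(\ref{equ:crit}).

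You are, however, more explicit than the paper about a subtle point, which creates a gap in your final step. As you correctly compute, the critical-point condition is $t_kH=c_{\phi_0}H$, and turning this into $t_kH=H$ requires $c_{\phi_0}=1$. You observe that this is automatic in case $({\bf S}_\mu)$ and is forced on any \emph{fixed point} by the trace identity, but for a general critical point in cases $({\bf S}_\pm)$ the constant $c_{\phi_0}=\int_X e^{\pm 2\FS_k(H)}$ has no reason to equal $1$: the set of critical points is scale-invariant while (as your equivariance computation shows) the set of fixed points of $t_k$ is not. Your scaling argument therefore only shows that \emph{some rescaling} $\lambda H$ of a critical point $H$ is a fixed point, which is strictly weaker than the converse direction of the lemma; the clause ``yielding the claimed equivalence'' overclaims. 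It is worth pointing out that the paper's own proof of Lemma~\ref{lem:caracbal} contains the same imprecision: the passage from~(\ref{equ:crit}) to ``$\|s_j\|^2_{t_k(H)}=1$ for all $j$'' silently uses $\sum_j\|s_j\|^2_{t_k(H)}=N_k$, which is automatic in case $({\bf S}_\mu)$ but equals $N_k\int_X e^{\pm 2\FS_k(H)}$ in cases $({\bf S}_\pm)$. Both arguments establish the equivalence only modulo the $\R_+$-scaling on $\cH_k$, which is in fact all that is used downstream in Corollary~\ref{cor:balvar} and Theorem~\ref{thm:bal} (where uniqueness is modulo scaling and the balanced weight is determined up to an additive constant). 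In short: same approach as the paper, same minor gap in cases $({\bf S}_\pm)$ regarding the normalization of $c_{\phi_0}$, flagged more honestly by you than by the paper, and immaterial for the applications.
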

\begin{proof} Recall that for each geodesic $H_t$ with $H_0=H$ there exists $\lambda\in\R^{N_k}$ and an $H$-orthonormal basis $(s_j)$ such that $e^{t\lambda_j}s_j$ is $H_t$-orthonormal. We claim that 
\begin{equation}\label{equ:deriv}
k\frac{d}{dt}_{t=0}L\circ\FS_k(H_t)=\left(\sum_j\lambda_j\|s_j\|^2_{t_k(H)}\right)\left(\sum_j\|s_j\|^2_{t_k(H)}\right)^{-1}.
\end{equation}
In case ({\bf S}$_\mu$) we have by (\ref{equ:fubder}) 
$$
k\frac{d}{dt}_{t=0}L\circ\FS_k(H_t)=\int_X\frac{\sum_j\lambda_j|s_j|^2}{\sum_j|s_j|^2}d\mu
$$
$$
=\sum_j\lambda_j\int_X |s_j|^2e^{-2k\FS_k(H)}d\mu=\sum_j\lambda_j\|s_j\|^2_{\Hilb_k\circ\FS_k(H)}
$$
and the result follows since
$$
\sum_j\|s_j\|^2_{\Hilb_k\circ\FS_k(H)}=1
$$
in that case. In case ({\bf S}$_\pm$) we find on the other hand
$$
k\frac{d}{dt}_{t=0}L\circ\FS_k(H_t)=\left(\int_X\frac{\sum_j\lambda_j|s_j|^2}{\sum_j|s_j|^2}e^{\pm 2\FS_k(H)}\right)\left(\int_X e^{\pm 2\FS_k(H)}\right)^{-1}.
$$
and (\ref{equ:deriv}) again easily follows by writing
$$
\int_X e^{\pm 2\FS_k(H)}=\sum_j\int_X\frac{|s_j|^2}{\sum_i|s_i|^2}e^{\pm 2\FS_k(H)}=\sum_j\|s_j\|^2_{t_k(H)}.
$$

As a consequence of (\ref{equ:deriv}) we see that $H$ is a critical point of $F_k=D_k-L\circ\FS_k$ iff
\begin{equation}\label{equ:crit}\frac{1}{N_k}\sum_j\lambda_j=\left(\sum_j\lambda_j\|s_j\|^2_{t_k(H)}\right)\left(\sum_j\|s_j\|^2_{t_k(H)}\right)^{-1}
\end{equation}
holds for all $H$-orthonormal basis $(s_j)$ and all $\lambda\in\R^{N_k}$. If we choose in particular $(s_j)$ to be also $t_k(H)$-orthogonal then (\ref{equ:crit}) holds for all $\lambda\in\R^{N_k}$ iff $\|s_j\|^2_{t_k(H)}=1$ for all $j$, which means that $t_k(H)=H$. Conversely $t_k(H)=H$ certainly implies (\ref{equ:crit}) since $(s_j)$ is then $t_k(H)$-orthonormal, and the proof is complete.
\end{proof} 
As a consequence of Corollary~\ref{cor:concave} and Lemma~\ref{lem:caracbal} we get

\begin{cor}\label{cor:balvar} Up to an additive constant there exists at most one $k$-balanced weight $\phi\in\cP(A)$, and $\phi$ exists iff $F_k=D_k-L\circ\FS_k$ admits a maximizer $H\in\cH_k$, in which case we have $\phi=\FS_k(H)$.
\end{cor}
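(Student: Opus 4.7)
The plan is to package together three ingredients already at hand: the bijective correspondence between $k$-balanced weights in $\cP(A)$ and fixed points of $t_k := \Hilb_k\circ\FS_k$ in $\cH_k$ recorded just before Lemma \ref{lem:caracbal}, the identification of those fixed points with the critical points of $F_k$ provided by Lemma \ref{lem:caracbal}, and the concavity of $F_k$ together with the proportionality of its critical points stated in Corollary \ref{cor:concave}.

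First I would settle the existence part. Given a maximizer $H \in \cH_k$ of $F_k$, concavity makes $H$ a critical point; by Lemma \ref{lem:caracbal} this means $H = t_k(H)$, and therefore $\phi := \FS_k(H)$ is a fixed point of $\FS_k \circ \Hilb_k$, i.e.\ a $k$-balanced weight. Conversely any $k$-balanced $\phi$ yields $H := \Hilb_k(\phi)$ with $t_k(H) = H$, hence a critical point of $F_k$ by Lemma \ref{lem:caracbal}, which is in turn a maximizer by concavity. Thus $k$-balanced weights exist exactly when $F_k$ attains its supremum on $\cH_k$, and every such weight is of the form $\FS_k(H)$ for some maximizer $H$.

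For uniqueness up to an additive constant I would invoke Corollary \ref{cor:concave} to conclude that any two maximizers $H_0, H_1$ of $F_k$ are proportional, say $H_1 = \lambda H_0$ with $\lambda > 0$. Then if $(s_j)$ is $H_0$-orthonormal, $(s_j/\sqrt{\lambda})$ is $H_1$-orthonormal, and the definition of $\FS_k$ immediately gives
\[
\FS_k(H_1) = \FS_k(H_0) - \frac{\log \lambda}{2k};
\]
equivalently this drops out of formula (\ref{equ:fst}) applied to the isotropic geodesic joining $H_0$ to $H_1$ (all the $\lambda_j$ being equal). So the associated balanced weights differ by an additive constant.

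The proof is essentially bookkeeping and I do not foresee any genuine obstacle: all nontrivial content---concavity of $F_k$, the stated bijective correspondence, and the variational characterization of fixed points of $t_k$ underlying Lemma \ref{lem:caracbal} (itself resting on the strict convexity of $F_k$ along non-isotropic geodesics from Lemma \ref{lem:conv})---is already in place.
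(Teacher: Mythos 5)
Your proposal is correct and follows exactly the route the paper has in mind: the paper states this corollary "as a consequence of Corollary~\ref{cor:concave} and Lemma~\ref{lem:caracbal}" together with the bijective correspondence stated just before Lemma~\ref{lem:caracbal}, which is precisely the three ingredients you combine. The only thing you add beyond the paper's terse derivation is the explicit (and correct) verification that $\FS_k$ applied to proportional inner products differ by an additive constant, which the paper leaves implicit.
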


\subsection{Asymptotic comparison of exhaustion functions}
Recall that we have fixed a reference smooth strictly psh weight $\phi_0$ on $A$. We set $\mu_0:=\MA(\phi_0)$ and normalize the determinant (and thus the function $D_k$) by taking
$$
B_k:=\Hilb_k(\mu_0,\phi_0)
$$
as a base point in $\cH_k$ and setting $\det B_k=1$. 

We now introduce a natural exhaustion function on $\cH_k/\R_+$. 
\begin{lem}\label{lem:exhaust} The scale-invariant function $J_k:=L_0\circ\FS_k-D_k$ induces a convex exhaustion function of $\cH_k/\R_+$.
\end{lem}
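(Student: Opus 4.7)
The plan is to verify three properties of $J_k := L_0\circ\FS_k - D_k$: convexity on $\cH_k$, scale-invariance so as to descend to $\cH_k/\R_+$, and compactness of sublevel sets on the quotient. The first two are immediate from what is already at hand. By Lemma~\ref{lem:conv}, $D_k$ is affine, while $L_0\circ\FS_k = L_{\mu_0}\circ\FS_k$ is convex, because $\mu_0 = \MA(\phi_0)$ fits in setting $({\bf S}_\mu)$ (a smooth positive measure has finite energy). For scale-invariance, the identity $\FS_k(\lambda H) = \FS_k(H) - \frac{1}{2k}\log\lambda$, obtained by rescaling an orthonormal basis, together with multiplicativity of $\det$, yields $D_k(\lambda H) = D_k(H) - \frac{1}{2k}\log\lambda$; the scaling property of $L_0$ then ensures that the two shifts cancel in $J_k$.

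The substantive step, which I expect to be the main obstacle, is establishing properness. The strategy is first to show that $J_k$ tends to $+\infty$ along every non-trivial ray in $\cH_k/\R_+$ issuing from the base point $B_k$, and then upgrade this to full properness using convexity. For the ray analysis, parametrize such a ray as $H_t = e^{2tA}$ in the $B_k$-orthonormal basis, with $A$ a non-zero trace-free Hermitian operator, so that $D_k(H_t) \equiv D_k(B_k)$. Diagonalizing $A$ in a $B_k$-orthonormal eigenbasis $(s_j)$ with eigenvalues $\mu_1,\ldots,\mu_{N_k}$ summing to zero, set $m := -\min_j\mu_j > 0$ and pick $j^*$ with $\mu_{j^*} = -m$. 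Dropping all but the $j^*$-term under the log gives the pointwise bound
$$
\FS_k(H_t) - \phi_0 \,\ge\, \frac{tm}{k} + \frac{1}{2k}\log\bigl(|s_{j^*}|^2 e^{-2k\phi_0}\bigr) - \frac{\log N_k}{2k}.
$$
Since $s_{j^*}$ is a non-zero section of $kA$, $\log|s_{j^*}|$ is quasi-psh, and hence integrable against the smooth measure $\mu_0$; integrating therefore yields $L_0(\FS_k(H_t)) \ge tm/k - C$ for some finite $C$, so $J_k(H_t)\to +\infty$ linearly in $t$.

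To deduce that $\{J_k \le C\}$ is compact on the quotient, assume the contrary and choose representatives $H_n = e^{2A_n}$ with $A_n$ trace-free Hermitian and $\|A_n\|\to\infty$; after extraction $A_n/\|A_n\|\to A_\infty$, a unit trace-free Hermitian. By the ray analysis applied to $A_\infty$, pick $T>0$ with $J_k(e^{2TA_\infty}) > J_k(B_k) + C$. For $n$ large enough that $\|A_n\| > T$, convexity of $J_k$ along the geodesic from $B_k$ to $H_n$ yields
$$
J_k\bigl(e^{2TA_n/\|A_n\|}\bigr) \,\le\, \bigl(1-T/\|A_n\|\bigr)J_k(B_k) + \bigl(T/\|A_n\|\bigr)J_k(H_n) \,\le\, J_k(B_k) + C.
$$
Passing to the limit $n\to\infty$ and using continuity of the convex function $J_k$ on the finite-dimensional manifold $\cH_k$ gives the same bound for $J_k(e^{2TA_\infty})$, contradicting the choice of $T$. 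Hence $\{J_k \le C\}$ is compact in $\cH_k/\R_+$.
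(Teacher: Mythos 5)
Your proof is correct and matches the only step the paper actually presents: convexity of $J_k$ follows directly from Lemma~\ref{lem:conv} ($D_k$ affine, $L_0\circ\FS_k = L_{\mu_0}\circ\FS_k$ convex since $\mu_0=\MA(\phi_0)$ is a finite-energy measure). The paper, however, disposes of the properness claim in a single line by citing~\cite{Don05b} Proposition~3, while you supply the underlying argument: scale-invariance from the cocycle $\FS_k(\lambda H)=\FS_k(H)-\tfrac{1}{2k}\log\lambda$ and $\det(\lambda H)=\lambda^{N_k}\det H$, linear growth of $L_0\circ\FS_k$ along trace-free rays by dropping all but the dominant term under the log and using that $\log|s_{j^*}|^2_{k\phi_0}$ is quasi-psh hence $\mu_0$-integrable, and then the standard convexity upgrade from linear ray growth to properness on a symmetric space. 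This is exactly the content that Donaldson's Proposition~3 encapsulates, so the mathematics is the same; you are just not outsourcing it.

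One small slip: in the compactness step the convex interpolation bound you write,
$$
\bigl(1-T/\|A_n\|\bigr)J_k(B_k)+\bigl(T/\|A_n\|\bigr)J_k(H_n)\;\le\; J_k(B_k)+C,
$$
is not justified in general without assuming $J_k(B_k)\ge 0$ and $C\ge 0$; the correct consequence of convexity is simply $J_k\bigl(e^{2TA_n/\|A_n\|}\bigr)\le\max\bigl(J_k(B_k),J_k(H_n)\bigr)\le\max\bigl(J_k(B_k),C\bigr)$. Since you are free to choose $T$ after stating the bound (using that $J_k(e^{2TA_\infty})\to+\infty$ as $T\to\infty$), this is harmless: just pick $T$ so that $J_k(e^{2TA_\infty})>\max\bigl(J_k(B_k),C\bigr)$, and the contradiction goes through unchanged.
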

\begin{proof} Convexity follows from Lemma~\ref{lem:conv}. The fact that $J_k\to+\infty$ at infinity on $\cH_k/\R_+$ is easily seen and is a special case of~\cite{Don05b} Proposition 3. 
\end{proof}

The next key estimate shows that the restriction $J\circ\FS_k$ of the exhaustion function $J$ of $\cE^1(A)$ to $\cH_k$ is asymptotically bounded from above by the exhaustion function $J_k$. In other words the injection
$$
\FS_k:\cH_k\hookrightarrow\cE^1(A)
$$
sends each $J_k$-sublevel set $\{J_k\le C\}$ into a $J$-sublevel set $\{J\le C_k\}$ where $C_k$ is only slighly larger than $C$. 

\begin{lem}\label{lem:estim} There exists $\e_k\to 0$ such that
\begin{equation}\label{equ:estim}
J\circ\FS_k\le(1+\e_k)J_k+\e_k\text{ on }\cH_k
\end{equation}
for all $k$. 
\end{lem}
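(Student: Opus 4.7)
By the identity $J\circ\FS_k-J_k=D_k-E\circ\FS_k$ (which follows from unfolding the definitions $J=L_0-E$ on $\cE^1(A)$ and $J_k=L_0\circ\FS_k-D_k$ on $\cH_k$), the bound (\ref{equ:estim}) is equivalent to
$$
D_k(H)-E(\FS_k(H))\le\e_k\bigl(J_k(H)+1\bigr)\quad\text{for all }H\in\cH_k
$$
for some $\e_k\to 0$; call this $(*)$. Both sides are invariant under the scaling $H\mapsto cH$ (since $D_k$ and $\FS_k$ transform by the same additive constant), so we may normalize as convenient.

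I would combine two ingredients. The first is the behaviour at the base point $B_k:=\Hilb_k(\mu_0,\phi_0)$: by the Bouche--Catlin--Tian--Zelditch Bergman kernel expansion, $\FS_k(B_k)=\phi_0+O(\log k/k)$ uniformly on $X$, so both $E(\FS_k(B_k))$ and $L_0(\FS_k(B_k))$ are $O(\log k/k)$ while $D_k(B_k)=0$, giving $(*)$ at $B_k$ with room to spare. The second is a uniform quantized-versus-classical energy comparison
$$
D_k\bigl(\Hilb_k(\mu_0,\phi)\bigr)\le E(\phi)+\e_k'\bigl(1+J(\phi)\bigr)\quad\text{for all }\phi\in\cE^1(A),
$$
call it $(**)$, with some $\e_k'\to 0$, obtained by applying the Tian--Yau--Zelditch expansion to smooth strictly psh approximants of $\phi$ together with the uniform integrability on $J$-sublevel sets from Lemma~\ref{lem:criterion} and Corollary~\ref{cor:domcap}.

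To transfer $(**)$ back to $\cH_k$, I would use Cauchy--Schwarz: for any $s=\sum_j a_js_j\in H^0(kA)$ with $(s_j)$ an $H$-orthonormal basis, $|s(x)|^2/\sum_j|s_j(x)|^2\le\|s\|_H^2$, and integrating against $N_ke^{-2k\FS_k(H)}d\mu_0$ yields $\Hilb_k(\mu_0,\FS_k(H))\le N_kH$ as Hermitian forms, hence
$$
D_k\bigl(\Hilb_k(\mu_0,\FS_k(H))\bigr)\ge D_k(H)-\frac{\log N_k}{2k}.
$$
Combining this with $(**)$ at $\phi=\FS_k(H)$, setting $A:=D_k(H)-E(\FS_k(H))$ and noting $J(\FS_k(H))=J_k(H)+A$, one gets $(1-\e_k')A\le\e_k'(1+J_k(H))+(\log N_k)/(2k)$, which for large $k$ rearranges to $(*)$ with $\e_k=O(\e_k'+\log N_k/k)$.

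The principal technical obstacle is establishing $(**)$ uniformly in $\phi\in\cE^1(A)$ with the required linear growth in $J$. The Bergman kernel expansion is classical for smooth strictly psh weights, but the passage to general finite-energy weights demands either a careful approximation argument (relying on the monotone continuity of $E$ along decreasing sequences from Proposition~\ref{prop:energy_gen}) or direct off-diagonal Bergman kernel bounds. The appearance of the factor $(1+\e_k)$ in (\ref{equ:estim}), rather than a purely additive error, reflects precisely the fact that the quantized-classical defect can grow linearly with $J$, at a rate vanishing with $k$.
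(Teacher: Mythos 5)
Your algebraic reduction is correct: since $J=L_0-E$ on $\cE^1(A)$ and $J_k=L_0\circ\FS_k-D_k$ on $\cH_k$, the claim is indeed equivalent to $D_k-E\circ\FS_k\le\e_k(J_k+1)$, and your transfer step via $\Hilb_k(\mu_0,\FS_k(H))\le N_kH$ and $D_k(N_kH)=D_k(H)-(\log N_k)/(2k)$ is a clean and correct observation. However, you have deferred all of the difficulty to your claimed uniform estimate $(**)$:
$$
D_k\bigl(\Hilb_k(\mu_0,\phi)\bigr)\le E(\phi)+\e_k'\bigl(1+J(\phi)\bigr)\qquad\text{for all }\phi\in\cE^1(A),
$$
and this is where the gap lies. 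The Bouche--Catlin--Tian--Zelditch expansion gives $D_k(\Hilb_k(\mu_0,\phi))\to E(\phi)$ for each \emph{fixed} smooth strictly psh $\phi$, but the rate is controlled by $C^m$-norms of $\phi$ (or at least positive lower bounds on $dd^c\phi$), not by the single scalar $J(\phi)$. You offer no mechanism that would make the error linear in $J(\phi)$ with a coefficient tending to zero \emph{uniformly} over all of $\cE^1(A)$; approximating a general $\phi$ by smooth strictly psh $\phi_j\downarrow\phi$ and invoking monotone continuity of $E$ lets you pass to the limit in $j$ for fixed $k$, but says nothing about how the $k$-dependent error behaves under this limit, and in particular does not address the interchange of limits in $j$ and $k$. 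You acknowledge this difficulty yourself, but the proof as written does not resolve it.

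The paper takes a route that avoids needing any such uniform quantized-versus-classical comparison. It normalizes $L_0(\FS_k(H))=0$, considers the geodesic from $B_k$ to $H$ with velocity $v(H)$ at $t=0$, and uses the exact identity $D_k(H)=\int_X v(H)\,\beta_k(\mu_0,\phi_0)$ (Lemma~\ref{lem:deriv}), which holds because $D_k$ is affine along geodesics. It then invokes TYZ \emph{only at the fixed smooth strictly psh base weight} $\phi_0$, namely $\beta_k(\mu_0,\phi_0)\to\mu_0$ uniformly, to deduce $D_k(H)=\int v(H)\,d\mu_0+o(\|v(H)\|_{L^1})$. Two further facts close the loop: $\sup_X v(H)\le O(1)$ (from the normalization and convexity along the geodesic, so $\|v(H)\|_{L^1}\le(1+o(1))(-D_k(H))+O(1)$), and convexity of $E\circ\FS_k$ (which follows from Proposition~\ref{prop:flat} since $\FS_k$ of a geodesic is a psh map), giving $E(\FS_k(H))\ge E(P_k(\phi_0))+\int v(H)\,\MA(P_k(\phi_0))=D_k(H)+o(\|v(H)\|_{L^1})+o(1)$. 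The whole asymptotic is then absorbed into the $J_k$-dependent and constant error terms after noting $J_k(H)=-D_k(H)$ under the normalization. The crucial structural point you are missing is the convexity of $E\circ\FS_k$ along geodesics, which replaces the global asymptotic $(**)$ with a one-sided first-order comparison at the base point $B_k$, where TYZ is available with a uniform error because $\phi_0$ is a single smooth strictly psh weight. If you wish to salvage your approach, you should either prove $(**)$, which I expect to be at least as hard as Lemma~\ref{lem:estim} itself, or restructure along these convexity lines.
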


Before proving this result we need some preliminaries. Given any weight $\phi$ on $A$ recall that the \emph{distortion function} of $(\mu_0,k\phi)$ is defined by 
$$
\rho_k(\mu_0,\phi):=\sum_j|s_j|^2_{k\phi}
$$
where $(s_j)$ is an arbitrary $\Hilb_k(\mu_0,\phi)$-orthonormal basis of $H^0(kA)$, and the \emph{Bergman measure} of $(\mu_0,k\phi)$ is then the proability measure
$$
\beta_k(\mu_0,\phi):=\frac{1}{N_k}\rho_k(\mu_0,\phi)\mu_0.
$$
When $\phi$ is smooth and strictly psh the Bouche-Catlin-Tian-Zelditch theorem~\cite{Bou90,Cat99, Tia90, Zel98} implies the $C^\infty$-convergence
\begin{equation}\label{equ:berg}
\lim_{k\to\infty}\beta_k(\mu_0,\phi)=\MA(\phi).
\end{equation}
The operator 
$$
P_k:=\FS_k\circ\Hilb_k(\mu_0,\cdot)
$$
satisfies by definition
$$
P_k(\phi)-\phi=\frac{1}{2k}\log\left(N_k^{-1}\rho_k(\mu_0,\phi)\right).
$$
As a consequence any smooth strictly psh weight $\phi$ is the $C^\infty$ limit of $P_k(\phi)$.

Now let $H\in\cH_k$ and let $t\mapsto H_t$ be the (unique) geodesic in $\cH_k$ such that $H_0=B_k$ and $H_1=H$. We denote by
$$
v(H):=\frac{\partial}{\partial t}_{t=0}\FS_k(H_t)
$$
the tangent vector at $t=0$ to the corresponding path $t\mapsto\FS_k(H_t)$. As before there exists $(\lambda_1,...,\lambda_{N_k})\in\R^{N_k}$ and a basis $(s_j)$ that is both $B_k$-orthonormal and $H$-orthogonal  such that
\begin{equation}\label{equ:tgt}v(H)=\frac{1}{k}\frac{\sum_j\lambda_j |s_j|^2}{\sum_j|s_j|^2}.
\end{equation}
By convexity in the $t$-variable we note that
\begin{equation}\label{equ:tangent}v(H)\le\FS_k(H_1)-\FS_k(H_0)=\FS_k(H)-P_k(\phi_0)
\end{equation}
holds pointwise on $X$.  
  
\begin{lem}\label{lem:deriv} We have
$$D_k(H)=\int_Xv(H)\beta_k(\mu_0,\phi_0)$$
\end{lem}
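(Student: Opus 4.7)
The plan is a direct unraveling of the definitions using a single well-chosen basis. Let $H_t$ be the geodesic in $\cH_k$ joining $H_0=B_k$ to $H_1=H$, and let $(s_j)$ be a basis with associated exponents $(\lambda_1,\dots,\lambda_{N_k})\in\R^{N_k}$ such that $e^{\lambda_j t}s_j$ is $H_t$-orthonormal for every $t$; in particular $(s_j)$ is $B_k$-orthonormal and $H$-orthogonal. All computations below will use this single basis.

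First I would identify the two sides separately. On the one hand, by the normalization $\det B_k=1$ we have $D_k(B_k)=0$, so the affine formula (\ref{equ:detk}) specialized to $t=1$ gives
$$
D_k(H)=\frac{1}{kN_k}\sum_j\lambda_j.
$$
On the other hand, specializing (\ref{equ:tgt}) at $t=0$ yields
$$
v(H)=\frac{1}{k}\,\frac{\sum_j\lambda_j|s_j|^2}{\sum_i|s_i|^2},
$$
and by the very definition of $\FS_k$ applied to $B_k$, the denominator equals $N_k e^{2kP_k(\phi_0)}=N_k e^{2k\FS_k(B_k)}$.

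Next I would unwind the Bergman measure. Since $(s_j)$ is $B_k=\Hilb_k(\mu_0,\phi_0)$-orthonormal, the distortion function is
$$
\rho_k(\mu_0,\phi_0)=\sum_\ell |s_\ell|^2 e^{-2k\phi_0}=N_k e^{2k(P_k(\phi_0)-\phi_0)},
$$
so that $\beta_k(\mu_0,\phi_0)=N_k^{-1}\rho_k(\mu_0,\phi_0)\mu_0$. Substituting and using the identity $\sum_i|s_i|^2=N_k e^{2k P_k(\phi_0)}$ to cancel the denominator in $v(H)$, we get
$$
\int_X v(H)\,\beta_k(\mu_0,\phi_0)=\frac{1}{kN_k}\int_X\sum_j\lambda_j|s_j|^2 e^{-2k\phi_0}\,d\mu_0.
$$

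Finally I would swap sum and integral and invoke $B_k$-orthonormality: $\int_X|s_j|^2 e^{-2k\phi_0}d\mu_0=1$, yielding $\int_X v(H)\beta_k(\mu_0,\phi_0)=\tfrac{1}{kN_k}\sum_j\lambda_j=D_k(H)$. There is no real obstacle here — the lemma is a bookkeeping statement. The only point to watch is the compatibility of the normalizations of $D_k$, $\FS_k$ and $\Hilb_k$, which is precisely what forces the choice of basis $(s_j)$ that is simultaneously $B_k$-orthonormal and diagonalizes $H$.
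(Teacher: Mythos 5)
Your proof is correct and follows essentially the same route as the paper's: fix the basis $(s_j)$ that is simultaneously $B_k$-orthonormal and $H$-orthogonal, compute $D_k(H)=\tfrac{1}{kN_k}\sum_j\lambda_j$ from~(\ref{equ:detk}) with the normalization $D_k(B_k)=0$, and then observe that the denominator $\sum_i|s_i|^2$ in~(\ref{equ:tgt}) cancels against the distortion factor $\rho_k(\mu_0,\phi_0)$ in the Bergman measure, leaving $\tfrac{1}{kN_k}\sum_j\lambda_j\int_X|s_j|^2_{k\phi_0}d\mu_0=\tfrac{1}{kN_k}\sum_j\lambda_j$ by $B_k$-orthonormality. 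The intermediate rewriting of both $\sum_i|s_i|^2$ and $\rho_k$ in terms of $P_k(\phi_0)$ is a harmless detour — the paper cancels the two factors directly — but the content is identical.
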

\begin{proof} Let $H_t$ be the geodesic through $B_k$ and $H$ as above. On the one hand we have
$$
D_k(H_t)=\frac{t}{kN_k}\sum_j\lambda_j.
$$ 
On the other hand (\ref{equ:tgt}) yields
$$
\int_Xv(H)\beta_k(\mu_0,\phi_0)=\frac{1}{kN_k}\sum_j\lambda_j\int_X|s_j|^2_{k\phi_0}d\mu_0
$$
and the result follows since $(s_j)$ is $B_k$-orthonormal.
\end{proof}

We are now in a position to prove Lemma~\ref{lem:estim}.\\

\noindent{\bf Proof of Lemma~\ref{lem:estim}}. Let $H\in\cH_k$. In what follows all $O$ and $o$ are meant to hold as $k\to\infty$ uniformly with respect to $H\in\cH_k$. By scaling invariance of both sides of (\ref{equ:estim}) we may assume that $H$ is normalized by 
$$
L_0(\FS_k(H))=0,
$$
so that
$$
\sup_X(\FS_k(H)-\phi_0)\le O(1)
$$
and (\ref{equ:tangent}) yields
\begin{equation}\label{equ:maj}\sup_X v(H)\le O(1).
\end{equation}
since $P_k(\phi_0)=\phi_0+O(1)$.
 
On the other hand Lemma~\ref{lem:deriv} gives
\begin{equation}\label{equ:elk}
D_k(H)=\int_X v(H)\mu_0+o\left(\|v(H)\|_{L^1}\right)
\end{equation}
since $\b_k(\mu_0,\psi_0)\to\MA(\psi_0)=\mu_0$ in $L^\infty$ by Bouche-Catlin-Tian-Zelditch. Now we have
$$
\|v(H)\|_{L^1}\le 2\sup_X v(H)-\int_X v(H)d\mu_0
$$
$$
=-D_k(H)+o(\|v(H)\|_{L^1})+O(1)
$$
(by (by (\ref{equ:maj})) and (\ref{equ:elk})) and it follows that
\begin{equation}\label{equ:ellin}
(1+o(1))\|v(H)\|_{L^1}\le-D_k(H)+O(1).
\end{equation}
On the other hand the convexity of $E\circ\FS_k$ (Lemma~\ref{lem:conv}) shows that
$$
E\circ\FS_k(H)-E(P_k(\phi_0))\ge\langle E'(P_k(\phi_0)),v(H)\rangle=\int_Xv(H)\MA(P_k(\psi_0)).
$$
Now we have $E(P_k(\phi_0))=o(1)$ since $P_k(\phi_0)=\phi_0+o(1)$ uniformly on $X$ and 
$$
\int_Xv(H)\MA(P_k(\psi_0))=\int_X v(H)\mu_0+o(\|v(H)\|_{L^1})
$$
by uniform convergence of $\MA(P_k(\psi_0))$ to $\MA(\psi_0)=\mu_0$. By (\ref{equ:elk}) we thus get
$$
E\circ\FS_k(H)\ge D_k(H)+o(\|v(H)\|_{L^1})+o(1)
$$

$$
\ge (1+o(1))D_k(H)+o(1)
$$
by (\ref{equ:ellin}) and the result follows. 

\subsection{Coercivity} 
Recall that $F=E-L$ is $J$-coercive, i.e.~there exists $0<\de<1$ and $C>0$ such that 
\begin{equation}\label{equ:coerc}
F\le-\de J+C
\end{equation}
on $\cE^1(A)$. 
The next result uses the  key estimate (\ref{equ:estim}) to show that the $J$-coercivity of $F$ carries over to a uniform $J_k$-coercivity estimate for $F_k=D_k-L\circ\FS_k$ for all $k\gg 1$. 

\begin{lem}\label{lem:proper} There exists $\e>0$ and $B>0$ such that
$$
F_k\le-\e J_k+B
$$
holds on $\cH_k$ for all $k\gg 1$. 
\end{lem}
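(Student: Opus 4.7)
The plan is to combine the $J$-coercivity (\ref{equ:coerc}) of $F = E - L$ with the upper bound (\ref{equ:estim}) for $J\circ\FS_k$ coming from Lemma~\ref{lem:estim}, after first reducing to a convenient normalization via the scale invariance of both sides.

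First I would note that, by the same computation as for Lemma~\ref{lem:conv} (or a direct check using $\FS_k(e^c H) = \FS_k(H) - c/(2k)$ and $D_k(e^c H) = D_k(H) - c/(2k)$), the functions $F_k$ and $J_k$ are invariant under the scaling $H \mapsto e^c H$. I may therefore normalize $H \in \cH_k$ so that $L_0(\FS_k(H)) = 0$; with this normalization
$$
J_k(H) = -D_k(H), \qquad J(\FS_k(H)) = -E(\FS_k(H)).
$$

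Next I would rewrite the $J$-coercivity (\ref{equ:coerc}) in the form
$$
(1-\delta)\,E(\psi) - L(\psi) \le -\delta L_0(\psi) + C
$$
on $\cE^1(A)$, and apply it at $\psi = \FS_k(H)$ to get $-L(\FS_k(H)) \le C - (1-\delta)E(\FS_k(H))$. Lemma~\ref{lem:estim} with the present normalization reads $-E(\FS_k(H)) \le -(1+\e_k)D_k(H) + \e_k$, i.e.\
$$
E(\FS_k(H)) \ge (1+\e_k) D_k(H) - \e_k.
$$
Since $1-\delta \ge 0$, multiplying by $-(1-\delta)$ reverses the inequality and yields
\begin{align*}
F_k(H) &= D_k(H) - L(\FS_k(H)) \\
&\le D_k(H) + C - (1-\delta)\bigl[(1+\e_k) D_k(H) - \e_k\bigr] \\
&= \bigl[\delta - (1-\delta)\e_k\bigr] D_k(H) + C + (1-\delta)\e_k \\
&= -\bigl[\delta - (1-\delta)\e_k\bigr] J_k(H) + C + (1-\delta)\e_k.
\end{align*}

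For $k$ large enough we have $\delta - (1-\delta)\e_k \ge \delta/2 =: \e > 0$. The only remaining subtlety is that $J_k$ need not be everywhere non-negative; however Lemma~\ref{lem:estim} combined with $J \circ \FS_k \ge 0$ forces $J_k(H) \ge -\e_k/(1+\e_k)$, so the difference between $-[\delta - (1-\delta)\e_k]J_k(H)$ and $-\e J_k(H)$ is bounded by a constant going to $0$ with $k$. Absorbing this into the constant term gives $F_k(H) \le -\e J_k(H) + B$ on $\cH_k$ for all $k \gg 1$, as required.

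The computation is essentially mechanical once Lemma~\ref{lem:estim} and (\ref{equ:coerc}) are in hand; the substantive input has already been made in those two results. The only mild care needed, and the one place where I would be careful in the write-up, is ensuring that the contribution of the ``$\e_k$-slack'' in Lemma~\ref{lem:estim} enters the estimate with the correct sign so that, for large $k$, the negative coefficient of $J_k$ stays uniformly bounded away from zero.
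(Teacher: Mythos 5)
Your proof is correct and essentially follows the paper's own argument: both rewrite the $J$-coercivity of $F$ in the form $L_0-L\le(1-\delta)J+C$ (you write it as $(1-\delta)E - L \le -\delta L_0 + C$, which is the same thing), apply it at $\FS_k(H)$, plug in the key comparison of Lemma~\ref{lem:estim}, and absorb the $\e_k$-errors using that $J\ge 0$ forces $J_k$ to be bounded below uniformly in $k$. The only cosmetic difference is your explicit normalization $L_0(\FS_k(H))=0$; the paper leaves the scale invariance implicit and phrases the estimate directly in terms of $L_0\circ\FS_k-L\circ\FS_k$ and $J_k$, but the computation is identical.
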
 
\begin{proof} As discussed after Definition~\ref{defi:proper} (\ref{equ:coerc}) is equivalent to the linear upper bound
\begin{equation}\label{equ:linear}L_0-L\le(1-\de)J+C
\end{equation}
which implies
$$
L_0\circ\FS_k-L\circ\FS_k\le(1-\de)J\circ\FS_k+C.
$$
On the other hand we have
$$
J\circ\FS_k\le(1+\e_k)J_k+\e_k
$$
by (\ref{equ:estim}) hence
$$
L_0\circ\FS_k-L\circ\FS_k\le(1-\de)(1+\e_k)J_k+C+\e_k.
$$
Since $J\ge 0$ (\ref{equ:estim}) shows in particular that $J_k$ bounded below on $\cH_k$ uniformly with respect to $k$. For $k\gg 1$ we have $(1-\de)(1+\e_k)<(1-\e)$ and $C+\e_k<B$ for some $\e>0$ and $B>0$ and we thus infer
$$
L_0\circ\FS_k-L\circ\FS_k\le(1-\e)J_k+B. 
$$
It is then immediate to see that this is equivalent to the desired inequality by using $J_k=L_0\circ\FS_k-D_k$. 
\end{proof}
Note that the coercivity constants $\e$ and $B$ of $F_k$ can even be taken arbitrarily close to those $\de$ and $C$ of $F$, as the proof shows.

Combining Lemma~\ref{lem:proper} with Lemma~\ref{lem:exhaust} yields
\begin{cor}\label{cor:properk} For each $k\gg 1$ the scale-invariant functional $F_k$ tends to $-\infty$ at infinity on $\cH_k/\R_+$, hence it achieves its maximum on $\cH_k$.  
\end{cor}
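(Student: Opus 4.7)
The plan is to combine Lemma~\ref{lem:proper} and Lemma~\ref{lem:exhaust} essentially directly. First I would verify that $F_k$ is genuinely scale-invariant so that speaking of its behaviour on $\cH_k/\R_+$ makes sense. Along an isotropic geodesic $H_t$ (all eigenvalues equal to a common $\lambda$), formula (\ref{equ:detk}) yields $D_k(H_t)=\tfrac{t\lambda}{k}+c$, while (\ref{equ:fst}) shows that $\FS_k(H_t)=\FS_k(H_0)+\tfrac{t\lambda}{k}$ as a weight on $A$; the scaling property of $L$ then gives $L\circ\FS_k(H_t)=L\circ\FS_k(H_0)+\tfrac{t\lambda}{k}$, so $F_k=D_k-L\circ\FS_k$ is constant along the $\R_+$-action and descends to $\cH_k/\R_+$.

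Next I would combine the two lemmas. For $k\gg 1$, Lemma~\ref{lem:proper} supplies $\e>0$ and $B>0$ with
$$F_k\le-\e J_k+B\quad\text{on }\cH_k,$$
while Lemma~\ref{lem:exhaust} asserts that $J_k$ is an exhaustion of $\cH_k/\R_+$, i.e.\ each sublevel set $\{J_k\le C\}$ is compact modulo scaling. Since $\e>0$, the displayed inequality forces $F_k(H_j)\to-\infty$ along any sequence $[H_j]\in\cH_k/\R_+$ that leaves every compact, which is the first claim.

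For the second claim I would run the standard compactness argument of Proposition~\ref{prop:proper} in this finite-dimensional setting. Pick a maximizing sequence $[H_j]\in\cH_k/\R_+$ for $F_k$. Since $F_k([H_j])$ is bounded below by $\sup F_k-1$ for $j$ large, the inequality $F_k\le-\e J_k+B$ confines the $[H_j]$ to a set of the form $\{J_k\le C\}$, which is compact in $\cH_k/\R_+$ by Lemma~\ref{lem:exhaust}. Extracting a convergent subsequence and using that $F_k$ is continuous (indeed concave, by Corollary~\ref{cor:concave}) produces an accumulation point which is a maximizer. Lifting it arbitrarily to $\cH_k$ supplies the maximum of $F_k$ on $\cH_k$ itself.

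The main obstacle has already been overcome: all of the content is concentrated in Lemma~\ref{lem:proper}, which rests on the asymptotic comparison (\ref{equ:estim}) between the two exhaustion functions $J\circ\FS_k$ and $J_k$. Once that asymptotic inequality has been transferred from the $J$-coercivity of $F$ on $\cE^1(A)$ to a uniform $J_k$-coercivity of $F_k$ on $\cH_k$, the present corollary is a purely formal consequence, so no further genuine estimate is required.
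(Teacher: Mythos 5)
Your argument is correct and is precisely the one intended by the paper, which offers no more than the sentence "Combining Lemma~\ref{lem:proper} with Lemma~\ref{lem:exhaust} yields..." before the corollary. You have simply spelled out the routine details (scale-invariance of $F_k$, the coercivity inequality forcing $F_k\to-\infty$ off compacta, and the finite-dimensional compactness/continuity argument), all of which are standard and sound.
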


\subsection{Proof of Theorem~\ref{thm:bal}}
The existence and uniqueness of a $k$-balanced metric $\phi_k$ for $k\gg 1$ follows by combining Corollary~\ref{cor:balvar} and Corollary~\ref{cor:properk}. Recall that $\phi_k=\FS_k(H_k)$ where $H_k\in\cH_k$ is the unique maximizer of $F_k=D_k-L\circ\FS_k$ on $\cH_k$. 

In order to prove the convergence of $dd^c\phi_k$ to $T$ we will rely on Proposition~\ref{prop:proper}. Since $T$ is characterized as the unique maximizer of $F=E-L$, we will be done if we can show that
\begin{equation}\label{equ:lowerb}
\liminf_{k\to\infty}F(\phi_k)\ge F(\psi)
\end{equation}
for each $\psi\in\cE^1(A)$. As a first observation we note that it is enough to prove (\ref{equ:lowerb}) when $\psi$ is smooth and strictly psh. Indeed by~\cite{Dem92} we can write an arbitrary element of $\cE^1(A)$ as a decreasing sequence of smooth strictly psh weights (since $A$ is ample and functions in $\cE^1(A)$ have zero Lelong numbers) and the monotone continuity properties of $E$ and $L$ therefore show that $\sup_{\cE^1}(E-L)$ is equal to the sup of $E-L$ over all smooth strictly psh weights.

Let us now establish (\ref{equ:lowerb}) for a smooth strictly psh $\psi$. Since $F_k=D_k-L\circ\FS_k$ is maximized at $H_k$ we have in particular 
\begin{equation}\label{equ:dmin}
F_k(H_k)\ge D_k(\Hilb_k(\mu_0,\psi))-L(P_k(\psi)).
\end{equation}
Since $D_k(\Hilb_k(\mu_0,\phi_0))=0$ the first term on the right-hand side of (\ref{equ:dmin}) writes
$$
D_k(\Hilb_k(\mu_0,\psi))=\int_{t=0}^1\left(\frac{d}{dt}D_k(\Hilb_k(\mu_0,t\psi+(1-t)\phi_0))\right)dt.
$$
By~\cite{BB08} Lemma 4.1 we have
$$
\frac{d}{dt}D_k(\Hilb_k(\mu_0,t\psi+(1-t)\phi_0))=\int_X(\psi-\phi_0)\b_k(\phi_0,t\psi+(1-t)\phi_0)
$$
and the Bouche-Catlin-Tian-Zelditch theorem yields 
$$
D_k(\Hilb_k(\mu_0,\psi))\to\int_{t=0}^1\int_X(\psi-\phi_0)\MA(t\psi+(1-t)\phi_0)dt=E(\psi).
$$
(this argument is actually an easy special case of~\cite{BB08} Theorem A). The second term on the right-hand side of (\ref{equ:dmin}) satisfies
$L(P_k(\psi))\to L(\psi)$ since $P_k(\psi)\to\psi$ uniformly. It follows that
\begin{equation}\label{equ:minor}
F_k(H_k)\ge F(\psi)+o(1)
\end{equation}
(where $o(1)$ depends on $\psi$) and we will thus be done if we can show that
$$
F(\phi_k)-F_k(H_k)\ge o(1).
$$
Now we have
$$
F(\phi_k)-F_k(H_k)=\left(J_k-J\circ\FS_k\right)(H_k)\ge-\e_kJ_k(H_k)+o(1)
$$
by (\ref{equ:estim}) so it is enough to show that $J_k(H_k)$ is bounded from above. But we can apply the uniform coercivity estimate of Lemma~\ref{lem:proper} to get
$$
F_k(H_k)\le-\e J_k(H_k)+O(1)
$$
for some $\e>0$. Since the left-hand side is bounded from below in view of (\ref{equ:minor}) we are finally done.

\end{document}